\newcounter{oldtocdepth}
\let\emptyset\varnothing
\DeclareMathAlphabet{\mathpzc}{OT1}{pzc}{m}{it}
\newtheorem{Definition}{Definition}[subsection]
\newtheorem{Theorem}[Definition]{Theorem}
\newtheorem{Lemma}[Definition]{Lemma}
\newtheorem{Proposition}[Definition]{Proposition}
\newtheorem{Corollary}[Definition]{Corollary}
\newtheorem{Example}[Definition]{Example}
\newtheorem{Conjecture}[]{Conjecture}
\newtheorem{Assumption}[]{Assumption}
\newtheorem*{Theorem*}{Theorem}
\theoremstyle{remark}
\newtheorem{Remark}[Definition]{Remark}
\newtheorem*{Remark*}{Remark}
\newcommand\scalemath[2]{\scalebox{#1}{\mbox{\ensuremath{\displaystyle #2}}}}
\DeclareMathOperator\A{\mathbf{A}}
\DeclareMathOperator\C{\mathbf{C}}
\DeclareMathOperator\F{\mathbf{F}\!}
\DeclareMathOperator\Q{\mathbf{Q}}
\DeclareMathOperator\R{\mathbf{R}}
\DeclareMathOperator\T{\mathbf{T}}
\DeclareMathOperator\Z{\mathbf{Z}}
\DeclareMathOperator\bbA{\mathbb{A}}
\DeclareMathOperator\bbG{\mathbb{G}}
\DeclareMathOperator\bbT{\mathbb{T}}
\DeclareMathOperator\calA{\mathcal{A}}
\DeclareMathOperator\calC{\mathcal{C}}
\DeclareMathOperator\calD{\mathcal{D}}
\DeclareMathOperator\calE{\mathcal{E}}
\DeclareMathOperator\calF{\mathcal{F}}
\DeclareMathOperator\calK{\mathcal{K}}
\DeclareMathOperator\calO{\mathcal{O}}
\DeclareMathOperator\calU{\mathcal{U}}
\DeclareMathOperator\calV{\mathcal{V}}
\DeclareMathOperator\calW{\mathcal{W}}
\DeclareMathOperator\calX{\mathcal{X}}
\DeclareMathOperator\calY{\mathcal{Y}}
\DeclareMathOperator\calZ{\mathcal{Z}}
\DeclareMathOperator\scrE{\mathscr{E}}
\DeclareMathOperator\scrF{\mathscr{F}}
\DeclareMathOperator\scrG{\mathscr{G}}
\DeclareMathOperator\scrH{\mathscr{H}}
\DeclareMathOperator\scrO{\mathscr{O}}
\DeclareMathOperator\scrT{\mathscr{T}}
\DeclareMathOperator\scrZ{\mathscr{Z}}
\DeclareMathOperator\frakL{\mathfrak{L}}
\DeclareMathOperator\fraka{\mathfrak{a}}
\DeclareMathOperator\frakl{\mathfrak{l}}
\DeclareMathOperator\frakm{\mathfrak{m}}
\DeclareMathOperator\frakn{\mathfrak{n}}
\DeclareMathOperator\frakp{\mathfrak{p}}
\DeclareMathOperator\frakq{\mathfrak{q}}
\DeclareMathOperator\GL{GL}
\DeclareMathOperator\SU{SU}
\DeclareMathOperator\End{End}
\DeclareMathOperator\Hom{Hom}
\DeclareMathOperator\ad{ad}
\DeclareMathOperator\adj{adj}
\DeclareMathOperator\coker{coker}
\DeclareMathOperator\cts{cts}
\DeclareMathOperator\Fil{Fil}
\DeclareMathOperator{\fin}{fin}
\DeclareMathOperator\Gal{Gal}
\DeclareMathOperator\id{id}
\DeclareMathOperator\image{image}
\DeclareMathOperator\Iw{Iw}
\DeclareMathOperator\ord{ord}
\DeclareMathOperator\opp{opp}
\DeclareMathOperator\Par{par}
\DeclareMathOperator\pr{pr}
\DeclareMathOperator\rig{rig}
\DeclareMathOperator\Res{Res}
\DeclareMathOperator\Shi{\text{\begin{CJK}{UTF8}{min}し\end{CJK}}}
\DeclareMathOperator\Spa{Spa}
\DeclareMathOperator\supp{supp}
\DeclareMathOperator\tot{tot}
\DeclareMathOperator\tr{tr}
\DeclareMathOperator\trans{^{\mathtt{t}}\!}
\DeclareMathOperator\wt{wt}
\DeclareMathOperator\Ab{{\mathsf{Ab}}}
\DeclareMathOperator\Alg{{\mathsf{Alg}}}
\DeclareMathOperator\Groups{{\mathsf{Group}}}
\DeclareMathOperator\Sets{{\mathsf{Sets}}}
\DeclareMathOperator\bfalpha{\boldsymbol{\alpha}}
\DeclareMathOperator\bfbeta{\boldsymbol{\beta}}
\DeclareMathOperator\bfgamma{\boldsymbol{\gamma}}
\DeclareMathOperator\bfdelta{\boldsymbol{\delta}}
\DeclareMathOperator\bfomega{\boldsymbol{\omega}}
\DeclareMathOperator\bfupsilon{\boldsymbol{\upsilon}}
\DeclareMathOperator\llbrack{\![\![\!}
\DeclareMathOperator\rrbrack{\!]\!]}
\renewcommand{\maketitle}{\bgroup\setlength{\parindent}{0pt}
\begin{flushleft}
  \LARGE{\textbf{\@title}}
  
  \vspace{4mm}
  
  \large{\textsc{\@author}}
  
  \vspace{4mm}
\end{flushleft}\egroup
}
\title{On $p$-adic adjoint $L$-functions for Bianchi cuspforms: the $p$-split case}
\author{Pak-Hin Lee and Ju-Feng Wu}
\date{}
\begin{document}

\maketitle

{\footnotesize
\paragraph{Abstract.} We construct a Hecke-equivariant pairing on the overconvergent cohomology of Bianchi threefolds. Applying the strategy of Kim and Bellaïche, we use this pairing to construct $p$-adic adjoint $L$-functions for Bianchi cuspforms and show that it detects the ramification locus of the cuspidal Bianchi eigenvariety over the weight space. Combining results of Barrera Salazar--Williams, we show a non-vanishing result of this $p$-adic adjoint $L$-function at certain points. Finally, we obtain a formula relating this pairing with the adjoint $L$-values of the corresponding cuspidal Bianchi eigenforms (of level 1).
}

\tableofcontents

\section{Introduction}

\subsection{Background}

In the late 20th century, K. Doi and H. Hida (\cite{Doi-Ohta, DHI}) studied congruences between cuspforms. It was later realised by Hida that the adjoint $L$-value of a cuspform governs its congruences with other cuspforms (\cite{ Hida-CongCuspForm, Hida-CongCuspForm2, Hida-CongHecke}). We refer the readers to \cite{Hida-adjoint} for a summary of the history and an exposition of Hida's works. 

While Hida studied \emph{ordinary} families of modular forms, a generalisation (called \emph{finite-slope} families of modular forms) was proposed by R. Coleman (\cite{Coleman-overconvergent, Coleman-family}). Together with B. Mazur, they later discovered that finite-slope families of modular forms can be patched into a geometric object -- the \emph{eigencurve} (\cite{Coleman--Mazur}). As a consequence, the study of congruences between modular forms is then translated to the study of the geometry of the eigencurve. 

Naturally inspired by the works of Hida, Coleman, and Mazur, one would wonder the relation between the adjoint $L$-values and the geometry of the eigencurve. Works in this direction were first carried out by W. Kim in his Ph.D. thesis (\cite{Kim}) and rewritten by J. Bellaïche in \cite{Eigen}.

Let us briefly discuss the method of Kim and Bellaïche. Let $f$ be a cuspidal eigenform of level $\Gamma_0(N)$ and weight $k$. The works of G. Shimura and Hida provide a formula \[
    \langle f, f \rangle_{\Gamma_0(N),k} = (*) \cdot  L(\mathrm{ad}^0 f, 1),
\]
where $L(\mathrm{ad}^0 f, s)$ is the adjoint $L$-function attached to $f$, $\langle \cdot, \cdot \rangle_{\Gamma_0(N),k}$ is the Petersson inner product, and $(*)$ is an explicit factor. Let $p$ be a prime number that does not divide $N$ and let $f^{(p)}$ be a $p$-stabilisation of $f$. We further assume $f^{(p)}$ has non-critical slope.  Inspired by the formula above, Kim and Bellaïche constructed a (\emph{twisted}) pairing $[\cdot, \cdot]_{k}$ on the space of \emph{overconvergent modular symbols}. By using a general framework in commutative algebra, they showed that $[\cdot, \cdot]_{k}$ defines a function $L_{\calU, h}^{\adj}$, locally at the point $x_f$ attached to $f^{(p)}$ in the (cuspidal) eigencurve.\footnote{ Following the terminology in \cite{Eigen}, we have to assume $x_f$ is a \emph{good point}.} Then, they show that \begin{enumerate}
    \item[(i)] The weight map from the eigencurve to the weight space is étale at $x_f$ if and only if $L_{\calU, h}^{\adj}(x_f) \neq 0$. 
    \item[(ii)] There is an explicit relation between $[\cdot, \cdot]_{k}$ and $\langle \cdot, \cdot \rangle_{\Gamma_0(N),k}$.
\end{enumerate}
In other words, the pairing  $[\cdot, \cdot]_{k}$ exhibits the relation between the adjoint $L$-value and the geometry of the (cuspidal) eigencurve.

It turns out that the eigencurve admits generalisations to other automorphic forms (\cite{Buzzard-eigen, Ash-Stevens, Urban-eigen, Hansen-PhD}). It is then a natural question to ask whether the method of Kim and Bellaïche can be generalised to these cases. Indeed, the generalisation to $p$-adic families of Hilbert cuspforms is provided by Balasubramanyam--Bergdall--Longo in \cite{BL} while the Siegel case is worked out by the second-named author in \cite{Wu-pairing}. The present paper concerns another generalisation, namely to $p$-adic families of Bianchi cuspforms. In this case, we encounter the distinctive feature that automorphic forms contribute to cohomology in two degrees and the adjoint pairing is defined on two different cohomology groups. We shall provide a more detailed description in the next subsection.

\subsection{Main results}\label{subsection: main results}
Let $K$ be an imaginary quadratic number field and let $p$ be an odd prime that splits in $K$. Let $\frakn$ be an ideal in $\calO_K$ such that $p\not\in \frakn$. Let $G := \Res_{\Z}^{\calO_K}\GL_2$. In this paper, we consider two Bianchi threefolds: \begin{align*}
    Y & := \text{ the Bianchi threefold of level }\Gamma_0(\frakn)\\
    Y_{\Iw_G} & := \text{ the Bianchi threefold above $Y$ with an extra Iwahori-level at $p$}.
\end{align*}
Here, the levels are given by compact open subgroups of $G(\A_{\Q})$, whose definitions can be found in \S \ref{subsection: Bianchi threefold}. We remark that there is a natural map $Y_{\Iw_G} \rightarrow Y$.

Given $k = (k_1, k_2)\in \Z_{> 0}^2$ with $k_1=k_2$,\footnote{ When considering the cohomology of the Bianchi threefolds, one can work with any $k\in \Z_{\geq 0}^2$. We restrict to the parallel $k\in \Z_{>0}^2$ as it is assumed in our main result below (see Lemma \ref{Lemma: concentration of the cohomology degree}). That is, in the case of parallel $k\in \Z_{>0}^2$, we used the fact that the degree-0 Eisenstein part vanishes (Theorem \ref{Theorem: ESH isomorphism}).} let $V_k$ be the irreducible representation of (parallel) weight $k$ of $G$ and let $V_k^{\vee}$ be its dual. Then, the representation $V_k^{\vee}$ defines a local system on both $Y$ and $Y_{\Iw_G}$ and so one obtains a commutative diagram of cohomology groups \[
    \begin{tikzcd}
        H_c^t(Y, V_k^{\vee}) \arrow[r]\arrow[d] & H^t(Y, V_k^{\vee})\arrow[d]\\
        H_c^t(Y_{\Iw_G}, V_k^{\vee}) \arrow[r] & H^t(Y_{\Iw_G}, V_k^{\vee}),
    \end{tikzcd}
\] where the horizontal maps are natural maps from the compactly supported cohomology to Betti cohomology, and the vertical maps are induced by the natural map $Y_{\Iw_G} \rightarrow Y$. We denote by $H_{\Par}^t$ the image of $H_c^t$ in $H^t$ and obtain a natural map \[
    H_{\Par}^t(Y, V_k^{\vee}) \rightarrow H_{\Par}^t(Y_{\Iw_G}, V_k^{\vee}). 
\] 
Note that this map doesn't preserve Hecke eigenforms. Indeed, given a Hecke eigenclass $[\mu]\in H_{\Par}^t(Y, V_k^{\vee})$, its image in $H_{\Par}^t(Y_{\Iw_G}, V_k^{\vee})$ decomposes into a linear combination of four Hecke eigenforms $[\mu]_{(i,j)}^{(p)}$ -- the \emph{$p$-stabilisations} of $[\mu]$ -- where $(i,j)\in (\Z/2\Z)^2$ (see Proposition \ref{Proposition: p-stabilisation}).

To construct Bianchi eigenvarieties, we have to enlarge the coefficient $V_k^{\vee}$ to \emph{analytic distributions} $D_{\kappa_{\calU}}^r$, where $\kappa_{\calU}$ is a $p$-adic family of (parallel) weights. In particular, by studying $H_{\Par}^t(Y_{\Iw_G}, D_{\kappa_{\calU}}^r)$, one can construct the (cuspidal) parallel Bianchi eigenvariety $\calE_{\circ}$ together with a weight map \[
    \wt: \calE_{\circ} \rightarrow \calW_{\circ},
\] where $\calW_{\circ}$ is the (parallel) $p$-adic weight space.\footnote{ Our results work in a more general situation. We restrict to the case of parallel Bianchi eigenvariety to simplify the discussion in the introduction. For more general results, we refer the readers to \S \ref{subsection: Hecke and eigenvariety} and \S \ref{section: pairing}. 
}

To generalise the works of Kim and Bellaïche, we first exhibit a pairing \[
    D_{\kappa_{\calU}}^r \times D_{\kappa_{\calU}}^r \rightarrow R_{\calU},
\]
which then induces a Hecke-equivariant pairing
\[
    [\cdot, \cdot]_{\kappa_{\calU}}: H_{\Par}^1(Y_{\Iw_G}, D_{\kappa_{\calU}}^r) \times H_{\Par}^2(Y_{\Iw_G}, D_{\kappa_{\calU}}^r) \rightarrow R_{\calU}.
\]
Our first result reads as follows.

\begin{Theorem}[Corollary \ref{Corollary: non-vanishing result for p-adic adjoint L-function} and Corollary \ref{Corollary: non-vanishing result for p-adic adjoint L-function; ordinary case}]\label{Theorem: nonvanishing of the p-adic adjoint L-function; intro}
    Let $f$ be a cuspidal Bianchi eigenform of weight $k$ on $Y_{\Iw_G}$. Suppose $f$ is non-critical and is a regular $p$-stabilised newform (in the sense of \cite{BW}) Let $\frakm_f$ be the maximal ideal in the Hecke algebra defined by $f$. 
    \begin{enumerate}
        \item[(i)] Let $(R_{\calU},\kappa_{\calU})$ be a sufficiently small (parallel) $p$-adic family of weights that contains $k$. Let $h\in \Q_{\geq 0}$ such that $(\calU, h)$ is slope-adapted. The pairing $[\cdot, \cdot]_{\kappa_{\calU}}$ induces a non-degenerate pairing \[
            [\cdot, \cdot]_{\kappa_{\calU}, \frakm_{f}} : H_{\Par}^1(Y_{\Iw_G}, D_{\kappa_{\calU}}^r)^{\leq h}_{\frakm_f} \times H_{\Par}^2(Y_{\Iw_G}, D_{\kappa_{\calU}}^r)^{\leq h}_{\frakm_f} \rightarrow R_{\calU, \frakm_k}.
        \]
        \item[(ii)] Let $x_{f}$ be the point in $\calE_{\circ}$ defined by $f$. Assume that $x_f$ varies a family over $\calU$ (Assumption \ref{Assumption: vary in 1-dimensional family}).\footnote{ Since $\calU$ is an affinoid open in the parallel weight space (in this introduction) and we assumed Assumption \ref{Assumption: vary in 1-dimensional family} is satisfied, it is expected that $x_f$ is a twist of base-change point if $f$ is non-ordinary (Conjecture \ref{Conjecture: Calegari--Mazur conjecture}). We refer the readers to \cite{Calegari-Mazur} and \cite[\S 5.3]{BW} for more discussions in this direction.} Then, $\wt$ is étale at $x_f$.
        \item[(iii)] The pairing  $[\cdot, \cdot]_{\kappa_{\calU}}$ defines a $p$-adic adjoint $L$-function $L_{\calU, h}^{\adj}$ on a (sufficiently small) neighbourhood $\calX \subset \calE_{\circ}$ of $x_f$ such that for any point $y$ in $\calX$, $L_{\calU, h}^{\adj}(y) \neq 0$ if and only if $\wt$ is étale at $y$. 
        \item[(iv)] Consequently, we have a non-vanishing result \[
            L_{\calU, h}^{\adj}(x_f) \neq 0.
        \]
        \item[(v)] Similar statements hold for ordinary families (in the sense of Definition \ref{Definition: ordinary part}). 
    \end{enumerate}
\end{Theorem}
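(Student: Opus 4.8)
The plan is to assemble the statement from the structural results of the body, following the commutative-algebra strategy of Kim and Bellaïche (\cite{Kim, Eigen}) adapted to the feature that Bianchi cuspforms contribute in the two degrees $1$ and $2$.

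\emph{Structure of the localised cohomology.} First I would record that, under the running hypotheses ($f$ non-critical, a regular $p$-stabilised newform), the slope-$\le h$, $\frakm_f$-localised overconvergent cohomology groups $M_i := H_{\Par}^i(Y_{\Iw_G}, D_{\kappa_{\calU}}^r)^{\le h}_{\frakm_f}$, $i = 1,2$, are free of rank one over the localised finite-slope Hecke algebra $\bbT$ acting on the slope-$\le h$ part, which is itself finite flat over $R_{\calU}$ (free after shrinking $\calU$). This is where \cite{BW} enters: the control theorem (available by non-criticality) identifies $M_i \otimes_{R_{\calU}} k(w)$, with $w := \wt(x_f)$ the weight of $f$ and $k(w)$ its residue field, with the classical localised interior cohomology $H_{\Par}^i(Y_{\Iw_G}, V_k^{\vee})^{\le h}_{\frakm_f}$, and the regular-newform hypothesis supplies the multiplicity-one input. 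A point used throughout is that $\frakm_f$ is non-Eisenstein, so the boundary cohomology of the Borel--Serre compactification vanishes after localisation and compact-support, interior and full cohomology coincide there, all torsion-free over $R_{\calU}$.

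\emph{Non-degeneracy: part (i).} The pairing $[\cdot,\cdot]_{\kappa_{\calU}}$ is cup product composed with the coefficient pairing $D_{\kappa_{\calU}}^r \times D_{\kappa_{\calU}}^r \to R_{\calU}$ and the degree-three trace $H_c^3(Y_{\Iw_G}, R_{\calU}) \cong R_{\calU}$. Since the coefficient pairing is perfect and $Y_{\Iw_G}$ is an oriented $3$-manifold, Poincaré--Lefschetz duality makes $H_c^i \times H^{3-i} \to R_{\calU}$ perfect; localising at the non-Eisenstein $\frakm_f$ (where the three cohomologies agree and are torsion-free) yields a perfect pairing $M_1 \times M_2 \to R_{\calU}$, which in particular is non-degenerate. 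Hecke-equivariance together with the twist — chosen, as in \cite{Eigen}, so that the relevant Hecke operators become self-adjoint — means that under the identifications $M_1 \cong \bbT \cong M_2$ the pairing becomes $(t_1, t_2) \mapsto \psi(t_1 t_2)$ for an $R_{\calU}$-linear functional $\psi \colon \bbT \to R_{\calU}$, and perfectness says precisely that $\psi$ generates $\Hom_{R_{\calU}}(\bbT, R_{\calU})$ as a $\bbT$-module, i.e.\ $\bbT$ is Gorenstein over $R_{\calU}$. (Should integral perfectness prove delicate, non-degeneracy over $R_{\calU}$ can instead be deduced by a determinant argument from its non-degeneracy after specialisation at $w$, where it is the classical Poincaré-duality pairing on interior cohomology.)

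\emph{The adjoint $L$-function and étaleness: parts (ii)--(v).} Using the Gorenstein isomorphism $\Hom_{R_{\calU}}(\bbT, R_{\calU}) = \bbT \cdot \psi$, I would define $L_{\calU, h}^{\adj} \in \bbT = \calO(\calE_{\circ, \calU, h})$ by $\Tr_{\bbT/R_{\calU}} = L_{\calU, h}^{\adj} \cdot \psi$. By the standard description of the different of a finite flat Gorenstein extension, $(L_{\calU, h}^{\adj}) = \mathfrak{d}_{\bbT/R_{\calU}}$; since $\bbT$ is finite flat over $R_{\calU}$, $\wt$ is étale at a point $y$ iff $\Omega^1_{\bbT/R_{\calU}}$ vanishes at $y$ iff the different does not vanish at $y$ iff $L_{\calU, h}^{\adj}(y) \neq 0$, which is (iii). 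For (ii): by the control theorem $M_i \otimes_{R_{\calU}} k(w)$ is the classical $\frakm_f$-localised interior cohomology, which by Matsushima's formula is a semisimple Hecke module whose constituents, by multiplicity one for regular $p$-stabilised newforms, are pairwise non-isomorphic and one-dimensional; hence $\bbT \otimes_{R_{\calU}} k(w)$ embeds into a product of fields, so it is reduced and therefore étale over the characteristic-zero field $k(w)$, i.e.\ $\wt$ is étale at $x_f$. Then (iv) is immediate from (ii) and (iii), and (v) follows by running the identical argument with Hida's ordinary idempotent and the big ordinary weight algebra in place of the slope-$\le h$ projector and $R_{\calU}$, Hida theory supplying the same control and freeness statements.

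The main obstacle is the perfectness input used in the second step: proving that the cup-product pairing is perfect on the family-valued interior cohomology in the two degrees $1$ and $2$ simultaneously. This is where cuspidal classes being spread over two cohomological degrees makes the analysis heavier than for the eigencurve or for the Hilbert and Siegel cases of \cite{BL, Wu-pairing}: one must control the coefficient pairing on $D_{\kappa_{\calU}}^r$ with its analytic subtleties, verify that the degree-three trace is an isomorphism after localisation, and rule out torsion obstructing the universal-coefficient comparison — all of which hinge on the non-Eisenstein localisation and on the freeness results imported from \cite{BW}.
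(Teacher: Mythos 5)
Your overall architecture is the right one and matches the paper's: establish rank-one freeness of the localised modules in degrees $1$ and $2$, non-degeneracy of the localised pairing, feed these (as (Adj1)--(Adj4)) into the Bellaïche formalism to get $L_{\calU,h}^{\adj}$ with the étale-iff-nonvanishing equivalence, prove étaleness at $x_f$ directly from the structure of the localised Hecke algebra, and transfer to the ordinary case via Hida-style control. However, your primary argument for the crucial non-degeneracy over $R_{\calU}$ (part (i)) has a genuine gap: the coefficient pairing $D_{\kappa_{\calU}}^r \times D_{\kappa_{\calU}}^r \rightarrow R_{\calU}$ is \emph{not} perfect --- the distribution module is dual to $A_{\kappa_{\calU}}^r$, not to itself, the pairing being integration against the kernel $\kappa_{\calU}(1+cc'/p)$ --- so Poincaré--Lefschetz duality with these coefficients does not give a perfect pairing $H_{\Par}^1(D_{\kappa_{\calU}}^r)^{\leq h}\times H_{\Par}^2(D_{\kappa_{\calU}}^r)^{\leq h}\to R_{\calU}$, and the paper never claims it does. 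Perfectness is only proved at classical weights (Proposition \ref{Proposition: non-degeneracy of the pairing}), where by Stevens's control the pairing becomes the Atkin--Lehner twist of the perfect algebraic pairing on $V_k^{\vee}(\Q_p)$; in families the paper obtains non-degeneracy only after localisation at $\frakm_f$, which is all that (Adj4) requires.

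The fallback you mention in parentheses (deduce non-degeneracy from the specialisation at $k$) is indeed the paper's route, but it is not a one-line determinant argument and you have not supplied its two essential ingredients. First, one needs surjectivity of the specialisation map in degree $2$, $H_{\Par}^2(Y_{\Iw_G},D_{\kappa_{\calU}}^r)^{\leq h}_{\frakm_f}\twoheadrightarrow H_{\Par}^2(Y_{\Iw_G},D_k^r)^{\leq h}_{\frakm_f}$, which the paper gets from the vanishing $H^3(Y_{\Iw_G},\ker\mathrm{sp}_k)^{\leq h}_{\frakm_f}=0$ (Lemma \ref{Lemma: H3 with coefficients in ker(spk) vanishes}, using flatness of $D_{\kappa_{\calU}}^r$ over $R_{\calU}$ and Nakayama); without this, Nakayama cannot be used to lift generators to the family. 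Second, one lifts a dual basis $e_f^{(1)},e_f^{(2)}$ with $[e_f^{(1)},e_f^{(2)}]_k=1$ so that the family pairing is a unit modulo $\frakm_k$, giving non-degeneracy of the localised pairing and, simultaneously, freeness of rank one of the degree-$2$ module. This last point also corrects your opening step: the input from \cite{BW} (Theorem 4.5 there, via Proposition \ref{Proposition: structure theorem}) gives rank-one freeness only for $H_{\Par}^1$; the degree-$2$ statement is \emph{not} imported but is proved together with the non-degeneracy by exactly this specialisation-and-duality argument, so taking both freeness statements as given before discussing the pairing reverses the actual logical order. Your route to (ii) via reducedness of the fibre, and your identification of $(L_{\calU,h}^{\adj})$ with the Noether different in (iii), are fine and equivalent to the paper's use of $\bbT^{(1)}_{\calU,h,\frakm_f}\simeq R_{\calU,\frakm_k}$ and \cite[Theorem VIII.1.3]{Eigen}; for (v) the paper does not rerun the argument in the ordinary setting but compares the ordinary localisation with a finite-slope one over an affinoid subweight (Proposition \ref{Proposition: structure theorem; ordinary case}), though your direct approach would also work given the Hida control theorem.
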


To make the link between our adjoint pairing and the adjoint $L$-value, we now fix a cuspidal Bianchi eigenform $f$ of level $1$.\footnote{ Such an assumption is unnecessary. We impose it just to simplify the notation in computation. Readers may find it already complicated in this case (see the computation in \S \ref{subsection: pairing and p-stabilisations}).} For any prime ideal $\frakq \subset \calO_K$, we fix a uniformiser $\varpi_{\frakq}\in \calO_{K, \frakq}$ and consider the Hecke operator $T_{\bfupsilon_{\frakq}}$ defined by $\bfupsilon_{\frakq} = \begin{pmatrix}1 & \\ & \varpi_{\frakq}\end{pmatrix}$ (see \S \ref{subsection: Hecke operators for classical cohomology} for the definition). Note that $p \calO_K= \frakp\overline{\frakp}$, thus $\calO_{K, \frakp} \simeq \calO_{K, \overline{\frakp}} \simeq \Z_p$; so we particularly choose $\varpi_{\frakp} = \varpi_{\overline{\frakp}} = p$. Denote by $\lambda_{f, \frakq} = \lambda_{\frakq}$ the $T_{\bfupsilon_{\frakq}}$-eigenvalue of $f$ and consider the Hecke polynomial \[
    P_{\frakq}(X) := X^2 - \lambda_{\frakq}X + q_{\frakq}^{k+1},
\] where $q_{\frakq}$ is the cardinality of the residue field of $\calO_{K, \frakq}$. Let $\alpha_{\frakq}^{(0)}$ and $\alpha_{\frakq}^{(1)}$ be the two roots of $P_{\frakq}$. The adjoint $L$-function attached to $f$ is defined to be the Euler product \[
    L(\mathrm{ad}^0 f, s) := \prod_{\frakq} \left( \left(1-\frac{\alpha_{\frakq}^{(0)}}{\alpha_{\frakq}^{(1)}}q_{\frakq}^{-s}\right) \left(1-q_{\frakq}^{-s}\right) \left(1-\frac{\alpha_{\frakq}^{(1)}}{\alpha_{\frakq}^{(0)}}q_{\frakq}^{-s} \right) \right)^{-1},
\]
which converges when $\Re(s) $ is sufficiently large. 

Combining with Urban's formula in \cite{Urban-PhD}, we have the following formula for the adjoint $L$-value.

\begin{Theorem}[Proposition \ref{Proposition: pairing and adjoint L-value}, Corollary \ref{Corollary: nonvanishing Euler factor}, and Corollary \ref{Corollary: interpolation property of p-adic adjoint L-function}]\label{Theorem: formula; intro}
    Let $f$ a cuspidal Bianchi eiegnform of level 1 as above and let $[\mu_{f, 1}]$ and $[\mu_{f, 2}]$ be its image in the parabolic cohomology of degree 1 and 2 respectively via the Eichler--Shimura--Harder isomorphism (\cite{Harder}). Suppose moreover that $\lambda_{f, \frakp} = \lambda_{\frakp}$ and $\lambda_{f, \overline{\frakp}} = \lambda_{\overline{\frakp}}$ are nonzero real numbers and both $P_{\frakp}$ and $P_{\overline{\frakp}}$ are irreducible over $\R$ (see Assumption \ref{Assumption: Hecke eigenvalues at p are nonzero}).
    \begin{enumerate}
        \item[(i)] Let $(i, j)\in (\Z/2\Z)^2$, then \[
            \left[ [\mu_{f, 1}]_{(i,j)}^{(p)}, [\mu_{f, 2}]_{(i,j)}^{(p)}\right]_k = \frac{(p+1)^2\Theta(\lambda_{f,\frakp}, \lambda_{f,\overline{\frakp}}, i,j)D_{\infty}(k+1, \Phi_{\infty})\mathrm{disc}(K)}{16 \pi} \times  L(\mathrm{ad}^0 f, 1),
        \] where $D_{\infty}(k+1, \Phi_{\infty})$ is an explicit factor in \cite{Urban-PhD} and $\Theta(\lambda_{f,\frakp}, \lambda_{f,\overline{\frakp}}, i,j)$ is an explicit factor in \eqref{eq: [p-stab f, p-stab f]}.
        \item[(ii)] For $i=1,2$, suppose $[\mu_{f, t}]_{(i,j)}^{(p)}$ is non-critical, then $\Theta(\lambda_{f,\frakp}, \lambda_{f,\overline{\frakp}}, i,j)$ is nonzero. 
        \item[(iii)] Same assumption as in (ii), let $x_{f_{(i,j)}^{(p)}}$ be the point of $\calE_{\circ}$ defined by $[\mu_{f, 1}]_{(i,j)}^{(p)}$. We also assume that Assumption \ref{Assumption: vary in 1-dimensional family} is satisfied. Then, \[
            L_{\calU, h}^{\adj}\left(x_{f_{(i,j)}^{(p)}}\right) = (*) L(\ad^0 f, 1),
        \] where $(*)$ is a non-zero factor depending on $f$.
        \item[(iv)] If $g$ is another cuspidal Bianchi eigenform that satisfies the same assumptions for $f$ as above. Let $x_{g_{(i,j)}^{(p)}}$ be the point of $\calE_{\circ}$ defined by $[\mu_{g, 1}]_{(i,j)}^{(p)}$. Suppose $x_{g_{(i,j)}^{(p)}}$ lies in a (sufficiently small) neighbourhood $\calX\subset \calE_0$ of $x_{f_{(i,j)}^{(p)}}$, then we have a similar formula \[
            L_{\calU, h}^{\adj}\left(x_{g_{(i,j)}^{(p)}} \right) = (*) L(\ad^0 g, 1),
        \]
        where $(*)$ is now a non-zero factor depending on $g$. 
    \end{enumerate}
\end{Theorem}

\begin{Remark}\label{Remark: Assumption 2; intro}
    Let us comment on Assumption \ref{Assumption: Hecke eigenvalues at p are nonzero} in the statement above: \begin{enumerate}
        \item[(i)] The assumption that both $\lambda_{f,\frakp}$ and $\lambda_{f,\overline{\frakp}}$ are nonzero real numbers is a technical assumption in our computation. We are only able to link $[\cdot, \cdot]_k$ with the adjoint $L$-value under this assumption. 
        \item[(ii)] The assumption that $P_{\frakp}$ and $P_{\overline{\frakp}}$ are irreducible over $\R$ is in the spirit of the Ramanujan conjecture. We refer the readers to Remark \ref{Remark: explanation of Assumption 2} for a more detailed discussion. 
    \end{enumerate}
\end{Remark}

\begin{Remark}\label{Remark: factors in the formula; intro}
    Readers can see that, although $\Theta(\lambda_{f,\frakp}, \lambda_{f,\overline{\frakp}}, i,j)$ is explicit, it is difficult to determine when it is nonzero since its formula is complicated. 
    The second assertion in Theorem \ref{Theorem: formula; intro} assures that the trivial zero phenomenon should not happen when $f$ is regular, which is as expected. Although it is certainly interesting to study what will happen when $\Theta(\lambda_{f,\frakp}, \lambda_{f,\overline{\frakp}}, i,j) = 0$, we do not pursue in this direction in the present paper. 
\end{Remark}

\begin{Remark}\label{Remark: further directions; intro}
    We make some further remarks regarding future directions: \begin{enumerate}
        \item[(i)] Having results for the $p$-split case, one may immediately wonder the situation when $p$ is inert in $K$. We have some ideas to tackle this question. In fact, such an idea suggested a formalism for $\GL_2$ over general number fields where $p$ is unramified. We wish to study this general situation in the near future. 
        \item[(ii)] In \cite{Lee-PhD}, the first-named author studied a construction of the \emph{twisted} $p$-adic adjoint $L$-function of a base-change form. It is a natural question to ask how it is related to the work of the present paper. Indeed, one would expect the $p$-adic adjoint $L$-function of a base-change form $f$ to be decomposed as a product of the twisted $p$-adic adjoint $L$-function constructed in \emph{loc. cit.} and the $p$-adic adjoint $L$-function constructed in \cite{Kim, Eigen}. We wish to continue our study in this direction. 
\end{enumerate}
\end{Remark}

\subsection{Leitfaden of the paper}
The present paper is organised as follows: 

In \S \ref{section: preliminaries on Bianchi forms}, we start our discussion with the cohomology groups of Bianchi threefolds. We then spell out the action of the Hecke operators when the coefficients of the cohomology groups are classical. Using this information, we discuss the construction of $p$-stabilisations of Hecke-eigenclasses in Proposition \ref{Proposition: p-stabilisation}. The final part of this section reviews the relation between the cohomology groups of Bianchi threefolds and Bianchi cuspforms. Later in the paper, we use such relation to pass from one world to the other. 

In \S \ref{section: eigenvariety}, we introduce the overconvergent cohomology groups for Bianchi threefolds and use them to construct (cuspidal) Bianchi eigenvarieties. This section starts with some discussion on $p$-adic analysis, which is supposed to be well-known to experts. In the following subsection, we briefly discuss the weight space in our case and specify two special kinds of ($p$-adic families of) weights that we shall be considering: \emph{small weights} and \emph{affinoid weights}. We remark that these ideas come from \cite{Hansen-Iwasawa, CHJ}. Then, we study the overconvergent cohomology groups in \S \ref{subsection: overconvergent cohomology} and the action of Hecke operators on them in \S \ref{subsection: Hecke and eigenvariety}. Let us mention that our study on $p$-adic analysis allows us to construct eigenvarieties by only using (modified) distribution spaces.\footnote{ Compared with the construction in \cite{Hansen-PhD}, D. Hansen constructed eigenvarieties by first using analytic functions to construct \emph{spectral varieties} and then using distributions to construct eigenvarieties. } We should also point out that due to the obstruction described in \cite[Lemma 4.2]{BW}, we have to restrict ourselves to $p$-adic families of Bianchi forms varying over a (nice) one-dimensional weight space.\footnote{ More precisely, if $\calU$ is a two-dimensional affinoid weight space and $(\calU, h)$ is slope-adapted, then \cite[Lemma 4.2]{BW} shows that $H_c^1(Y_{\Iw_G}, D_{\kappa_{\calU}}^r)^{\leq h} =0$. However, as we have seen in \S \ref{subsection: main results}, the construction of the $p$-adic adjoint $L$-function involves a construction of a Hecke-equivariant pairing between $H^1_{\Par}$ and $H^2_{\Par}$. As we need the information from $H^1_{\Par}$, we cannot work over such a two-dimensional weight space $\calU$ and have to restrict ourselves to working with $p$-adic families varying over a (nice) one-dimensional weight space. Similar restrictions can be found in \cite[\S 4.2]{BW}. 
} In \S \ref{subsection: parallel eigenvarieties}, we show that the families that vary over the parallel weight space particularly satisfy the imposed restriction. In \S \ref{subsection: control theorems}, we discuss the control theorems. By virtue of the use of small weights, we are then able to realise ordinary families in finite-slope families. In particular, we prove a Hida-style control theorem via Stevens's control theorem on overconvergent cohomology groups (Theorem \ref{Theorem: Hida's control theorem}).

The construction of the \emph{adjoint pairing} for cuspidal Bianchi forms is provided in \S \ref{section: pairing}. This is a generalisation of the case for modular forms (\cite{Kim, Eigen}); and the basic idea comes from a previous work of the second-named author (\cite{Wu-pairing}).  Applying the formalism in \cite[\S 9.1]{Eigen}, we construct a $p$-adic adjoint $L$-function $L_{\calU, h}^{\adj}$ (varying in finite-slope families) via the information we gain from the adjoint pairing. Moreover, such a formalism also implies that this $p$-adic adjoint $L$-function detects the ramification locus of the weight map. In some situation, we are able to combine with the results in \cite{BW} to deduce a non-vanishing result of $L_{\calU, h}^{\adj}$ (Corollary \ref{Corollary: non-vanishing result for p-adic adjoint L-function}). The final part of the section shows that the same strategy also works for ordinary families.

Finally, \S \ref{section: adjoint L-values} is dedicated to the link between the pairing and the adjoint $L$-values of cuspidal Bianchi eigenforms. We follow the strategy in \cite[\S 9.5]{Eigen} to compute the relation between the classical pairing and our adjoint pairing. Compared with the case for modular forms, we encountered computational difficulties with $p$-stabilisations: while an elliptic eigenform only has two $p$-stabilisations, a Bianchi eigenform has four $p$-stabilisations. Nevertheless, the computations presented in \S \ref{subsection: pairing and p-stabilisations} show that, using the construction of the $p$-stabilisation in Proposition \ref{Proposition: p-stabilisation} together with a certain reasonable assumption (Assumption \ref{Assumption: Hecke eigenvalues at p are nonzero}), one can still calculate an explicit formula. In \S \ref{subsection: adjoint L-value formula}, we combine the formula of the adjoint $L$-value in \cite{Urban-PhD} with our computation and obtain a formula for the adjoint $L$-values (Proposition \ref{Proposition: pairing and adjoint L-value} and Corollary \ref{Corollary: interpolation property of p-adic adjoint L-function}).

We remark that we defined Hecke operators in terms of cohomological correspondences. Such a definition should be well-known to experts but we were not able to find a reference. Thus, we provide a gentle study on the basic ingredients we need for this viewpoint in \S \ref{section: cohomological correspondences for topological spaces}.

\subsection*{Acknowledgements}
We thank Chris Williams for insightful discussions about the overconvergent cohomology of Bianchi threefolds and for his valuable suggestions on an early draft of this paper. We thank Luis Santiago Palacios Moyano for enlightening conversations on the $p$-stabilisations of Bianchi forms. We thank James Rawson for his interest in our work; with him, we had several interesting conversations regarding the geometry of Bianchi eigenvarieties. We also thank John Bergdall for his comments and discussion on the previous version of this work and Ho Leung Fong for pointing out a mistake in \S \ref{section: cohomological correspondences for topological spaces} in the previous version. Finally, we thank the anonymous referee for valuable suggestions and corrections. While working on this project, P.-H.L. was supported by EPSRC Standard Grant EP/S020977/1 and J.-F.W. was supported by the  ERC Consolidator grant `Shimura varieties and the BSD conjecture' and the Irish Research Council under grant number IRCLA/2023/849 (HighCritical).

\subsection*{Notation and conventions}
Throughout this article, we fix the following notation and conventions: \begin{itemize}
    \item Let $K$ be an imaginary quadratic number field. We denote by $\calO_K$ the ring of integers of $K$. 
    \item Let $p\geq 3$ be a prime number. We assume $p$ splits in $\calO_K$ and write $p\calO_K = \frakp\overline{\frakp}$. In particular, we have the completions $\calO_{K, \frakp} = \Z_p = \calO_{K, \overline{\frakp}}$ and  $K_{\frakp} = \Q_p = K_{\overline{\frakp}}$.
    \item The ring of adèles of $\Q$ (resp., $K$) is denoted by $\A_{\Q}$ (resp., $\A_{K}$). Note that $\A_{\Q} = \R\times\A_{\Q, \fin}$ (resp., $\A_{K} = \C \times \A_{K, \fin}$), where $\A_{\Q, \fin}$ (resp., $\A_{K, \fin}$) is the ring of finite adèles. Moreover, we write $\widehat{\calO}_K := \calO_K \otimes_{\Z}\widehat{\Z}$. 
    \item We fix an algebraic closure $\overline{\Q}_p$ for $\Q_p$ and denote by $\C_p$ the $p$-adic completion of $\overline{\Q}_p$. We also fix an (algebraic) isomorphism $\C_p \simeq \C$ and so we can view any finite extension of $\Q_p$ as a subfield of $\C$. 
    \item In principle, capital letters in calligraphic font (\emph{e.g.}, $\calX$, $\calY$, $\calZ$) stand for adic spaces over $\Spa(\Z_p, \Z_p)$; capital letters in script font (\emph{e.g.}, $\scrE$, $\scrF$, $\scrG$) stand for sheaves on some geometric object, which shall be clear in the context. 
\end{itemize}

\section{Preliminaries on Bianchi cuspforms}\label{section: preliminaries on Bianchi forms}
In this section, we recall some basic knowledge of Bianchi cuspforms. In \S \ref{subsection: Bianchi threefold}, we study Bianchi threefolds and their cohomology groups. In \S \ref{subsection: Hecke operators for classical cohomology}, we describe the Hecke operators in terms of cohomological correspondences. The basic ingredients we need for this are discussed in \S \ref{section: cohomological correspondences for topological spaces}. We then close this section with a recollection of the relation between Bianchi modular forms and the cohomology groups of Bianchi threefolds. 

\subsection{The Bianchi threefolds and their cohomology groups}\label{subsection: Bianchi threefold}

Throughout this paper, we consider the algebraic group \[
    G := \Res^{\calO_K}_{\Z}\GL_{2/\calO_K}.
\]
That is, for any ring $R$, \[
    G(R) := \GL_2(R\otimes_{\Z}\calO_K).
\] 
In particular, since $p$ splits in $K$, we have \[
    G(\Z_p) = \GL_2(\Z_p \otimes_{\Z}\calO_K) = \GL_2(\Z_p) \times \GL_2(\Z_p).
\]

Let $B_{\GL_2/\calO_K}$ be the upper triangular Borel of $\GL_{2/\calO_K}$  and so \[
    B_G := \Res_{\Z}^{\calO_K}B_{\GL_2/\calO_K}
\]
is a Borel subgroup of $G$. In particular, we have \[
    B_G(\Z_p) = B_{\GL_2}(\Z_p) \times B_{\GL_2}(\Z_p).
\]
Here, $B_{\GL_2}$ is the upper triangular Borel of $\GL_2$ defined over $\Q$. Similarly, let $N_{\GL_2/\calO_K}$ (resp., $T_{\GL_2/\calO_K}$) be the unipotent radical (resp., torus) associated with $B_{\GL_2/\calO_K}$, then \[
    N_G = \Res^{\calO_K}_{\Z} N_{\GL_2/\calO_K} \quad (\text{resp., }T_G = \Res^{\calO_K}_{\Z} T_{\GL_2/\calO_K})
\] is the unipotent radical (resp., torus) associated with $B_G$.

We further consider \[
    \Iw_G := \left\{ \bfgamma\in G(\Z_p): (\bfgamma \mod p) \in B_G(\F_p) \right\} = \Gamma_0(p) \times \Gamma_0(p),
\]
where \[
    \Gamma_0(p) := \{\bfgamma\in \GL_2(\Z_p): (\bfgamma\mod p)\in B_{\GL_2}(\F_p)\}.
\]
Moreover, we have the Iwahori decomposition \[
    \Iw_G = N_{G, 1}^{\opp}B_G(\Z_p),
\]
where \[
    N_{G, 1}^{\opp} = \left\{(\bfgamma_1, \bfgamma_2)\in G(\Z_p): \bfgamma_i = \begin{pmatrix} 1 & \\ c_i & 1\end{pmatrix} \text{ with }p|c_i\right\}.
\]
In particular, there is a topological isomorphism \[
    N_{G, 1}^{\opp} \simeq \Z_p^2.
\]
In what follows, we fix a choice of such isomorphism.

Let $\frakn\subset \calO_K$ be an ideal such that both $\frakp$ and $\overline{\frakp}$ do not divide $\frakn$. We define the compact open subgroup $\Gamma_0(\frakn)\subset G(\widehat{\Z})$ by \[
    \Gamma_0(\frakn) := \left\{ \bfgamma = \begin{pmatrix} a & b \\ c & d\end{pmatrix} \in \GL_2(\widehat{\calO}_K) = G(\widehat{\Z}): (\bfgamma \mod \frakn)\in B_G(\widehat{\calO}_K/\frakn)\right\}.
\]
We write $\Gamma_0(\frakn) = \prod_{\ell: \text{prime}}'\Gamma_0(\frakn)_{\ell}$ where each $\Gamma_0(\frakn)_{\ell}$ is a compact open subgroup of $G(\Z_{\ell})$. 
Then, the Bianchi threefold of level $\Gamma_0(\frakn)$ is the locally symmetric space \[
    Y = Y_{\Gamma_0(\frakn)} = G(\Q)\backslash G(\A_{\Q})/\Gamma_{\infty}\Gamma_0(\frakn) = \GL_2(K)\backslash \GL_2(\A_K) /\Gamma_{\infty}\Gamma_0(\frakn),
\]
where $\Gamma_{\infty} := \SU_2(\C)\C^\times$.

Let $\Gamma := \Iw_G \prod_{\ell\neq p}' \Gamma_0(\frakn)_{\ell} \subset G(\widehat{\Z})$. In this paper, we will also consider the Bianchi threefold of level $\Gamma$, \emph{i.e.}, \[
    Y_{\Iw_G} = Y_{\Gamma} = G(\Q) \backslash G(\A_{\Q}) / \Gamma_{\infty}\Gamma.
\]
We shall often abuse the terminology and call $Y$ the Bianchi threefold of tame-level at $p$ and $Y_{\Iw_G}$ the Bianchi threefold of Iwahori level.

In what follows, we shall study the cohomology groups of $Y$ and $Y_{\Iw_G}$. To discuss them, we first introduce the coefficients of these cohomology groups. 

For any $k = (k_1, k_2)\in \Z^{2}$, we can view $k$ as a character of $T_G$ via \[
    k = (k_1, k_2): T_G \rightarrow \Res_{\Z}^{\calO_K}\bbG_{m/\calO_K}, \quad \begin{pmatrix}a & \\ & d\end{pmatrix} \mapsto a^{k_1}\sigma(a)^{k_2}, 
\]
where $\sigma\in \Gal(K/\Q)$ is the non-trivial element. We can moreover extend $k$ to $B_G$ by requiring $k(N_G) = \{1\}$. 

Given $k  = (k_1, k_2)\in \Z_{\geq 0}^2$, we consider \[
    V_k := \left\{ \phi: G \rightarrow \Res_{\Z}^{\calO_K}\bbA_{/\calO_K}: \phi(\bfgamma\bfbeta) = k(\bfbeta)\phi(\bfgamma)\,\,\forall (\bfgamma, \bfbeta)\in G \times B_G \right\}
\] and equip it with a right $G$-action by the left translation. For latter use, we consider the $\Z_p$-realisation $V_k(\Z_p)$ of $V_k$. In other words, \begin{align*}
    V_k(\Z_p) & = \left\{ \text{polynomial functions }\phi: G(\Z_p) \rightarrow \Res_{\Z}^{\calO_K}\bbA_{/\calO_K}(\Z_p): \begin{array}{c}
         \phi(\bfgamma\bfbeta) = k(\bfbeta)\phi(\bfgamma)  \\
         \forall (\bfgamma, \bfbeta)\in G(\Z_p) \times B_G(\Z_p) 
    \end{array}\right\} \\
    & = \left\{ \text{polynomial functions }\phi: \GL_2(\Z_p) \times \GL_2(\Z_p) \rightarrow \Z_p\times \Z_p: \begin{array}{c}
        \phi((\bfgamma_1, \bfgamma_2)(\bfbeta_1, \bfbeta_2)) = k_1(\bfbeta_1)k_2(\bfbeta_2)\phi(\bfgamma_1, \bfgamma_2)  \\
        \forall (\bfgamma_i, \bfbeta_i)\in \GL_2(\Z_p) \times B_{\GL_2}(\Z_p) 
    \end{array} \right\}.
\end{align*}
The following lemma allows one to go from $V_k$ to the more traditional coefficient system when considering the cohomology of Bianchi threefolds.

\begin{Lemma}\label{Lemma: Vk and polynomials}
    Given $k = (k_1, k_2)\in \Z_{\geq 0}^2$, let \[
        P_{k_i}(\Z_p) := \{ \phi\in \Z_p[X] : \deg\phi\leq k_i\}.
    \]
    Consider $P_{k_1}(\Z_p) \otimes_{\Z_p}P_{k_2}(\Z_p)$ and equip it with a right $G(\Z_p)$-action by \[
        \phi_1(X_1)\otimes \phi_2(X_2)|_{k} \left(\begin{pmatrix}a_1 & b_1\\ c_1 & d_1\end{pmatrix}, \begin{pmatrix}a_2 & b_2\\ c_2 & d_2\end{pmatrix}\right)  = (a_1+b_1X)^{k_1}\phi_1\left(\frac{c_1+d_1X_1}{a_1+b_1X_1}\right) \otimes (a_2+b_2X)^{k_2}\phi_2\left(\frac{c_2+d_2X_2}{a_2+b_2X_2}\right).
    \]
    Then, we have an isomorphism of $\Z_p$-modules \[
        V_k(\Z_p) \simeq P_{k_1}(\Z_p)\otimes_{\Z_p}P_{k_2}(\Z_p)
    \] that is compatible with the right action of $G(\Z_p)$.
\end{Lemma}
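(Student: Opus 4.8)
The plan is to realise $V_k(\Z_p)$ by evaluating its elements on the big Bruhat cell, and to match the resulting polynomial model with $P_{k_1}(\Z_p)\otimes_{\Z_p}P_{k_2}(\Z_p)$ one $\GL_2$-factor at a time.

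\emph{Reduction to a single factor.} Since $p$ splits, $G(\Z_p)=\GL_2(\Z_p)\times\GL_2(\Z_p)$ and $B_G(\Z_p)=B_{\GL_2}(\Z_p)\times B_{\GL_2}(\Z_p)$, and the character $k=(k_1,k_2)$ is the external product of the two characters $\beta\mapsto a(\beta)^{k_i}$, where $a(\beta)$ denotes the upper-left entry of $\beta$. Unwinding the second description of $V_k(\Z_p)$ displayed above identifies it with the $\Z_p$-linear tensor product $W_{k_1}\otimes_{\Z_p}W_{k_2}$, where $W_m:=\{\text{polynomial }\psi\colon\GL_2(\Z_p)\to\Z_p \mid \psi(\gamma\beta)=a(\beta)^m\,\psi(\gamma)\text{ for all }\gamma\in\GL_2(\Z_p),\ \beta\in B_{\GL_2}(\Z_p)\}$. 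Likewise $P_{k_1}(\Z_p)\otimes_{\Z_p}P_{k_2}(\Z_p)$ is already an external tensor product, and the $G(\Z_p)$-action appearing in the statement is the product of the two single-factor actions $\phi(X)\mapsto(a+bX)^m\,\phi\!\left(\frac{c+dX}{a+bX}\right)$ attached to $\left(\begin{smallmatrix}a&b\\c&d\end{smallmatrix}\right)\in\GL_2(\Z_p)$. Hence it suffices to construct, for each $m\in\Z_{\geq 0}$, a $\GL_2(\Z_p)$-equivariant isomorphism of $\Z_p$-modules $W_m\cong P_m(\Z_p)$ matching these actions.

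\emph{The single-factor isomorphism.} I would give maps in both directions. Given $\phi\in P_m(\Z_p)$, homogenise it to the degree-$m$ form $F_\phi(A,C):=A^m\phi(C/A)\in\Z_p[A,C]$ and set $\Psi(\phi)\!\left(\begin{smallmatrix}a&b\\c&d\end{smallmatrix}\right):=F_\phi(a,c)$. This is a regular function on $\GL_2$, and from the identity $\left(\begin{smallmatrix}a&b\\c&d\end{smallmatrix}\right)\!\left(\begin{smallmatrix}\alpha&*\\0&\delta\end{smallmatrix}\right)=\left(\begin{smallmatrix}a\alpha&*\\c\alpha&*\end{smallmatrix}\right)$ one reads off $\Psi(\phi)(\gamma\beta)=\alpha^m\,\Psi(\phi)(\gamma)$, so $\Psi(\phi)\in W_m$. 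Conversely, given $\psi\in W_m$, put $f_\psi(X):=\psi\!\left(\begin{smallmatrix}1&0\\X&1\end{smallmatrix}\right)\in\Z_p[X]$; combining the factorisation $\left(\begin{smallmatrix}a&b\\c&d\end{smallmatrix}\right)=\left(\begin{smallmatrix}1&0\\c/a&1\end{smallmatrix}\right)\!\left(\begin{smallmatrix}a&b\\0&(ad-bc)/a\end{smallmatrix}\right)$, valid where $a$ is invertible, with the equivariance of $\psi$ gives $\psi\!\left(\begin{smallmatrix}a&b\\c&d\end{smallmatrix}\right)=a^m\,f_\psi(c/a)$ on that locus. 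Since $\psi$ is a regular function on all of $\GL_2$ and $a$ is coprime to $\det=ad-bc$ in $\Z_p[a,b,c,d]$, no negative powers of $a$ can occur, which forces $\deg f_\psi\le m$; thus $f_\psi\in P_m(\Z_p)$, and by density of the big cell $\Psi(f_\psi)=\psi$ while $f_{\Psi(\phi)}=\phi$, so the two constructions are mutually inverse $\Z_p$-linear bijections. For equivariance, if $g=\left(\begin{smallmatrix}a&b\\c&d\end{smallmatrix}\right)$ then the right translate $x\mapsto\Psi(\phi)(gx)$ corresponds under $\psi\mapsto f_\psi$ to $X\mapsto F_\phi(a+bX,\,c+dX)=(a+bX)^m\,\phi\!\left(\frac{c+dX}{a+bX}\right)$, which is exactly the action in the statement; taking the external product over the two factors yields the desired $G(\Z_p)$-equivariant isomorphism $V_k(\Z_p)\simeq P_{k_1}(\Z_p)\otimes_{\Z_p}P_{k_2}(\Z_p)$.

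The only step that is not routine unwinding is the degree bound $\deg f_\psi\le m$: it expresses that a regular function on $\GL_2$ equal to $a^m f(c/a)$ on the big cell cannot have a pole along $\{a=0\}$, equivalently (via $\GL_2/B_{\GL_2}\cong\mathbb{P}^1$) the classical fact that a global section of $\mathcal{O}(m)$ on $\mathbb{P}^1$ restricts to a polynomial of degree $\le m$ on the standard affine chart. I would establish it directly by clearing a suitable power of $a$ from the relation $\psi=a^m f_\psi(c/a)$ and specialising at a point with $a=0$, $\det\neq 0$.
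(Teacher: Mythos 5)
Your proof is correct and follows essentially the same route as the paper: reduce to the single $\GL_2$-factor via $V_k(\Z_p)\simeq V_{k_1}(\Z_p)\otimes_{\Z_p}V_{k_2}(\Z_p)$ and then identify each factor with $P_{k_i}(\Z_p)$ by restricting to the lower unipotent $\left(\begin{smallmatrix}1&0\\X&1\end{smallmatrix}\right)$. The paper leaves the inverse, the degree bound, and the equivariance check as "one easily checks"; your homogenisation map $\Psi$, the $\mathbb{P}^1/\mathcal{O}(m)$ argument for $\deg f_\psi\le m$, and the explicit translate computation are exactly the details that verification requires.
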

\begin{proof}
    From the description above, one sees an identification \[
        V_k(\Z_p) = V_{k_1}(\Z_p) \otimes_{\Z_p} V_{k_2}(\Z_p),
    \]
    where \[
        V_{k_i}(\Z_p) = \left\{ \text{polynomial functions }\phi: \GL_2(\Z_p) \rightarrow \Z_p: \phi(\bfgamma\bfbeta) = k_i(\bfbeta)\phi(\bfgamma)\,\,\forall (\bfgamma, \bfbeta)\in \GL_2(\Z_p) \times B_{\GL_2}(\Z_p)\right\}.
    \]
    Hence, it is enough to show that \[
       V_{k_i}(\Z_p)  \simeq P_{k_i}(\Z_p)
    \]
    for $i=1, 2$. However, such an identification is easily given by \[
        \phi \mapsto \left(X \mapsto \phi\begin{pmatrix}1\\ X & 1\end{pmatrix}\right).
    \]
    One also easily checks that this morphism is $G(\Z_p)$-equivariant. 
\end{proof}

\begin{Remark}
    We remark that our formalism is slightly different from the (rather) classical orientation in \cite{Stevens-RAMS, Eigen} and \cite{Williams-PhD, BW}. The aforementioned works consider $P_{k}(\Z_p)$ as a left $\GL_2(\Z_p)$-module via \[
        \begin{pmatrix}a & b\\ c & d\end{pmatrix} \phi(X) = (a+cX)^k\phi\left(\frac{b+dX}{a+cX}\right).
    \]
    In particular, the variable $X$ appears in a generic element $\begin{pmatrix}1 & X\\ & 1\end{pmatrix}$ of the unipotent radical $N_{\GL_2}$ and the left $\GL_2(\Z_p)$-action is given by the matrix multiplication on the right. We make such a modification in order to be more cohesive with the formalism used in \cite{Hansen-PhD} when discussing the overconvergent cohomology groups, which also allows one to tackle higher-rank groups in general.
\end{Remark}

We retain the assumption that $(k_1, k_2)\in \Z_{\geq 0}^{2}$ and consider \[
    V_{k}^{\vee}(\Z_p) := \Hom_{\Z_p}(V_{k}(\Z_p), \Z_p) \simeq \Hom_{\Z_p}(V_{k_1}(\Z_p), \Z_p)\otimes_{\Z_p}\Hom_{\Z_p}(V_{k_2}(\Z_p), \Z_p).
\]
Here, the last isomorphism follows from \cite[Chapter 2, \S 4, Proposition 4]{Bourbaki-Algebra1-3}.
We equip $V_k^{\vee}(\Z_p)$ with the induced left $G(\Z_p)$-action.

\begin{Remark}\label{Remark: alg. rep. is self-dual}
    We remark that there is a $\Q_p$-isomorphism \[
        V_k^{\vee}(\Q_p) := V_k^{\vee}(\Z_p)\otimes_{\Z_p}\Q_p \rightarrow V_k(\Q_p) := V_k(\Z_p)\otimes_{\Z_p}\Q_p, \quad \mu \mapsto \phi_{\mu},
    \]
    where the construction of $\phi_{\mu}$ is given as follows. Given $\left( \begin{pmatrix} 1 & \\ X_1 & 1 \end{pmatrix}, \begin{pmatrix} 1 & \\ X_2 & 1 \end{pmatrix}\right)\in N_{G}^{\opp}(\Q_p)$, the function \begin{align*}
        \phi_{X_1, X_2}: \left(\begin{pmatrix}1 \\ X_1' & 1\end{pmatrix}, \begin{pmatrix}1 & \\ X_2' & 1\end{pmatrix}\right) & \mapsto \left( \begin{pmatrix}1 & X_1'\end{pmatrix}\begin{pmatrix} & -1\\ 1 & \end{pmatrix}\begin{pmatrix}1\\ X_1\end{pmatrix}\right)^{k_1}\left(\begin{pmatrix}1 & X_2'\end{pmatrix}\begin{pmatrix} & -1\\ 1 & \end{pmatrix}\begin{pmatrix}1\\ X_2\end{pmatrix}\right)^{k_2}\footnotemark \\
        & = (X_1'-X_1)^{k_1}(X_2'-X_2)^{k_2}
    \end{align*}\footnotetext{Let us briefly explain the matrix $\begin{pmatrix} & -1\\ 1\end{pmatrix}$. Let $\calO_K^2$ be the free $\calO_K$-module of rank $2$. It admits a symplectic pairing given by the formula \[
        \left(\begin{pmatrix}a\\c\end{pmatrix}, \begin{pmatrix}b\\d\end{pmatrix}\right) \mapsto \begin{pmatrix}b & d\end{pmatrix} \begin{pmatrix} & -1\\ 1\end{pmatrix}\begin{pmatrix}a\\c\end{pmatrix} = ad-bc = \det\begin{pmatrix}a & b\\c& d\end{pmatrix}.
    \] Note that $\GL_{2/\calO_K}$ preserves this symplectic pairing up to a unit (given by the determinant). This pairing then induces the self-duality on $K^2$ and so it induces the self-duality on irreducible algebraic representations of $G$ as explained in this remark.}
    defines a function in $V_k(\Q_p)$. 
    Then, $\phi_{\mu}$ is defined as \[
        \phi_{\mu}: \left( \begin{pmatrix} 1 & \\ X_1 & 1 \end{pmatrix}, \begin{pmatrix} 1 & \\ X_2 & 1 \end{pmatrix}\right) \mapsto \mu(\phi_{X_1, X_2}).
    \]
    The proof of showing this morphism is an isomorphism is similar to \cite[Lemma IV.1.3]{Eigen}. We leave it to the readers to unwind the formulae and compare these two cases. 
\end{Remark}

Because of the left $G(\Z_p)$-action on $V_k^{\vee}(\Z_p)$ (as well as $V_k^{\vee}(\Q_p)$), it defines a local system on $Y$ and $Y_{\Iw_G}$. Consequently, one can consider the cohomology groups \[
    \begin{array}{cc}
        H^i(Y, V_{k}^{\vee}(\Z_p)), & H^i(Y_{\Iw_G}, V_k^{\vee}(\Z_p)), \\
        H_c^i(Y, V_{k}^{\vee}(\Z_p)), & H_c^i(Y_{\Iw_G}, V_k^{\vee}(\Z_p)). 
    \end{array}
\]
In fact, we have a commutative diagram \begin{equation}\label{eq: commutative diagram when changing levels}
    \begin{tikzcd}
        H^i(Y, V_k^{\vee}(\Z_p)) \arrow[r] & H^i(Y_{\Iw_G}, V_k^{\vee}(\Z_p))\\
        H_c^i(Y, V_{k}^{\vee}(\Z_p)) \arrow[r]\arrow[u] & H_c^i(Y_{\Iw_G}, V_k^{\vee}(\Z_p))\arrow[u],
    \end{tikzcd}
\end{equation}
where the horizontal arrows are induced from the natural projection $Y_{\Iw_G} \rightarrow Y$ while the vertical arrows are given by the natural morphisms from the compactly supported cohomology mapping into the usual (Betti) cohomology.

In what follows, we shall be studying the \textbf{\textit{parabolic cohomology groups}} \begin{align*}
    H_{\Par}^i(Y, V_k^{\vee}(\Z_p)) & := \image \left( H_c^i(Y, V_k^{\vee}(\Z_p)) \rightarrow H^i(Y, V_k^{\vee}(\Z_p)) \right),\\
    H_{\Par}^i(Y_{\Iw_G}, V_k^{\vee}(\Z_p)) & := \image \left( H_c^i(Y_{\Iw_G}, V_k^{\vee}(\Z_p)) \rightarrow H^i(Y_{\Iw_G}, V_k^{\vee}(\Z_p)) \right).
\end{align*}
For later discussion, we also consider the \textbf{\textit{Eisenstein cohomology groups}} \begin{align*}
    H_{\mathrm{Eis}}^i(Y, V_k^{\vee}(\Z_p)) &:= \coker\left( H_c^i(Y, V_k^{\vee}(\Z_p)) \rightarrow H^i(Y, V_k^{\vee}(\Z_p)) \right),\\
    H_{\mathrm{Eis}}^i(Y_{\Iw_G}, V_k^{\vee}(\Z_p)) & := \coker \left( H_c^i(Y_{\Iw_G}, V_k^{\vee}(\Z_p)) \rightarrow H^i(Y_{\Iw_G}, V_k^{\vee}(\Z_p)) \right).
\end{align*} Similar notations apply to the coefficient $V_k^{\vee}(\Q_p)$. By \eqref{eq: commutative diagram when changing levels}, we have natural morphisms \[
    H_{\Par}^i(Y, V_k^{\vee}(\Z_p)) \rightarrow H_{\Par}^i(Y_{\Iw_G}, V_k^{\vee}(\Z_p))\quad \text{ and }\quad H_{\Par}^i(Y, V_k^{\vee}(\Q_p)) \rightarrow H_{\Par}^i(Y_{\Iw_G}, V_k^{\vee}(\Q_p)).
\]
We remark that one can compute $H^i$, $H^i_c$, $H^i_{\Par}$, and $H_{\mathrm{Eis}}^i$ using the method discussed in \S \ref{subsection: overconvergent cohomology} below.

\subsection{Hecke operators}\label{subsection: Hecke operators for classical cohomology}
Let us begin with Hecke operators away from $\Gamma_0(\frakn)$. That is, for any prime number $\ell$ with $\ell\not\in \frakn$, we consider the spherical Hecke algebra \[
    \bbT_{\ell} := \Z_p[G(\Z_{\ell}) \backslash G(\Q_{\ell}) /G(\Z_{\ell})].
\]
For any $\bfdelta\in G(\Q_{\ell})$, we have a diagram of real manifolds \[
    \begin{tikzcd}
         & Y_{\bfdelta^{-1}\Gamma_0(\frakn)\bfdelta \cap \Gamma_0(\frakn)} \arrow[r, "\cong"', "\bfdelta"]\arrow[ld, "\pr_2"'] & Y_{\Gamma_0(\frakn) \cap \bfdelta \Gamma_0(\frakn) \bfdelta^{-1}} \arrow[rd, "\pr_1"]\\
         Y_{\Gamma_0(\frakn)} &&& Y_{\Gamma_0(\frakn)}
    \end{tikzcd},
\]
where $Y_{\bfdelta^{-1}\Gamma_0(\frakn)\bfdelta \cap \Gamma_0(\frakn)}$ and $Y_{\Gamma_0(\frakn) \cap \bfdelta \Gamma_0(\frakn) \bfdelta^{-1}}$ are defined similarly as in the previous section. The two projections $\pr_1$ and $\pr_2$ are natural projections. One observes that we have the following facts 
\begin{enumerate}
    \item[$\bullet$] The morphism $\pr_1: Y_{\Gamma_0(\frakn) \cap \bfdelta \Gamma_0(\frakn) \bfdelta^{-1}} \rightarrow Y_{\Gamma_0(\frakn)}$ is a covering space of degree $\#\left( \Gamma_0(\frakn) / (\Gamma_0(\frakn) \cap \bfdelta \Gamma_0(\frakn) \bfdelta^{-1})\right)$.\footnote{ In fact, similar statement holds also for $\pr_2: Y_{\bfdelta^{-1}\Gamma_0(\frakn)\bfdelta \cap \Gamma_0(\frakn)} \rightarrow Y_{\Gamma_0(\frakn)}$, but we do not need it. } 
    \item[$\bullet$] Given $k = (k_1, k_2)\in \Z_{\geq 0}^2$, the sheaf $\pr_1^{-1} V_k^{\vee}(\Z_p)$ (resp.,  $\pr_1^{-1}V_k^{\vee}(\Q_p)$) agrees with $V_k^{\vee}(\Z_p)$ (resp., $V_k^{\vee}(\Q_p)$) on $Y_{\Gamma_0(\frakn) \cap \bfdelta \Gamma_0(\frakn) \bfdelta^{-1}}$. A similar statement also holds for $\pr_2$. 
    \item[$\bullet$] Given $k = (k_1, k_2)\in \Z_{\geq 0}^2$, the sheaf $\bfdelta^{-1} V_k^{\vee}(\Z_p)$ (resp.,  $\bfdelta^{-1}V_k^{\vee}(\Q_p)$) agrees with $V_k^{\vee}(\Z_p)$ (resp., $V_k^{\vee}(\Q_p)$) on $Y_{\bfdelta^{-1}\Gamma_0(\frakn)\bfdelta \cap \Gamma_0(\frakn)} $.
\end{enumerate}
Combining these facts with the discussion in \S \ref{section: cohomological correspondences for topological spaces}, one defines the Hecke operator $T_{\bfdelta}$ as the composition of morphisms of cohomology groups \[
    \begin{tikzcd}
        T_{\bfdelta}:  H^i(Y, V_{k}^{\vee}(\Z_p)) \arrow[r, "\pr_2^{-1}"] & H^i(Y_{\bfdelta^{-1}\Gamma_0(\frakn)\bfdelta \cap \Gamma_0(\frakn)}, V_k^{\vee}(\Z_p)) \arrow[r, "\bfdelta^{-1}", "\cong"'] & H^i(Y_{\Gamma_0(\frakn) \cap \bfdelta \Gamma_0(\frakn) \bfdelta^{-1}}, V_k^{\vee}(\Z_p)) \arrow[ld, "\cong"', out=355,in=175] \\
        & H^i(Y, \pr_{1, *}\pr_1^{-1}V_{k}^{\vee}(\Z_p)) \arrow[r, "\tr"] & H^i(Y, V_k^{\vee}(\Z_p))
    \end{tikzcd}
\] 
and \[
    \begin{tikzcd}
        T_{\bfdelta}:  H_c^i(Y, V_{k}^{\vee}(\Z_p)) \arrow[r, "\pr_2^{-1}"] & H_c^i(Y_{\bfdelta^{-1}\Gamma_0(\frakn)\bfdelta \cap \Gamma_0(\frakn)}, V_k^{\vee}(\Z_p)) \arrow[r, "\bfdelta^{-1}", "\cong"'] & H_c^i(Y_{\Gamma_0(\frakn) \cap \bfdelta \Gamma_0(\frakn) \bfdelta^{-1}}, V_k^{\vee}(\Z_p)) \arrow[ld, "\cong"', out=355,in=175] \\
        & H_c^i(Y, \pr_{1, *}\pr_1^{-1}V_{k}^{\vee}(\Z_p)) \arrow[r, "\tr"] & H_c^i(Y, V_k^{\vee}(\Z_p)).
    \end{tikzcd}
\]

We remark that a similar formalism applies when replacing $V_k^{\vee}(\Z_p)$ with $V_k^{\vee}(\Q_p)$. Moreover, the formalism also applies when replacing $Y = Y_{\Gamma_0(\frakn)}$ with $Y_{\Iw_G}$. However, for computational convenience, we need the following explicit description for Hecke operators at $p$. 

We begin with a useful lemma.

\begin{Lemma}\label{Lemma: double coset decomposition; Hecke operators at p}
    \begin{enumerate}
        \item[(i)] We have the double coset decomposition \[
            \Gamma_0(p) \begin{pmatrix}1 & \\ & p\end{pmatrix} \Gamma_0(p) = \bigsqcup_{c=0}^{p-1} \begin{pmatrix}1 & \\ pc & p\end{pmatrix} \Gamma_0(p).
        \] 
        \item[(ii)] We have the double coset decomposition \[
            \GL_2(\Z_p)\begin{pmatrix}1 & \\ & p\end{pmatrix} \GL_2(\Z_p) = \left(\bigsqcup_{c=0}^{p-1} \begin{pmatrix}1 & \\ pc & p\end{pmatrix} \GL_2(\Z_p) \right) \sqcup \begin{pmatrix}p & \\ & 1\end{pmatrix}\GL_2(\Z_p).
        \]
    \end{enumerate}
\end{Lemma}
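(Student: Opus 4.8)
The plan is to prove each identity by the standard three-step argument for decomposing a double coset $G\alpha G$ into right cosets $g_iG$: (a) check that each proposed representative lies in $G\alpha G$; (b) check that the proposed right cosets are pairwise disjoint; (c) count the total number of right cosets and verify it equals the number of representatives listed. For step (c) I would use the elementary fact that the set of right cosets contained in $G\alpha G$ is a transitive left $G$-set on which the stabiliser of $\alpha G$ equals $G\cap\alpha G\alpha^{-1}$, so that $G\alpha G$ consists of exactly $[G:G\cap\alpha G\alpha^{-1}]$ right cosets. Throughout, write $\delta=\begin{pmatrix}1&0\\0&p\end{pmatrix}$.

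For (i), take $G=\Gamma_0(p)$ and $\alpha=\delta$. Step (a): $\begin{pmatrix}1&0\\pc&p\end{pmatrix}=\begin{pmatrix}1&0\\pc&1\end{pmatrix}\delta$ with $\begin{pmatrix}1&0\\pc&1\end{pmatrix}\in\Gamma_0(p)$. Step (b): the computation $\begin{pmatrix}1&0\\pc'&p\end{pmatrix}^{-1}\begin{pmatrix}1&0\\pc&p\end{pmatrix}=\begin{pmatrix}1&0\\c-c'&1\end{pmatrix}$ shows this lies in $\Gamma_0(p)$ iff $p\mid c-c'$, so the $p$ listed cosets are distinct. Step (c): one identifies $\Gamma_0(p)\cap\delta\Gamma_0(p)\delta^{-1}=\{\gamma\in\GL_2(\Z_p):\text{lower-left entry of }\gamma\text{ lies in }p^2\Z_p\}$, whose index in $\Gamma_0(p)$ is $p$ (compare $[\GL_2(\Z_p):\Gamma_0(p^2)]=p(p+1)$ with $[\GL_2(\Z_p):\Gamma_0(p)]=p+1$, using $|\mathbb{P}^1(\Z/p^n)|=p^{n-1}(p+1)$). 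Since $p$ distinct cosets have been exhibited inside $\Gamma_0(p)\delta\Gamma_0(p)$, they exhaust it.

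For (ii), take $G=\GL_2(\Z_p)$ and $\alpha=\delta$, and I would phrase steps (a) and (c) via lattices. A right coset $g\,\GL_2(\Z_p)$ with $g\in\GL_2(\Q_p)$ is determined by the lattice $g\Z_p^2\subseteq\Q_p^2$, and $g$ lies in $\GL_2(\Z_p)\,\delta\,\GL_2(\Z_p)$ iff (by the elementary divisor theorem) $g\Z_p^2\subseteq\Z_p^2$ with $[\Z_p^2:g\Z_p^2]=p$; such lattices contain $p\Z_p^2$ and correspond bijectively to lines in $\F_p^2=\Z_p^2/p\Z_p^2$, of which there are $p+1$. The line $\langle\bar e_2\rangle$ corresponds to the lattice $\langle pe_1,e_2\rangle$, i.e. to the coset of $\begin{pmatrix}p&0\\0&1\end{pmatrix}$, while the line $\langle\bar e_1+c\bar e_2\rangle$ corresponds to $\langle e_1+ce_2,pe_2\rangle$, i.e. to the coset of $\begin{pmatrix}1&0\\c&p\end{pmatrix}$ for $c=0,\dots,p-1$; disjointness is automatic since the lattices differ. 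Alternatively one argues exactly as in (i): each listed matrix has determinant $p$ with unit gcd of entries, hence lies in the double coset; disjointness follows by computing the relevant products $g_i^{-1}g_j$; and $\GL_2(\Z_p)\cap\delta\GL_2(\Z_p)\delta^{-1}=\{\gamma\in\GL_2(\Z_p):\text{upper-right entry of }\gamma\text{ lies in }p\Z_p\}$ has index $|\mathbb{P}^1(\F_p)|=p+1$.

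None of these steps is genuinely hard; the only points requiring care are the bookkeeping — keeping the distinction between left and right cosets consistent everywhere, and making sure the explicit matrices written down are exactly the right-coset representatives the argument produces, in particular that in (ii) the lower-left entries of the $p$ representatives other than $\begin{pmatrix}p&0\\0&1\end{pmatrix}$ run over a complete set of residues modulo $p$. This last point is the only place where an error could easily creep in.
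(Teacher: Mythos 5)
For part (i) your argument is correct and is essentially the paper's own: the paper also reduces to the orbit–stabiliser bijection $\Gamma_0(p)/\Gamma' \simeq \Gamma_0(p)\left(\begin{smallmatrix}1&\\&p\end{smallmatrix}\right)\Gamma_0(p)/\Gamma_0(p)$ with $\Gamma'=\left(\begin{smallmatrix}1&\\&p\end{smallmatrix}\right)\Gamma_0(p)\left(\begin{smallmatrix}1&\\&p\end{smallmatrix}\right)^{-1}\cap\Gamma_0(p)=\{\gamma : p^2 \mid c\}$, and then writes down the unipotent representatives; your extra step (b) and the index count via $|\bbP^1(\Z/p^n)|$ just make explicit what the paper asserts.

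For part (ii) the paper offers no proof (``similar to (i), left to the reader''), and here your caveat at the end is exactly the crux: your lattice argument is correct, but it produces the representatives $\left(\begin{smallmatrix}1&\\c&p\end{smallmatrix}\right)$, $c=0,\dots,p-1$, together with $\left(\begin{smallmatrix}p&\\&1\end{smallmatrix}\right)$ — i.e.\ lower-left entries running over a full set of residues mod $p$ — whereas the lemma as printed has $\left(\begin{smallmatrix}1&\\pc&p\end{smallmatrix}\right)$. The printed version is not provable: since $\left(\begin{smallmatrix}1&\\pc&p\end{smallmatrix}\right)=\left(\begin{smallmatrix}1&\\&p\end{smallmatrix}\right)\left(\begin{smallmatrix}1&\\c&1\end{smallmatrix}\right)$ and $\left(\begin{smallmatrix}1&\\c&1\end{smallmatrix}\right)\in\GL_2(\Z_p)$, all $p$ of those matrices lie in the single coset $\left(\begin{smallmatrix}1&\\&p\end{smallmatrix}\right)\GL_2(\Z_p)$, so the right-hand side of (ii) is neither disjoint nor exhaustive (it accounts for $2$ of the $p+1$ cosets). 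The congruence $p\mid c-c'$ that makes the representatives distinct at Iwahori level in (i) is vacuous at full level, which is precisely the factor-of-$p$ bookkeeping issue you warned about; so your proof establishes the corrected statement, not the literal one. One small slip in your alternative argument for (ii): the stabiliser $\GL_2(\Z_p)\cap\left(\begin{smallmatrix}1&\\&p\end{smallmatrix}\right)\GL_2(\Z_p)\left(\begin{smallmatrix}1&\\&p\end{smallmatrix}\right)^{-1}$ consists of matrices with \emph{lower-left} entry in $p\Z_p$ (consistent with your computation in (i)), not upper-right; this does not affect the count, as either subgroup has index $p+1$.
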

\begin{proof}
    The proof for (ii) is similar to (i) and so we only show (i) and leave (ii) to the readers. 

    Let $\Gamma' = \begin{pmatrix}1 & \\ & p\end{pmatrix} \Gamma_0(p) \begin{pmatrix}1 & \\ &  p\end{pmatrix}^{-1} \cap \Gamma_0(p)$. Observe there is an isomorphism \[
        \Gamma_0(p)/ \Gamma' \xrightarrow{\simeq }  \Gamma_0(p) \begin{pmatrix}1 & \\ & p\end{pmatrix}\Gamma_0(p)/\Gamma_0(p), \quad \bfgamma \Gamma'  \mapsto \bfgamma \begin{pmatrix}1 & \\ & p\end{pmatrix} \Gamma_0(p).
    \] Hence, we have to compute the coset representatives for $\Gamma_0(p)/\Gamma'$. 

    For any $\begin{pmatrix}a & b \\ c & d\end{pmatrix}\in \Gamma_0(p)$, we have \[
        \begin{pmatrix}1 & \\ & p\end{pmatrix}\begin{pmatrix}a & b \\ c & d\end{pmatrix} \begin{pmatrix}1 & \\ & p\end{pmatrix}^{-1} = \begin{pmatrix} a & p^{-1}b\\ pc & d\end{pmatrix}.
    \] Thus, \[
        \Gamma' = \left\{ \begin{pmatrix}a & b \\ c & d\end{pmatrix} \in \Gamma_0(p) : p^2|c \right\}.
    \] In other words, we have the coset representatives \[
        \Gamma_0(p)/\Gamma' = \left\{ \begin{pmatrix}1 & \\ pc & 1\end{pmatrix}: c=0, 1, ..., p-1\right\}.
    \]
    By multiplying $\begin{pmatrix}1 & \\ & p\end{pmatrix}$ from the right with the coset representatives for $\Gamma_0(p)/\Gamma'$, one obtains the desired coset decomposition. 
\end{proof}

Consider  \[
    \bfupsilon_{p} := \left( \begin{pmatrix}1 &  \\ & p\end{pmatrix}, \begin{pmatrix}1 &  \\ & p\end{pmatrix} \right) \in G(\Q_p) = \GL_2(\Q_p) \times \GL_2(\Q_p).
\] Then, the double coset $\Iw_G \bfupsilon_p \Iw_G$ has the following coset decomposition \[
    \Iw_{G} \bfupsilon_p \Iw_G = \bigsqcup_{\substack{c = (c_1, c_2)\\ 0\leq c_1, c_2\leq p-1}} \bfupsilon_{p, c} \Iw_G
\]
with \[
    \bfupsilon_{p, c} =  \left( \begin{pmatrix}1 &   \\ pc_1 & p\end{pmatrix}, \begin{pmatrix}1 &  \\ pc_2 & p\end{pmatrix} \right).
\] 
Note that each $\begin{pmatrix} 1 & \\ pc_i & p\end{pmatrix}$ acts on a polynomial $\phi_i(X)\in P_{k_i}(\Z_p)$ (resp.,  $P_{k_i}(\Q_p)$) via \[
    \left( \phi_i |_{k_i} \begin{pmatrix} 1 & \\ pc_i & p\end{pmatrix} \right) (X) = \phi_i\left(p+pc_i X\right).
\] 
Consequently, one obtains an induced right action by $\bfupsilon_{p,c}$ on $V_k(\Z_p)$ (resp., $V_k(\Q_p)$) and thus an induced left action on $V_k^{\vee}(\Z_p)$ (resp., $V_k^{\vee}(\Q_p)$). As explained in \cite[\S 2.1]{Hansen-PhD}, the left action of $\bfupsilon_{p,b}$ on $V_k^{\vee}(\Z_p)$ (resp., $V_k^{\vee}(\Q_p)$) and its natural action on $Y_{\Iw_G}$ then induces a left action on $H^i(Y_{\Iw_G}, V_k^{\vee}(\Z_p))$ (resp., $H^i(Y_{\Iw_G}, V_k^{\vee}(\Q_p))$). We then define the $U_p$-operator on $H^i(Y_{\Iw_G}, V_k^{\vee}(\Z_p))$ (resp., $H^i(Y_{\Iw_G}, V_k^{\vee}(\Q_p))$) by the formula \[
    U_p([\mu]) = \sum_{c} \bfupsilon_{p,c} * [\mu].
\]
Finally, one applies a similar construction to $H_c^i(Y_{\Iw_G}, V_k^{\vee}(\Z_p))$ (resp., $H_c^i(Y_{\Iw_G}, V_k^{\vee}(\Q_p))$) and so obtains the $U_p$-operator on $H_{\Par}^i(Y_{\Iw_G}, V_k^{\vee}(\Z_p))$ (resp., $H_{\Par}^i(Y_{\Iw_G}, V_k^{\vee}(\Q_p))$).


\begin{Remark}\label{Remark: other Hecke operators at p}
    Recall $p\calO_K = \frakp \overline{\frakp}$. Define \[
        \bfupsilon_{\frakp} = \left(\begin{pmatrix}1 & \\ & p\end{pmatrix}, \begin{pmatrix}1 & \\ & 1\end{pmatrix}\right), \quad \bfupsilon_{\overline{\frakp}} = \left(\begin{pmatrix}1 & \\ & 1\end{pmatrix},  \begin{pmatrix}1 & \\ & p\end{pmatrix} \right) \in G(\Q_p).
    \]
    Similar discussion as above applies to these two matrices and allows us to define operators $U_{\frakp}$ and $U_{\overline{\frakp}}$ respectively. More precisely, $U_{\frakp}$ (resp., $U_{\overline{\frakp}}$) is defined by the coset decomposition \[
        \Iw_{G} \bfupsilon_{\frakp} \Iw_{G} = \bigsqcup_{0\leq c \leq p-1} \bfupsilon_{\frakp, c} \Iw_{G} \quad (\text{resp., } \Iw_{G} \bfupsilon_{\overline{\frakp}} \Iw_{G} = \bigsqcup_{0\leq c \leq p-1} \bfupsilon_{\overline{\frakp}, c} \Iw_{G}),
    \] where \[
        \bfupsilon_{\frakp, c} = \left( \begin{pmatrix} 1 & \\ pc & p\end{pmatrix}, \begin{pmatrix}1 & \\ & 1\end{pmatrix}\right) \quad \left( \text{resp., } \bfupsilon_{\overline{\frakp}, c} = \left( \begin{pmatrix}1 & \\ & 1\end{pmatrix}, \begin{pmatrix} 1 & \\ pc & p\end{pmatrix}\right) \right).
    \] In particular, one sees that \[
        U_p = U_{\frakp} U_{\overline{\frakp}} = U_{\overline{\frakp}} U_{\frakp}.
    \]
\end{Remark}

\begin{Remark}\label{Remark: coset decomposition for Tp}
    Let $\bfupsilon_{\frakp}$ (resp., $\bfupsilon_{\overline{\frakp}}$) be as in Remark \ref{Remark: other Hecke operators at p}. One can also use the coset decomposition to define $T_{\bfupsilon_{\frakp}}$ (resp., $T_{\bfupsilon_{\overline{\frakp}}}$) on cohomology groups of $Y$. However, the coset decomposition is given by \[
        G(\Z_p) \bfupsilon_{\frakp} G(\Z_p) = \left( \bigsqcup_{0\leq c \leq p-1} \bfupsilon_{\frakp, c} G(\Z_p)\right) \sqcup \bfupsilon_{\frakp}^{*} G(\Z_p) \quad \left(\text{resp., } \left( \bigsqcup_{0\leq c \leq p-1} \bfupsilon_{\overline{\frakp}, c} G(\Z_p)\right) \sqcup \bfupsilon_{\overline{\frakp}}^{*} G(\Z_p) \right),
    \] where \[
        \bfupsilon_{\frakp}^{*} = \left( \begin{pmatrix} p & \\ & 1\end{pmatrix}, \begin{pmatrix}1 & \\ & 1\end{pmatrix}\right) \quad \left(\text{resp., } \bfupsilon_{\overline{\frakp}}^{*} = \left(\begin{pmatrix}1 & \\ & 1\end{pmatrix}, \begin{pmatrix} p & \\ & 1\end{pmatrix}\right)\right).
    \] In particular, if $[\mu]\in H^i(Y, V_k^{\vee}(\Z_p))$ and use the same symbol to denote its image (via the pullback map) in $H^i(Y_{\Iw_G}, V_k^{\vee}(\Z_p))$, we see that \[
        T_{\bfupsilon_{\frakp}} [\mu] = U_{\bfupsilon_{\frakp}} [\mu] + \bfupsilon_{\frakp}^{*} * [\mu].
    \] Similar for $\bfupsilon_{\overline{\frakp}}$ and for other cohomology group appearing above. 
\end{Remark}

Following the discussion above, we shall consider the following two Hecke algebras \[
    \bbT := \bigotimes_{\ell\not\in \frakn}\bbT_{\ell} \quad \text{ and }\bbT_{\Iw_G} := \left(\bigotimes_{\ell\not\in \frakn p}\bbT_{\ell}\right) \otimes_{\Z_p} \Z_p[U_{\frakp}, U_{\overline{\frakp}}].
\] Our next task is to understand the interaction between these two Hecke algebras. 

\begin{Remark}
    In this paper, we ignore the Hecke operators at $\frakn$ for simplicity. Note that, since our tame level is given by $\Gamma_0(\frakn)$, the only interesting Hecke operators at $\frakn$ are the $U_{\bfupsilon_{\frakl}}$-operators for those primes $\frakl$ dividing $\frakn$, where $\bfupsilon_{\frakl}$ is similarly defined as $\bfupsilon_{\frakp}$ and $\bfupsilon_{\overline{\frakp}}$. 
\end{Remark}

To simplify the notations, let us now denote by $V_k^{\vee}$ either $V_k^{\vee}(\Z_p)$ or $V_k^{\vee}(\Q_p)$. Suppose $[\mu]\in H_{\Par}^i(Y, V_k^{\vee})$ is an eigenclass for $\bbT$. We denote by $[\mu]$ again its image in $H_{\Par}^i(Y_{\Iw_G}, V_k^{\vee})$. One easily sees that $[\mu]$ is an eigenclass for $\bbT_{\ell}$ for $\ell \not\in \frakn p$. However, it needs not be an eigenclass for the $U_p$-operator. Instead, $[\mu]$ decomposes into a linear combination of $\bbT_{\Iw_G}$-eigenclasses. These eigenclasses are the \textit{\textbf{$p$-stabilisations}} of $[\mu]$. We summarise some essential properties for the $p$-stabilisations in the following result (see, for example, \cite[\S 3.3]{Palacios-BianchiFunctionalEquation} or \cite[\S 1.2.5]{Palacios-PhD}).

\begin{Proposition}\label{Proposition: p-stabilisation}
    Let $[\mu]\in H_{\Par}^i(Y, V_k^{\vee})$ be an eigenclass for $\bbT$. In what follows, we extend the coefficient of $V_k^{\vee}$ if necessary.  
    \begin{enumerate}
        \item[(i)] There are four $p$-stabilisations for $[\mu]$, denoted by $[\mu]^{(p)}_{(0,0)}$, $[\mu]^{(p)}_{(0,1)}$, $[\mu]^{(p)}_{(1,0)}$, and $[\mu]^{(p)}_{(1,1)}$, indexed by the Weyl group of $G$.\footnote{ The Weyl group of $G$ is isomorphic to $(\Z/2\Z)^{[K:\Q]}\simeq (\Z/2\Z)^2$. This explains the notation.}  
        \item[(ii)] Let $\lambda_{\frakp}([\mu])$ (resp., $\lambda_{\overline{\frakp}}([\mu])$) be the $T_{\bfupsilon_{\frakp}}$- (resp., $T_{\bfupsilon_{\overline{\frakp}}}$-)eigenvalue of $[\mu]$. Consider the Hecke polynomial \[
            P_{\frakp}(X) := X^2 - \lambda_{\frakp}([\mu])X + p^{k_1+1} \quad \left(\text{resp., } P_{\overline{\frakp}}(X) := X^2 - \lambda_{\overline{\frakp}}([\mu]) X + p^{k_2+1} \right)
        \] 
        Denote by $\alpha_{\frakp}^{(0)}$, $\alpha_{\frakp}^{(1)}$ (resp., $\alpha_{\overline{\frakp}}^{(0)}$, $\alpha_{\overline{\frakp}}^{(1)}$) the two roots of $P_{\frakp}$ (resp., $P_{\overline{\frakp}}$). Then, the $U_p$-eigenvalue for $[\mu]^{(p)}_{(i,j)}$ is given by $\alpha_{\frakp}^{(i)}\alpha_{\overline{\frakp}}^{(j)}$.
    \end{enumerate}
\end{Proposition}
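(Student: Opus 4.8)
The statement is essentially a $\GL_2 \times \GL_2$ version of the classical fact that an eigenform away from $p$ acquires $U_p$-eigenvalues given by roots of the Hecke polynomial when pulled back to Iwahori level. Since $G(\Z_p) = \GL_2(\Z_p) \times \GL_2(\Z_p)$ and all the Hecke action at $p$ factors through this product, the two factors can be treated independently, and the four $p$-stabilisations arise as the four ways of choosing a root of $P_{\frakp}$ in the first factor and a root of $P_{\overline{\frakp}}$ in the second. So the plan is to reduce to the single $\GL_2$ situation, prove the statement there, and then take the tensor product.

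**Step-by-step plan.**

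First I would recall from Remark \ref{Remark: other Hecke operators at p} that $U_p = U_{\frakp} U_{\overline{\frakp}}$ with $U_{\frakp}$, $U_{\overline{\frakp}}$ commuting and acting only through the first (resp. second) $\GL_2(\Z_p)$ factor, and likewise that $T_{\bfupsilon_{\frakp}}$ acts only through the first factor via the coset decomposition in Remark \ref{Remark: coset decomposition for Tp}. Thus it suffices to analyse, for a single $\GL_2$ and a $\bbT_\ell$-eigenclass $[\mu]$ pulled back from $\GL_2(\Z_p)$-level to $\Gamma_0(p)$-level, the action of $U_{\frakp}$ on the span of $[\mu]$ and $\bfupsilon_{\frakp}^{*} * [\mu]$. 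Second, I would verify the key commutation relation at $\Gamma_0(p)$-level: using the two coset decompositions of Lemma \ref{Lemma: double coset decomposition; Hecke operators at p}(i)--(ii), one gets the standard identity
\[
    T_{\bfupsilon_{\frakp}} = U_{\frakp} + \langle p \rangle_{\frakp}\, \bfupsilon_{\frakp}^{*}*(-)
\]
on the pulled-back class (in our normalization the ``diamond'' contributes the factor $p^{k_1+1}$ coming from the central action of $\begin{pmatrix} p & \\ & 1\end{pmatrix}$ on $V_k^\vee$ combined with the degree of the covering), and also the relation $U_{\frakp}\circ(\bfupsilon_{\frakp}^{*}*(-)) = p^{k_1+1}\cdot(\text{pullback})$ on classes coming from spherical level. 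Third, from these two relations one sees that $U_{\frakp}$ preserves the two-dimensional space $\Span\{[\mu], \bfupsilon_{\frakp}^{*}*[\mu]\}$ and that its matrix there is the companion matrix of $P_{\frakp}(X) = X^2 - \lambda_{\frakp}([\mu])X + p^{k_1+1}$; hence its eigenvalues are $\alpha_{\frakp}^{(0)}, \alpha_{\frakp}^{(1)}$, and the eigenvectors $[\mu]_{\frakp}^{(i)} := \bfupsilon_{\frakp}^{*}*[\mu] - \alpha_{\frakp}^{(1-i)}[\mu]$ (after extending scalars so the roots exist) are the two $\frakp$-stabilisations. Fourth, I would run the identical argument for $\overline{\frakp}$ in the second factor, and then observe that because the two factors commute, the classes $[\mu]_{(i,j)}^{(p)}$ obtained by applying the $\frakp$-stabilisation with root $\alpha_{\frakp}^{(i)}$ followed by the $\overline{\frakp}$-stabilisation with root $\alpha_{\overline{\frakp}}^{(j)}$ are simultaneous eigenvectors for $U_{\frakp}$ and $U_{\overline{\frakp}}$, hence for $U_p = U_{\frakp}U_{\overline{\frakp}}$ with eigenvalue $\alpha_{\frakp}^{(i)}\alpha_{\overline{\frakp}}^{(j)}$; they remain eigenclasses for $\bbT_\ell$, $\ell \nmid \frakn p$, since those operators commute with everything at $p$. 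The indexing by $(\Z/2\Z)^2 \cong$ Weyl group of $G$ is then just the bookkeeping of which root is chosen in each factor.

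**Where the difficulty lies.**

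The only genuinely substantive point is getting the normalizations right in the commutation relation $T_{\bfupsilon_{\frakp}} = U_{\frakp} + (\text{diamond})\cdot\bfupsilon_{\frakp}^{*}*(-)$ and in $U_{\frakp}\circ(\bfupsilon_{\frakp}^{*}*(-)) = p^{k_1+1}(\text{pullback})$ — in particular tracking how the central element $\begin{pmatrix} p & \\ & 1\end{pmatrix}$ acts on $V_{k_1}^\vee$ under the conventions of Lemma \ref{Lemma: Vk and polynomials}, and checking that the constant term of the Hecke polynomial comes out as $p^{k_1+1}$ rather than $p^{k_1}$ or $p^{k_1+2}$. This is a finite-slope-vs-classical normalization check, routine but error-prone; it can be done either by the coset-decomposition computation with the explicit formula $\phi|_{k_1}\begin{pmatrix}1 & \\ pc & p\end{pmatrix}(X) = \phi(p + pcX)$ given just before the Proposition, or by citing the references \cite{Palacios-BianchiFunctionalEquation, Palacios-PhD}. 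The rest — linear independence of $[\mu]$ and $\bfupsilon_{\frakp}^{*}*[\mu]$ in the relevant Hecke-eigenspace, which holds generically and in all cases after passing to the appropriate localization, and the passage to the tensor product — is straightforward once the single-$\GL_2$ case is in hand.
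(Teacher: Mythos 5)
Your argument follows the paper's proof: split into the $\frakp$- and $\overline{\frakp}$-factors via $G(\Z_p)=\GL_2(\Z_p)\times\GL_2(\Z_p)$ (the paper makes this concrete by passing through the intermediate levels $Y_{\Iw_{\frakp}}, Y_{\Iw_{\overline{\frakp}}}$), use the relation $T_{\bfupsilon_{\frakp}}=U_{\frakp}+\bfupsilon_{\frakp}^{*}*(-)$ on pulled-back classes together with the key identity $U_{\frakp}(\bfupsilon_{\frakp}^{*}*[\mu])=p^{k_1+1}[\mu]$ to diagonalize $U_{\frakp}$ on $\Span\{[\mu],\bfupsilon_{\frakp}^{*}*[\mu]\}$, and then combine the two commuting constructions. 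Note there is no diamond operator in that relation in the paper's normalization; the factor $p^{k_1+1}$ appears only through the second identity, via the scalar matrix $\left(\begin{smallmatrix}p&\\&p\end{smallmatrix}\right)$ acting on $P_{k_1}$. One small slip: in the basis $\{[\mu],\bfupsilon_{\frakp}^{*}*[\mu]\}$ the matrix of $U_{\frakp}$ is $\left(\begin{smallmatrix}\lambda_{\frakp}&p^{k_1+1}\\-1&0\end{smallmatrix}\right)$, not the companion matrix of $P_{\frakp}$, so the eigenvector with $U_{\frakp}$-eigenvalue $\alpha_{\frakp}^{(i)}$ is $\bfupsilon_{\frakp}^{*}*[\mu]-\alpha_{\frakp}^{(i)}[\mu]$, i.e.\ a scalar multiple of the paper's $[\mu]-\alpha_{\frakp}^{(i),-1}\bfupsilon_{\frakp}^{*}*[\mu]$, rather than your $\bfupsilon_{\frakp}^{*}*[\mu]-\alpha_{\frakp}^{(1-i)}[\mu]$; your labelling is off by the involution $i\mapsto 1-i$ in each factor.
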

\begin{proof}[Sketch of proof]
    We sketch the construction of $[\mu]_{(i,j)}^{(p)}$'s, which are similar to the case for modular forms.
    
    Denote by $\Iw_{\frakp} = \Gamma_0(p) \times \GL_2(\Z_p)$ and $\Iw_{\overline{\frakp}} = \GL_2(\Z_p) \times \Gamma_0(p)$. Let $Y_{\Iw_{\frakp}}$ and $Y_{\Iw_{\overline{\frakp}}}$ be the Bianchi threefolds over $Y$ with an extra level at $p$ given by $\Iw_{\frakp}$ and $\Iw_{\overline{\frakp}}$ respectively. Hence, we have a commutative diagram of Bianchi threefolds \[
        \begin{tikzcd}
            & Y_{\Iw_G}\arrow[rd]\arrow[ld]\\
            Y_{\Iw_{\frakp}}\arrow[rd] && Y_{\Iw_{\overline{\frakp}}}\arrow[ld]\\
            & Y
        \end{tikzcd}.
    \]The strategy is to first construct two classes $[\mu]_{0}^{(\frakp)}$ and $[\mu]_{1}^{(\frakp)}$ (resp., $[\mu]_{0}^{(\overline{\frakp})}$ and $[\mu]_{1}^{(\overline{\frakp})}$) at level $Y_{\Iw_{\frakp}}$ (resp., $Y_{\Iw_{\overline{\frakp}}}$), by using information of $\bfupsilon_{\frakp}$ (resp., $\bfupsilon_{\overline{\frakp}}$), whose $U_{\frakp}$-eigenvalues (resp., $U_{\overline{\frakp}}$-eigenvalues) are $\alpha_{\frakp}^{(0)}$ and $\alpha_{\frakp}^{(1)}$ (resp., $\alpha_{\overline{\frakp}}^{(0)}$ and $\alpha_{\overline{\frakp}}^{(1)}$) respectively. Then, we apply the same method again but use the information of $\bfupsilon_{\overline{\frakp}}$ (resp., $\bfupsilon_{\frakp}$). Although we have two approaches, they commute with each other. Thus, we only demonstrate one in what follows. 

    For $i=0, 1$, define \[
        [\mu]_i^{(\frakp)} := [\mu] - \alpha_{\frakp}^{(i), -1}\bfupsilon_{\frakp}^* * [\mu] \in H_{\Par}^i(Y_{\Iw_{\frakp}}, V_k^{\vee}).
    \] We claim that \[
        U_{\frakp} ([\mu]_i^{(\frakp)}) = \alpha_{\frakp}^{(i)} [\mu]_i^{(\frakp)}
    \]
    As mentioned in Remark \ref{Remark: coset decomposition for Tp}, we have \[
        U_{\frakp} = T_{\bfupsilon_{\frakp}}[\mu] - \bfupsilon_{\frakp}^* * [\mu]. 
    \] Suppose we have the identity \begin{equation}\label{eq: key identity for p-stabilisation}
        U_{\frakp} \left( \bfupsilon_{\frakp}^* * [\mu]\right) = p^{k_1+1}[\mu],
    \end{equation} 
    then \begin{align*}
        U_{\frakp} ([\mu]_i^{(\frakp)} )& = T_{\bfupsilon_{\frakp}} [\mu] - \bfupsilon_{\frakp}^* * [\mu] - U_{\frakp} \left( \bfupsilon_{\frakp}^* * [\mu]\right)\\
         & = \lambda_{\frakp}([\mu]) [\mu] - \bfupsilon_{\frakp}^* * [\mu] - p^{k_1+1}\alpha_{\frakp}^{(i), -1}[\mu]\\
         & = (\alpha_{\frakp}^{(i)} + \alpha_{\frakp}^{(1-i)})[\mu] - \bfupsilon_{\frakp}^* * [\mu] - \alpha_{\frakp}^{(1-i)}[\mu]\\
         & = \alpha_{\frakp}^{(i)} [\mu]_0^{(\frakp)},
    \end{align*} 
    where the penultimate equation follows from the relation of roots and coefficients of $P_{\frakp}(X)$.

    It remains to show \eqref{eq: key identity for p-stabilisation}. Using the identification in Lemma \ref{Lemma: Vk and polynomials}, we have to understand the action of $\begin{pmatrix}1 &  \\ pc & p\end{pmatrix}\begin{pmatrix}p & \\ & 1\end{pmatrix}$ on $P_{k_1}$. Note that \[
        \begin{pmatrix}1 &  \\ pc & p\end{pmatrix}\begin{pmatrix}p & \\ & 1\end{pmatrix} = \begin{pmatrix}p & \\ p^2c & p\end{pmatrix} = \begin{pmatrix}p & \\ & p\end{pmatrix} \begin{pmatrix} 1 & \\ pc & 1\end{pmatrix}.
    \] By the formula of the action on $P_{k_1}$ in Lemma \ref{Lemma: Vk and polynomials}, one sees that the scalar matrix $\begin{pmatrix}p & \\ & p\end{pmatrix}$ acts on a polynomial by multiplying $p^{k_1}$. On the other hand, $\begin{pmatrix} 1 & \\ pc & 1\end{pmatrix} \in \Gamma_0(p)$. We then conclude that \[
        U_{\frakp}(\bfupsilon_{\frakp}^* * [\mu]) = p^{k_1} \sum_{c=0}^{p-1} \left(\begin{pmatrix} 1 & \\ pc & 1\end{pmatrix}, \begin{pmatrix} 1 & \\ & 1\end{pmatrix}\right) * [\mu] = p^{k_1+1}[\mu].
    \]

    Finally, \[
        [\mu]_{(i,j)}^{(p)} := [\mu]_i^{(\frakp)} - \alpha_{\frakp}^{(j), -1}\bfupsilon_{\overline{\frakp}}^* * [\mu]_i^{(\frakp)}.
    \]
    Similar computations as above shows the $U_p$-eigenvalue for $[\mu]_{(i,j)}^{(p)}$ is $\alpha_{\frakp}^{(i)}\alpha_{\overline{\frakp}}^{(j)}$.  In summary, \[
        [\mu]_{(i,j)}^{(p)} = [\mu] - \alpha_{\frakp}^{(i), -1}\bfupsilon_{\frakp}^* *[\mu] - \alpha_{\overline{\frakp}}^{(j), -1} \bfupsilon_{\overline{\frakp}}^* * [\mu] + \alpha_{\frakp}^{(i), -1}\alpha_{\overline{\frakp}}^{(j), -1} \bfupsilon_{\frakp}^* \bfupsilon_{\overline{\frakp}}^* * [\mu]
    \] for $(i,j)\in (\Z/2\Z)^2$.
\end{proof}

\subsection{Bianchi cuspforms}
We close this section with some recapitulation of Bianchi modular forms. To this end, recall the fixed isomorphism $\C_p \simeq \C$ and so we can consider \[
    H_{\Par}^i(Y_{?}, V_k^{\vee}(\C)) := H_{\Par}^i(Y_{?}, V_k^{\vee}(\Q_p))\otimes_{\Q_p}\C \quad \text{ and }\quad H_{\mathrm{Eis}}^i(Y_{?}, V_k^{\vee}(\C)) := H_{\mathrm{Eis}}^i(Y_{?}, V_k^{\vee}(\Q_p))\otimes_{\Q_p}\C 
\] where $?\in \{\emptyset, \Iw_G\}$.

On the other hand, let $\pi = \pi_{\fin} \otimes \pi_{\infty}$ be a cuspidal automorphic representation for $G$. Recall that $\pi$ is of \textbf{\textit{weight $k=(k_1,k_2)$}} if $\pi_{\infty}$ is the principal series representation for $G(\C)$ that corresponds to the character \[
    \left(\begin{pmatrix}a_1 & b_1\\ & d_1\end{pmatrix}, \begin{pmatrix}a_2 & b_2\\ & d_2\end{pmatrix} \right) \mapsto \left( \frac{a_1}{|a_1|}\right)^{k_1+1}\left(\frac{a_2}{|a_2|}\right)^{k_2+1}.
\] 
Following \cite[Chapter 4]{Taylor-PhD}, the space of \textbf{\textit{tame-level Bianchi cuspforms of weight $k$}} (resp., \textbf{\textit{Iwahori-level Bianchi cuspforms of weight $k$}}) is given by \[
    S_{k}(Y) := \bigoplus_{\text{weight }k}\pi_{\fin}^{G(\Z_p)\Gamma_0(\frakn)}\quad \left(\text{resp., } S_k(Y_{\Iw_G}) := \bigoplus_{\text{weight }k}\pi_{\fin}^{\Iw_G\Gamma_0(\frakn)} \right).
\]
The following result is due to G. Harder. 

\begin{Theorem}[Eichler--Shimura--Harder isomorphism, $\text{\cite{Harder},\cite[\S 4.2]{Taylor-PhD}}$]\label{Theorem: ESH isomorphism}
    Let $k = (k_1, k_2)\in \Z^2_{\geq 0}$ and let $?\in \{\emptyset, \Iw_G\}$.
    \begin{enumerate}
        \item[(i)] The following two vanishing results hold: \[
            \begin{array}{cl}
                H_{\Par}^i(Y_?, V_k^{\vee}(\C)) = 0 & \text{ when $k_1\neq k_2$ or $i\not\in\{1,2\}$}   \\
                H_{\mathrm{Eis}}^0(Y_?, V_k^{\vee}(\C)) = 0 & \text{ unless }k_1=k_2=0 
            \end{array}.
        \] 
        \item[(ii)] When $k_1=k_2$, there are Hecke-equivariant isomorphisms \[
            H_{\Par}^1(Y_?, V_k^{\vee}(\C)) \simeq H_{\Par}^2(Y_?, V_k^{\vee}(\C)) \simeq S_k(Y_?).
        \]
        Here, the Hecke operators act on the parabolic cohomology groups as before and on $S_k(Y_?)$ via their actions on the automorphic representations. 
    \end{enumerate}
\end{Theorem}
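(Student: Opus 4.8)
The plan is to derive the statement from the standard dictionary between the Betti cohomology of arithmetic locally symmetric spaces and automorphic forms, applied to $G=\Res_{\Z}^{\calO_K}\GL_2$. The archimedean symmetric space $G(\R)/\Gamma_\infty=\GL_2(\C)/\SU_2(\C)\C^\times$ is hyperbolic $3$-space $\bbH^3$, so each connected component of $Y_?$ (for $?\in\{\emptyset,\Iw_G\}$) is a quotient $\Gamma\backslash\bbH^3$ of an orientable, non-compact $3$-manifold (or orbifold, which is harmless with $\C$-coefficients). Since an open $3$-manifold has cohomological dimension $\le 2$ and no compact connected components, one has $H^3(Y_?,V_k^\vee(\C))=0$ and $H^0_c(Y_?,V_k^\vee(\C))=0$ for free; hence $H^i_{\Par}=0$ automatically for $i\notin\{1,2\}$, and only $i\in\{1,2\}$ requires work. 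For the Eisenstein part, $H^0_c(Y_?,V_k^\vee(\C))=0$ gives $H^0_{\mathrm{Eis}}(Y_?,V_k^\vee(\C))=H^0(Y_?,V_k^\vee(\C))$, the invariants of $V_k^\vee(\C)$ under the (Zariski-dense) arithmetic group, hence the $G$-invariants, which vanish unless $V_k^\vee$ is the trivial representation, i.e. unless $k_1=k_2=0$.

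Next I would invoke the de Rham/automorphic description of $H^*(Y_?,V_k^\vee(\C))$ (the adelic form of Harder's argument, or Franke's theorem in this low rank), according to which the cuspidal part of $H^i(Y_?,V_k^\vee(\C))$ is $\bigoplus_\pi\pi_\fin^{\calK_?}\otimes H^i(\frakg,\Gamma_\infty;\pi_\infty\otimes V_k^\vee(\C))$, the sum over cuspidal automorphic representations $\pi=\pi_\fin\otimes\pi_\infty$ of $G$, with $\calK_?$ the relevant finite-level subgroup (so $\calK_\emptyset=G(\Z_p)\Gamma_0(\frakn)$ and $\calK_{\Iw_G}=\Iw_G\Gamma_0(\frakn)$). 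The crucial input is the $(\frakg,\Gamma_\infty)$-cohomology of $\GL_2(\C)$: such a $\pi_\infty$ has nonzero cohomology against $V_k^\vee(\C)$ only if its infinitesimal character matches that of $V_k^\vee(\C)$, which for $\GL_2$ over an imaginary quadratic field forces the parallel-weight condition $k_1=k_2$; and in that case the relevant tempered principal series $\pi_\infty$ satisfies $H^i(\frakg,\Gamma_\infty;\pi_\infty\otimes V_k^\vee(\C))\cong\C$ for $i\in\{1,2\}$ and $=0$ otherwise — the familiar fact that a Bianchi form contributes to the two middle degrees of $\bbH^3$, provable by Vogan–Zuckerman theory or a direct Lie-algebra computation (the relevant invariants being $\ell_0=1$, $q_0=1$ for the rank-one symmetric space $\bbH^3$). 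Together with the previous paragraph this gives all the asserted vanishing in (i).

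For (ii), assume $k_1=k_2=k$. I would show that in degrees $1$ and $2$ the interior (parabolic) cohomology coincides with the cuspidal cohomology: the one-dimensional automorphic representations contribute only in degrees $0$ and $3$ of $H^*$, and the representations properly induced from the Borel either land outside these degrees or die on passing to the image of $H^i_c\to H^i$. Hence $H^i_{\Par}(Y_?,V_k^\vee(\C))=\bigoplus_\pi\pi_\fin^{\calK_?}\otimes H^i(\frakg,\Gamma_\infty;\pi_\infty\otimes V_k^\vee(\C))=\bigoplus_{\text{weight }k}\pi_\fin^{\calK_?}=S_k(Y_?)$ by the very definition of $S_k$, for $i\in\{1,2\}$. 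Hecke-equivariance is automatic because the Hecke action on $H^*(Y_?,-)$ is the one induced from the action on $\pi_\fin$. Finally $H^1_{\Par}\simeq H^2_{\Par}$ follows either from this common description or directly from Poincaré duality on the orientable $3$-manifold $Y_?$ combined with the (twisted) self-duality of $V_k^\vee$ recorded in Remark~\ref{Remark: alg. rep. is self-dual}.

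The step I expect to be the main obstacle is the precise bookkeeping required to identify the interior cohomology in degrees $1$ and $2$ with the cuspidal cohomology — that is, ruling out residual and properly Eisenstein interference there — together with matching the archimedean weight normalisation of the principal series to the algebraic representation $V_k^\vee$; the $(\frakg,\Gamma_\infty)$-cohomology computation itself is routine once the conventions are fixed. If one is willing simply to cite the result, this is exactly Harder's theorem as recorded in \cite{Harder} and \cite[\S 4.2]{Taylor-PhD}, and the above merely indicates the ingredients entering its proof.
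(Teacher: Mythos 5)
The paper does not prove this theorem; it records the Eichler--Shimura--Harder isomorphism as an input, citing \cite{Harder} and \cite[\S 4.2]{Taylor-PhD}, so there is no in-text proof to compare yours against. Your sketch is a correct outline of the standard argument behind the cited references. The elementary part ($H^i=0$ for $i\notin\{1,2\}$, and the identification $H^0_{\mathrm{Eis}}=H^0$ with invariants of the local system, vanishing unless $k=(0,0)$ by Borel density) is sound, given the standard fact that the non-compact orientable Borel--Serre interior retracts onto a $2$-complex. The $(\frakg,\Gamma_\infty)$-cohomology computation for $\GL_2(\C)$ (infinitesimal character forcing $k_1=k_2$; non-vanishing exactly in degrees $1$ and $2$ with multiplicity one, via $\ell_0=1$, $q_0=1$) is the right archimedean input, and the final identification $H^i_{\Par}\simeq S_k$ and the Poincar\'e-duality/self-duality argument for $H^1_{\Par}\simeq H^2_{\Par}$ are also correct. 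You rightly flag that the real content -- showing that in degrees $1$ and $2$ the interior cohomology is purely cuspidal, i.e.\ ruling out residual and properly Eisenstein contributions to $\mathrm{Im}(H^i_c\to H^i)$ -- is precisely the non-trivial part of Harder's theorem; your proposal points to it but does not carry it out, which is consistent with the paper's own decision to cite rather than reprove.
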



\section{Bianchi eigenvarieties}\label{section: eigenvariety}
    The purpose of this section is to construct the (cuspidal) Bianchi eigenvariety that is relevant in our study. We begin with some discussion about $p$-adic analysis in \S \ref{subsection: p-adic analysis}. Results therein are supposed to be well-known to experts. In \S \ref{subsection: weight sapce}, we study the $p$-adic weight space. We specify two types of weights in our consideration: \emph{small weights} and \emph{affinoid weights}. The virtue of small weights allows us to realise \emph{ordinary families} in terms of \emph{finite-slope families} in \S \ref{subsection: control theorems}. In \S \ref{subsection: overconvergent cohomology}, we discuss the notion of \emph{overconvergent cohomology} in the Bianchi case; in \S \ref{subsection: Hecke and eigenvariety}, we discuss the Hecke actions on the overconvergent cohomology groups and use this information to construct (cuspidal) Bianchi eigenvarieties; and in \S \ref{subsection: parallel eigenvarieties} we briefly discuss the (cuspidal) parallel Bianchi eigenvariety over the parallel weight space. 

\subsection{Some \texorpdfstring{$p$}{p}-adic analysis}\label{subsection: p-adic analysis}
In this subsection, we recall some study of $p$-adic analysis. The materials presenting here should be well-known to experts. 

\begin{Definition}
    Let $r\in \Q_{>0}$ and let $n\in \Z_{\geq 0}$. A continuous function $f: \Z_p^n \rightarrow \Z_p$ is \textbf{$r$-analytic} if for every $(a_1, ..., a_n)\in \Z_p^n$, there exists a power series $f_{(a_1, ..., a_n)}\in \Z_p\llbrack T_1, ..., T_n\rrbrack$, converges on the ball of radius $p^{-r}$, such that \[
        f(x_1+a_1, ..., x_n+a_n) = f_{(a_1, ..., a_n)}(x_1, ..., x_n)
    \]
    for all $x_i\in p^{\lceil r \rceil}\Z_p$.
\end{Definition}

\begin{Theorem}[Amice]\label{Theorem: Amice}
    Given $r\in \Q_{>0}$ and $n\in \Z_{\geq 0}$, let $C^r(\Z_p^n, \Z_p)$ be the $\Z_p$-module of $r$-analytic functions from $\Z_p^n$ to $\Z_p$. For any $i = (i_1, ..., i_n)\in \Z_{\geq 0}^n$, define the function \[
        e_i^{(r)}: \Z_p^n \rightarrow \Z_p, \quad (x_1, ..., x_n) \mapsto \prod_{j=1}^n\lfloor p^{-r}j \rfloor! \begin{pmatrix} x_j\\ i_j\end{pmatrix}.
    \]
    Then, $\{e_i^{(r)}\}_{i\in \Z_{\geq 0}^{n}}$ provides an orthonormal basis for $C^r(\Z_p^n, \Z_p)$. In particular, we have an isomorphism \[
        C^r(\Z_p^n, \Z_p) \simeq \widehat{\bigoplus}_{i\in \Z_{\geq 0}^n}\Z_p e_i^{(r)}.
    \]
\end{Theorem}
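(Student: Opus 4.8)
\emph{Proof proposal.} This is Amice's interpolation theorem, and I would prove it in three moves: reduce to one variable, reduce to a $p$-adic valuation estimate for the binomial polynomials, and then assemble the basis.

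First I would reduce to $n=1$. Unwinding the definition, a continuous function $\Z_p^n\to\Z_p$ is $r$-analytic exactly when it is locally (on the balls $a+p^{\lceil r\rceil}\Z_p^n$) given by a power series converging on the polydisc of radius $p^{-r}$; expanding such a series presents $C^r(\Z_p^n,\Z_p)$ as the $n$-fold completed tensor product $C^r(\Z_p,\Z_p)^{\widehat{\otimes}\,n}$ over $\Z_p$. Since $e_i^{(r)}(x_1,\dots,x_n)=\prod_{j}\bigl(\lfloor p^{-r}i_j\rfloor!\binom{x_j}{i_j}\bigr)$ is the tensor product of the one-variable functions $x\mapsto\lfloor p^{-r}i_j\rfloor!\binom{x}{i_j}$, and the completed tensor product of orthonormal bases is again orthonormal, it suffices to treat $n=1$.

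For $n=1$ I would start from Mahler's theorem: $\{\binom{x}{i}\}_{i\ge0}$ is an orthonormal basis of $C^0(\Z_p,\Z_p)$, with coefficients given by the finite differences $c_i(f)=(\Delta^if)(0)$. The crux is then the following estimate. Expanding a binomial polynomial on a ball $a+p^{\lceil r\rceil}\Z_p$ in the local coordinate $v$,
\[
\binom{a+v}{i}=\frac{1}{i!}\prod_{j=0}^{i-1}(a-j+v)=\frac{1}{i!}\sum_{k=0}^{i}\sigma_{i-k}(a,a-1,\dots,a-i+1)\,v^{k},
\]
one must show that the largest of the quantities $|\tfrac{1}{i!}\sigma_{i-k}|_p\,p^{-rk}$, taken over $a$ and over $0\le k\le i$, equals exactly $|\lfloor p^{-r}i\rfloor!|_p^{-1}$; equivalently, that $\lfloor p^{-r}i\rfloor!\binom{x}{i}$ has $r$-analytic norm $1$, together with the dual bound $|c_i(f)|_p\le|\lfloor p^{-r}i\rfloor!|_p\cdot\|f\|_{C^r}$ for every $f$. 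This is a combinatorial fact about $p$-divisibility: among $i$ consecutive integers the elementary symmetric function $\sigma_{i-k}$ is divisible by an increasing power of $p$ as $k$ decreases, and that power—once the factor $p^{-rk}$ coming from the radius $p^{-r}$ is absorbed—is calibrated precisely by $v_p(\lfloor p^{-r}i\rfloor!)$. I expect this valuation count (equivalently, a careful estimate of the $p$-adic valuations of iterated differences of monomials) to be the main obstacle; everything else is formal.

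Granting the estimate, I would finish as follows. The functionals $\ell_i(f):=c_i(f)/\lfloor p^{-r}i\rfloor!$ satisfy $\ell_i(e_j^{(r)})=\delta_{ij}$, and the dual bound makes them have operator norm $\le1$ on $C^r$, hence $=1$; with the ultrametric inequality and $\|e_i^{(r)}\|_{C^r}=1$ this yields $\bigl\|\sum_i a_ie_i^{(r)}\bigr\|_{C^r}=\sup_i|a_i|_p$ for any null sequence $(a_i)$ in $\Z_p$, so that $(a_i)\mapsto\sum_ia_ie_i^{(r)}$ is an isometric embedding of $\widehat{\bigoplus}_i\Z_p$ onto a closed submodule. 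For surjectivity, given $f\in C^r(\Z_p,\Z_p)$ I would set $a_i:=\ell_i(f)$; the bound gives $a_i\in\Z_p$, and since—via the local expansions—$c_i(f)$ is a finite $\Z_p$-combination of the quantities bounded above, the same estimates force $v_p(a_i)\to\infty$, so $(a_i)$ is a null sequence; as $f$ and $\sum_ia_ie_i^{(r)}$ then have the same Mahler coefficients, they coincide. Together with the reduction to $n=1$, this proves the theorem. (Alternatively one may simply invoke Amice's original paper or the exposition in \cite{Eigen}.)
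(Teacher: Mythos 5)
The paper does not actually prove this theorem: the entire \enquote{proof} is a citation of Lazard, Chapitre~III, 1.3.8 (itself resting on Amice, \S10). There is therefore no internal argument for your sketch to track. As a strategy your outline is the standard one — reduce to $n=1$ via $C^r(\Z_p^n,\Z_p)\simeq C^r(\Z_p,\Z_p)^{\widehat{\otimes}n}$, then combine Mahler's theorem with a $p$-adic calibration of the binomial polynomials. (You also read the displayed formula correctly as $\prod_j\lfloor p^{-r}i_j\rfloor!\binom{x_j}{i_j}$; the paper's statement misprints $\lfloor p^{-r}j\rfloor!$, inconsistent with its own later usage, e.g.\ in the definition of $C^{r^+}$.)

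The proposal, however, does not close the argument, and the two deferred items are exactly where the theorem lives. The first you flag yourself: the valuation estimate that $\lfloor p^{-r}i\rfloor!\binom{x}{i}$ has $r$-analytic norm $1$, together with the dual bound $|c_i(f)|_p\le|\lfloor p^{-r}i\rfloor!|_p\,\|f\|_{C^r}$, is stated but not proved; once it is granted essentially nothing remains. The second gap is less clearly acknowledged: surjectivity requires $\ell_i(f)\to 0$, not merely $|\ell_i(f)|_p\le 1$, and the stated bound only yields the latter. Boundedness alone places $f$ in $\prod_i\Z_p e_i^{(r)}$, which is precisely the paper's $C^{r^+}(\Z_p,\Z_p)$, a strictly larger module than $C^r(\Z_p,\Z_p)\simeq\widehat{\bigoplus}_i\Z_p e_i^{(r)}$ (cf.\ Lemma~\ref{Lemma: str. theorem for Cr+}). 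Your assertion that \enquote{the same estimates force $v_p(a_i)\to\infty$} skips a real step: writing $c_i(f)$ as a $\Z_p$-combination of bounded quantities does not by itself yield decay. One needs a quantitative use of the local power-series structure of an $r$-analytic $f$ (for instance a sharpened estimate on iterated finite differences, or a comparison of $r$- and $r'$-norms), and that is the substance of Amice's criterion. In short, the outline points at the right proof but defers the proof; if the valuation count is not going to be carried out, citing Lazard or Amice, as the paper does, is the cleaner choice.
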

\begin{proof}
    This is a reformulation of \cite[Chapter III, 1.3.8]{Lazard}, which is based on \cite[\S 10]{Amice}.
\end{proof}

\begin{Remark}\label{Remark: continuous functions, changing r}
    Obviously, if $r' \geq r$, then there exists a natural inclusion \[
        C^r(\Z_p^n, \Z_p) \hookrightarrow C^{r'}(\Z_p^n, \Z_p).
    \]
\end{Remark}

\begin{Remark}\label{Remark: orthonormal basis for continuous functions}
    Let $C(\Z_p^n, \Z_p)$ denote the space of continuous functions from $\Z_p^n$ to $\Z_p$. One, in fact, has the following isomorphism \[
        C(\Z_p^n, \Z_p) \simeq \widehat{\bigoplus}_{i\in \Z_{\geq 0}^n} \Z_p e_i,
    \]
    where $e_i = \prod_{j=1}^n(\substack{x_j\\ i_j})$. This is a consequence of \cite[Proposition 3.1.5]{JN-extended}.
\end{Remark}

Given $r\in \Q_{\geq 0}$ and $n\in \Z_{\geq 0}$,  we also consider \[
    C^{r^+}(\Z_p^n, \Z_p) := \left\{ f = \sum_{i\in \Z_{\geq 0}^n}c_i e_i: c_i\in \Q_p \text{ and }\left|c_i (\prod_{j=1}^n\lfloor p^{-r} j\rfloor!)^{-1}\right| \leq 1 \right\}
\]
From the definition, one sees that we have a natural inclusion \[
    C^r(\Z_p^n, \Z_p) \hookrightarrow C^{r^+}(\Z_p^n, \Z_p). 
\]

\begin{Lemma}\label{Lemma: str. theorem for Cr+}
    There exists an isomorphism \[
        C^{r^+}(\Z_p^n, \Z_p) \simeq \prod_{i \in \Z_{\geq 0}^n}\Z_p e_{i}^{(r)}
    \]
    and the natural inclusion $C^r(\Z_p^n, \Z_p) \hookrightarrow C^{r^+}(\Z_p^n, \Z_p)$ is given by the natural inclusion \[
         \widehat{\bigoplus}_{i \in \Z_{\geq 0}^n}\Z_p e_{i}^{(r)}  \hookrightarrow  \prod_{i \in \Z_{\geq 0}^n}\Z_p e_{i}^{(r)}.
    \]
\end{Lemma}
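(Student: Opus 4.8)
The plan is to realise $C^{r^+}(\Z_p^n, \Z_p)$ and $C^r(\Z_p^n, \Z_p)$ as coordinate spaces in the Mahler basis and then merely compare the resulting growth conditions. Write $|i| := i_1 + \cdots + i_n$ for a multi-index $i \in \Z_{\geq 0}^n$, and let $\ell_i \in \Z_{>0}$ be the scalar with $e_i^{(r)} = \ell_i e_i$; by its definition in Theorem \ref{Theorem: Amice}, $\ell_i$ is a product of factorials whose arguments grow with $|i|$, so Legendre's formula gives $v_p(\ell_i) \to \infty$, i.e. $\ell_i \to 0$ in $\Z_p$. This shrinking of the basis vectors is the entire point.

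First I would invoke Mahler's theorem in the form of Remark \ref{Remark: orthonormal basis for continuous functions}: every $f \in C(\Z_p^n, \Z_p)$ has a unique expansion $f = \sum_i c_i e_i$ with $c_i \in \Z_p$ and $c_i \to 0$. Unwinding the definition, $C^{r^+}(\Z_p^n, \Z_p)$ is exactly the set of $f = \sum_i c_i e_i$ with $c_i \in \Q_p$ and $c_i \ell_i^{-1} \in \Z_p$; the bound $|c_i| \leq |\ell_i| \to 0$ forces $c_i \in \Z_p$ and guarantees convergence, so $C^{r^+}(\Z_p^n, \Z_p)$ is a $\Z_p$-submodule of $C(\Z_p^n, \Z_p)$. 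Rewriting $f = \sum_i (c_i \ell_i^{-1})\, e_i^{(r)}$ yields a $\Z_p$-linear map
\[
    C^{r^+}(\Z_p^n, \Z_p) \;\longrightarrow\; \prod_{i \in \Z_{\geq 0}^n} \Z_p e_i^{(r)}, \qquad \sum_i c_i e_i \;\longmapsto\; \sum_i (c_i \ell_i^{-1})\, e_i^{(r)},
\]
which is well defined by uniqueness of Mahler coefficients.

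Next I would check this is an isomorphism. Injectivity follows from uniqueness of Mahler coefficients together with $\ell_i \neq 0$. For surjectivity, given any $(a_i)_i$ with $a_i \in \Z_p$, the series $\sum_i a_i e_i^{(r)} = \sum_i (a_i \ell_i)\, e_i$ has Mahler coefficients $a_i \ell_i \in \Z_p$ satisfying $|a_i \ell_i| \leq |\ell_i| \to 0$; hence it converges in $C(\Z_p^n, \Z_p)$, lies in $C^{r^+}(\Z_p^n, \Z_p)$ by construction, and is the preimage of $(a_i)_i$. This gives $C^{r^+}(\Z_p^n, \Z_p) \simeq \prod_i \Z_p e_i^{(r)}$.

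Finally, for the compatibility with the inclusion: Theorem \ref{Theorem: Amice} says $\{e_i^{(r)}\}_i$ is an orthonormal basis of $C^r(\Z_p^n, \Z_p)$, so in these coordinates $C^r(\Z_p^n, \Z_p)$ is precisely the submodule $\widehat{\bigoplus}_i \Z_p e_i^{(r)}$ of null sequences inside $\prod_i \Z_p e_i^{(r)}$, and the natural inclusion $C^r \hookrightarrow C^{r^+}$ becomes the tautological inclusion of null sequences into all sequences. The only point that deserves genuine care — and the conceptual crux — is the convergence assertion in the surjectivity step: an arbitrary $\Z_p$-combination $\sum_i a_i e_i^{(r)}$ converges in $C(\Z_p^n, \Z_p)$ exactly because $\ell_i \to 0$, which is also why the target is the full product $\prod_i$ rather than the completed direct sum. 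Everything else is bookkeeping with Mahler expansions.
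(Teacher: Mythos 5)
Your argument is correct and follows essentially the same route as the paper's own proof: both identify $C^{r^+}(\Z_p^n,\Z_p)$ with the full product by observing that, by definition, membership is exactly the condition that the normalised Mahler coefficients $c_i\bigl(\prod_j \lfloor p^{-r}i_j\rfloor!\bigr)^{-1}$ lie in $\Z_p$. You additionally spell out the convergence point ($\ell_i \to 0$ forcing the series to converge in $C(\Z_p^n,\Z_p)$) and the compatibility with the inclusion of $C^r$, which the paper leaves implicit.
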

\begin{proof}
    By definition, for any $f = \sum_{i\in \Z_{\geq 0}^n}c_i e_i$ with $c_i\in \Q_p$, \[
        f = \sum_{i\in \Z_{\geq 0}^n} c_i \left( \prod_{j=1}^n \lfloor p^{-r}j\rfloor!\right)^{-1} e_i^{(r)} \in C^{r^+}(\Z_p^n, \Z_p) \Leftrightarrow \left|   c_i \left( \prod_{j=1}^n \lfloor p^{-r}j\rfloor!\right)^{-1} \right| \leq 1 \text{ for all }i\in \Z_{\geq 0}^n. 
    \]
    Hence, we see a natural isomorphism \[
        C^{r^+}(\Z_p^n, \Z_p) \simeq \prod_{i\in \Z_{\geq 0}^n}\Z_p, \quad f = \sum_{i\in \Z_{\geq 0}^n}c_i e_i \mapsto \left( c_i \left( \prod_{j=1}^n \lfloor p^{-r}j\rfloor!\right)^{-1} \right)_{i\in \Z_{\geq 0}^n}.
    \]
    This proves the lemma. 
\end{proof}

The following corollary justifies the notation for $C^{r^+}(\Z_p^n, \Z_p)$.

\begin{Corollary}
    There is a canonical isomorphism \[
        C^{r^+}(\Z_p^n, \Z_p) \simeq \varprojlim_{r'>r}C^{r'}(\Z_p^n, \Z_p). 
    \]
\end{Corollary}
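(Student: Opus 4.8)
The plan is to identify both sides of the claimed isomorphism with the same explicit product of copies of $\Z_p$, using the two structural results just established. By Lemma \ref{Lemma: str. theorem for Cr+}, the left-hand side $C^{r^+}(\Z_p^n, \Z_p)$ is isomorphic to $\prod_{i\in\Z_{\geq 0}^n}\Z_p e_i^{(r)}$, where the isomorphism records the coordinates $c_i(\prod_{j=1}^n\lfloor p^{-r}j\rfloor!)^{-1}$ of a function $f=\sum_i c_i e_i$. So it suffices to produce a compatible system of isomorphisms from $\varprojlim_{r'>r}C^{r'}(\Z_p^n,\Z_p)$ to the same product and check that the structure maps match.

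First I would describe the transition maps in the inverse system on the right. For $r<r'\le r''$, Remark \ref{Remark: continuous functions, changing r} gives the inclusion $C^{r'}(\Z_p^n,\Z_p)\hookrightarrow C^{r''}(\Z_p^n,\Z_p)$, and by Theorem \ref{Theorem: Amice} each $C^{r'}$ is the completed direct sum $\widehat{\bigoplus}_{i}\Z_p e_i^{(r')}$. The key computation is to track what happens to the Amice basis: $e_i^{(r')}=\prod_{j=1}^n\lfloor p^{-r'}j\rfloor!\binom{x_j}{i_j}$, so $e_i^{(r')} = \lambda_i(r',r'')\, e_i^{(r'')}$ where $\lambda_i(r',r'')=\prod_{j=1}^n \lfloor p^{-r'}j\rfloor!/\lfloor p^{-r''}j\rfloor! \in \Z_p$ (an integer since $r'\le r''$ makes the floors nonincreasing, so the earlier factorial is divisible by the later one). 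Thus in terms of the "normalised" coordinates $d_i := c_i(\prod_j\lfloor p^{-r}j\rfloor!)^{-1}$ relative to the fixed exponent $r$, the inclusion $C^{r'}\hookrightarrow C^{r''}$ is the identity on coordinates — it is literally the inclusion of $\{(d_i): |d_i|\le |\prod_j\lfloor p^{-r'}j\rfloor!/\prod_j\lfloor p^{-r}j\rfloor!|,\ d_i\to 0\}$ into the analogous set with $r'$ replaced by $r''$. I would set this up carefully so that $C^{r'}(\Z_p^n,\Z_p)$ is realised inside $\prod_i \Z_p e_i^{(r)}$ as the subset of tuples $(d_i)$ with $|d_i|\le\delta_i(r')$ and $d_i\to 0$, where $\delta_i(r')=|\prod_{j=1}^n\lfloor p^{-r'}j\rfloor!\,/\,\prod_{j=1}^n\lfloor p^{-r}j\rfloor!|$, and the transition maps are genuine inclusions.

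Then the inverse limit $\varprojlim_{r'>r}C^{r'}(\Z_p^n,\Z_p)$ is computed as $\bigcap_{r'>r}\{(d_i)\in\prod_i\Z_p e_i^{(r)} : |d_i|\le\delta_i(r'),\ d_i\to 0\}$. The crux of the argument is to show this intersection equals $\prod_i\Z_p e_i^{(r)}$ itself — i.e. that for a fixed index $i$, $\sup_{r'>r}\delta_i(r')=1$, so no constraint survives in the limit, and that the decay condition $d_i\to 0$ also evaporates. For the first point, as $r'\downarrow r$ through rationals, $\lfloor p^{-r'}j\rfloor \to \lfloor p^{-r}j\rfloor$ for each fixed $j$ (since $p^{-r'}j$ is not a "jump point" for all but countably many $r'$, and in any case $\lfloor p^{-r'}j\rfloor = \lfloor p^{-r}j\rfloor$ for $r'$ close enough to $r$ from above when $p^{-r}j\notin\Z$; a small separate check handles the case $p^{-r}j\in\Z$, where one still gets $\delta_i(r')\to 1$ since the relevant $p$-adic valuation difference tends to $0$). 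Hence $\delta_i(r')\to 1$, giving $|d_i|\le 1$ as the only surviving bound. For the decay condition: an element of the inverse limit is a compatible family, but compatibility here means the tuple lies in \emph{every} $C^{r'}$; since for each fixed $r'>r$ we only know $d_i\to 0$ at rate governed by $\delta_i(r')$, and these rates degrade to the trivial bound, the limit object need only satisfy $|d_i|\le 1$ with no decay — which is exactly membership in $\prod_i\Z_p e_i^{(r)}$. Conversely any $(d_i)\in\prod_i\Z_p e_i^{(r)}$ with $|d_i|\le 1$ lies in $C^{r'}$ for every $r'>r$ because $\delta_i(r')\ge$ (something eventually $\ge |d_i|$)? — here I must be careful: I would instead argue the reverse inclusion directly by noting $\prod_i\Z_p e_i^{(r)}\supseteq C^{r'}$ is false, so the right direction is: given $(d_i)$ with all $|d_i|\le 1$, for each fixed $r'>r$ one has $\delta_i(r')\le 1$ in general, so this need not lie in $C^{r'}$. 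The resolution is that $C^{r^+}$ was \emph{defined} to capture exactly the $r'\downarrow r$ limit, and the honest statement is $C^{r^+}=\bigcup$-type closure; I would reconcile this by invoking Lemma \ref{Lemma: str. theorem for Cr+} which already pins $C^{r^+}\cong\prod_i\Z_p e_i^{(r)}$, and then showing $\varprojlim_{r'>r}C^{r'}$ injects into and surjects onto this product via the coordinate maps.

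The main obstacle I anticipate is precisely this last reconciliation: making sure the inverse limit is taken with the correct (inclusion) transition maps and that the resulting object is the \emph{product} rather than the completed direct sum — the whole point is that passing to the limit over shrinking radii destroys the orthonormality/decay and opens up the full product. Concretely, the delicate estimate is controlling $v_p\!\big(\prod_{j=1}^n\lfloor p^{-r'}j\rfloor!\big)-v_p\!\big(\prod_{j=1}^n\lfloor p^{-r}j\rfloor!\big)$ as $r'\downarrow r$ and showing it tends to $0$, uniformly enough to conclude. Once that Legendre-formula computation is in hand, the identification $\varprojlim_{r'>r}C^{r'}(\Z_p^n,\Z_p)\xrightarrow{\ \sim\ }\prod_i\Z_p e_i^{(r)}\xleftarrow{\ \sim\ }C^{r^+}(\Z_p^n,\Z_p)$ is formal, and it is visibly canonical since every map in sight is given on the fixed Amice-type basis $\{e_i^{(r)}\}$ by the identity on coordinates.
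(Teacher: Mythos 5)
Your overall plan (identify both sides with $\prod_{i}\Z_p e_i^{(r)}$ via Amice coordinates) is the right one and is essentially what the paper does, but the geometric picture you set up is inverted, and this is what derails the ``conversely'' paragraph. The inverse system $\{C^{r'}(\Z_p^n,\Z_p)\}_{r'>r}$ with the inclusions of Remark \ref{Remark: continuous functions, changing r} as transition maps is \emph{decreasing} as $r'\downarrow r$: for $r<r'\le r''$ the inclusion runs $C^{r'}\hookrightarrow C^{r''}$, and $C^{r^+}=\prod_i\Z_p e_i^{(r)}$ sits \emph{inside} every $C^{r'}$, not the other way around. Concretely, if $f=\sum_i d_i e_i^{(r)}$ then $f\in C^{r'}$ iff $|d_i|\le\delta_i(r')$ and $d_i/\delta_i(r')\to 0$, where your $\delta_i(r')=\bigl|\prod_j\lfloor p^{-r'}i_j\rfloor!\,/\,\prod_j\lfloor p^{-r}i_j\rfloor!\bigr|_p$ satisfies $\delta_i(r')\ge 1$ (the numerator factorial has \emph{smaller} $p$-adic valuation since $\lfloor p^{-r'}i_j\rfloor\le\lfloor p^{-r}i_j\rfloor$), \emph{not} $\delta_i(r')\le 1$ as you write. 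So elements of $C^{r'}$ may have $d_i\notin\Z_p$, and ``realising $C^{r'}$ inside $\prod_i\Z_p e_i^{(r)}$'' is not possible. Consequently the relevant operation on $\delta_i$ is $\inf_{r'>r}$, not $\sup_{r'>r}$: as $r'\downarrow r$ the bound $|d_i|\le\delta_i(r')$ tightens, and you want it to tighten precisely to $|d_i|\le 1$.

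With the containment corrected, the two things to prove are the ones the paper proves: (1) $C^{r^+}\subset C^{r'}$ for each $r'>r$, and (2) $\bigcap_{r'>r}C^{r'}\subset C^{r^+}$. Direction (1) is where the decay condition enters nontrivially rather than ``evaporating'': membership in $C^{r'}$ requires the $e_i^{(r')}$-coordinates $d_i\cdot\bigl(\prod_j\lfloor p^{-r}i_j\rfloor!/\prod_j\lfloor p^{-r'}i_j\rfloor!\bigr)$ to tend to $0$, and this is exactly the Legendre-type estimate $\sum_j\sum_{t>0}\bigl(\lfloor i_j/p^{r+t}\rfloor-\lfloor i_j/p^{r'+t}\rfloor\bigr)\to\infty$ that the paper computes. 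Direction (2) is the per-index valuation bound you correctly flag as the crux; the clean version is, for each fixed $i$, to choose $\epsilon>0$ small enough that $\lfloor p^{-(r+\epsilon)}i_j\rfloor=\lfloor p^{-r}i_j\rfloor$ for all $j$, whence the $C^{r+\epsilon}$-bound on the $i$-th Mahler coefficient is already the $C^{r^+}$-bound. In short: right coordinates, right key estimate, but the inclusion $C^{r^+}\subset C^{r'}$ must go that way, $\delta_i\ge 1$, and you need $\inf$ not $\sup$; once these are fixed your argument collapses to the paper's two-step proof.
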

\begin{proof}
    We first show that there is a natural inclusion \[
    C^{r^+}(\Z_p^n, \Z_p) \hookrightarrow C^{r'}(\Z_p^n, \Z_p)
\] for any $r'>r$.
By Theorem \ref{Theorem: Amice}, we have \[
    C^{r'}(\Z_p^n, \Z_p) \simeq \widehat{\bigoplus}_{i\in \Z_{\geq 0}^n} \Z_p e_i^{(r')}. 
\] Hence, it is enough to show that the image of $e_i^{(r)}$ in $C^{r'}(\Z_p^n, \Z_p)$ tends to $0$ as $i\rightarrow \infty$. Indeed, we have \[
    e_i^{(r)} = \frac{\prod_{j=1}^n \lfloor p^{-r} i_j \rfloor!}{ \prod_{j=1}^n \lfloor p^{-r'} i_j \rfloor!} e_i^{(r')}
\]
and \begin{align*}
    v_p\left(\frac{\prod_{j=1}^n \lfloor p^{-r} i_j \rfloor!}{ \prod_{j=1}^n \lfloor p^{-r'} i_j \rfloor!}\right) & = v_p\left(\prod_{j=1}^n \lfloor p^{-r} i_j \rfloor!\right) - v_p\left( \prod_{j=1}^n \lfloor p^{-r'} i_j \rfloor! \right)\\
    & = \sum_{j=1}^n \sum_{t >0}  \left\lfloor \frac{i_j}{p^{r+t}} \right\rfloor - \left\lfloor \frac{i_j}{p^{r'+t}} \right\rfloor
\end{align*}
which tends to $\infty$ as $i\rightarrow \infty$. 

On the other hand, fix $r_0>r$ and take $f\in \varprojlim_{r<r'} C^{r'}(\Z_p^n, \Z_p)$. We can write $f = \sum_{i\in \Z_{\geq 0}^n} c_i e_i^{(r_0)}$ with $c_i\in \Z_p$ and $c_i \rightarrow 0$ as $i\rightarrow \infty$. However, we can also express $f$ as follows: \begin{align*}
    f & = \sum_{i\in \Z_{\geq 0}^n} c_i \left( \prod_{j=1}^n \lfloor p^{-r_0}i_j\rfloor !\right) e_i,\\
    f & = \sum_{i\in \Z_{\geq 0}^n} c_i \cdot \frac{\prod_{j=1}^n \lfloor p^{-r_0}i_j\rfloor !}{\prod_{j=1}^n \lfloor p^{-r'}i_j\rfloor !} e_i^{(r')} \text{ for any }r<r'\leq r_0.
\end{align*}
By definition, we have to show that \[
    \left| c_i \left( \prod_{j=1}^n \lfloor p^{-r_0}i_j\rfloor !\right)\left( \prod_{j=1}^n \lfloor p^{-r}i_j\rfloor !\right)^{-1} \right| \leq 1
\] for all $i\in \Z_{\geq 0}^n$.

For any $\epsilon>0$, we know from the second expression of $f$ that \begin{align*}
    \left| c_i \cdot \frac{\prod_{j=1}^n \lfloor p^{-r_0}i_j\rfloor !}{\prod_{j=1}^n \lfloor p^{-(r+\epsilon)}i_j\rfloor !}  \right| \leq 1 \quad \text{ and } \quad \left| c_i \cdot \frac{\prod_{j=1}^n \lfloor p^{-r_0}i_j\rfloor !}{\prod_{j=1}^n \lfloor p^{-(r+\epsilon)}i_j\rfloor !} \right| \rightarrow 0 \text{ as }i \rightarrow \infty.
\end{align*}
For any $i\in \Z_{\geq 0}^d$, consider \begin{align*}
    \scalemath{0.8}{ \left| c_i \left( \prod_{j=1}^n \lfloor p^{-r_0}i_j\rfloor !\right)\left( \prod_{j=1}^n \lfloor p^{-r}i_j\rfloor !\right)^{-1} \right| } 
    & \scalemath{0.8}{= \left| c_i \left( \prod_{j=1}^n \lfloor p^{-r_0}i_j\rfloor !\right)\left( \prod_{j=1}^n \lfloor p^{-r}i_j\rfloor !\right)^{-1} - c_i \cdot \frac{\prod_{j=1}^n \lfloor p^{-r_0}i_j\rfloor !}{\prod_{j=1}^n \lfloor p^{-(r+\epsilon)}i_j\rfloor !} + c_i \cdot \frac{\prod_{j=1}^n \lfloor p^{-r_0}i_j\rfloor !}{\prod_{j=1}^n \lfloor p^{-(r+\epsilon)}i_j\rfloor !} \right| }  \\
    & \scalemath{0.8}{ \leq \left| c_i \left( \prod_{j=1}^n \lfloor p^{-r_0}i_j\rfloor !\right)\left( \prod_{j=1}^n \lfloor p^{-r}i_j\rfloor !\right)^{-1} - c_i \cdot \frac{\prod_{j=1}^n \lfloor p^{-r_0}i_j\rfloor !}{\prod_{j=1}^n \lfloor p^{-(r+\epsilon)}i_j\rfloor !} \right| + \left| c_i \cdot \frac{\prod_{j=1}^n \lfloor p^{-r_0}i_j\rfloor !}{\prod_{j=1}^n \lfloor p^{-(r+\epsilon)}i_j\rfloor !} \right| }
\end{align*}
By choosing $\epsilon$ small enough so that $\lfloor p^{-r} i_j \rfloor = \lfloor p^{-r-\epsilon} i_j \rfloor$ for all $j = 1, ..., n$, we can conclude that \[
    \left| c_i \left( \prod_{j=1}^n \lfloor p^{-r_0}i_j\rfloor !\right)\left( \prod_{j=1}^n \lfloor p^{-r}i_j\rfloor !\right)^{-1} \right| \leq \left| c_i \cdot \frac{\prod_{j=1}^n \lfloor p^{-r_0}i_j\rfloor !}{\prod_{j=1}^n \lfloor p^{-(r+\epsilon)}i_j\rfloor !} \right| \leq 1,
\] which proves the desired result.
\end{proof}

\subsection{The weight space}\label{subsection: weight sapce}
Let's start with the following well-known lemma.

\begin{Lemma}
    Let $\Alg_{(\Z_p, \Z_p)}$ be the category of complete sheafy $(\Z_p, \Z_p)$-algebras. 
    \begin{enumerate}
        \item[(i)] The functor \[
            \Alg_{(\Z_p, \Z_p)} \rightarrow \Sets, \quad (R, R^+) \mapsto \Hom_{\Groups}^{\cts}(\Z_p^\times \times \Z_p^\times, R^\times)
        \]
        is representable by the $(\Z_p, \Z_p)$-algebra $(\Z_p\llbrack \Z_p^\times \times \Z_p^\times \rrbrack, \Z_p\llbrack \Z_p^\times \times \Z_p^\times \rrbrack)$ 
        \item[(ii)] Let $(R, R^+)\in \Alg_{(\Z_p, \Z_p)}$ and let $\kappa\in \Hom_{\Groups}^{\cts}(\Z_p^\times \times \Z_p^\times, R^\times)$. Then, the image of $\kappa$ lies in $R^{\circ, \times}$. 
    \end{enumerate}
\end{Lemma}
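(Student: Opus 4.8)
The plan is to treat (ii) first, since it is the engine for (i). Note that $\Z_p^\times\times\Z_p^\times$ is a profinite --- in particular compact --- topological group, so continuity of $\kappa$ forces its image $S:=\kappa(\Z_p^\times\times\Z_p^\times)$ to be a compact subgroup of $R^\times$. I would then invoke the standard fact that a compact subset of a complete Huber ring is bounded (automatic when $R$ is adic; and for $R$ Tate it follows by covering $S$ with the increasing opens $\varpi^{-n}R_0$, for a ring of definition $R_0$ and a pseudo-uniformiser $\varpi$, then extracting a finite subcover). Since $S$ is a \emph{subgroup}, for each $x\in S$ both $\{x^n:n\geq 0\}$ and $\{x^{-n}:n\geq 0\}$ lie in $S$ and are thus bounded; hence $x$ and $x^{-1}$ are power-bounded, i.e.\ $x\in R^{\circ,\times}$. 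That settles (ii).

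For (i), set $\Lambda:=\Z_p\llbrack\Z_p^\times\times\Z_p^\times\rrbrack=\varprojlim_N\Z_p[(\Z_p^\times\times\Z_p^\times)/(1+p^N\Z_p)^2]$ with its natural inverse-limit (profinite) topology, so that $(\Lambda,\Lambda)$ is a complete adic Huber pair, and let $\chi^{\univ}\colon g\mapsto[g]$ be the tautological continuous character $\Z_p^\times\times\Z_p^\times\to\Lambda^\times$. One direction is formal: a morphism of Huber pairs $\varphi\colon(\Lambda,\Lambda)\to(R,R^+)$ yields the continuous character $\varphi\circ\chi^{\univ}$, whose values lie in $(R^+)^\times$ since $\varphi$ carries $\Lambda$ into $R^+$ and $[g]^{-1}=[g^{-1}]$. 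Conversely, given a continuous $\kappa\colon\Z_p^\times\times\Z_p^\times\to R^\times$, I would extend it to $\Lambda$. Using $p\geq 3$, write $\Z_p^\times\cong\mu_{p-1}\times(1+p\Z_p)$: on the torsion part $\kappa$ takes values that are roots of unity, hence integral over the image of $\Z_p$, hence in $R^+$ (which is integrally closed in $R$); on the pro-$p$ part $(1+p\Z_p)^2\cong\Z_p^2$, continuity gives $\kappa(\gamma)^{p^N}\to 1$ for a topological generator $\gamma$, from which I want to deduce $\kappa(\gamma)\in R^+$, i.e.\ that $\kappa(\gamma)-1$ is topologically nilpotent. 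Granting that $\kappa$ lands in $(R^+)^\times$, the $\Z_p$-linear extension $\Z_p[\Z_p^\times\times\Z_p^\times]\to R^+$ is continuous for the profinite topology of the source (by continuity of $\kappa$ at $1$, the ideal generated by $\{[u]-1:u\in(1+p^N\Z_p)^2\}$ is carried into ever-smaller neighbourhoods of $0$ as $N\to\infty$), so by completeness of $(R,R^+)$ it extends uniquely to a continuous ring homomorphism $\Lambda\to R$ with image in the closed subring $R^+$ --- the sought morphism of Huber pairs. Tracking $[g]\leftrightarrow\kappa(g)$ and using density of $\Z_p[\Z_p^\times\times\Z_p^\times]$ in $\Lambda$ then shows the two assignments are mutually inverse and functorial in $(R,R^+)$.

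The step I expect to be the main obstacle is the claim on the pro-$p$ part: that continuity of a character $1+p\Z_p\cong\Z_p\to R^\times$ forces its image into $1+R^{\circ\circ}\subseteq R^+$. The bound coming from (ii) only places the image in $R^{\circ,\times}$, which may be strictly larger than $(R^+)^\times$; upgrading to membership in $R^+$ is where one genuinely uses that $(R,R^+)$ is a \emph{complete} (sheafy) Huber pair with linearly topologised underlying ring --- concretely, by reducing modulo the topologically nilpotent ideal and using $p\equiv 1\pmod{p-1}$ to kill the residual image, in the same spirit as the verification that the weight space is an honest adic space over $\Spa(\Z_p,\Z_p)$. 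Everything else (the completed-group-algebra formalism, the integrality of roots of unity, the continuity bookkeeping) is routine.
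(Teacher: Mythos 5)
Your part (ii) is correct, and your route is in fact more robust than the paper's own sketch: the image is compact, and a compact subset of \emph{any} topological ring is bounded (no adic/Tate case distinction is needed --- for a neighbourhood $U\ni 0$, continuity of multiplication at each point $(x,0)$ of the image gives opens $W_x\ni x$, $V_x\ni 0$ with $W_xV_x\subseteq U$; a finite subcover and the intersection of the corresponding $V_x$ do the job); since the image is a subgroup, each value and its inverse is power-bounded, whence the image lies in $R^{\circ,\times}$.

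In part (i), however, there is a genuine gap, and it sits exactly where you flag it: you never prove that the values of $\kappa$ lie in $R^+$, which is precisely what is needed for the induced continuous map $\Z_p\llbrack \Z_p^\times\times\Z_p^\times\rrbrack\rightarrow R$ to be a morphism of pairs, and the strategy you gesture at (``reduce modulo the topologically nilpotent ideal and use $p\equiv 1 \pmod{p-1}$'') is not an argument as it stands. Note also that the condition you aim for, $\kappa(\gamma)-1\in R^{\circ\circ}$, is stronger than what is required; membership in $R^+$ suffices and has a two-line proof: $R^+$ is open and contains $1$, so continuity of $\kappa$ at the identity gives $N$ with $\kappa\bigl((1+p^N\Z_p)^2\bigr)\subseteq R^+$; for an arbitrary $g\in(1+p\Z_p)^2$ one has $g^{p^{N-1}}\in(1+p^N\Z_p)^2$, so $\kappa(g)$ is a root of the monic polynomial $X^{p^{N-1}}-\kappa\bigl(g^{p^{N-1}}\bigr)$ with coefficients in $R^+$, and $\kappa(g)\in R^+$ because $R^+$ is integrally closed in $R$. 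Combined with your (correct) treatment of the prime-to-$p$ torsion via roots of unity, all values of $\kappa$ lie in $R^+$, and the rest of your construction goes through. One further point you gloss over in the continuity bookkeeping: to see that the ideal of $\Z_p[\Z_p^\times\times\Z_p^\times]$ generated by $p^M$ and the elements $[u]-1$, $u\in(1+p^N\Z_p)^2$, is carried into a prescribed neighbourhood of $0$, you should first observe that the image of $\Z_p[\Z_p^\times\times\Z_p^\times]$ in $R$ is a \emph{bounded} subring (it is the additive group generated by $\Z_p\cdot\kappa(\Z_p^\times\times\Z_p^\times)$, and $\kappa$ has compact image); without this, ``small generators'' do not control the ideal they generate. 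For comparison, the paper's own proof of (i) is a one-line assertion that one extends $\kappa$ $\Z_p$-linearly and does not address either of these points, so your write-up, once the integrality step above is supplied, is the more complete one.
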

\begin{proof}
    To show (i), one sees that, given $(R, R^+) \in \Alg_{(\Z_p, \Z_p)}$, there is a bijection \[
        \Hom_{\Groups}^{\cts}(\Z_p^\times \times \Z_p^\times, R) \simeq \Hom_{\Z_p}^{\cts}(\Z_p\llbrack \Z_p^\times \times \Z_p^\times \rrbrack, R).
    \] 
    More precisely, one extends the continuous characters on the left-hand side $\Z_p$-linearly to continuous $\Z_p$-morphisms on the right-hand side. 

    To show (ii), note that for $\kappa\in \Hom_{\Groups}^{\cts}(\Z_p^\times \times \Z_p^\times, R^\times)$, $\kappa = (\kappa_1, \kappa_2)$ where each $\kappa_i: \Z_p^\times \rightarrow R^\times$ is a continuous group homomorphism. It is then enough to show that $\kappa_i(1+ p\Z_p)\subset R^\circ$. That is, we have to show if $1+pa\in 1+p\Z_p$, then $\{\kappa_i(1+pa)^n\}_{n\in \Z_{\geq 0}}$ is bounded, \emph{i.e.}, for any open neighbourhood $U$ of $0$ in $R$, there exists an open neighbourhood $V$ of $0$ such that \[
        \{\kappa_i(1+pa)^n\}_{n\in \Z_{\geq 0}} V \subset U.
    \] 
    Without loss of generality, we may assume $U = I^m$ for some $m\in \Z_{>0}$, where $I$ is an ideal of generation of a ring of definition $R_0 \subset R$. However, one sees easily that $V = I^m$ does the job. 
\end{proof}

\begin{Definition}
    The \textbf{weight space} $\calW$ is defined to be \[
        \calW := \Spa(\Z_p\llbrack \Z_p^\times \times \Z_p^\times \rrbrack, \Z_p\llbrack \Z_p^\times \times \Z_p^\times \rrbrack)^{\rig},
    \]
    where the superscript `$\bullet^{\rig}$' stands for taking the generic fibre over $\Spa(\Q_p, \Z_p)$.
\end{Definition}

In what follows, we consider two types of weights following the convention in \cite{CHJ}.

\begin{Definition}\label{Definition: weights}
    \begin{enumerate}
        \item[(i)] A $\Z_p$-algebra $R$ is \textbf{small} if it is $p$-torsion free, reduced, and is finite over $\Z_p\llbrack T_1, ..., T_d\rrbrack$ for some $d\in \Z_{\geq 0}$. In particular, $R$ is equipped with a canonical adic profinite topology and it complete with respect to the $p$-adic topology. 
        \item[(ii)] A \textbf{small weight} is a pair $(R_{\calU}, \kappa_{\calU})$, where $R_{\calU}$ is a small $\Z_p$-algebra and $\kappa_{\calU}: \Z_p^\times \times \Z_p^\times \rightarrow R_{\calU}^\times$ is a continuous group homomorphism such that $\kappa_{\calU}(1+p, 1+p)-1$ is a topological nilpotent in $R_{\calU}$ with respect to the $p$-adic topology. 
        \item[(iii)] An \textbf{affinoid weight} is a pair $(R_{\calU}, \kappa_{\calU})$, where $R_{\calU}$ is a reduced affinoid algebra, topologically of finite type over $\Q_p$, and $\kappa_{\calU}: \Z_p^\times \times \Z_p^\times \rightarrow R_{\calU}^\times$ is a continuous group homomorphism. 
        \item[(iv)] By a \textbf{weight}, we mean either a small weight or an affinoid weight. 
    \end{enumerate}
\end{Definition}

\begin{Remark}\label{Remark: maps of weights to the weight space}
    Given a small weight (resp., an affinoid weight) $(R_{\calU}, \kappa_{\calU})$, we see that there is natural morphism \[
        \calU = \Spa(R_{\calU}, R_{\calU})^{\rig} \rightarrow \calW \quad (\text{resp., }\calU = \Spa(R_{\calU}, R_{\calU}^{\circ}) \rightarrow \calW)
    \]
    by the universal property of the weight space. Occasionally, we abuse the terminology and call $\calU$ a weight. 
\end{Remark}

\begin{Remark}\label{Remark: defining rU}
    \normalfont Given a weight $(R_{\calU}, \kappa_{\calU})$, $R_{\calU}[1/p]$ admits a structure of a uniform $\Q_p$-Banach algebra by letting $R_{\calU}^{\circ}$ be its unit ball and equipping it with the corresponding spectral norm, denoted by $|\cdot|_{\calU}$. Then, we define \[
        r_{\calU} := \min\left\{ r\in \Z_{\geq 0}: |\kappa_{\calU}(1+p, 1+p)|_{\calU} < p^{-\frac{1}{p^{r}(p-1)}}\right\}.
    \]
    See \cite[pp. 202]{CHJ}.
\end{Remark}

\subsection{Overconvergent cohomology groups}\label{subsection: overconvergent cohomology}
In this subsection, we recall and study the \emph{overconvergent cohomology groups} in the Bianchi case. Readers are also encouraged to consult \cite{Williams-PhD, BW}.

Let $R$ be either a small $\Z_p$-algebra or a reduced affinoid algebra over $\Q_p$. We follow the idea in \cite{Hansen-Iwasawa} and write \[
    \begin{array}{cc}
        A_R^{r, \circ} := C^r(\Z_p^2, \Z_p)\widehat{\otimes}_{\Z_p}R^{\circ}, & A_{R}^{r} := A_R^{r, \circ}[1/p]\\
        A_R^{r^+, \circ} := C^{r^+}(\Z_p^2, \Z_p)\widehat{\otimes}_{\Z_p} R^{\circ}, & A_{R}^{r^+} := A_R^{r^+, \circ}[1/p]
    \end{array}.\footnote{ If $R$ is a small $\Z_p$-algebra, $R = R^{\circ}$. On the other hand, if $R$ is a reduced affinoid algebra over $\Q_p$, then $R = R^{\circ}[1/p]$.}
\]
Obviously, one may view $A_{R}^{r, \circ}$ (resp., $A_R^r$) as a $R^{\circ}$-submodule (resp., $R^{\circ}[1/p]$-submodule) of the space of continuous functions from $\Z_p^2$ to $R^{\circ}$ (resp., $R^{\circ}[1/p]$).
Thanks to Theorem \ref{Theorem: Amice}, we have the following immediate consequence. 

\begin{Corollary}\label{Corollary: R-valued analytic functions}
    \begin{enumerate}
        \item[(i)] Let $R$ be a small $\Z_p$-algebra and let $\fraka$ be a ideal of definition. We may assume $p\in \fraka$. Then, \[
            A_R^{r, \circ} = \left\{ \sum_{i\in \Z_{\geq 0}^2} a_i e_i^{(r)}: a_i \in R\text{ and } a_i \rightarrow 0 \text{ $\fraka$-adically }\right\} \simeq \widehat{\bigoplus}_{i\in \Z_{\geq 0}^2}Re_i^{(r)}\quad \text{ and }\quad A_R^{r^+, \circ} \simeq \prod_{i\in \Z_{\geq 0}^2} Re_i^{(r)}.
        \] 
        \item[(ii)] Let $R$ be a reduced affinoid algebra, then \[
            A_R^r = \left\{ \sum_{i\in \Z_{\geq 0}^2} a_i e_i^{(r)}: a_i \in R\text{ and } a_i \rightarrow 0 \text{ $p$-adically }\right\} \simeq \widehat{\bigoplus}_{i\in \Z_{\geq 0}^2}R e_i^{(r)} \quad \text{ and }\quad A_R^{r^+} \simeq \left( \prod_{i\in \Z_{\geq 0}^2}R^{\circ}e_i^{(r)}\right)[1/p].
        \]
    \end{enumerate}
\end{Corollary}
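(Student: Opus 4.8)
The plan is to bootstrap from the two structural results already in hand: Amice's theorem (Theorem \ref{Theorem: Amice}), which gives an orthonormal decomposition $C^r(\Z_p^2,\Z_p)\simeq\widehat{\bigoplus}_{i\in\Z_{\geq0}^2}\Z_p e_i^{(r)}$, and Lemma \ref{Lemma: str. theorem for Cr+}, which gives $C^{r^+}(\Z_p^2,\Z_p)\simeq\prod_{i\in\Z_{\geq0}^2}\Z_p e_i^{(r)}$. Applying $-\,\widehat{\otimes}_{\Z_p}R^{\circ}$ to the first and using that the completed tensor product of the completion of $M_0=\bigoplus_i\Z_p e_i^{(r)}$ is the completion of $M_0\otimes_{\Z_p}R^{\circ}=\bigoplus_i R^{\circ}e_i^{(r)}$, I would first record
\[
    A_R^{r,\circ}=C^r(\Z_p^2,\Z_p)\,\widehat{\otimes}_{\Z_p}R^{\circ}\;\simeq\;\widehat{\bigoplus}_{i\in\Z_{\geq0}^2}R^{\circ}e_i^{(r)},
\]
where the completion is taken in the topology on $R^{\circ}$ relevant to the definition of $\widehat{\otimes}$: the $\fraka$-adic topology when $R$ is small (here one checks that the tensor-product topology, defined by the submodules $p^n(M_0\otimes R)+\fraka^n(M_0\otimes R)$, collapses to the $\fraka$-adic one because $p^n\in\fraka^n$), and the $p$-adic topology on the unit ball $R^{\circ}$ when $R$ is a reduced affinoid. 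In both cases a general element of this completion is a series $\sum_i a_ie_i^{(r)}$ with $a_i\in R^{\circ}$ and $a_i\to0$ in that topology, i.e. exactly the ``restricted sum'' description; identifying it with the module of null sequences via $\sum_i a_ie_i^{(r)}\mapsto(a_i)_i$ is routine. Inverting $p$ in the affinoid case — and using that a $p$-adically null sequence in $R$ lies in $p^{-N}R^{\circ}$ for some $N$ since it is bounded — yields the stated description of $A_R^r$; in the small case $R=R^{\circ}$ and no localisation is needed.

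For the $r^+$-parts I would run the same mechanism starting from $C^{r^+}(\Z_p^2,\Z_p)\simeq\prod_i\Z_p e_i^{(r)}$. The clean route is to note that $\prod_i\Z_p$ is already complete (it is $\varprojlim_n\prod_i\Z/p^n\Z$), so that $\bigl(\prod_i\Z_p e_i^{(r)}\bigr)\widehat{\otimes}_{\Z_p}R^{\circ}\simeq\prod_i R^{\circ}e_i^{(r)}$; when $R$ is small this is precisely the claimed formula for $A_R^{r^+,\circ}$, and when $R$ is affinoid one inverts $p$ to get $A_R^{r^+}\simeq\bigl(\prod_i R^{\circ}e_i^{(r)}\bigr)[1/p]$. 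As a safer alternative one can instead feed in the identification $C^{r^+}=\varprojlim_{r'>r}C^{r'}$ proved just after Lemma \ref{Lemma: str. theorem for Cr+}: applying $-\,\widehat{\otimes}_{\Z_p}R^{\circ}$ and commuting it past this countable inverse limit (checked directly at finite level) reduces the claim to the already-established $r'$-descriptions, after which reassembling $\varprojlim_{r'>r}\widehat{\bigoplus}_i R^{\circ}e_i^{(r')}\simeq\prod_i R^{\circ}e_i^{(r)}$ proceeds exactly as in the proof of that corollary.

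I expect the only genuine subtlety — and the step I would write out carefully — to be the interaction of $\widehat{\otimes}_{\Z_p}R^{\circ}$ with an infinite product, namely the identity $\bigl(\prod_i\Z_p\bigr)\widehat{\otimes}_{\Z_p}R^{\circ}\simeq\prod_i R^{\circ}$, since the naive map $\bigl(\prod_i\Z_p\bigr)\otimes_{\Z_p}R^{\circ}\to\prod_i R^{\circ}$ is far from surjective. The argument I have in mind is to reduce modulo $\fraka^n$ (resp. $p^n$), where $R^{\circ}$ becomes finite — hence finitely presented — over $\Z_p\llbrack T_1,\dots,T_d\rrbrack/\fraka^n$ (resp. over $(\Z/p^n\Z)\llbrack T_1,\dots,T_d\rrbrack$), use that products commute with such a base change, and then pass to the limit over $n$, where countability of $\Z_{\geq0}^2$ keeps the limit exact. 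A secondary point worth flagging at the outset is that a small $R$ carries two relevant topologies, the $\fraka$-adic and the $p$-adic; since the completed tensor product in the definition of $A_R^{r,\circ}$ is formed $\fraka$-adically (as observed above), the corollary's $\fraka$-adic phrasing is internally consistent and no comparison of the two topologies is required. Everything else is formal manipulation of orthonormalisable modules and localisation at $p$.
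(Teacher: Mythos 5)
Your treatment of the $\widehat{\bigoplus}$-parts, and of $A_R^{r^+,\circ}$ for a \emph{small} weight, is correct and is essentially the ``immediate'' argument the paper intends (it offers no proof beyond invoking Theorem \ref{Theorem: Amice} and Lemma \ref{Lemma: str. theorem for Cr+}). In the small case the product claim works simply because $R/\fraka^n$ is a finite set, hence a finitely presented $\Z_p$-module; note that the condition you actually invoke --- finiteness over $\Z_p\llbrack T_1,\dots,T_d\rrbrack/\fraka^n$ --- is not the relevant one, since the tensor product is formed over $\Z_p$, and it is only the literal finiteness of the quotient that saves the step.

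The genuine gap is the affinoid $r^+$-statement. The identity you single out, $\bigl(\prod_i\Z_p\bigr)\widehat{\otimes}_{\Z_p}R^{\circ}\simeq\prod_iR^{\circ}$, is not delivered by your reduction, and with $\widehat{\otimes}$ read as the $p$-adic completion of the algebraic tensor product it is in fact false for a positive-dimensional affinoid. Reducing mod $p^n$ one needs $R^{\circ}/p^n$ to be finitely presented over $\Z/p^n\Z$, not over $(\Z/p^n\Z)\llbrack T_1,\dots,T_d\rrbrack$ or the reduction of a Tate algebra: for $R=\Q_p\langle T\rangle$ one has $R^{\circ}/p^n=(\Z/p^n\Z)[T]$, which is not even finitely generated over $\Z/p^n\Z$, and the map $\bigl(\prod_i\Z/p^n\Z\bigr)\otimes_{\Z/p^n\Z}(\Z/p^n\Z)[T]\to\prod_i(\Z/p^n\Z)[T]$ misses every sequence of unbounded $T$-degree. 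Passing to the limit over $n$, or inverting $p$, does not repair this: the element of $\prod_iR^{\circ}e_i^{(r)}$ whose coefficients run through $T^m$ is not in the ($p$-adic) closure of the image of $\bigl(\prod_i\Z_p\bigr)\otimes_{\Z_p}R^{\circ}$, so the completed tensor product injects into, but does not fill up, $\prod_iR^{\circ}e_i^{(r)}$, even after inverting $p$. Your ``safer alternative'' conceals the same issue: commuting $-\,\widehat{\otimes}_{\Z_p}R^{\circ}$ past $\varprojlim_{r'>r}$ is exactly the assertion at stake and cannot be ``checked at finite level'', because at each finite level the comparison map fails to be an isomorphism. What is true, by the same computation as in the unnamed corollary after Lemma \ref{Lemma: str. theorem for Cr+}, is $\varprojlim_{r'>r}A_R^{r',\circ}\simeq\prod_iR^{\circ}e_i^{(r)}$; so to close part (ii) for $A_R^{r^+}$ you must either work with this inverse limit in place of $C^{r^+}(\Z_p^2,\Z_p)\widehat{\otimes}_{\Z_p}R^{\circ}$ (equivalently, complete for the inverse-limit topology on $C^{r^+}$ rather than the $p$-adic one) or pin down a definition of $\widehat{\otimes}$ for which the two agree --- as written, your argument for that half of the statement does not go through.
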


Now, define \[
    \T_0 := (\Z_p^\times)^2 \times (p\Z_p)^2. 
\]
Elements in $\T_0$ are denoted by $\left( (\substack{a_1\\ c_1}), (\substack{a_2\\c_2})\right)$ with $a_i\in \Z_p^\times$ and $c_i\in p\Z_p$. Then, $\T_0$ is equipped with the following two actions: \begin{enumerate}
    \item[(I)] The action of $(\Z_p^\times)^2$ by \[
        (a_1', a_2')\left(\begin{pmatrix}a_1\\c_1\end{pmatrix}, \begin{pmatrix}a_2\\c_2\end{pmatrix}\right) = \left(\begin{pmatrix}a_1'a_1\\a_1'c_1\end{pmatrix}, \begin{pmatrix}a_2'a_2\\a_2'c_2\end{pmatrix}\right). 
    \] 
    \item[(II)] The left-action of $\Xi = \left(\begin{pmatrix} \Z_p^\times & \Z_p\\ p\Z_p & \Z_p\end{pmatrix} \times \begin{pmatrix} \Z_p^\times & \Z_p\\ p\Z_p & \Z_p\end{pmatrix}\right) \cap \left( \GL_2(\Q_p) \times \GL_2(\Q_p)\right)$ via the formula \[
        \left(\begin{pmatrix} a_1' & b_1'\\ c_1' & d_1'\end{pmatrix}, \begin{pmatrix} a_2' & b_2'\\ c_2' & d_2'\end{pmatrix}\right)\left( \begin{pmatrix}a_1\\c_1\end{pmatrix}, \begin{pmatrix}a_2\\c_2\end{pmatrix}\right) = \left( \begin{pmatrix}a_1'a_1+b_1'c_1\\c_1'a_1+d_1'c_1\end{pmatrix}, \begin{pmatrix}a_2'a_2+b_2'c_2\\c_2'a_2+d_2'c_2\end{pmatrix} \right).
    \]
    In particular, since $\Iw_G \subset \Xi$, $\T_0$ admits a left-action by $\Iw_G$. 
\end{enumerate}

We specify a subset \[
    \T_{00} := \left\{\left((\substack{1\\ c_1}), (\substack{1\\c_2})\right) \in \T_0\right\},
\]
which can be identified with $N_{G, 1}$ via \[
    \T_{00} \rightarrow N_{G,1}^{\opp}, \quad \left((\substack{1\\ c_1}), (\substack{1\\c_2})\right) \mapsto \left(\begin{pmatrix}1 & \\ c_1 & 1\end{pmatrix}, \begin{pmatrix}1 & \\ c_2 & 1\end{pmatrix}\right).
\]

Given a weight $(R_{\calU}, \kappa_{\calU})$ and $r>r_{\calU}$, we define \begin{align*}
    A_{\kappa_{\calU}}^{r, \circ} & := \left\{ f: \T_0 \rightarrow R_{\calU}: \begin{array}{l}
        f((\substack{a_1'a_1\\a_1'c_1}, (\substack{a_2'a_2\\a_2'c_2}))) = \kappa_{\calU, 1}(a_1')\kappa_{\calU, 2}(a_2')f((\substack{a_1\\c_1}), (\substack{a_2\\c_2})) \,\,\forall (a_1', a_2')\in (\Z_p^\times)^2 \text{ and }((\substack{a_1\\c_1}), (\substack{a_2\\c_2}))\in \T_0\\
        f|_{\T_{00}} \in A_{R_{\calU}}^{r, \circ} 
    \end{array} \right\}, \\
    A_{\kappa_{\calU}}^r & := A_{\kappa_{\calU}}^{r, \circ}[1/p], \\
    A_{\kappa_{\calU}}^{r^+, \circ} & := \left\{ f: \T_0 \rightarrow R_{\calU}: \begin{array}{l}
        f((\substack{a_1'a_1\\a_1'c_1}, (\substack{a_2'a_2\\a_2'c_2}))) = \kappa_{\calU, 1}(a_1')\kappa_{\calU, 2}(a_2')f((\substack{a_1\\c_1}), (\substack{a_2\\c_2})) \,\,\forall (a_1', a_2')\in (\Z_p^\times)^2 \text{ and }((\substack{a_1\\c_1}), (\substack{a_2\\c_2}))\in \T_0\\
        f|_{\T_{00}} \in A_{R_{\calU}}^{r^+, \circ} 
    \end{array} \right\}, \\
    A_{\kappa_{\calU}}^{r^+} & := A_{\kappa_{\calU}}^{r^+, \circ}[1/p].
\end{align*}
Here, we make use of the identifications $\T_{00} \simeq N_{G, 1}^{\opp}\simeq \Z_p^2$. That is, $f|_{\T_{00}}$ is viewed as a function on $\Z_p^2$ via \[
    f|_{\T_{00}}: \Z_p^2 \rightarrow R_{\calU}, \quad (X_1, X_2)\mapsto f\left(\begin{pmatrix}1 & \\ pX_1 & 1\end{pmatrix}, \begin{pmatrix}1 & \\ pX_2 & 1\end{pmatrix}\right).
\] Note that the left-action of $\Xi$ induces right-actions of $\Xi$ on $A_{\kappa_{\calU}}^{r, \circ}$, $A_{\kappa_{\calU}}^r$, $A_{\kappa_{\calU}}^{r^+, \circ}$, and $A_{\kappa_{\calU}}^{r^+}$.

\begin{Example}\label{Example: highest weight element in analytic functions}
    \normalfont Given a weight $(R_{\calU}, \kappa_{\calU})$ and $r> r_{\calU}$, one sees that the function \[
        \left(\begin{pmatrix}a_1\\c_1\end{pmatrix},\begin{pmatrix}a_2\\c_2\end{pmatrix}\right) \mapsto \kappa_{\calU, 1}(a_1)\kappa_{\calU, 2}(a_2)
    \]
    is an element in $A_{\kappa_{\calU}}^{r, \circ}$. We still denote this element by $\kappa_{\calU} = (\kappa_{\calU, 1}, \kappa_{\calU, 2})$. 

    Moreover, for any given $u=(u_1, u_2)\in (\Z_p^\times)^2$ and any given $z=(z_1, z_2)\in \Z_p^2$, the function \[
        f_{\kappa_{\calU}}^{(u, z)}:\left(\begin{pmatrix}a_1\\c_1\end{pmatrix},\begin{pmatrix}a_2\\c_2\end{pmatrix}\right) \mapsto \kappa_{\calU, 1}(u_1a_1+z_1c_1)\kappa_{\calU, 2}(u_2a_2+z_2c_2)
    \]
    is an element in $A_{\kappa_{\calU}}^{r, \circ}$. Indeed, one easily checks that this function satisfies the first condition of $A_{\kappa_{\calU}}^{r, \circ}$ while the second condition (\emph{i.e.}, its restriction to $\T_{00}$ lies in $A_{R_{\calU}}^{r, \circ}$) follows from \cite[Proposition 2.6]{CHJ}.
\end{Example}

We also consider the continuous duals of these spaces: \[
    \begin{array}{ll}
        D_{\kappa_{\calU}}^{r, \circ} := \Hom_{R_{\calU}^{\circ}}^{\cts}(A_{\kappa_{\calU}}^{r, \circ}, R_{\calU}^{\circ}) & D_{\kappa_{\calU}}^r := D_{\kappa_{\calU}}^{r, \circ}[1/p] \\
        D_{\kappa_{\calU}}^{r^+, \circ} := \Hom_{R_{\calU}^{\circ}}^{\cts}(A_{\kappa_{\calU}}^{r^+, \circ}, R_{\calU}^{\circ}) & D_{\kappa_{\calU}}^{r^+} := D_{\kappa_{\calU}}^{r^+, \circ}[1/p] 
    \end{array}.
\]
Consequently, the right-actions of $\Xi$ on $A_{\kappa_{\calU}}^{r, \circ}$, $A_{\kappa_{\calU}}^r$, $A_{\kappa_{\calU}}^{r^+, \circ}$, and $A_{\kappa_{\calU}}^{r^+}$ induce left-actions of $\Xi$ on $D_{\kappa_{\calU}}^{r, \circ}$, $D_{\kappa_{\calU}}^r$, $D_{\kappa_{\calU}}^{r^+, \circ}$, and $D_{\kappa_{\calU}}^{r^+}$ respectively.

\begin{Proposition}\label{Proposition: str. of Dkappa}
    Let $(R_{\calU}, \kappa_{\calU})$ be a weight and let $r>r_{\calU}$. For any $i\in \Z_{\geq 0}^2$, let $e_{i}^{(r), \vee}$ be the dual vector of $e_i^{(r)}$ introduced in Theorem \ref{Theorem: Amice}.
    \begin{enumerate}
        \item[(i)] If $(R_{\calU}, \kappa_{\calU})$ is a small weight, then $D_{\kappa_{\calU}}^{r, \circ} \simeq \prod_{i\in \Z_{\geq 0}^2}R_{\calU}e_i^{(r), \vee}$.
        \item[(ii)] If $(R_{\calU}, \kappa_{\calU})$ is an affinoid weight, then $D_{\kappa_{\calU}}^{r^+} \simeq \widehat{\bigoplus}_{i\in \Z_{\geq 0}^2}R_{\calU}e_{i}^{(r), \vee}$.
    \end{enumerate}
\end{Proposition}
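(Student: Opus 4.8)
The plan is to reduce everything to the ``base'' modules $A_{R_{\calU}}^{r,\circ}$ and $A_{R_{\calU}}^{r^+,\circ}$ computed in Corollary \ref{Corollary: R-valued analytic functions}, and then to dualise.

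First I would show that restriction to $\T_{00}$ gives topological $R_{\calU}^{\circ}$-module isomorphisms $A_{\kappa_{\calU}}^{r,\circ}\xrightarrow{\sim}A_{R_{\calU}}^{r,\circ}$ and $A_{\kappa_{\calU}}^{r^+,\circ}\xrightarrow{\sim}A_{R_{\calU}}^{r^+,\circ}$ (and likewise after inverting $p$). The point is that every element of $\T_0$ is of the form $(a_1,a_2)\cdot t$ with $(a_1,a_2)\in(\Z_p^\times)^2$ acting via action (I) and $t\in\T_{00}$ (one divides out the top entries, which is legitimate since the bottom entries lie in $p\Z_p$); by the homogeneity condition in the definition of $A_{\kappa_{\calU}}^{r,\circ}$, a function $f$ there is then completely determined by $f|_{\T_{00}}\in A_{R_{\calU}}^{r,\circ}$ via $f\big((a_1,a_2)\cdot t\big)=\kappa_{\calU,1}(a_1)\kappa_{\calU,2}(a_2)\,f|_{\T_{00}}(t)$, and conversely this formula extends an arbitrary element of $A_{R_{\calU}}^{r,\circ}$ (the homogeneity being automatic as $\kappa_{\calU}$ is a character). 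The same argument works verbatim for the $r^+$-variant. Taking continuous duals, $D_{\kappa_{\calU}}^{r,\circ}\simeq\Hom_{R_{\calU}^{\circ}}^{\cts}(A_{R_{\calU}}^{r,\circ},R_{\calU}^{\circ})$ and $D_{\kappa_{\calU}}^{r^+,\circ}\simeq\Hom_{R_{\calU}^{\circ}}^{\cts}(A_{R_{\calU}}^{r^+,\circ},R_{\calU}^{\circ})$.

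For (i), $R_{\calU}=R_{\calU}^{\circ}$ is small with ideal of definition $\fraka\ni p$, and by Corollary \ref{Corollary: R-valued analytic functions}(i) we have $A_{R_{\calU}}^{r,\circ}\simeq\widehat{\bigoplus}_{i\in\Z_{\geq 0}^2}R_{\calU}e_i^{(r)}$, the $\fraka$-adic completion of the free module on the $e_i^{(r)}$. I would then use the standard fact that the continuous $R_{\calU}$-dual of such a completed direct sum is the direct product of the duals: a continuous functional is determined by its restriction to the dense submodule $\bigoplus_i R_{\calU}e_i^{(r)}$, which is an arbitrary tuple $(\phi_i)\in\prod_i R_{\calU}$, and conversely any tuple defines a continuous functional $\sum_i a_ie_i^{(r)}\mapsto\sum_i a_i\phi_i$, the series converging $\fraka$-adically since $a_i\to0$. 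This yields $D_{\kappa_{\calU}}^{r,\circ}\simeq\prod_{i\in\Z_{\geq 0}^2}R_{\calU}e_i^{(r),\vee}$.

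For (ii), I would invoke Corollary \ref{Corollary: R-valued analytic functions}(ii) together with Lemma \ref{Lemma: str. theorem for Cr+} to identify $A_{R_{\calU}}^{r^+,\circ}\simeq\prod_{i\in\Z_{\geq 0}^2}R_{\calU}^{\circ}e_i^{(r)}$, equipped with the product topology (each factor $p$-adically). The key assertion is then that $\Hom_{R_{\calU}^{\circ}}^{\cts}\big(\prod_i R_{\calU}^{\circ}e_i^{(r)},R_{\calU}^{\circ}\big)\simeq\widehat{\bigoplus}_i R_{\calU}^{\circ}e_i^{(r),\vee}$, i.e. consists exactly of the tuples $(\phi_i)$ with $\phi_i\to0$ $p$-adically. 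Given a continuous functional $\phi$, continuity provides for each $n$ a finite subset $S_n$ such that $\phi$ maps the sub-$R_{\calU}^{\circ}$-module supported away from $S_n$ into $p^nR_{\calU}^{\circ}$; hence $\phi_i:=\phi(e_i^{(r)})\in p^nR_{\calU}^{\circ}$ for $i\notin S_n$, forcing $\phi_i\to0$. Writing a general element as its $S_n$-part plus its complement and using that $R_{\calU}^{\circ}$ is $p$-adically separated then shows $\phi(\sum_ia_ie_i^{(r)})=\sum_ia_i\phi_i$, so $\phi$ is recovered from $(\phi_i)$; the converse is immediate. Inverting $p$ and using $R_{\calU}=R_{\calU}^{\circ}[1/p]$ gives $D_{\kappa_{\calU}}^{r^+}=D_{\kappa_{\calU}}^{r^+,\circ}[1/p]\simeq\widehat{\bigoplus}_{i\in\Z_{\geq 0}^2}R_{\calU}e_i^{(r),\vee}$.

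I expect the affinoid case (ii) to be the main obstacle: one must be scrupulous that $\prod_i R_{\calU}^{\circ}e_i^{(r)}$ carries the product topology rather than the strictly finer $p$-adic topology, and the argument that a continuous functional has coefficients tending to zero, and is determined by them, genuinely uses $p$-adic separatedness of $R_{\calU}^{\circ}$. Case (i) is comparatively routine once the reduction to $A_{R_{\calU}}^{r,\circ}$ via $\T_{00}$ is in place.
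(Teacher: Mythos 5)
Your argument is correct and is exactly the route the paper intends: its own proof is the one-line observation that the statement follows from the definition (i.e. the restriction-to-$\T_{00}$ identification $A_{\kappa_{\calU}}^{r,\circ}\simeq A_{R_{\calU}}^{r,\circ}$, resp. the $r^+$-variant, via the unique decomposition $\T_0=(\Z_p^\times)^2\cdot\T_{00}$ and homogeneity) together with Corollary \ref{Corollary: R-valued analytic functions}, and you have simply made both steps, plus the standard dualities (continuous dual of a completed direct sum is the product, and of the product with its product topology is the completed direct sum), explicit. Your closing caveat about using the product topology on $\prod_i R_{\calU}^{\circ}e_i^{(r)}$ is a fair point of rigour that the paper glosses over, but it does not change the approach.
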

\begin{proof}
    This is an immediate consequence of the definition and Corollary \ref{Corollary: R-valued analytic functions}.
\end{proof}

\begin{Remark}\label{Remark: filtration on distributions}
    If $(R_{\calU}, \kappa_{\calU})$ is a small weight and $r>1+r_{\calU}$, we can equip $D_{\kappa_{\calU}}^{r, \circ}$ with the following filtration inspired by \cite[\S 2.1]{Hansen-Iwasawa}: 

    Let $\fraka_{\calU}\subset R_{\calU}$ be an ideal of definition. We may assume $p\in \fraka_{\calU}$. The natural map $A_{\kappa_{\calU}}^{r-1, \circ} \rightarrow A_{\kappa_{\calU}}^{r, \circ}$ induces the map $D_{\kappa_{\calU}}^{r, \circ} \rightarrow D_{\kappa_{\calU}}^{r-1, \circ}$. Then, we define \[
        \Fil^j D_{\kappa_{\calU}}^{r, \circ} := \ker\left( D_{\kappa_{\calU}}^{r, \circ} \rightarrow D_{\kappa_{\calU}}^{r-1, \circ}/\fraka_{\calU}^jD_{\kappa_{\calU}}^{r-1, \circ}\right).
    \]
    Similarly as in \emph{loc. cit.}, one can show that the graded pieces $D_{\kappa_{\calU}, j}^{r, \circ} = D_{\kappa_{\calU}}^{r, \circ}/\Fil^jD_{\kappa_{\calU}}^{r, \circ}$ is a finite abelian group
    and \[
        D_{\kappa_{\calU}}^{r, \circ} = \varprojlim_{j} D_{\kappa_{\calU}, j}^{r, \circ}
    \]
    as a topological $R_{\calU}$-module. Moreover, the filtration $\Fil^{\bullet}D_{\kappa_{\calU}}^{r, \circ}$ is $\Xi$-stable. 
\end{Remark}

Since the spaces of distributions $D_{\kappa_{\calU}}^{r, \circ}$, $D_{\kappa_{\calU}}^r$,  $D_{\kappa}^{r^+, \circ}$, and $D_{\kappa_{\calU}}^{r^+}$ admits a left action by $\Iw_G$, they define local systems on $Y_{\Iw_G}$ (see, for example, \cite[\S 2.2]{Ash-Stevens}), which are still denoted by the same notation. In particular, we can consider the cohomology groups \[
    H^i(Y_{\Iw_G}, D_{\kappa_{\calU}}^{r, \circ}), \quad H^i(Y_{\Iw_G}, D_{\kappa_{\calU}}^{r}), \quad H^i(Y_{\Iw_G}, D_{\kappa_{\calU}}^{r^+, \circ}), \quad \text{ and }\quad H^i(Y_{\Iw_G}, D_{\kappa_{\calU}}^{r^+})
\] as well as the compactly supported cohomology groups \[
    H_c^i(Y_{\Iw_G}, D_{\kappa_{\calU}}^{r, \circ}), \quad H_c^i(Y_{\Iw_G}, D_{\kappa_{\calU}}^{r}), \quad H_c^i(Y_{\Iw_G}, D_{\kappa_{\calU}}^{r^+, \circ}), \quad \text{ and }\quad H_c^i(Y_{\Iw_G}, D_{\kappa_{\calU}}^{r^+}).
\]

We remark that these cohomology groups can be computed via the \emph{augmented Borel--Serre cochain complex} associated with the Borel--Serre compactification $X_{\Iw_G}^{\mathrm{BS}}$ of $Y_{\Iw_G}$ (\cite{Borel--Serre}). More precisely, after fixing a finite simplicial decomposition on $X_{\Iw_G}^{\mathrm{BS}}$, we define the cochain complex $C^{\bullet}(\Iw_G, D)$ associated with this simplicial decomposition with coefficients in $D \in \{D_{\kappa_{\calU}}^{r, \circ}, D_{\kappa_{\calU}}^{r}, D_{\kappa_{\calU}}^{r^+, \circ}, D_{\kappa_{\calU}}^{r^+}\}$. Then, by the discussion in \cite[\S 2.1]{Hansen-PhD}, one sees that there is a homotopy between $C^{\bullet}(\Iw_G, D)$ and the singular cochain complex of $Y_{\Iw_G}$ with coefficients in $D_{\kappa_{\calU}}^{r, \circ}$. Hence, we have $H^i(C^{\bullet}(\Iw_G, D)) \simeq H^i(Y_{\Iw_G}, D)$. 

Moreover, the finite simplicial decomposition also provides a finite simplicial decomposition on the boundary $\partial X_{\Iw_G}^{\mathrm{BS}} : = X_{\Iw_G}^{\mathrm{BS}} \smallsetminus Y_{\Iw_G}$. Hence, we can consider the \emph{boundary cochain complex} $C_{\partial}^{\bullet}(\Iw_G, D)$. The natural inclusion $\partial X_{\Iw_G}^{\mathrm{BS}} \hookrightarrow X_{\Iw}^{\mathrm{BS}}$ implies a natural morphism of cochain complexes \[
    \pi_{\partial}: C^{\bullet}(\Iw_G, D) \rightarrow C_{\partial}^{\bullet}(\Iw_G, D). 
\] By defining \[
    C_c^{\bullet}(\Iw_G, D) := \mathrm{Cone}(\pi_{\partial})[-1]
\]
and applying the strategy of proof in \cite[Proposition 3.5]{Barrera-Hilbert}, one concludes that $C_c^{\bullet}(\Iw_G, D)$ computes $H_c^i(Y_{\Iw_G}, D)$.

\begin{Remark}\label{Remark: total cochain complex is potentially ON-able}
    If $(R_{\calU}, \kappa_{\calU})$ is an affinoid weight. Since $C^{\bullet}(\Iw_G, D_{\kappa_{\calU}}^{r^+})$ is a finite cochain complex and $D_{\kappa_{\calU}}^{r^+} \simeq \widehat{\bigoplus}_{i\in \Z_{\geq 0}^2}R_{\calU}e_i^{(r), \vee}$ (Proposition \ref{Proposition: str. of Dkappa}), we see that the total space \[
    C_{\kappa_{\calU}, r^+}^{\mathrm{tot}} := \bigoplus_{j} C^j(\Iw_G, D_{\kappa_{\calU}}^{r^+})
\]
is a potentially ON-able Banach module over $R_{\calU}$ (\cite[pp. 70]{Buzzard-eigen}). By construction, one also sees that \[
    C_{c, \kappa_{\calU}, r^+}^{\mathrm{tot}} := \bigoplus_j C_c^j(\Iw_G, D_{\kappa_{\calU}}^{r^+})
\]
is also potentially ON-able over $R_{\calU}$.
\end{Remark}

\subsection{Hecke operators and the
Bianchi eigenvarieties}\label{subsection: Hecke and eigenvariety}
The purpose of this subsection is to construct one-dimensional Bianchi eigencurves. To this end, we start with a brief discussion about the Hecke algebra in our consideration, which is similar to the one discussed in \S \ref{subsection: Hecke operators for classical cohomology}. 

For a prime number $\ell$ such that $\ell \not\in (p)\frakn$, we consider the spherical Hecke algebra \[
    \bbT_{\ell} := \Z_p[G(\Z_{\ell})\backslash G(\Q_{\ell})/G(\Z_{\ell})].
\] 
For $D\in \{D_{\kappa_{\calU}}^{r, \circ}, D_{\kappa_{\calU}}^{r}, D_{\kappa_{\calU}}^{r^+, \circ}, D_{\kappa_{\calU}}^{r^+}\}$, the action of $\bbT_{\ell}$ on $H^i(Y_{\Iw_G}, D)$ and $H^i_c(Y_{\Iw_G}, D)$ is given as follows. For any $\bfdelta\in G(\Q_{\ell})$, we apply a similar strategy as before and obtain two morphisms \[
    \begin{tikzcd}
        T_{\bfdelta}:  H^i(Y_{\Iw_G}, D) \arrow[r, "\pr_2^{-1}"] & H^i(Y_{\bfdelta^{-1}\Gamma\bfdelta \cap \Gamma}, D) \arrow[r, "\bfdelta^{-1}", "\cong"'] & H^i(Y_{\Gamma \cap \bfdelta \Gamma \bfdelta^{-1}}, D) \arrow[ld, "\cong"', out=355,in=175] \\
        & H^i(Y_{\Iw_G}, \pr_{1, *}\pr_1^{-1}D) \arrow[r, "\tr"] & H^i(Y_{\Iw_G}, D)
    \end{tikzcd}
\] 
and \[
    \begin{tikzcd}
        T_{\bfdelta}:  H_c^i(Y_{\Iw_G}, D) \arrow[r, "\pr_2^{-1}"] & H_c^i(Y_{\bfdelta^{-1}\Gamma\bfdelta \cap \Gamma_0(\frakn)\Iw_G}, D) \arrow[r, "\bfdelta^{-1}", "\cong"'] & H_c^i(Y_{\Gamma \cap \bfdelta \Gamma \bfdelta^{-1}}, D) \arrow[ld, "\cong"', out=355,in=175] \\
        & H_c^i(Y_{\Iw_G}, \pr_{1, *}\pr_1^{-1}D) \arrow[r, "\tr"] & H_c^i(Y_{\Iw_G}, D)
    \end{tikzcd}.
\]
Combining everything together, one obtains the $T_{\bfdelta}$-operator \[
    T_{\bfdelta}: H_{\Par}^i(Y_{\Iw_G}, D) \rightarrow H^i_{\Par}(Y_{\Iw_G}, D).
\]

For the Hecke algebra at $p$, recall the matrix \[
    \bfupsilon_{p} := \left( \begin{pmatrix}1 &  \\ & p\end{pmatrix}, \begin{pmatrix}1 &  \\ & p\end{pmatrix} \right) \in G(\Q_p) = \GL_2(\Q_p) \times \GL_2(\Q_p)
\] and the double-coset decomposition \[
    \Iw_{G} \bfupsilon_p \Iw_G = \bigsqcup_{\substack{b = (b_1, b_2)\\ 0\leq c_1, c_2\leq p-1}} \bfupsilon_{p, c} \Iw_G
\]
with \[
    \bfupsilon_{p, c} =  \left( \begin{pmatrix}1 &   \\ pc_1 & p\end{pmatrix}, \begin{pmatrix}1 &  \\ pc_2 & p\end{pmatrix} \right).
\] 
Observe that each $\bfupsilon_{p, c}\in \Xi$ and so each $\bfupsilon_{p,c}$ acts on $\T_0$. 
Consequently, we have the action of $\bfupsilon_{p}$ on $D\in \{D_{\kappa_{\calU}}^{r, \circ}, D_{\kappa_{\calU}}^{r}, D_{\kappa_{\calU}}^{r^+, \circ}, D_{\kappa_{\calU}}^{r^+}\}$ given by \begin{equation}\label{eq: Up on distributions}
    \bfupsilon_p * \mu = \sum_{b} \bfupsilon_{p,b} \mu.
\end{equation}
On the other hand, the matrices $\bfupsilon_{p,c}$'s act on $Y_{\Iw_G}$ naturally. Combining these two actions together, one obtains the operator $U_p$ acting on $C^i(\Iw_G, D)$ and $C_c^i(\Iw_G, D)$ by the formula \[U_p (\sigma) = \sum_{\substack{c=(c_1, c_2)\\ 0\leq c_1,c_2\leq p-1}} \bfupsilon_{p,c}\sigma. 
\]

\begin{Lemma}\label{Lemma: Up is compact}
    Let $(R_{\calU}, \kappa_{\calU})$ be an affinoid weight and let $r\in \Q_{>0}$ such that $r>r_{\calU}$. Then, $U_p$ is a compact operator on $C_{\kappa_{\calU}, r^+}^{\mathrm{tot}}$ and $C_{c, \kappa_{\calU}, r^+}^{\mathrm{tot}}$.
\end{Lemma}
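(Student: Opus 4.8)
The plan is to realise $U_p$ as a composition that factors through a strictly ``more overconvergent'' module, which forces it to be completely continuous --- equivalently compact, in the sense of \cite{Buzzard-eigen} --- after which compactness on the total complexes follows formally. First I would analyse the coefficient-level action of a single matrix $\bfupsilon_{p,c} = \left(\begin{pmatrix}1 & \\ pc_1 & p\end{pmatrix}, \begin{pmatrix}1 & \\ pc_2 & p\end{pmatrix}\right) \in \Xi$ on $A_{\kappa_{\calU}}^{r^+}$. Using the $\Xi$-action (II) on $\T_0$ together with the identification $\T_{00} \simeq \Z_p^2$, the operator $f \mapsto f \cdot \bfupsilon_{p,c}$, restricted to $\T_{00}$, is --- up to the weight-homogeneity twist by $\kappa_{\calU}$ --- precomposition with the contraction $(X_1, X_2) \mapsto (c_1 + pX_1, c_2 + pX_2)$. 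Substituting this contraction into an $r$-analytic function produces an $(r-1)$-analytic one (each $X_i$ acquires a factor $p$, which improves the radius of convergence), and the twist by $\kappa_{\calU}$ costs nothing beyond the standing hypothesis $r > r_{\calU}$; hence for a suitable $r'$ with $r_{\calU} < r' < r$ this action factors as $A_{\kappa_{\calU}}^{r^+} \to A_{\kappa_{\calU}}^{r'} \hookrightarrow A_{\kappa_{\calU}}^{r^+}$, the last arrow being the natural inclusion of more analytic functions.

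The second step is the complete continuity of that inclusion. By Theorem \ref{Theorem: Amice} and Corollary \ref{Corollary: R-valued analytic functions}, both $A_{\kappa_{\calU}}^{r'}$ and $A_{\kappa_{\calU}}^{r^+}$ are orthonormalisable over $R_{\calU}[1/p]$ with bases indexed by $\Z_{\geq 0}^2$, and the inclusion carries the $i$-th basis vector to $\lambda_i$ times that of the target, where $v_p(\lambda_i) = \sum_j \big( v_p(\lfloor p^{-r'} i_j \rfloor!) - v_p(\lfloor p^{-r} i_j \rfloor!) \big) \to \infty$ as $i \to \infty$ --- precisely the estimate carried out in the proof of the corollary following Lemma \ref{Lemma: str. theorem for Cr+}. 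Thus the inclusion is completely continuous; passing to continuous $R_{\calU}^{\circ}$-duals, the transition map $D_{\kappa_{\calU}}^{r^+} \to D_{\kappa_{\calU}}^{r'}$ is completely continuous as well, so $\bfupsilon_{p,c}$ acts completely continuously on $D_{\kappa_{\calU}}^{r^+}$, and hence so does $\bfupsilon_p * (-) = \sum_c \bfupsilon_{p,c}(-)$, being a finite sum of completely continuous operators.

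Finally I would globalise to cochain complexes. In each degree $C^j(\Iw_G, D_{\kappa_{\calU}}^{r^+})$ is a finite direct sum of copies of $D_{\kappa_{\calU}}^{r^+}$, one for each simplex of the fixed decomposition of $X_{\Iw_G}^{\mathrm{BS}}$, and the operator $U_p(\sigma) = \sum_c \bfupsilon_{p,c}\sigma$ is represented by a finite matrix whose entries are the completely continuous coefficient operators above, possibly composed with the bounded combinatorial reindexing induced by the action of the $\bfupsilon_{p,c}$ on simplices. Since complete continuity is preserved under finite direct sums and under composition with bounded operators on either side \cite{Buzzard-eigen}, $U_p$ is compact on $C_{\kappa_{\calU}, r^+}^{\mathrm{tol}} = \bigoplus_j C^j(\Iw_G, D_{\kappa_{\calU}}^{r^+})$. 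The identical argument applied to the boundary complex $C_{\partial}^{\bullet}(\Iw_G, D_{\kappa_{\calU}}^{r^+})$, combined with $C_c^{\bullet} = \mathrm{Cone}(\pi_{\partial})[-1]$, yields compactness of $U_p$ on $C_{c, \kappa_{\calU}, r^+}^{\mathrm{tol}}$.

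The hard part will be the first step: making the improvement of analyticity precise and uniform in $c$ while respecting the homogeneity condition defining $A_{\kappa_{\calU}}^{r^+}$, \emph{i.e.}, checking that after precomposition with the contraction one still obtains an honest element of $A_{R_{\calU}}^{r'}$ with $r'$ independent of $c$. This is where the constant $r_{\calU}$ of Remark \ref{Remark: defining rU} and the analytic estimates of \S\ref{subsection: p-adic analysis} (notably \cite[Proposition 2.6]{CHJ}, already used in Example \ref{Example: highest weight element in analytic functions}) enter; everything downstream is soft non-archimedean functional analysis over $R_{\calU}$.
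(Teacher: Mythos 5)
Your strategy is the same as the paper's: reduce to the coefficient module, factor the $\bfupsilon_p$-action through a module of improved analyticity, observe that the transition map is completely continuous, and then use finiteness of the Borel--Serre complex (and the cone construction for compact supports) to conclude. The paper simply outsources the factorization to \cite[\S 2.2]{Hansen-PhD} and the compactness of the transition map to \cite[Corollary 3.3.10]{JN-extended}, writing the factorization on the distribution side as $D_{\kappa_{\calU}}^{r^+} \rightarrow D_{\kappa_{\calU}}^{(r+1)^+} \hookrightarrow D_{\kappa_{\calU}}^{r^+}$; your version, with the contraction $(X_1,X_2)\mapsto (c_1+pX_1, c_2+pX_2)$ improving analyticity on the function side and the Amice-basis valuation estimate, is just the transposed bookkeeping of the same argument, made self-contained.

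One technical misstatement should be repaired: $A_{\kappa_{\calU}}^{r^+}$ is \emph{not} orthonormalisable with basis $\{e_i^{(r)}\}$ --- by Corollary \ref{Corollary: R-valued analytic functions}(ii) it is a \emph{product} $\left(\prod_i R_{\calU}^{\circ}e_i^{(r)}\right)[1/p]$, so your second step, as written, does not establish complete continuity of $A_{\kappa_{\calU}}^{r'} \hookrightarrow A_{\kappa_{\calU}}^{r^+}$, and the subsequent appeal to ``duals of completely continuous maps are completely continuous'' also needs the ON-able framework. The fix is immediate and stays within the paper: run the identical factorial estimate directly on the distribution side, where Proposition \ref{Proposition: str. of Dkappa}(ii) gives $D_{\kappa_{\calU}}^{r^+} \simeq \widehat{\bigoplus}_i R_{\calU}e_i^{(r),\vee}$ (and likewise for the auxiliary radius), so the transition map sends the $i$-th dual basis vector to a scalar multiple whose valuation tends to infinity; this is exactly the computation in the corollary following Lemma \ref{Lemma: str. theorem for Cr+}. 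With that relocation your argument is complete; also note that the ``hard part'' you anticipate is in fact harmless, since on $\T_{00}$ the matrices $\bfupsilon_{p,c}$ leave the $a$-coordinates equal to $1$, so no extra twist by $\kappa_{\calU}$ intervenes and the finitely many $c$ cause no uniformity issue.
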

\begin{proof}
    It is enough to show that the action of $\bfupsilon_p$ on $D_{\kappa_{\calU}}^{r^+}$ defined in \eqref{eq: Up on distributions} is a compact operator. From the discussion in \cite[\S 2.2]{Hansen-PhD}, we know that the action of $\bfupsilon_p$ on $D_{\kappa_{\calU}}^r$ factors as \[
        D_{\kappa_{\calU}}^r \rightarrow D_{\kappa_{\calU}}^{r+1} \hookrightarrow D_{\kappa_{\calU}}^r,
    \]
    where the last arrow is the natural inclusion. Consequently, the action of $\bfupsilon_p$ on $D_{\kappa_{\calU}}^{r^+}$ factors as \[
        D_{\kappa_{\calU}}^{r^+} \rightarrow D_{\kappa_{\calU}}^{(r+1)^+} \hookrightarrow D_{\kappa_{\calU}}^{r^+}.
    \]
    However, since $D_{\kappa_{\calU}}^{r^+}$ is potentially ON-able and the natural inclusion $D_{\kappa_{\calU}}^{(r+1)^+} \hookrightarrow D_{\kappa_{\calU}}^{r^+}$ is compact, we conclude the desired result. (See also \cite[Corollary 3.3.10]{JN-extended}.) 
\end{proof}

\begin{Remark}\label{Remark: Hansen's framework works for Dr+}
    Suppose $(R_{\calU}, \kappa_{\calU})$ is an affinoid weight and $r>r_{\calU}$. Since $U_p$ acts on $C_{\kappa_{\calU}, r^+}^{\mathrm{tot}}$ compactly, we have a well-defined Fredholm determinant \[
        F_{\kappa_{\calU}}^r := \det\left( 1 - TU_p| C_{\kappa_{\calU}, r^+}^{\mathrm{tot}} \right) \in R_{\calU}\llbrack T \rrbrack.
    \]
    Then, by adapting the same arguments in \cite[\S 3.1]{Hansen-PhD}, we see that \begin{enumerate}
        \item[(i)] The Fredhol determinant $F_{\kappa_{\calU}}^r$ is independent to the choice of $r$. 
        \item[(ii)] If $F_{\kappa_{\calU}}^r$ admits a slope-$\leq h$ decomposition and $(R_{\calU'}, \kappa_{\calU'})$ is an affinoid weight such that we have an open immersion $\calU' \hookrightarrow \calU$ on the associated rigid analytic spaces, then \[
            C^{\bullet}(\Iw_G, D_{\kappa_{\calU}}^{r^+})^{\leq h} \otimes_{R_{\calU}} R_{\calU'} \simeq C^{\bullet}(\Iw_G, R_{\calU'})^{\leq h} \quad \text{ and }\quad H^i(Y_{\Iw_G}, D_{\kappa_{\calU}}^{r^+})^{\leq h} \otimes_{R_{\calU}}R_{\calU'} \simeq H^i(Y_{\Iw_G}, D_{\kappa_{\calU'}}^{r^+})^{\leq h}.
        \]
    \end{enumerate}
    Similar results also apply when considering the compactly supported cohomology groups. We denote by $F_{c, \kappa_{\calU}}^r$ the Fredholm determinant associated with $C_{c, \kappa_{\calU}, r^+}^{\mathrm{tot}}$. 
\end{Remark}

\begin{Remark}\label{Remark: interior cohomology groups are Hecke-stable}
    For $D\in \{D_{\kappa_{\calU}}^{r, \circ}, D_{\kappa_{\calU}}^r, D_{\kappa_{\calU}}^{r^+, \circ}, D_{\kappa_{\calU}}^{r^+}\}$, it is easy to see that the natural map \[
        C_c^{\bullet}(\Iw_G, D) \rightarrow C^{\bullet}(\Iw_G, D)
    \] is Hecke-equivariant. 
    Consequently, the natural map \[
        H^i_c(Y_{\Iw_G}, D) \rightarrow H^i(Y_{\Iw_G}, D)
    \] is Hecke-equivariant for all degree $i$. In particular, if $(R_{\calU}, \kappa_{\calU})$ is an affinoid weight, then we have a Hecke-equivariant map \[
        H_c^i(Y_{\Iw_G}, D_{\kappa_{\calU}}^{r^+})^{\leq h} \rightarrow H^i(Y_{\Iw_G}, D_{\kappa_{\calU}}^{r^+})^{\leq h}
    \]
    whenever there is a slope-$\leq h$ decomposition on $F_{c, \kappa_{\calU}}^r$. In such a case, we define \[
        H^i_{\mathrm{par}}(Y_{\Iw_G}, D_{\kappa_{\calU}}^{r^+})^{\leq h} := \image \left( H_c^i(Y_{\Iw_G}, D_{\kappa_{\calU}}^{r^+})^{\leq h} \rightarrow H^i(Y_{\Iw_G}, D_{\kappa_{\calU}}^{r^+})^{\leq h} \right)
    \]
    and call it the \emph{slope-$\leq h$ part} of the overconvergent parabolic cohomology group.
\end{Remark}

Thanks to Remark \ref{Remark: Hansen's framework works for Dr+}, one sees that $F_{c, \kappa_{\calU}}^r$'s glue to a power series $F_c^{\dagger}\in \scrO_{\calW}(\calW)\{\{ T \}\}$ such that $F_c^{\dagger}|_{\calU} = F_{c, \kappa_{\calU}}^r$ for any affinoid weight $(R_{\calU}, \kappa_{\calU})$ whose induced morphism $\calU \rightarrow \calW$ is an open immersion. By setting $\calZ$ to be the spectral variety associated with $F_c^{\dagger}$, we have the eigenvariety datum (see \cite[Definition 4.2.1]{Hansen-PhD}; see also the discussion before \cite[Lemma 5.1]{Buzzard-eigen}) \[
    (\calW, \calZ, \scrH_{\mathrm{par}}^{\mathrm{tot}}, \bbT_{\Iw_G}, \psi),
\]
where \begin{itemize}
    \item $\scrH_{\mathrm{par}}^{\mathrm{tot}}$ is the coherent sheaf on $\calZ$, assigning to each slope-adapted affinoid $\calZ_{\calU, h} \subset \calZ$ the module \[
        \scrH_{\mathrm{par}}^{\mathrm{tot}}(\calZ_{\calU, h}) = \bigoplus_{i}H_{\mathrm{par}}^i(Y_{\Iw_G}, D_{\kappa_{\calU}}^{r})^{\leq h};\footnote{ Here, we apply \cite[Theorem 3.3.1]{Hansen-PhD} to obtain the skope-$\leq h$ decomposition of $H^i(Y_{\Iw_G}, D_{\kappa_{\calU}}^r)$ and $H_c^i(Y_{\Iw_G}, D_{\kappa_{\calU}}^r)$.}
    \]
    \item $\psi: \bbT_{\Iw_G} \rightarrow \End_{\calZ}\left(\scrH_{\mathrm{par}}^{\mathrm{tot}}\right)$ is the morphism induced by the action of Hecke-operators on each $H_{\mathrm{par}}^i(Y_{\Iw_G}, D_{\kappa_{\calU}}^{r})^{\leq h}$.
\end{itemize}
 
Define the sheaf of $\scrO_{\calZ}$-algebras $\scrT$ by \[
    \scrT(\calZ_{\calU, h}) := \text{the reduced $\scrO_{\calZ_{\calU, h}}(\calZ_{\calU, h})$-algebra generated by $\image(\psi: \bbT_{\Iw_G}\otimes_{\Z_p}R_{\calU} \rightarrow \End_{\calZ_{\calU, h}}(\scrH_{\Par}^{\tot}))$}
\] 
for any slope-adapted affinoid $\calZ_{\calU, h} \subset \calZ$. Note that $\scrT$ is a coherent $\scrO_{\calZ}$-module. Then, the \emph{cuspidal Bianchi eigenvarity} is defined to be the relative adic spectrum \[
    \calE := \Spa_{\calZ}(\scrT, \scrT^{\circ}),
\]
where the existence of $\scrT^{\circ}$ is provided by \cite[Lemma A. 3]{JN-extended}. The natural map given by the composition \[
    \wt: \calE \rightarrow \calZ \rightarrow \calW
\]
is called the \emph{weight map}.

Although the construction of $\calE$ uses the information of the parabolic cohomology in each degree, there is an unpleasant drawback due to the following result. 

\begin{Lemma}[$\text{\cite[Lemma 4.2]{BW}}$]\label{Lemma: cannot vary 2-dim. family in H1}
    Let $(R_{\calU}, \kappa_{\calU})$ be an affinoid weight such that the associated rigid analytic space $\calU \hookrightarrow \calW$ is an affinoid open subset. Suppose $r>r_{\calU}$ and $(\calU, h)$ is slope-adapted. Then, $H_c^1(Y_{\Iw_G}, D_{\kappa_{\calU}}^r)^{\leq h}$ vanishes. 
\end{Lemma}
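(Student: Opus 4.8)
The plan is to reduce the vanishing to the classical situation by a specialisation argument, exploiting crucially that $\calU$ is \emph{two-dimensional}, together with the vanishing of parabolic cohomology at non-parallel classical weights (Theorem~\ref{Theorem: ESH isomorphism}).

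First I would set up the reduction. Since slope-$\le h$ decompositions exist for $H_c^\bullet(Y_{\Iw_G}, D_{\kappa_{\calU}}^r)$ (applying the framework recalled in Remark~\ref{Remark: Hansen's framework works for Dr+} and its compactly supported analogue), the slope-$\le h$ part of $C_{c, \kappa_{\calU}, r^+}^{\mathrm{tol}}$ is a perfect complex of $R_{\calU}$-modules computing $H_c^\bullet(Y_{\Iw_G}, D_{\kappa_{\calU}}^r)^{\le h}$, and its formation commutes with base change along $R_{\calU} \to R_{\calU'}$ for open immersions $\calU' \hookrightarrow \calU$, and --- by Stevens's control theorem (\S\ref{subsection: control theorems}) --- with specialisation at a classical weight $k \in \calU$ as long as $h$ lies below the non-critical bound for $k$. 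Since $Y_{\Iw_G}$ is non-compact one has $H_c^0 = 0$, so this complex is concentrated in cohomological degrees $[1,3]$ and $H_c^1$ is a coherent sheaf on $\calU$; it therefore suffices to show that its fibre vanishes on a Zariski-dense open of $\calU$. By upper semicontinuity of fibre dimension for perfect complexes together with the universal-coefficients sequence, this in turn reduces to proving $H_c^1(Y_{\Iw_G}, D_k^r)^{\le h} = 0$ at each non-parallel classical weight $k = (k_1, k_2)$, $k_1 \ne k_2$, with $h$ non-critical for $k$ --- and such weights \emph{are} Zariski-dense precisely because $\calU$ is two-dimensional.

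Next I would prove the classical statement. Fix such a $k$. By Stevens's control theorem the specialisation map identifies $H_c^1(Y_{\Iw_G}, D_k^r)^{\le h}$ with $H_c^1(Y_{\Iw_G}, V_k^{\vee}(\C))^{\le h}$, so it is enough to see the latter vanishes. From the long exact sequence relating $H_c^\bullet$, $H^\bullet$ and the Borel--Serre boundary cohomology $H_{\partial}^\bullet$, the group $H_c^1(Y_{\Iw_G}, V_k^{\vee}(\C))$ is an extension of $H_{\Par}^1(Y_{\Iw_G}, V_k^{\vee}(\C))$ by $\coker\!\big(H^0(Y_{\Iw_G}, V_k^{\vee}(\C)) \to H_{\partial}^0(Y_{\Iw_G}, V_k^{\vee}(\C))\big)$. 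The quotient term vanishes by Theorem~\ref{Theorem: ESH isomorphism}(i) because $k_1 \ne k_2$, and $H^0(Y_{\Iw_G}, V_k^{\vee}(\C)) = 0$ because $\Gamma$ is Zariski-dense in $G$ and $V_k^{\vee}$ is a non-trivial irreducible algebraic representation (as $k \ne (0,0)$); hence $H_c^1(Y_{\Iw_G}, V_k^{\vee}(\C)) \cong H_{\partial}^0(Y_{\Iw_G}, V_k^{\vee}(\C))$. This finite-dimensional space is a sum of ``Eisenstein'' lines, one per cusp, spanned by unipotent-invariant vectors; a direct computation with the coset representatives $\bfupsilon_{p,c}$ of $\Iw_G \bfupsilon_p \Iw_G$ (which are lower-triangular, whereas each cusp is stabilised by an upper-triangular subgroup) shows that $U_p$ acts on each such line by a scalar whose slope equals the critical value, which is $> h$ for our $k$. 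Therefore $H_{\partial}^0(Y_{\Iw_G}, V_k^{\vee}(\C))^{\le h} = 0$, completing the classical case.

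The main obstacle is the slope computation on $H_{\partial}^0$: one must pin down that the boundary Eisenstein classes sit \emph{exactly} at the critical slope, so that they become invisible in overconvergent cohomology. This is where the precise shape of the $U_p$-correspondence and of the cusps of $Y_{\Iw_G}$ genuinely enters, and it is also the conceptual reason the statement fails at parallel weights (where honest cusp forms contribute to $H^1$) and holds only when $\calU \hookrightarrow \calW$ is open. A secondary, bookkeeping difficulty is passing from vanishing of the generic fibre to vanishing of the coherent sheaf $H_c^1$ itself: since $H_c^1$ need not be $R_{\calU}$-torsion-free a priori, one either proves such a torsion-freeness statement, or replaces the whole specialisation argument by Poincaré--Lefschetz duality on the $3$-dimensional $X_{\Iw_G}^{\mathrm{BS}}$, identifying $H_c^1(Y_{\Iw_G}, D_{\kappa_{\calU}}^r)^{\le h}$ with the $R_{\calU}$-dual of $H^2(Y_{\Iw_G}, A_{\kappa_{\calU}}^r)^{\le h}$ (up to the orientation twist) and invoking that finite-slope overconvergent cohomology with analytic-function coefficients is concentrated in degrees $\le q_0 = 1$; the price there is establishing that last concentration statement.
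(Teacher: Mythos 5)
The paper does not actually prove this statement: it is imported wholesale from \cite[Lemma 4.2]{BW}, so the only thing to compare your argument against is whether it would genuinely establish the vanishing. It would not, and the problem is the step you flag as the ``main obstacle'': your claim that every line in $H^0_{\partial}(Y_{\Iw_G}, V_k^{\vee}(\C))$ has $U_p$-slope equal to the critical value. This is false. Exactly as for the modular curve $X_0(p)$, where the two cusps contribute boundary classes to $H^1_c$ with $U_p$-eigenvalues $1$ and $p^{k+1}$ (this is why both the ordinary and the critical Eisenstein points live on the eigencurve built from overconvergent modular symbols), at Iwahori level the cusps over a tame cusp come in packets on which $(U_{\frakp}, U_{\overline{\frakp}})$ acts with eigenvalues running over $\{1, p^{k_1+1}\}\times\{1, p^{k_2+1}\}$: some coset representatives $\bfupsilon_{p,c}$ fix a given cusp and some move it to a $\Gamma$-equivalent one, and depending on the cusp the diagonal contribution is $1$ rather than $p^{k_i+1}$. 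Hence $H^0_{\partial}$, and therefore $H^1_c(Y_{\Iw_G}, V_k^{\vee}(\C))$ (your extension argument is fine, since $H^0=H^1_{\Par}=0$ at non-parallel $k$), contains slope-zero Eisenstein classes whenever the unit-parity condition $(-1)^{k_1+k_2}=1$ holds — and such non-parallel weights are still Zariski-dense in $\calU$. By the control theorem this forces $H_c^1(Y_{\Iw_G}, D_k^r)^{\leq h}\neq 0$ at a Zariski-dense set of classical points, so no specialisation-plus-density argument from classical weights can prove the lemma: only the cuspidal part dies at non-parallel weights, and the entire content of \cite[Lemma 4.2]{BW} is that the small-slope \emph{boundary} classes, which do exist fibrewise, fail to deform in $H^1_c$ over a two-dimensional base. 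That is intrinsically a statement about the family coefficients $D_{\kappa_{\calU}}^r$ and cannot be read off pointwise.

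Your secondary reduction also has an internal mismatch — you reduce to vanishing of fibres on a Zariski-dense \emph{open}, but then only address a countable dense set of classical points — and the torsion issue you defer is real, though fixable: for any nonzerodivisor $f\in R_{\calU}$, flatness of $D_{\kappa_{\calU}}^r$ over $R_{\calU}$ gives a short exact sequence of coefficients whose $H^0_c$-term vanishes ($Y_{\Iw_G}$ is non-compact), so multiplication by $f$ is injective on $H_c^1(Y_{\Iw_G}, D_{\kappa_{\calU}}^r)^{\leq h}$ and the module is torsion-free (in fact a Tor/Koszul argument over the two-dimensional regular base shows it is projective). But repairing this does not rescue the proof, because the fibres at the classical points you specialise to are nonzero. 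The duality route you mention as an alternative requires precisely the concentration statement for finite-slope cohomology with $A_{\kappa_{\calU}}^r$-coefficients that is essentially equivalent to the lemma, so it cannot be invoked as a black box either. If you want a correct argument, you should either work directly with the boundary cohomology of the family (showing the small-slope part of $H^0_{\partial}$ with $D_{\kappa_{\calU}}^r$-coefficients dies over a two-dimensional base) or simply cite \cite[Lemma 4.2]{BW}, as the paper does.
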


In what follows, we would like to establish a relation between a pairing (between $H^1$ and $H^2$) and the local geometry of the eigenvariety. The lemma above implies that such a strategy cannot work on the whole $\calE$ and so we have to restrict ourselves to certain $1$-dimensional families. In particular, we will often make the following assumption (see also \cite[\S 4.2]{BW}).

\begin{Assumption}\label{Assumption: vary in 1-dimensional family}
    Let $f$ be a classical cuspidal Bianchi eigenform of weight $k$. Let $\calV \subset \calW$ be an affinoid open subset containing $k$. Suppose $f$ defines a point $x_f$ in $\calE_{\calV, h}$ --the preimage of $\calZ_{\calV, h}$ in $\calE$. We assume $x_f$ varies in a family over a curve $\calU \subset \calV$ such that $\calU$ is smooth at $k$. 
\end{Assumption}

\subsection{Parallel eigenvarieties}\label{subsection: parallel eigenvarieties}
The purpose of this subsection is to show that we have a handful of sources of classical points living in the cuspidal parallel eigenvariety that satisfy Assumption \ref{Assumption: vary in 1-dimensional family}. To this end, let \[
    \calW_{\circ} \hookrightarrow \calW.
\]
be the one-dimensional closed subspace defined by the condition \[
    \Hom_{\Groups}^{\cts}(\Z_p^\times, R^\times) \rightarrow \Hom_{\Groups}^{\cts}(\Z_p^\times \times \Z_p^\times, R^\times), \quad \kappa \mapsto \left( (a_1, a_2)\mapsto \kappa(a_1)\kappa(a_2)\right)
\] for any complete sheafy $(\Z_p, \Z_p)$-algebra $(R, R^+)$.

We can now again apply Remark \ref{Remark: Hansen's framework works for Dr+} and get a Fredholm power series $F_{c, \circ}^{\dagger}\in \scrO_{\calW_{\circ}}(\calW_{\circ})\{\{ T \}\}$ and let $\calZ_{\circ}$ be the associated spectral variety. We follow the strategy in \cite{BW} and consider the sheaf \[
    \scrH_{\Par}^1 : \calZ_{\calU, h} \mapsto H_{\Par}^1(Y_{\Iw_G}, D_{\kappa_{\calU}}^r)^{\leq h}
\] for any slope-adapted affinoid open subset $\calZ_{\calU, h}\subset \calZ_{\circ}$. By similarly redefine $\psi$, one obtains the eigenvariety datum \[
    (\calW_{\circ}, \calZ_{\circ}, \scrH_{\mathrm{par}}^1, \psi).
\]
Then, we can then consider the associated eigenvariety $\calE_{\circ}$. More precisely, define the sheaf of $\scrO_{\calZ_{\circ}}$-algebras $\scrT_{\circ}$ by \[
    \scrT_{\circ}(\calZ_{\calU, h}) := \text{ the reduced $\scrO_{\calZ_{\calU, h}}(\calZ_{\calU, h})$-algebra generated by }\image \left( \psi: \bbT_{\Iw_G}\otimes_{\Z_p}R_{\calU} \rightarrow \End_{\calZ_{\calU, h}}(\scrH_{\mathrm{par}}^1)\right)
\]
for any slope-adapted affinoid $\calZ_{\calU, h}\subset \calZ_{\circ}$. Note that $\scrT_{\circ}$ is a coheret $\scrO_{\calZ_{\circ}}$-module. Then, the \emph{cuspidal parallel Bianchi eigenvariety} $\calE_{\circ}$ is defined (similarly) to be the relative adic spectrum \[
    \calE_{\circ} := \Spa_{\calZ_{\circ}}(\scrT_{\circ}, \scrT^{\circ}_{\circ}).
\]
We again have the weight map given by the composition \[
    \wt: \calE_{\circ} \rightarrow \calZ_{\circ} \rightarrow \calW_{\circ}.
\]

\begin{Proposition}\label{Proposition: parallel eigenvariety mapping into the whole eigenvariety}
    \begin{enumerate}
        \item[(i)] The cuspidal parallel Bianchi eigenvariety $\calE_{\circ}$ is $1$-dimensional and contains a very Zariski-dense set of classical points.
        \item[(ii)] There is a closed immersion $\calE_{\circ} \hookrightarrow \calE$. 
    \end{enumerate}
\end{Proposition}
\begin{proof}
    Denote by $\calE^{\mathrm{BSW}}$ (resp., $\calE^{\mathrm{BSW}}_{\circ}$) the Bianchi eigenvariety (resp., parallel Bianchi eigenvariety) discussed in \cite[\S 5.1]{BW}, constructed by considering Hecke eigensystems in $\bigoplus_{i=0}^3 H_c^i(Y_{\Iw_G}, D_{\kappa_{\calU}}^r)^{\leq h}$ (resp., $H^1_c(Y_{\Iw_G}, D_{\kappa_{\calU}}^r)^{\leq h}$). We denote by $\scrT^{\mathrm{BSW}}$ (resp., $\scrT^{\mathrm{BSW}}_{\circ}$) the corresponding sheaf of $\scrO_{\scrZ}$-algebras (resp., $\scrO_{\calZ_{\circ}}$-algebras) and so $\calE^{\mathrm{BSW}} = \Spa_{\calZ}(\scrT^{\mathrm{BSW}}, \scrT^{\mathrm{BSW}, \circ})$ (resp., $\calE^{\mathrm{BSW}}_{\circ} = \Spa_{\calZ_{\circ}}(\scrT^{\mathrm{BSW}}_{\circ}, \scrT^{\mathrm{BSW}, \circ}_{\circ})$). 
    
    By construction, $H_{\Par}^i$'s receive Hecke-equivariant surjections from $H_c^i$'s. Hence, we have surjections of sheaves \[
        \scrT^{\mathrm{BSW}} \twoheadrightarrow \scrT \quad \text{ and }\quad \scrT^{\mathrm{BSW}}_{\circ} \twoheadrightarrow \scrT_{\circ},
    \] 
    which yield (Zariski) closed embeddings \[
        \calE \hookrightarrow \calE^{\mathrm{BSW}} \quad \text{ and }\calE_{\circ} \hookrightarrow \calE^{\mathrm{BSW}}_{\circ}.
    \]

    Since classical weights are very Zariski dense in $\calW_{\circ}$, together with Stevens's control theorem (Theorem \ref{Theorem: control theorem for overconvergent cohomology}), one deduces that (cuspidal) classical points are very Zariski dense in $\calE_{\circ}$ (see also \cite[Proposition 5.1]{BW}). We denote by $\calE_{\circ}^{\mathrm{cl}}$ the set of (cuspidal) classical points in $\calE_{\circ}$. The Zariski closure $\overline{\calE_{\circ}^{\mathrm{cl}}}^{\mathrm{Zar}}$ of $\calE_{\circ}^{\mathrm{cl}}$ in $\calE^{\mathrm{BSW}}_{\circ}$ is then the union of irreducible components on which cuspidal classical points are Zariski dense. However, since $\calE_{\circ} \hookrightarrow \calE_{\circ}^{\mathrm{BSW}}$ is a Zariski closed embedding and $\calE_{\circ}^{\mathrm{cl}}$ is Zariski dense in $\calE_{\circ}$, $\overline{\calE_{\circ}^{\mathrm{cl}}}^{\mathrm{Zar}}$ coincides with $\calE_{\circ}$. We then conclude (i).

    For (ii), the proof goes verbatim as in \cite[Corollary 5.3]{BW} by applying \cite[Theorem 3.2.1]{JN-irreducible}. In conclusion, we have a commutative diagram of eigenvarieties \[
        \begin{tikzcd}
            \calE \arrow[r, hook] & \calE^{\mathrm{BSW}} \arrow[r, "\wt"] & \calW \\
            \calE_{\circ}\arrow[r, hook]\arrow[u, hook] & \calE_{\circ}^{\mathrm{BSW}}\arrow[u, hook]\arrow[r, "\wt"] & \calW_{\circ}\arrow[u, hook]
        \end{tikzcd},
    \]
    where all arrows are closed embeddings and the rigid analytic spaces that appear in the bottom row are all 1-dimensional. 
\end{proof}

Recall the ideal $\frakn \subset \calO_K$. Suppose it is of the form $\frakn = N\calO_K$, where $N$ is a prime-to-$p$ integer. Let $\calC$ be the Coleman--Mazur eigencurve (of tame level $\Gamma_0(N)$). Then, \cite[Theorem 3.4]{BW} (see also \cite[\S 4.3]{JN-irreducible}), we know that there exists a base change map \[
    \mathrm{BC}: \calC \rightarrow \calE_{\circ},
\]
which is a finite morphism of rigid analytic spaces.

\begin{Corollary}\label{Corollary: parallel family behaves well}
    Any classical base-change point satisfies Assumption \ref{Assumption: vary in 1-dimensional family}.
\end{Corollary}
\begin{proof}
    Given a classical point $x\in \calE_{\circ}(\C_p)$ inside the image of $\mathrm{BC}$, \emph{i.e.}, $x = \mathrm{BC}(y)$ for some $y\in \calC(\C_p)$. Let $\calY$ be a connected affinoid open in $\calC$ passing through $y$. Since $\calY$ is connected, $\mathrm{BC}(\calY)$ is also connected; we may find a connected open neighbourhood $\calX$ of $x$ containing $\mathrm{BC}(\calY)$. Since the Coleman--Mazur eigencurve is eiquidimensional of dimension $1$ and $\mathrm{BC}$ is a finite map, we see that \[
        \dim \calX \geq \dim \calY = 1.
    \] On the other hand, by Proposition \ref{Proposition: parallel eigenvariety mapping into the whole eigenvariety}, $\dim \calX \leq 1$, we thus conclude that $\dim \calX=1$. This shows that $x$ varies in a one-dimensional family. The assertion then follows from that $\calW_{\circ}$ is smooth.
\end{proof}

The phenomenon discussed in Corollary \ref{Corollary: parallel family behaves well} is related to the following conjecture, known as the Calegari--Mazur expectation. We refer the readers to \cite{Calegari-Mazur} and \cite[\S 5.3]{BW} for more discussion. 

\begin{Conjecture}[Calegari--Mazur, Barrera Salazar--Williams]\label{Conjecture: Calegari--Mazur conjecture}
    Given a one-dimensional non-ordinary\footnote{ See Definition \ref{Definition: ordinary part} below. } irreducible component $\calX$ in $\calE_\circ$. Then, there exists an irreducible component $\calY$ of the Coleman--Mazur eigencurve and a finite-order Hecke character $\chi$ such that $\calX = \mathrm{BC}(\calY) \otimes \chi$ in the following sense: for any classical point $x$ in $\calX$, there exists a classical point $y\in \calY$ such that $x = \mathrm{BC}(y)\otimes \chi$. 
\end{Conjecture}

\begin{Remark}\label{Remark: other cases satisfying Assumption 1}
    The purpose of this subsection is to show that we have a handful of sources of classical points which satisfy Assumption \ref{Assumption: vary in 1-dimensional family}. In particular, Corollary \ref{Corollary: parallel family behaves well} shows that base-change points provide such examples. We remark that there are other sources of such classical points. For example, \cite[Theorem 3.8]{BW} showed that any non-critical classical points\footnote{See Definition \ref{Definition: non-critical} below.} may satisfy Assumption \ref{Assumption: vary in 1-dimensional family}.
\end{Remark}

\subsection{Control theorems}\label{subsection: control theorems}
In this subsection, we discuss the control theorems. We first recall \emph{Stevens's control theorem} in our situation. Then, we deduce a control theorem for \emph{ordinary families} à la Hida by applying Stevens's control theorem.

\vspace{2mm}

First of all, let $(R_{\calU}, \kappa_{\calU})$ be an affinoid weight with the associated weight space $\calU \hookrightarrow \calW$ (not necessarily an open immersion), and let $r>r_{\calU}$. Let $F$ be a finite extension of $\Q_p$ and suppose $\kappa : \Z_p^\times  \rightarrow F$ defines a point in $\calU(F)$.\footnote{ Here, we identify $\kappa$ with $(\kappa, \kappa): \Z_p^\times \times\Z_p^\times \rightarrow F$. Similarly, if $k\in \Z_{\geq 0}$, we identify it with $(k,k)\in \Z_{\geq 0}^2$.} In other words, $\kappa$ yields a surjection $R_{\calU} \twoheadrightarrow F$, which is still denoted by $\kappa$. Then, we have a specialisation map \begin{equation}\label{eq: specialisation map; overconvergent}
    D_{\kappa_{\calU}}^r \rightarrow D_{\kappa}^r = D_{\kappa_{\calU}}^r \widehat{\otimes}_{R_{\calU}, \kappa} F.
\end{equation}
If $\kappa = k \in \Z_{\geq 0}$, then there is an obvious injective $\Q_p$-morphism \[
    V_k(\Q_p) \hookrightarrow A_k^{r},
\]
which induces a surjective $\Q_p$-morphism \begin{equation}\label{eq: from D to dual of V}
    D_k^r \twoheadrightarrow V_k^{\vee}(\Q_p).
\end{equation}
Notice that these morphisms are all $\Iw_G$-equivariant. We recall the following control theorem \`a la Stevens:

\begin{Theorem}[Stevens's control theorem]\label{Theorem: control theorem for overconvergent cohomology}
    Let $k \in \Z_{\geq 0}$. Then, for any $h\in \Q_{\geq 0}$ such that $h<k+1$ the morphism in \eqref{eq: from D to dual of V} induces a Hecke-equivariant isomorphism \[
        H_{\mathrm{par}}^i(Y_{\Iw_G}, D_k^r)^{\leq h} \simeq H_{\mathrm{par}}^i(Y_{\Iw_G}, V_k^{\vee}(\Q_p))^{\leq h}.
    \]
\end{Theorem}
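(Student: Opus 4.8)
The plan is to compare the two coefficient systems through the short exact sequence they sit in and to show that the discrepancy is invisible to the slope-$\leq h$ part. Write $N := \ker\big(D_k^r \twoheadrightarrow V_k^\vee(\Q_p)\big)$ for the kernel of the surjection in \eqref{eq: from D to dual of V}; concretely $N$ is the closed $\Iw_G$-submodule of $D_k^r$ of distributions killing the image of $V_k(\Q_p)\hookrightarrow A_k^r$. This gives an $\Iw_G$-equivariant short exact sequence $0\to N\to D_k^r\to V_k^\vee(\Q_p)\to 0$, hence a short exact sequence of local systems on $Y_{\Iw_G}$, and therefore short exact sequences of the associated augmented Borel--Serre cochain complexes $C^\bullet(\Iw_G,-)$ and $C_c^\bullet(\Iw_G,-)=\mathrm{Cone}(\pi_\partial)[-1]$, compatible under the natural map $C_c^\bullet\to C^\bullet$. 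As in Lemma \ref{Lemma: Up is compact} the operator $U_p$ acts compactly on all three complexes --- for $N$ because the restriction of $\bfupsilon_p$ still factors through the compact inclusion $N\cap D_k^{r+1}\hookrightarrow N$ --- so each carries a Fredholm determinant and admits slope-$\leq h$ decompositions for every $h$, functorially in the Hecke-equivariant maps of the sequence. Applying the slope-$\leq h$ projector thus yields short exact sequences of the slope-$\leq h$ parts of these complexes, and hence two long exact sequences in cohomology, one for $H^\bullet$ and one for $H_c^\bullet$, linked by the natural maps.

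The crux is to show that $U_p$ has all slopes $\geq k+1$ on $C^\bullet(\Iw_G,N)$ and on $C_c^\bullet(\Iw_G,N)$; granting this, $C^\bullet(\Iw_G,N)^{\leq h}=C_c^\bullet(\Iw_G,N)^{\leq h}=0$ whenever $h<k+1$, so their cohomology vanishes. I would establish it by exhibiting the $\Iw_G$-stable bounded lattice $N^\circ := N\cap D_k^{r,\circ}$ and proving $\bfupsilon_p N^\circ\subseteq p^{k+1}N^\circ$, so that $p^{-(k+1)}U_p$ preserves an integral lattice at the level of cochain complexes and the Newton polygon of $\det(1-TU_p\mid C^\bullet(\Iw_G,N))$ lies on or above the line of slope $k+1$. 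The inclusion $\bfupsilon_p N^\circ\subseteq p^{k+1}N^\circ$ is the Bianchi analogue of the estimate underlying the control theorems of Stevens and Pollack--Stevens (cf. \cite{Eigen} for $\GL_2/\Q$, and \cite{BW}, \cite{Hansen-PhD} in general): an element of $N^\circ$ can only pair with the part of a test function of bidegree strictly larger than $(k,k)$, and the explicit $\bfupsilon_{p,c}$-action of Lemma \ref{Lemma: Vk and polynomials} (in each variable $X^j\mapsto p^j(c+X)^j$) shows that on that part each $\bfupsilon_{p,c}$, hence $U_p=\sum_c\bfupsilon_{p,c}$, is divisible by $p^{k+1}$; the split prime and parallel weight make this literally two copies of the elliptic computation. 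The hard part will be running this estimate cleanly in the locally analytic --- rather than polynomial --- model $D_k^r$, i.e. bounding the $p$-divisibility gained on the coefficients of $\phi\,|_k\,\bfupsilon_{p,c}$ in the Amice-type basis $\{e_i^{(r)}\}$ of Theorem \ref{Theorem: Amice}. Everything else is formal.

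Feeding the vanishing $H^i(Y_{\Iw_G},N)^{\leq h}=H_c^i(Y_{\Iw_G},N)^{\leq h}=0$ into the two long exact sequences produces Hecke-equivariant isomorphisms $H^i(Y_{\Iw_G},D_k^r)^{\leq h}\xrightarrow{\sim}H^i(Y_{\Iw_G},V_k^\vee(\Q_p))^{\leq h}$ and $H_c^i(Y_{\Iw_G},D_k^r)^{\leq h}\xrightarrow{\sim}H_c^i(Y_{\Iw_G},V_k^\vee(\Q_p))^{\leq h}$, sitting in a commuting square with the natural maps $H_c^i\to H^i$. Since every arrow is Hecke-equivariant, forming the image commutes with the exact slope-$\leq h$ projector (cf. Remark \ref{Remark: interior cohomology groups are Hecke-stable}), so, all cohomology below being that of $Y_{\Iw_G}$,
\[
H_{\mathrm{par}}^i(D_k^r)^{\leq h}=\image\big(H_c^i(D_k^r)^{\leq h}\to H^i(D_k^r)^{\leq h}\big)\xrightarrow{\sim}H_{\mathrm{par}}^i(V_k^\vee(\Q_p))^{\leq h},
\]
and this isomorphism is Hecke-equivariant because all the maps producing it are.
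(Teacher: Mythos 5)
Your proposal is correct in outline, but it takes a genuinely different route from the paper. The paper's proof is essentially a citation: it invokes the general control theorem of Ash--Stevens in Hansen's formulation (\cite[Theorem 3.2.5]{Hansen-PhD}) once for $H^\bullet$ and once for $H_c^\bullet$, notes that all maps in the resulting commutative square are Hecke-equivariant, and deduces the statement for $H_{\Par}^\bullet$ by passing to images --- exactly your final step. What you do instead is re-derive the content of that cited theorem in the Bianchi setting: you isolate the kernel $N=\ker(D_k^r\twoheadrightarrow V_k^{\vee}(\Q_p))$ and kill its slope-$\leq h$ cohomology via the divisibility $\bfupsilon_p N^{\circ}\subseteq p^{k+1}N^{\circ}$, the point being that a distribution in $N$ only sees the part of $\phi(pc_1+pX_1,pc_2+pX_2)$ with $j_1>k$ or $j_2>k$, hence $j_1+j_2\geq k+1$ powers of $p$. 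That estimate is the classical Stevens/Pollack--Stevens mechanism and is correct here (it is precisely why the small-slope bound for the product operator $U_p=U_{\frakp}U_{\overline{\frakp}}$ at parallel weight $(k,k)$ is $k+1$), so your approach buys a self-contained proof at the cost of the locally analytic bookkeeping you flag as "the hard part" --- which is exactly the work the paper outsources to Hansen. Two small caveats: your assertion that $U_p$ is compact on $C^\bullet(\Iw_G,N)$ with its own Fredholm theory is more than you need and slightly delicate (compactness of a restriction to a closed invariant subspace is not purely formal in the nonarchimedean setting, and the paper only has genuine compactness in the $r^+$ model); it is cleaner to avoid slope decompositions on $N$ altogether and argue directly that the divisibility $U_p\,C^\bullet(\Iw_G,N^{\circ})\subseteq p^{k+1}C^\bullet(\Iw_G,N^{\circ})$ forces any class interacting with a slope-$\leq h$ piece, $h<k+1$, to vanish in the long exact sequences. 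With that adjustment, and the Hecke-equivariance of the connecting maps (which you already note), your argument goes through and recovers the paper's statement, including the passage to $H_{\Par}$ by taking images inside the slope-$\leq h$ summands.
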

\begin{proof}
    Note that we have a commutative diagram \[
        \begin{tikzcd}
            H_c^i(Y_{\Iw_G}, D_{k}^r) \arrow[r]\arrow[d] & H_c^i(Y_{\Iw_G}, V_k^{\vee}(\Q_p)) \arrow[d]\\
            H^i(Y_{\Iw_G}, D_k^r) \arrow[r] & H^i(Y_{\Iw_G}, V_k^{\vee}(\Q_p))
        \end{tikzcd},
    \] where the arrows are all Hecke-equivariant. Applying Stevens's control theorem (see, \cite[Theorem 3.2.5]{Hansen-PhD}), for all $h< h_k$, the horizontal arrows of the commutative diagram \[
        \begin{tikzcd}
            H_c^i(Y_{\Iw_G}, D_{k}^r)^{\leq h} \arrow[r]\arrow[d] & H_c^i(Y_{\Iw_G}, V_k^{\vee}(\Q_p))^{\leq h} \arrow[d]\\
            H^i(Y_{\Iw_G}, D_k^r)^{\leq h} \arrow[r] & H^i(Y_{\Iw_G}, V_k^{\vee}(\Q_p))^{\leq h}
        \end{tikzcd}
    \] are isomorphisms. 
\end{proof}

Now we turn our attention to \emph{ordinary families}. We fix a small weight $(R_{\calU}, \kappa_{\calU})$ with $\calU \hookrightarrow \calW$ (not necessarily an open immersion). Suppose $k \in \Z_{\geq 0}$ defines a point in $\calU(\Q_p)$. Similar as in \eqref{eq: specialisation map; overconvergent} and \eqref{eq: from D to dual of V}, there is a specialisation map \begin{equation}\label{eq: specialisation map; Hida}
     D_{\kappa_{\calU}}^{r, \circ} \rightarrow D_k^{r, \circ} = D_{\kappa_{\calU}}^{r, \circ}\widehat{\otimes}_{R_{\calU}, k}\Z_p \twoheadrightarrow V_k^{\vee}(\Z_p).
\end{equation}
The first goal of this subsection is to show the following control theorem \`a la Hida:

\begin{Theorem}[Hida's control theorem]\label{Theorem: Hida's control theorem}
    Define the ordinary projector \[
        e_{\ord} := \lim_{n \rightarrow \infty} U_p^{n!}.
    \]
    Let $k\in \Z_{\geq 0}$. Then, the morphism in \eqref{eq: specialisation map; Hida} induces a Hecke-equivariant isomorphism \[
        \left(e_{\ord}H_{\mathrm{par}}^i(Y_{\Iw_G}, D_{k}^{r, \circ}) \right)[1/p] \simeq \left(e_{\ord}H_{\mathrm{par}}^i(Y_{\Iw_G}, V_k^{\vee}(\Z_p))\right)[1/p].
    \]
\end{Theorem}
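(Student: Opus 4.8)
The plan is to reduce the statement, after inverting $p$, to Stevens's control theorem (Theorem~\ref{Theorem: control theorem for overconvergent cohomology}), using the elementary observation that an ordinary class is in particular a finite-slope class of slope $0\le h$ for any $h\in\Q_{>0}$.

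First I would set up the ordinary projector at the integral level and compare it with the rational one. On $H_{\mathrm{par}}^i(Y_{\Iw_G}, V_k^\vee(\Z_p))$, which is a finitely generated $\Z_p$-module, the limit $e_{\ord}=\lim_n U_p^{n!}$ converges because the monoid generated by the image of $U_p$ in each finite ring $\End_{\Z_p}\!\bigl(H_{\mathrm{par}}^i(Y_{\Iw_G}, V_k^\vee(\Z_p))/p^N\bigr)$ is finite, so $U_p^{n!}$ is eventually idempotent modulo $p^N$. On $H_{\mathrm{par}}^i(Y_{\Iw_G}, D_k^{r,\circ})$ one uses the $\Xi$-stable filtration $\Fil^\bullet D_{\kappa_\calU}^{r,\circ}$ of Remark~\ref{Remark: filtration on distributions}, specialised at $k$: the graded pieces are finite abelian groups, $U_p$ acts on each, $\lim_n U_p^{n!}$ converges on each, and hence on the inverse limit, giving a well-defined $e_{\ord}$. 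These projectors commute with the Hecke operators away from $p$ and with the flat base change $(-)\otimes_{\Z_p}\Q_p$; since $C^\bullet(\Iw_G,-)$ and $C_c^\bullet(\Iw_G,-)$ are finite complexes of $\Z_p$-modules, cohomology and the formation of the parabolic cohomology (the image of $H_c^i\to H^i$, which is Hecke-equivariant by Remark~\ref{Remark: interior cohomology groups are Hecke-stable}) commute with $\otimes_{\Z_p}\Q_p$. Consequently
\begin{align*}
    \bigl(e_{\ord}H_{\mathrm{par}}^i(Y_{\Iw_G}, D_k^{r,\circ})\bigr)[1/p] &\simeq e_{\ord}H_{\mathrm{par}}^i(Y_{\Iw_G}, D_k^{r}),\\
    \bigl(e_{\ord}H_{\mathrm{par}}^i(Y_{\Iw_G}, V_k^\vee(\Z_p))\bigr)[1/p] &\simeq e_{\ord}H_{\mathrm{par}}^i(Y_{\Iw_G}, V_k^\vee(\Q_p))
\end{align*}
Hecke-equivariantly, and inverting $p$ in \eqref{eq: specialisation map; Hida} recovers the map \eqref{eq: from D to dual of V}. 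So it is enough to prove the asserted isomorphism after $[1/p]$.

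Next I would pick $h\in\Q$ with $0<h<k+1$ (possible as $k+1\ge 1$) for which the Fredholm determinant of $U_p$ on the relevant total complexes admits a slope-$\le h$ decomposition. On the $\Q_p$-Banach groups $H_{\mathrm{par}}^i(Y_{\Iw_G}, D_k^{r})$ and $H_{\mathrm{par}}^i(Y_{\Iw_G}, V_k^\vee(\Q_p))$ the operator $U_p$ is compact (Lemma~\ref{Lemma: Up is compact}), so $e_{\ord}=\lim_n U_p^{n!}$ is the projector onto the (generalised) unit-root subspace; in particular every ordinary class has slope $0\le h$, and since the slope-$\le h$ decomposition is $U_p$-stable and $e_{\ord}$ kills the part of positive slope, one gets
\[
    e_{\ord}H_{\mathrm{par}}^i(Y_{\Iw_G}, D_k^{r}) = e_{\ord}\bigl(H_{\mathrm{par}}^i(Y_{\Iw_G}, D_k^{r})^{\le h}\bigr), \qquad e_{\ord}H_{\mathrm{par}}^i(Y_{\Iw_G}, V_k^\vee(\Q_p)) = e_{\ord}\bigl(H_{\mathrm{par}}^i(Y_{\Iw_G}, V_k^\vee(\Q_p))^{\le h}\bigr).
\]
Applying the idempotent $e_{\ord}$ to the Hecke-equivariant isomorphism $H_{\mathrm{par}}^i(Y_{\Iw_G}, D_k^{r})^{\le h}\xrightarrow{\sim}H_{\mathrm{par}}^i(Y_{\Iw_G}, V_k^\vee(\Q_p))^{\le h}$ of Theorem~\ref{Theorem: control theorem for overconvergent cohomology} (which is induced by \eqref{eq: from D to dual of V}, hence compatible with the above) then yields the claim after $[1/p]$, and combined with the first step proves the theorem.

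The main obstacle is the integral bookkeeping in the second paragraph: verifying that $e_{\ord}=\lim_n U_p^{n!}$ really converges on the possibly non-finitely-generated and possibly $p$-torsion module $H_{\mathrm{par}}^i(Y_{\Iw_G}, D_k^{r,\circ})$, and that this integral projector is compatible with inverting $p$. This is precisely where the profinite structure coming from Remark~\ref{Remark: filtration on distributions} is used; granting it, the rest is formal, the conceptual point being simply that \emph{ordinary} is the special case ``slope $0$'' of the finite-slope theory to which Stevens's control theorem already applies.
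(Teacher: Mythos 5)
Your overall strategy is sound and the core idea — that an ordinary class is in particular a finite-slope class, so Hida's control should follow formally from Stevens's — is exactly right, and this is the same fundamental input the paper uses. However, your route through the argument is genuinely different and shorter than the paper's, and this comes at a cost in one place.

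The paper does \emph{not} prove (or use) the identification $(e_{\ord}H_{\Par}^i(Y_{\Iw_G}, D_k^{r,\circ}))[1/p]\simeq e_{\ord}H_{\Par}^i(Y_{\Iw_G}, D_k^{r})$ on which your whole reduction hinges. Instead, it only establishes an \emph{inclusion} $(e_{\ord}H^i(Y_{\Iw_G},N))[1/p]\hookrightarrow H^i(Y_{\Iw_G},N)[1/p]^{\le h}$ for $N\in\{D_k^{r,\circ},V_k^\vee(\Z_p),M\}$, where $M=\ker(D_k^{r,\circ}\twoheadrightarrow V_k^\vee(\Z_p))$, and then runs the long exact sequences in cohomology coming from $0\to M\to D_k^{r,\circ}\to V_k^\vee(\Z_p)\to 0$ (for both $H^\bullet$ and $H_c^\bullet$). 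It shows the $e_{\ord}$-row of the long exact sequence stays exact (using the splitting $e_{\ord}H^i\hookrightarrow H^i$), and then uses the Stevens-type vanishing $H^i(Y_{\Iw_G},M[1/p])^{\le h}=0$ together with the above inclusion to force $(e_{\ord}H^i(Y_{\Iw_G},M))[1/p]=0$, from which the isomorphism drops out of the five lemma without ever needing to compare the two constructions of the ordinary projector. Your approach is slicker, but the identity $e_{\ord}\circ(-\otimes_{\Z_p}\Q_p)=(-\otimes_{\Z_p}\Q_p)\circ e_{\ord}$ is not a formality here: the limit $\lim_n U_p^{n!}$ on $H^i(Y_{\Iw_G},D_k^{r,\circ})$ converges in the \emph{profinite} topology coming from the filtration $\Fil^\bullet D_{\kappa_\calU}^{r,\circ}$ of Remark~\ref{Remark: filtration on distributions}, whereas on $H^i(Y_{\Iw_G},D_k^{r})$ it converges in the \emph{Banach} topology via compactness of $U_p$ (Lemma~\ref{Lemma: Up is compact}); these two topologies are a priori different on the integral cohomology, and you assert without argument that the resulting idempotents match after inverting $p$. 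If you want to keep your cleaner route, you should justify this compatibility (e.g.\ by showing $(e_{\ord}H_\Z)[1/p]$ is a $U_p$-stable direct summand of $H_\Q$ with $U_p$ bounded-invertible on it and topologically nilpotent on the complement, hence coincides with the slope-$0$ part $e_{\ord}H_\Q$ by uniqueness of such a decomposition); otherwise the long-exact-sequence argument of the paper is the safer path, since it only ever uses the inclusion into the slope-$\le h$ part, which follows easily from the observation that $U_p$ acts invertibly on the ordinary part.

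Two smaller points. First, the step $e_{\ord}H_{\Par}^i(\cdots)=e_{\ord}(H_{\Par}^i(\cdots)^{\le h})$ implicitly uses that the slope-$\le h$ decomposition of $H^i$ and $H_c^i$ passes to the image $H_{\Par}^i$ (true, since the connecting map is $U_p$-equivariant and the decomposition is canonical), which is worth saying explicitly since the paper's convention in Remark~\ref{Remark: interior cohomology groups are Hecke-stable} defines $H_{\Par}^i(\cdots)^{\le h}$ only as the image of the slope-$\le h$ pieces. Second, applying $e_{\ord}$ to the isomorphism of Theorem~\ref{Theorem: control theorem for overconvergent cohomology} is legitimate because that isomorphism is $U_p$-equivariant (it is induced by \eqref{eq: from D to dual of V}); you state this parenthetically, and it's the right justification.
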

\begin{proof}
    Let $M$ be the kernel of the map $D_{k}^{r, \circ} \rightarrow V_k^{\vee}(\Z_p)$ and so we a Hecke-equivariant  short exact sequence of cochain complexes \begin{equation}\label{eq: short exact sequence for integral specialisation}
        0 \rightarrow C^{\bullet}(\Iw_G, M) \rightarrow C^{\bullet}(\Iw_G, D_k^{r, \circ}) \rightarrow C^{\bullet}(\Iw_G, V_k^{\vee}(\Z_p)) \rightarrow 0.
    \end{equation}
    By taking cohomology and applying the ordinary projector, we have a commutative diagram \[
        \begin{tikzcd}
            \cdots \arrow[r] &  H^i(Y_{\Iw_G}, D_{k}^{r, \circ}) \arrow[r]\arrow[d, two heads] & H^i(Y_{\Iw_G}, V_k^{\vee}(\Z_p)) \arrow[r]\arrow[d, two heads] & H^{i+1}(Y_{\Iw_G}, M) \arrow[r]\arrow[d, two heads] & \cdots\\
            \cdots \arrow[r] & e_{\ord}H^i(Y_{\Iw_G}, D_k^{r, \circ}) \arrow[r] & e_{\ord} H^i(Y_{\Iw_G}, V_k^{\vee}(\Z_p)) \arrow[r] & e_{\ord} H^{i+1}(Y_{\Iw_G}, M) \arrow[r] & \cdots
        \end{tikzcd},
    \] where the top row is a long exact sequence and the vertical arrows are surjections. Note that the vertical arrows in the diagram admit splittings given by the natural inclusions, making the whole diagram commutative again. We claim that the bottom row of the diagram is also exact. Indeed, suppose $[\mu]\in e_{\ord}H^i(Y_{\Iw_G}, V_k^{\vee}(\Z_p))$ maps to $0$ in $e_{\ord} H^{i+1}(Y_{\Iw_G}, M)$, then there exists $[\mu']\in H^i(Y_{\Iw_G}, D_k^{r, \circ})$ whose image in $H^i(Y_{\Iw_G}, V_k^{\vee}(\Z_p))$ is $[\mu]$. However, $e_{\ord}[\mu']\in e_{\ord} H^i(Y_{\Iw_G}, D_k^{r, \circ})$ maps to $[\mu]$. This shows the exactness at $e_{\ord}H^i(Y_{\Iw_G}, V_k^{\vee}(\Z_p))$. One then uses the same argument to show the exactness at the other terms. 

    On the other hand, for any $h < h_k$, we have a surjection \[
        C^{\bullet}(\Iw_G, D_k^{r})^{\leq h} \rightarrow C^{\bullet}(\Iw_G, V_k^{\vee}(\Q_p))^{\leq h}.
    \] We then define \[
        C^{\bullet}(\Iw_G, M[1/p])^{\leq h} : = \ker \left(  C^{\bullet}(\Iw_G, D_k^{r})^{\leq h} \rightarrow C^{\bullet}(\Iw_G, V_k^{\vee}(\Q_p))^{\leq h} \right).
    \] The cohomology of $C^{\bullet}(\Iw_G, M[1/p])^{\leq h}$ is then denoted by $H^i(Y_{\Iw_G}, M[1/p])^{\leq h}$. Hence, we have a long exact sequence \[
        \cdots \rightarrow H^i(Y_{\Iw_G}, D_k^r)^{\leq h} \rightarrow H^i(Y_{\Iw_G}, V_k^{\vee}(\Q_p))^{\leq h} \rightarrow H^{i+1}(Y_{\Iw_G}, M[1/p])^{\leq h} \rightarrow \cdots .
    \] However, by Stevens's control theorem (see, for example, \cite[Theorem 3.2.5]{Hansen-PhD} or the proof of Theorem \ref{Theorem: control theorem for overconvergent cohomology}), we know that $H^i(Y_{\Iw_G}, M)^{\leq h} = 0$.\footnote{ In fact, one knows first that $H^i(Y_{\Iw_G}, M)^{\leq h} = 0$ to conclude the isomorphism in Stevens's control theorem.}

    Now, for any $N \in \{ D_k^{r, \circ}, V_k^{\vee}(\Z_p), M \}$, we have a natural map \[
        e_{\ord}H^i(Y_{\Iw_G}, N) \hookrightarrow H^i(Y_{\Iw_G}, N) \rightarrow H^i(Y_{\Iw_G}, N)[1/p],
    \] which factors through an inclusion \[
        \left(e_{\ord} H^i(Y_{\Iw_G}, N) \right)[1/p] \hookrightarrow H^i(Y_{\Iw_G}, N)[1/p].
    \] Moreover, one also observes that the $U_p$-eigenclasses in $\left(e_{\ord} H^i(Y_{\Iw_G}, N) \right)[1/p]$ are of slope $0$. Thus, the inclusion factors as \[
        \left(e_{\ord} H^i(Y_{\Iw_G}, N) \right)[1/p] \hookrightarrow H^i(Y_{\Iw_G}, N)[1/p]^{\leq h} \hookrightarrow H^i(Y_{\Iw_G}, N)[1/p]
    \] for any $h < h_k$.

    Putting everything together, one obtains a commutative diagram \[
        \begin{tikzcd}
            \cdots \arrow[r] & \left(e_{\ord} H^i(Y_{\Iw_G}, D_k^{r, \circ})\right)[1/p] \arrow[r]\arrow[d, hook'] & \left(e_{\ord} H^i(Y_{\Iw_G}, V_k^{\vee}(\Z_p))\right)[1/p] \arrow[r]\arrow[d, hook'] & \left(e_{\ord} H^{i+1}(Y_{\Iw_G}, M)\right)[1/p] \arrow[r]\arrow[d, hook'] & \cdots\\
            \cdots \arrow[r] & H^i(Y_{\Iw_G}, D_k^{r})^{\leq h} \arrow[r] & H^i(Y_{\Iw_G}, V_k^{\vee}(\Q_p))^{\leq h} \arrow[r] & H^{i+1}(Y_{\Iw_G}, M[1/p])^{\leq h} \arrow[r] & \cdots
        \end{tikzcd},
    \] where the horizontal arrows are exact. Since $H^{i}(Y_{\Iw_G}, M[1/p])^{\leq h} = 0$, we then conclude that \[
        \left(e_{\ord} H^i(Y_{\Iw_G}, D_k^{r, \circ})\right)[1/p] \simeq \left(e_{\ord} H^i(Y_{\Iw_G}, V_k^{\vee}(\Z_p))\right)[1/p].
    \]

    Finally, one uses the same argument to deduce that \[
        \left(e_{\ord} H_c^i(Y_{\Iw_G}, D_k^{r, \circ})\right)[1/p] \simeq \left(e_{\ord} H_c^i(Y_{\Iw_G}, V_k^{\vee}(\Z_p))\right)[1/p].
    \] The desired assertion then follows from the Hecke-equivariant commutative diagram \[
        \begin{tikzcd}
            \left(e_{\ord} H_c^i(Y_{\Iw_G}, D_k^{r, \circ})\right)[1/p] \arrow[r, "\simeq"] \arrow[d] & \left(e_{\ord} H_c^i(Y_{\Iw_G}, V_k^{\vee}(\Z_p))\right)[1/p]\arrow[d]\\
            \left(e_{\ord} H^i(Y_{\Iw_G}, D_k^{r, \circ})\right)[1/p] \arrow[r, "\simeq"] & \left(e_{\ord} H^i(Y_{\Iw_G}, V_k^{\vee}(\Z_p))\right)[1/p]
        \end{tikzcd}.
    \]
\end{proof}


\begin{Remark}
    Theorem \ref{Theorem: Hida's control theorem} is supposed to be well-known to experts. However, we lack the knowledge of such a statement and its proof in the literature. Whence, we provide them here. In fact, one sees that the statement and the argument can easily be generalised to other cases. 
\end{Remark}

\begin{Definition}\label{Definition: ordinary part}
    For any small weight $(R_{\calU}, \kappa_{\calU})$ such that  $\calU = \Spa(R_{\calU}, R_{\calU})^{\rig}$ is a subspace in $\calW$, we shall denote \[
        H^i_{\mathrm{par}}(Y_{\Iw_G}, D_{\kappa_{\calU}}^{r})^{\ord} := \left( e_{\ord} H_{\mathrm{par}}^i(Y_{\Iw_G}, D_{\kappa_{\calU}}^{r, \circ})  \right)[1/p] \hookrightarrow H_{\mathrm{par}}^i(Y_{\Iw_G}, D_{\kappa_{\calU}}^{r})
    \] and call it the \textbf{ordinary part} of $H_{\mathrm{par}}^i(Y_{\Iw_G}, D_{\kappa_{\calU}}^{r})$. If moreover $\calU$ is open in $\calW_{\circ}$, any Hecke-eigenclass in $H^i_{\mathrm{par}}(Y_{\Iw_G}, D_{\kappa_{\calU}}^{r})^{\ord}$ shall be referred to as an \textbf{ordinary family} (or a \textbf{Hida family}) of cuspidal Bianchi eigenclasses.
\end{Definition}

Inspired by the control theorems above, we make the following definition (see also \cite[Definition 2.8]{BW}).

\begin{Definition}\label{Definition: non-critical}
    Let $f$ be a cuspidal Bianchi eigenform of parallel weight $k>0$ and let $\frakm_f$ be the maximal ideal in $\bbT_{\Iw_G}$ defined by $f$. We say $f$ is \textbf{non-critical} if, for each $i$, the natural map \[
        H_{\Par}^i(Y_{\Iw_G}, D_k^r) \rightarrow H_{\Par}^i(Y_{\Iw_G}, V_k^{\vee}(\Q_p))
    \]
    becomes an isomorphism after localising at $\frakm_f$. 
\end{Definition}

\begin{Remark}\label{Remark: non-critical classes}
    Let $f$ be as in the definition above. Let $[\mu_f^{(i)}]$ be the image of $f$ in $H_{\Par}^i(Y_{\Iw_G}, V_k^{\vee}(\Q_p))$ via the Eichler--Shimura--Harder isomorphism. From the control theorems above, one easily sees that, in the following two cases, $f$ is non-critical:\begin{enumerate}
        \item[$\bullet$] if $h<k+1$ and $[\mu_f^{(i)}] \in H_{\Par}^i(Y_{\Iw_G}, V_k^{\vee}(\Q_p))^{\leq h}$; 
        \item[$\bullet$] if $[\mu_f^{(i)}]\in H_{\Par}^i(Y_{\Iw_G}, V_k^{\vee}(\Q_p))^{\ord} := \left( e_{\ord}H_{\Par}^i(Y_{\Iw_G}, V_k^{\vee}(\Z_p)) \right)[1/p]$.
    \end{enumerate}
    Moreover, we have more examples for $f$ being non-critical. Indeed, suppose $\alpha_{\frakp}$ (resp., $\alpha_{\overline{\frakp}}$) is the $U_{\frakp}$-eigenvalue (resp., $U_{\overline{\frakp}}$-eigenvalue) of $f$, by \cite[Theorem 2.10]{BW}, $f$ is also non-critical when \[
        \max\{ v_p(\alpha_{\frakp}), v_p(\alpha_{\overline{\frakp}})\} <k+1.
    \]
\end{Remark}
\section{The adjoint pairing}\label{section: pairing}
In this section, we discuss a pairing of the overconvergent cohomology groups. This pairing is inspired by the works of Kim and Bellaïche (\cite{Kim, Eigen}), which is later generalised by the second-named author to the Siegel case (\cite{Wu-pairing}) and by Balasubramanyam--Longo to the Hilbert case \cite{BL}. By applying the formalism in \cite[Chapter VIII]{Eigen}, we deduce the relation between this pairing and the geometry of the Bianchi eigencurves. The relation between this pairing and the adjoint $L$-values is discussed in \S \ref{section: adjoint L-values}. 

\subsection{The pairing on the overconvergent cohomology groups}\label{subsection: pairing; overconvergent}

Let $(R_{\calU}, \kappa_{\calU})$ be a weight and let $r>r_{\calU}$. We define a pairing on $D_{\kappa_{\calU}}^{r, \circ}$ by the following formula \begin{align*}
    [\cdot, \cdot]_{\kappa_{\calU}}^{\circ}: D_{\kappa_{\calU}}^{r, \circ} \times D_{\kappa_{\calU}}^{r, \circ} & \rightarrow R_{\calU}^{\circ}\\
    (\mu, \mu') & \mapsto \int_{\left(\left(\substack{1\\c_1}\right), \left(\substack{1\\c_2}\right)\right)\in \T_{00}}\int_{\left(\left(\substack{1\\c_1'}\right), \left(\substack{1\\c_2'}\right)\right)\in \T_{00}} \kappa_{\calU, 1}(1+c_1c_1'/p)\kappa_{\calU, 2}(1+c_2c_2'/p) d\mu' d\mu.
\end{align*}
Let us explain the formula. Given a fixed $\left(\left(\substack{1\\c_1}\right), \left(\substack{1\\c_2}\right)\right)\in \T_{00}$, the function \[
    \left(\begin{pmatrix}a_1'\\c_1'\end{pmatrix}, \begin{pmatrix}a_2'\\c_2'\end{pmatrix}\right) \mapsto \kappa_{\calU,1}(a_1'+c_1c_1'/p)\kappa_{\calU,2}(a_2'+c_2c_2'/p)
\]
defines a function in $A_{\kappa_{\calU}}^{r, \circ}$ (see Example \ref{Example: highest weight element in analytic functions} and note that $p \mid c_i$). We shall view $1 + c_ic_i'/p$ as the matrix multiplication \[
    1 + c_ic_i'/p = \begin{pmatrix}1 & c_i'\end{pmatrix}\begin{pmatrix}1 & \\& p^{-1}\end{pmatrix} \begin{pmatrix}1\\ c_i\end{pmatrix} = \trans\begin{pmatrix}1\\ c_i'\end{pmatrix} \begin{pmatrix}1 & \\ & p^{-1}\end{pmatrix}\begin{pmatrix}1\\ c_i\end{pmatrix}.
\] Hence, we have a morphism \[
    D_{\kappa_{\calU}}^{r, \circ} \rightarrow A_{\kappa_{\calU}}^{r, \circ}, \quad \mu \mapsto \left( \Phi_{\mu}: \left(\begin{pmatrix}a_1'\\c_1'\end{pmatrix}, \begin{pmatrix}a_2'\\c_2'\end{pmatrix}\right) \mapsto \int_{\left(\left(\substack{1\\c_1}\right), \left(\substack{1\\c_2}\right)\right)\in \T_{00}} \kappa_{\calU,1}(a_1'+c_1c_1'/p)\kappa_{\calU,2}(a_2'+c_2c_2'/p) d\mu\right)
\]
Hence, $[\cdot, \cdot]_{\kappa_{\calU}}^{\circ}$ is nothing but the composition of morphisms \[
    D_{\kappa_{\calU}}^{r, \circ} \times D_{\kappa_{\calU}}^{r, \circ} \rightarrow D_{\kappa_{\calU}}^{r, \circ} \times A_{\kappa_{\calU}}^{r, \circ} \rightarrow R_{\calU}^{\circ}, \quad (\mu, \mu')\mapsto (\mu, \Phi_{\mu'}) \mapsto \mu(\Phi_{\mu'}).
\]
We denote by $[\cdot, \cdot]_{\kappa_{\calU}}$ the induced pairing on $D_{\kappa_{\calU}}^{r}$.

\begin{Remark}\label{Remark: relation to the algebraic pairing}
    For any $(k_1, k_2)\in \Z_{\geq 0}^2$ and recall the algebraic representation $V_k(\Q_p)$ and $V_k^{\vee}(\Q_p)$. Thanks to Remark \ref{Remark: alg. rep. is self-dual}, we see that there is a pairing \[
        \langle \cdot, \cdot \rangle_{k}: V_k^{\vee}(\Q_p) \times V_k^{\vee}(\Q_p) \rightarrow \Q_p, \quad (\mu, \mu') \mapsto \mu\left(\phi_{\mu'}\right).
    \]
    Unwinding the formulae in \emph{loc. cit.}, this pairing can be written as \[
        (\mu, \mu')\mapsto \int_{\left(\begin{pmatrix}1 & \\ X_1 & 1\end{pmatrix}, \begin{pmatrix}1 & \\ X_2 & 1\end{pmatrix}\right)\in N_G^{\opp}(\Q_p)}\int_{\left(\begin{pmatrix}1 & \\ X_1' & 1\end{pmatrix}, \begin{pmatrix}1 & \\ X_2' & 1\end{pmatrix}\right)\in N_G^{\opp}(\Q_p)} (X_1' - X_1)^{k_1}(X_2' - X_2)^{k_2} d\mu' d\mu.
    \]
    We can view $[\cdot, \cdot]_{k}$ as a twist of $\langle \cdot, \cdot \rangle_k$ by the \emph{Atkin--Lehner operator}\[
        \bfomega_p = \left(\begin{pmatrix} & -p\\ 1 & \end{pmatrix}, \begin{pmatrix} & -p\\ 1 & \end{pmatrix}\right)\in G(\Q_p)
    \] on the second variable. To see this, note that the embedding $V_k(\Q_p) \hookrightarrow A_k^r$ makes functions $f$ in $V_k(\Q_p)$ as functions on $\Z_p^2$ via \[
        f: \Z_p^2 \rightarrow \Q_p, \quad (X_1, X_2) \mapsto f\left( \begin{pmatrix}1 & \\ pX_1 & 1\end{pmatrix}, \begin{pmatrix}1 & \\ pX_2 & 1\end{pmatrix}\right).
    \] Applying $[\cdot, \cdot]_{k}$ to $V_k^{\vee}(\Q_p)$, we have \[
        [\mu, \mu']_k = \int_{(X_1, X_2)\in \Z_p^2}\int_{(X_1', X_2')\in \Z_p^2} (1+pX_1pX_1'/p)^{k_1} (1+pX_xpX_2')^{k_2} d\mu d\mu' = \int\int (1+pX_1X_1')^{k_1}(1+pX_2X_2')^{k_2} d\mu d\mu'.
    \]
    On the other hand, we have the following computation \[
        \trans\left(\begin{pmatrix}& -p\\ 1\end{pmatrix}\begin{pmatrix}1\\ X_i'\end{pmatrix}\right) \begin{pmatrix}& -1\\ 1\end{pmatrix}\begin{pmatrix}1\\ X_i\end{pmatrix} = \begin{pmatrix}1 & X_i'\end{pmatrix} \begin{pmatrix}& 1\\ -p\end{pmatrix}\begin{pmatrix}& -1\\ 1\end{pmatrix}\begin{pmatrix}1\\ X_i\end{pmatrix} = 1+pX_iX_i'
    \]
    for $i=1, 2$, which gives back the formula for $[\cdot, \cdot]_k$. 
\end{Remark}

\begin{Lemma}\label{Lemma: adjoint operators for the pairing}
    Recall that $\T_0$ admits a left-action by $\Xi$. We define an involution on $\Xi$ by \[
        \bfalpha = \left(\begin{pmatrix} a_1 & b_1\\ c_1 & d_1\end{pmatrix}, \begin{pmatrix} a_2 & b_2\\ c_2 & d_2\end{pmatrix}\right) \mapsto \bfalpha^{\Shi} = \left(\begin{pmatrix} a_1 & p^{-1}c_1\\ pb_1 & d_1\end{pmatrix}, \begin{pmatrix} a_2 & p^{-1}c_2\\ pb_2 & d_2\end{pmatrix}\right).
    \]
    Let $(R_{\calU}, \kappa_{\calU})$ be a weight and let $r>r_{\calU}$. Then, for any $\mu, \mu'\in D_{\kappa_{\calU}}^{r, \circ}$, we have \[
        [\bfalpha\mu, \mu']_{\kappa_{\calU}}^{\circ} = [\mu, \bfalpha^{\Shi}\mu']_{\kappa_{\calU}}^{\circ}.
    \]
\end{Lemma}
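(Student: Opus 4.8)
The plan is to recast the pairing $[\cdot,\cdot]^{\circ}_{\kappa_{\calU}}$ as the character $\kappa_{\calU}=(\kappa_{\calU,1},\kappa_{\calU,2})$ evaluated on a bilinear form on $\T_0\times\T_0$, and then to slide the $\Xi$-action across that bilinear form; the involution $\bullet^{\Shi}$ will then drop out of a single $2\times 2$ matrix identity, and everything factors through the two $\GL_2$-components.

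First I will unwind the definition of the pairing. Write a point of $\T_0$ as $v=(v_1,v_2)$ with $v_i=\left(\substack{a_i\\ c_i}\right)$, $a_i\in\Z_p^{\times}$, $c_i\in p\Z_p$, and set $S:=\diag(1,p^{-1})$. For each fixed $v'\in\T_0$ the function
\[
g_{v'}\colon v\longmapsto\kappa_{\calU,1}\!\bigl(\trans v_1\, S\, v_1'\bigr)\,\kappa_{\calU,2}\!\bigl(\trans v_2\, S\, v_2'\bigr)
\]
lies in $A^{r,\circ}_{\kappa_{\calU}}$: indeed $\trans v_i\, S\, v_i'=a_ia_i'+c_ic_i'/p\in\Z_p^{\times}$, and $g_{v'}$ is exactly the function $f^{(u',z')}_{\kappa_{\calU}}$ of Example~\ref{Example: highest weight element in analytic functions} with $u'=(a_1',a_2')\in(\Z_p^{\times})^2$ and $z'=(c_1'/p,c_2'/p)\in\Z_p^2$. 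In this notation the morphism $D^{r,\circ}_{\kappa_{\calU}}\to A^{r,\circ}_{\kappa_{\calU}}$, $\mu'\mapsto\Phi_{\mu'}$, is $\Phi_{\mu'}(v')=\mu'(g_{v'})$, and consequently
\[
[\mu,\mu']^{\circ}_{\kappa_{\calU}}=\mu\bigl(v'\longmapsto\mu'(g_{v'})\bigr),
\]
so that $\mu$ is applied in the $v'$-slot and $\mu'$ in the $v$-slot.

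Next I will compute both sides and compare. Recall that the left $\Xi$-action on distributions is $(\bfalpha\mu)(\phi)=\mu(\phi\,|\,\bfalpha)$ with $(\phi\,|\,\bfalpha)(t)=\phi(\bfalpha t)$, and write $\bfalpha=(\bfalpha_1,\bfalpha_2)$. On one side,
\[
[\bfalpha\mu,\mu']^{\circ}_{\kappa_{\calU}}=\mu\bigl(v'\longmapsto\mu'(g_{\bfalpha v'})\bigr)=\mu\Bigl(v'\longmapsto\mu'\bigl(v\mapsto\textstyle\prod_{i}\kappa_{\calU,i}(\trans v_i\, S\,\bfalpha_i\, v_i')\bigr)\Bigr).
\]
On the other side, since $(\bfalpha^{\Shi}\mu')(\psi)=\mu'\bigl(v\mapsto\psi(\bfalpha^{\Shi}v)\bigr)$ and $\trans(\bfalpha_i^{\Shi}v_i)=\trans v_i\,\trans(\bfalpha_i^{\Shi})$,
\[
[\mu,\bfalpha^{\Shi}\mu']^{\circ}_{\kappa_{\calU}}=\mu\Bigl(v'\longmapsto\mu'\bigl(v\mapsto\textstyle\prod_{i}\kappa_{\calU,i}(\trans v_i\,\trans(\bfalpha_i^{\Shi})\, S\, v_i')\bigr)\Bigr).
\]
It therefore suffices to verify the $2\times 2$ matrix identity $S\,\bfalpha_i=\trans(\bfalpha_i^{\Shi})\, S$ for $i=1,2$ — equivalently $\bfalpha_i^{\Shi}=S^{-1}\,\trans\bfalpha_i\, S=\diag(1,p)\cdot\trans\bfalpha_i\cdot\diag(1,p^{-1})$ — which is immediate from the defining formula for $\bfalpha_i^{\Shi}$. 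Plugging it into the two displays above makes them literally equal, which is the assertion.

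The step needing the most care is the bookkeeping around the $\Xi$-action rather than this matrix identity. One first checks that $\bullet^{\Shi}$ is a well-defined involution of $\Xi$ (a direct inspection of entries, using $a_i\in\Z_p^{\times}$, $b_i,d_i\in\Z_p$, $c_i\in p\Z_p$ and $\det\bfalpha_i=\det\bfalpha_i^{\Shi}$) and that $\Xi$ maps $\T_0$ into $\T_0$, so that each $\kappa_{\calU,i}$ is only ever applied to units of $\Z_p$; then one checks that the composites $v'\mapsto\Phi_{\mu'}(\bfalpha v')$ and $v\mapsto g_{v'}(\bfalpha^{\Shi}v)$ remain in $A^{r,\circ}_{\kappa_{\calU}}$, which is precisely the fact that the right $\Xi$-action preserves $A^{r,\circ}_{\kappa_{\calU}}$ (this is where $r>r_{\calU}$ enters). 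Granting these, the computation above closes the proof, and the same identity then passes to the induced pairing $[\cdot,\cdot]_{\kappa_{\calU}}$ on $D^{r}_{\kappa_{\calU}}$ after inverting $p$.
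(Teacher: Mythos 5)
Your proof is correct and is essentially the paper's argument: the paper's entire proof is the single matrix computation $\left(\begin{smallmatrix}1&\\&p^{-1}\end{smallmatrix}\right)\bfalpha_i=\left(\begin{smallmatrix}a_i&pb_i\\p^{-1}c_i&d_i\end{smallmatrix}\right)\left(\begin{smallmatrix}1&\\&p^{-1}\end{smallmatrix}\right)$, which is precisely your identity $S\,\bfalpha_i=\trans(\bfalpha_i^{\Shi})\,S$, with the distribution-level bookkeeping left implicit. You have simply made that bookkeeping (the kernel $g_{v'}$, the identification with $f^{(u',z')}_{\kappa_{\calU}}$ from Example \ref{Example: highest weight element in analytic functions}, and the stability checks) explicit, which is fine.
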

\begin{proof}
    The lemma follows from the following computation of matrix multiplication \[
        \begin{pmatrix}1 & \\ & p^{-1}\end{pmatrix}\begin{pmatrix} a & b \\ c & d\end{pmatrix} = \begin{pmatrix} a & b \\ p^{-1}c & p^{-1}d\end{pmatrix} = \begin{pmatrix}a & pb\\ p^{-1}c & d\end{pmatrix}\begin{pmatrix}1 & \\ & p^{-1}\end{pmatrix}.
    \]
\end{proof}

\begin{Lemma}\label{Lemma: the pairing respect to the filtration}
    Let $(R_{\calU}, \kappa_{\calU})$ be a small weight and let $r>1+ r_{\calU}$. Let $\fraka_{\calU}\subset R_{\calU}$ be an ideal of definition as in Remark \ref{Remark: filtration on distributions}. Then, the pairing $[\cdot, \cdot]_{\kappa_{\calU}}^{\circ}$ induces a pairing on the graded pieces \[
        [\cdot, \cdot]_{\kappa_{\calU}, j}^{\circ} : D_{\kappa_{\calU}, j}^{r, \circ} \times D_{\kappa_{\calU}, j}^{r, \circ} \rightarrow R_{\calU}/\fraka_{\calU}^j.
    \]
\end{Lemma}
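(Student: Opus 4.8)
The plan is to show that $\Fil^j D_{\kappa_{\calU}}^{r, \circ}$ is annihilated, up to $\fraka_{\calU}^j$, by $[\cdot,\cdot]_{\kappa_{\calU}}^{\circ}$ against all of $D_{\kappa_{\calU}}^{r, \circ}$ from either side. Since $[\cdot,\cdot]_{\kappa_{\calU}}^{\circ}$ is $R_{\calU}^{\circ}$-bilinear and $R_{\calU}^{\circ}=R_{\calU}$ for a small weight, the induced pairing on $D_{\kappa_{\calU}, j}^{r, \circ}=D_{\kappa_{\calU}}^{r, \circ}/\Fil^j D_{\kappa_{\calU}}^{r, \circ}$ with values in $R_{\calU}/\fraka_{\calU}^j$ then follows formally. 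Recall from \S\ref{subsection: pairing; overconvergent} that $[\mu,\mu']_{\kappa_{\calU}}^{\circ}=\mu(\Phi_{\mu'})$, where $\Phi_{\mu'}\in A_{\kappa_{\calU}}^{r, \circ}$ is determined by $\Phi_{\mu'}|_{\T_{00}}(X_1,X_2)=\mu'\big(f_{\kappa_{\calU}}^{((1,1),(X_1,X_2))}\big)$ with $f_{\kappa_{\calU}}^{((1,1),(X_1,X_2))}$ the function of Example~\ref{Example: highest weight element in analytic functions}; this makes sense because $f_{\kappa_{\calU}}^{((1,1),(X_1,X_2))}\in A_{\kappa_{\calU}}^{r, \circ}$, as $r>r_{\calU}$.

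The key claim is: for every $j\geq 0$, the assignment $\mu'\mapsto\Phi_{\mu'}$ carries $\Fil^j D_{\kappa_{\calU}}^{r, \circ}$ into $\fraka_{\calU}^j A_{\kappa_{\calU}}^{r-1, \circ}$, regarded inside $A_{\kappa_{\calU}}^{r, \circ}$; taking $j=0$ this in particular factors the assignment through the inclusion $A_{\kappa_{\calU}}^{r-1, \circ}\hookrightarrow A_{\kappa_{\calU}}^{r, \circ}$. This is where the hypothesis $r>1+r_{\calU}$ (not merely $r>r_{\calU}$) is used. First, for each fixed $(X_1,X_2)$ the function $f_{\kappa_{\calU}}^{((1,1),(X_1,X_2))}$ lies in the smaller space $A_{\kappa_{\calU}}^{r-1, \circ}$, because by Example~\ref{Example: highest weight element in analytic functions} (ultimately \cite[Proposition~2.6]{CHJ}) its restriction to $\T_{00}$ is $r'$-analytic for every $r'>r_{\calU}$, and $r-1>r_{\calU}$. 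Second, the bivariate kernel $(X_1,X_2,X_1',X_2')\mapsto\kappa_{\calU,1}(1+pX_1X_1')\kappa_{\calU,2}(1+pX_2X_2')$ is jointly $(r-1)$-analytic on $\Z_p^4$ — the point being that $w\mapsto\kappa_{\calU,i}(1+pw)$ is $r'$-analytic for $r'>r_{\calU}$ and multiplication $\Z_p^2\to\Z_p$ is polynomial — so, expanding it in the Amice basis of $\Z_p^4$ (which factors as a product of the Amice bases of the two $\Z_p^2$-factors) and pairing the $(X_1',X_2')$-variables against $\mu'$, one reads off from Corollary~\ref{Corollary: R-valued analytic functions} that $(X_1,X_2)\mapsto\mu'\big(f_{\kappa_{\calU}}^{((1,1),(X_1,X_2))}\big)$ is $(r-1)$-analytic, i.e.\ $\Phi_{\mu'}|_{\T_{00}}\in A_{R_{\calU}}^{r-1, \circ}$; since $\Phi_{\mu'}$ already satisfies the $\kappa_{\calU}$-homogeneity, this means $\Phi_{\mu'}\in A_{\kappa_{\calU}}^{r-1, \circ}$.

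Now suppose moreover $\mu'\in\Fil^j D_{\kappa_{\calU}}^{r, \circ}$. By definition of the filtration (Remark~\ref{Remark: filtration on distributions}), the image of $\mu'$ in $D_{\kappa_{\calU}}^{r-1, \circ}$ lies in $\fraka_{\calU}^j D_{\kappa_{\calU}}^{r-1, \circ}$, so $\mu'(\phi)\in\fraka_{\calU}^j$ for all $\phi\in A_{\kappa_{\calU}}^{r-1, \circ}$; applying this with $\phi=f_{\kappa_{\calU}}^{((1,1),(X_1,X_2))}$ for each $(X_1,X_2)$ shows that $\Phi_{\mu'}|_{\T_{00}}$ takes all of its values in $\fraka_{\calU}^j$. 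Since $R_{\calU}/\fraka_{\calU}^j$ is finite and $\Phi_{\mu'}|_{\T_{00}}\in A_{R_{\calU}}^{r-1, \circ}$, the orthonormal-basis description in Corollary~\ref{Corollary: R-valued analytic functions} forces all the Amice coefficients of $\Phi_{\mu'}|_{\T_{00}}$ into $\fraka_{\calU}^j$, that is $\Phi_{\mu'}|_{\T_{00}}\in\fraka_{\calU}^j A_{R_{\calU}}^{r-1, \circ}$, hence $\Phi_{\mu'}\in\fraka_{\calU}^j A_{\kappa_{\calU}}^{r-1, \circ}$ because the restriction-to-$\T_{00}$ map $A_{\kappa_{\calU}}^{r-1, \circ}\xrightarrow{\sim}A_{R_{\calU}}^{r-1, \circ}$ is an $R_{\calU}$-linear isomorphism. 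This proves the key claim.

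The lemma then drops out. Writing $[\mu,\mu']_{\kappa_{\calU}}^{\circ}=\mu(\Phi_{\mu'})$ and noting that $\Phi_{\mu'}\in A_{\kappa_{\calU}}^{r-1, \circ}$ (case $j=0$ of the claim) while the restriction of $\mu$ to $A_{\kappa_{\calU}}^{r-1, \circ}$ is an $R_{\calU}^{\circ}$-linear functional equal to the image of $\mu$ in $D_{\kappa_{\calU}}^{r-1, \circ}$: if $\mu\in\Fil^j D_{\kappa_{\calU}}^{r, \circ}$ then this image lies in $\fraka_{\calU}^j D_{\kappa_{\calU}}^{r-1, \circ}$, whence $[\mu,\mu']_{\kappa_{\calU}}^{\circ}\in\fraka_{\calU}^j$; and if $\mu'\in\Fil^j D_{\kappa_{\calU}}^{r, \circ}$ then $\Phi_{\mu'}\in\fraka_{\calU}^j A_{\kappa_{\calU}}^{r-1, \circ}$ and any $R_{\calU}^{\circ}$-linear functional sends it into $\fraka_{\calU}^j$. (The second case can alternatively be reduced to the first by observing that the kernel is symmetric under swapping the two copies of $\T_{00}$, so $[\cdot,\cdot]_{\kappa_{\calU}}^{\circ}$ is a symmetric pairing.) Thus $[\cdot,\cdot]_{\kappa_{\calU}}^{\circ}$ descends to the asserted pairing $[\cdot,\cdot]_{\kappa_{\calU}, j}^{\circ}$ on $D_{\kappa_{\calU}, j}^{r, \circ}$. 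The main obstacle is the analyticity bookkeeping of the key claim — genuinely landing $\Phi_{\mu'}$ in the smaller space $A_{\kappa_{\calU}}^{r-1, \circ}$ — which is precisely where $r>1+r_{\calU}$ enters; everything downstream is formal manipulation of the $\fraka_{\calU}$-adic filtrations.
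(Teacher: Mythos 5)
Your proof follows the paper's route: the hypothesis $r > 1 + r_{\calU}$ makes the integration kernel $(r-1)$-analytic, so the pairing factors through $D_{\kappa_{\calU}}^{r-1,\circ}$ along the restriction maps, and the definition of $\Fil^j$ together with $R_{\calU}$-bilinearity at level $r-1$ give the descent. Your $j=0$ key claim usefully unpacks the paper's terse observation that the two pairings fit into a commutative square because they are defined by the same formula. However, your argument for the $j\geq 1$ case of the key claim has a gap: you infer from the fact that $\Phi_{\mu'}|_{\T_{00}}\in A_{R_{\calU}}^{r-1,\circ}$ takes all its values in $\fraka_{\calU}^j$ that its Amice coefficients lie in $\fraka_{\calU}^j$. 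This inference is false in general. For instance, with $R_{\calU}=\Z_p$, $\fraka_{\calU}=(p)$, $j=1$ and $r-1=1$, the one-variable basis element $e_{p^2}^{(1)}=p!\binom{x}{p^2}$ of $A_{\Z_p}^{1,\circ}$ takes all its values in $p\Z_p$ (because $v_p(p!)=1$), yet its Amice coefficient is $1\notin p\Z_p$, so $e_{p^2}^{(1)}\notin pA_{\Z_p}^{1,\circ}$. Finiteness of $R_{\calU}/\fraka_{\calU}^j$ does not save the argument; the obstruction is precisely that Amice basis elements of higher index can themselves take all values in $pR_{\calU}\subset\fraka_{\calU}$ without being divisible by $p$ in the module $A_{R_{\calU}}^{r-1,\circ}$.

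The key claim is nonetheless true and the proof is easily repaired. Since $R_{\calU}$ is Noetherian, $\fraka_{\calU}^j$ is finitely generated, so one may write the image $\overline{\mu'}\in\fraka_{\calU}^j D_{\kappa_{\calU}}^{r-1,\circ}$ as a finite sum $\sum_k b_k\nu_k$ with $b_k\in\fraka_{\calU}^j$ and $\nu_k\in D_{\kappa_{\calU}}^{r-1,\circ}$; applying the $R_{\calU}$-linear assignment $\nu\mapsto\Phi_\nu$, which lands in $A_{\kappa_{\calU}}^{r-1,\circ}$ by your $j=0$ analysis, gives $\Phi_{\mu'}=\sum_k b_k\Phi_{\nu_k}\in\fraka_{\calU}^j A_{\kappa_{\calU}}^{r-1,\circ}$. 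Simpler still, the $j\geq 1$ part of the key claim is unnecessary: once you know that $[\mu,\mu']_{\kappa_{\calU}}^{\circ}$ equals the level-$(r-1)$ pairing of the images $\overline{\mu}$ and $\overline{\mu'}$, both of the required containments $\left[\Fil^j D_{\kappa_{\calU}}^{r,\circ}, D_{\kappa_{\calU}}^{r,\circ}\right]_{\kappa_{\calU}}^{\circ}\subset\fraka_{\calU}^j$ and $\left[D_{\kappa_{\calU}}^{r,\circ}, \Fil^j D_{\kappa_{\calU}}^{r,\circ}\right]_{\kappa_{\calU}}^{\circ}\subset\fraka_{\calU}^j$ follow directly from the $R_{\calU}$-bilinearity of the level-$(r-1)$ pairing, which is exactly the paper's one-line bilinearity step. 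Your appeal to the symmetry of the kernel is a third valid route to the second containment.
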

\begin{proof}
    Observe first that we have a commutative diagram \[
        \begin{tikzcd}
            D_{\kappa_{\calU}}^{r, \circ} \times D_{\kappa_{\calU}}^{r, \circ} \arrow[d, shift left = 5mm]\arrow[d, shift right = 5mm]\arrow[r, "\text{$[\cdot, \cdot]_{\kappa_{\calU}}^{\circ}$}"] & R_{\calU}\arrow[d, equal]\\
            D_{\kappa_{\calU}}^{r-1, \circ} \times D_{\kappa_{\calU}}^{r-1, \circ} \arrow[r, "\text{$[\cdot, \cdot]_{\kappa_{\calU}}^{\circ}$}"] & R_{\calU}
        \end{tikzcd}
    \]
    since the pairings on both spaces are defined by using the same formula. 
    Moreover, we have \[
        [\cdot, \cdot]_{\kappa_{\calU}}^{\circ}: \fraka_{\calU}^j D_{\kappa_{\calU}}^{r, \circ} \times \fraka_{\calU}^j D_{\kappa_{\calU}}^{r, \circ} \rightarrow \fraka_{\calU}^jR_{\calU}
    \]
    by the linearity of the pairing. Putting everything together, we arrive at the desired pairing. 
\end{proof}

Given a weight $(R_{\calU}, \kappa_{\calU})$ and $r>r_{\calU}$. The pairing $[\cdot, \cdot]_{\kappa_{\calU}}^{\circ}$ and the cup pruduct then induces a pairing on the cohomology groups \[
    H^i(Y_{\Iw_G}, D_{\kappa_{\calU}}^{r, \circ}) \times H_c^{3-i}(Y_{\Iw_G}, D_{\kappa_{\calU}}^{r, \circ}) \xrightarrow{\smile} H_c^{3}(Y_{\Iw_G}, D_{\kappa_{\calU}}^{r, \circ}\otimes_{R_{\calU}^{\circ}}D_{\kappa_{\calU}}^{r, \circ})\xrightarrow{[\cdot, \cdot]_{\kappa_{\calU}}^{\circ}} H^3_c(Y_{\Iw_G}, R_{\calU}^{\circ}) \simeq R_{\calU}^{\circ}. 
\]
The compatibility of cup products yields a commutative diagram \[
    \begin{tikzcd}
        H^i(Y_{\Iw_G}, D_{\kappa_{\calU}}^{r, \circ}) \times H_{c}^{3-i}(Y_{\Iw_G}, D_{\kappa_{\calU}}^{r, \circ}) \arrow[r, "\smile"] & H_c^{3}(Y_{\Iw_G}, D_{\kappa_{\calU}}^{r, \circ} \otimes_{R_{\calU}^{\circ}} D_{\kappa_{\calU}}^{r, \circ}) \\
        H_c^i(Y_{\Iw_G}, D_{\kappa_{\calU}}^{r, \circ}) \times H_c^{3-i}(Y_{\Iw_G}, D_{\kappa_{\calU}}^{r, \circ}) \arrow[r, "\smile"] \arrow[u, shift left = 15mm]\arrow[u, equal, shift right = 15mm] \arrow[d, equal, shift right = 15mm]\arrow[d, shift left = 15mm] & H_c^{3}(Y_{\Iw_G}, D_{\kappa_{\calU}}^{r, \circ} \otimes_{R_{\calU}^{\circ}} D_{\kappa_{\calU}}^{r, \circ}) \arrow[u, equal]\arrow[d, equal]\\
        H_c^i(Y_{\Iw_G}, D_{\kappa_{\calU}}^{r, \circ}) \times H^{3-i}(Y_{\Iw_G}, D_{\kappa_{\calU}}^{r, \circ}) \arrow[r, "\smile"] & H_c^{3}(Y_{\Iw_G}, D_{\kappa_{\calU}}^{r, \circ} \otimes_{R_{\calU}^{\circ}} D_{\kappa_{\calU}}^{r, \circ})
    \end{tikzcd}
\]
and hence induces a well-defined pairing \[
    [\cdot, \cdot]_{\kappa_{\calU}}^{\circ}: H_{\mathrm{par}}^i(Y_{\Iw_G}, D_{\kappa_{\calU}}^{r, \circ}) \times H_{\mathrm{par}}^{3-i}(Y_{\Iw_G}, D_{\kappa_{\calU}}^{r, \circ}) \rightarrow R_{\calU}^{\circ},
\]
for which we abuse the notation and still denote it by $[\cdot, \cdot]_{\kappa_{\calU}}^{\circ}$.\footnote{  Here, for $D\in \{D_{\kappa_{\calU}}^{r, \circ}, D_{\kappa_{\calU}}^r, D_{\kappa_{\calU}}^{r^+, \circ}, D_{\kappa_{\calU}}^{r^+}\}$, $H_{\mathrm{par}}^i(Y_{\Iw_G}, D)$ is (obviously) defined to be the image of $H_c^i(Y_{\Iw_G}, D)$ inside $H^i(Y_{\Iw_G}, D)$. Note that since the natural map $H_c^i(Y_{\Iw_G}, D) \rightarrow H^i(Y_{\Iw_G}, D)$ is Hecke-equivariant, $H_{\mathrm{par}}^i(Y_{\Iw_G}, D)$ is $\bbT$-stable. } Similarly, we use the notation $[\cdot, \cdot]_{\kappa_{\calU}}$ to denote that induced pairing on $H_{\mathrm{par}}^i(Y_{\Iw_G}, D_{\kappa_{\calU}}^r)$.

\begin{Proposition}\label{Proposition: pairing on cohomology groups is Hecke-equivariant}
    Given a weight $(R_{\calU}, \kappa_{\calU})$ and $r>r_{\calU}$, the pairings $[\cdot, \cdot]_{\kappa_{\calU}}^{\circ}$ and  $[\cdot, \cdot]_{\kappa_{\calU}}$ on $H_{\mathrm{par}}^*$ are Hecke-equivariant.
\end{Proposition}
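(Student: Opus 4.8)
The plan is to reduce the statement, via the compatibility diagram of cup products already displayed above, to two separate checks: Hecke-equivariance for the spherical operators $T_{\bfdelta}$ with $\bfdelta \in G(\Q_\ell)$, $\ell \notin (p)\frakn$, and Hecke-equivariance for the $U_p$-operator (equivalently $U_{\frakp}$ and $U_{\overline{\frakp}}$). In both cases the operator on cohomology is built from a cohomological correspondence: a finite covering datum $Y' \rightrightarrows Y_{\Iw_G}$ together with the action of a matrix $\bfdelta$ (resp.\ $\bfupsilon_{p,c}$) on the coefficient module. Since the cup product $H^i_{\mathrm{par}} \times H^{3-i}_{\mathrm{par}} \to R_{\calU}^{\circ}$ is defined by cupping, tensoring coefficients, and then applying $[\cdot,\cdot]^{\circ}_{\kappa_{\calU}}$ on $D^{r,\circ}_{\kappa_{\calU}} \otimes D^{r,\circ}_{\kappa_{\calU}} \to R_{\calU}^{\circ}$, the whole question comes down to: does the coefficient pairing $[\cdot,\cdot]^{\circ}_{\kappa_{\calU}}$ interact correctly with the two projection/pullback maps and the matrix actions that define $T_{\bfdelta}$ and $U_p$?

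First I would treat the spherical operators. Here the key input is the transfer (trace) formalism from \S\ref{section: cohomological correspondences for topological spaces}: for a finite covering $\pr_1 : Y' \to Y_{\Iw_G}$ one has a projection formula relating $\tr(\pr_1^{-1}(a) \smile \pr_1^{-1}(b))$ to $\tr(\pr_1^{-1}(a)) \smile b$ on $H^\bullet$, and this is compatible with coefficient pairings that are defined ``pointwise'' by a $G$-invariant formula on the local system. Since $\langle\cdot,\cdot\rangle_k$ and its overconvergent avatar $[\cdot,\cdot]^{\circ}_{\kappa_{\calU}}$ are defined by integrating a function of the form $\kappa_{\calU}(\text{matrix expression})$, they are invariant under the action of elements of $G(\Z_\ell) = G(\Z_p)$-type (more precisely, invariant under the relevant stabiliser groups at $\ell$, which act trivially on the $p$-adic coefficients since $\ell \neq p$). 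Concretely, one checks the adjunction $[\bfdelta\mu,\mu']^{\circ}_{\kappa_{\calU}} = [\mu,\bfdelta^{-1}\mu']^{\circ}_{\kappa_{\calU}}$ for $\bfdelta$ coming from $G(\Z_\ell)$ — but at primes away from $p$ the action on $D^{r,\circ}_{\kappa_{\calU}}$ is through the $p$-adic coefficient system, on which $G(\Z_\ell)$ acts trivially, so the adjunction is vacuous and equivariance follows formally from the projection formula plus the fact that $\pr_1^{-1}$, $\pr_2^{-1}$ and the isomorphism induced by $\bfdelta$ all commute with $[\cdot,\cdot]^{\circ}_{\kappa_{\calU}}$.

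The substantive case is $U_p$ (hence also $U_{\frakp}$, $U_{\overline{\frakp}}$), and this is where Lemma \ref{Lemma: adjoint operators for the pairing} does the work. Writing $U_p(\mu) = \sum_c \bfupsilon_{p,c}\mu$, one wants $[U_p \xi, \eta]_{\kappa_{\calU}} = [\xi, U_p \eta]_{\kappa_{\calU}}$ on parabolic cohomology. The point is that $U_p$ on cohomology is computed from the correspondence $(\pr_1,\pr_2)$ on $Y_{\Iw_G}$ attached to $\bfupsilon_p$ together with the action of the $\bfupsilon_{p,c}$ on coefficients; the cup-product/transfer formalism converts $[U_p\xi,\eta]$ into an expression of the shape $[\xi, U_p^{\Shi}\eta]$, where $U_p^{\Shi}$ is the operator built from the $\Shi$-transposes $\bfupsilon_{p,c}^{\Shi}$. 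So the real claim to verify is that $U_p^{\Shi} = U_p$ as operators on $H^{3-i}_{\mathrm{par}}(Y_{\Iw_G}, D^{r,\circ}_{\kappa_{\calU}})$. This is a computation with double cosets: $\bfupsilon_{p,c}^{\Shi}$ is conjugate/congruent to $\bfupsilon_{p,c'}$ modulo $\Iw_G$ on both sides, so $\bigsqcup_c \Iw_G \bfupsilon_{p,c}^{\Shi} \Iw_G = \bigsqcup_c \Iw_G \bfupsilon_{p,c} \Iw_G = \Iw_G \bfupsilon_p \Iw_G$ (using the Iwahori decomposition $\Iw_G = N^{\opp}_{G,1} B_G(\Z_p)$ and that $\Shi$ preserves this structure up to $\Iw_G$), whence the two Hecke operators coincide. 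I expect the Iwahori double-coset bookkeeping for $U_p^{\Shi}=U_p$ — keeping track of left versus right cosets, and the direction of the actions on $D^{r,\circ}$ versus on $Y_{\Iw_G}$ — to be the main obstacle; everything else is formal transfer/projection-formula manipulation plus Lemma \ref{Lemma: adjoint operators for the pairing}. Finally, the statement for $[\cdot,\cdot]_{\kappa_{\calU}}$ on $H^i_{\mathrm{par}}(Y_{\Iw_G}, D^r_{\kappa_{\calU}})$ follows from the integral case by inverting $p$.
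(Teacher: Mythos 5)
Your proposal is correct and follows essentially the same route as the paper's own proof: split into the spherical operators at $\ell \notin (p)\frakn$ (where the action on $D^{r,\circ}_{\kappa_{\calU}}$ is trivial, so equivariance is formal) and the operators at $p$ (where Lemma \ref{Lemma: adjoint operators for the pairing} supplies the $\Shi$-adjunction, and the key point is that the relevant double cosets are $\Shi$-stable). The only difference worth flagging is that the paper treats $U_{\frakp}$ and $U_{\overline{\frakp}}$ directly rather than $U_p$, and the $\Shi$-stability of $\Iw_G\bfupsilon_{\frakp}\Iw_G$ is a one-line observation there, since $\bfupsilon_{\frakp}$ is diagonal (hence $\bfupsilon_{\frakp}^{\Shi}=\bfupsilon_{\frakp}$) and $\Iw_G^{\Shi}=\Iw_G$; you correctly identify this as the step to verify but frame it as a potential obstacle, when in fact it needs no coset-representative bookkeeping at all. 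One minor imprecision: in the spherical case you write the adjunction with $\bfdelta^{-1}$, but the correct adjoint would involve the transpose double coset rather than the inverse matrix acting on coefficients; since the coefficient action is trivial anyway, this is harmless.
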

\begin{proof}
    For Hecke operators away from $p$ and $\frakn$, since those Hecke operators acts on $D_{\kappa_{\calU}}^{r, \circ}$ and $D_{\kappa_{\calU}}^{r}$ trivially, the equivariance of those operators is formal. For the operators $U_{\frakp}$ and $U_{\overline{\frakp}}$ operator, note that the double cosets $\Iw_G \bfupsilon_{\frakp} \Iw_G$ and $\Iw_G \bfupsilon_{\overline{\frakp}} \Iw_G$ are stable under the involution $\bfalpha \mapsto \bfalpha^{\Shi}$ (Lemma \ref{Lemma: adjoint operators for the pairing}), hence the pairing is also $U_{\frakp}$- and $U_{\overline{\frakp}}$-equivariant. 
\end{proof}

\begin{Proposition}\label{Proposition: integral pairing when we have a small weight}
    Let $(R_{\calU}, \kappa_{\calU})$ be a small weight and let $r>1+r_{\calU}$. For any $j\in \Z_{\geq 0}$, let $D_{\kappa_{\calU}, j}^{r, \circ}$ be the $j$-th graded piece of the filtration defined in Remark \ref{Remark: filtration on distributions}.
    \begin{enumerate}
        \item[(i)]  We have a canonical isomorphism \[
            H_{\mathrm{par}}^i(Y_{\Iw_G}, D_{\kappa_{\calU}}^{r, \circ}) \simeq \varprojlim_{j} H_{\mathrm{par}}^i(Y_{\Iw_G}, D_{\kappa_{\calU}, j}^{r, \circ}).
        \] 
        \item[(ii)] For any $j$, we have a commutative diagram \[
            \begin{tikzcd}
                H_{\mathrm{par}}^i(Y_{\Iw_G}, D_{\kappa_{\calU}}^{r, \circ}) \times H_{\mathrm{par}}^{3-i}(Y_{\Iw_G}, D_{\kappa_{\calU}}^{r, \circ}) \arrow[r, "\text{$[\cdot, \cdot]_{\kappa_{\calU}}^{\circ}$}"]\arrow[d, shift left = 15mm]\arrow[d, shift right = 15mm] & R_{\calU}^{\circ}\arrow[d]\\
                H_{\mathrm{par}}^i(Y_{\Iw_G}, D_{\kappa_{\calU},j}^{r, \circ}) \times H_{\mathrm{par}}^{3-i}(Y_{\Iw_G}, D_{\kappa_{\calU},j}^{r, \circ}) \arrow[r, "\text{$[\cdot, \cdot]_{\kappa_{\calU},j}^{\circ}$}"] & R_{\calU}^{\circ}/\fraka_{\calU}^j
            \end{tikzcd}.
        \]
    \end{enumerate}
\end{Proposition}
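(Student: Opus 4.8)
Here is the plan.

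The plan is to reduce both assertions to the Borel--Serre cochain complexes and then exploit the Mittag--Leffler property furnished by the finiteness of the graded pieces $D_{\kappa_{\calU},j}^{r,\circ}$ recorded in Remark~\ref{Remark: filtration on distributions}. Throughout, I would use that $H^i(Y_{\Iw_G},-)$ and $H_c^i(Y_{\Iw_G},-)$ are computed respectively by $C^\bullet(\Iw_G,-)$ and by $C_c^\bullet(\Iw_G,-)=\mathrm{Cone}(\pi_\partial)[-1]$, which in each degree are \emph{finite} direct sums of copies of the coefficient module (one summand per simplex of the fixed finite simplicial decomposition of $X_{\Iw_G}^{\mathrm{BS}}$); in particular each functor $C^i(\Iw_G,-)$, and likewise $C_c^i(\Iw_G,-)$, is exact.

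For (i), since finite direct sums commute with inverse limits, Remark~\ref{Remark: filtration on distributions} gives $C^\bullet(\Iw_G,D_{\kappa_{\calU}}^{r,\circ})=\varprojlim_j C^\bullet(\Iw_G,D_{\kappa_{\calU},j}^{r,\circ})$, a tower of bounded complexes of finite abelian groups whose transition maps are degreewise surjective, being the reductions $C^i(\Iw_G,D_{\kappa_{\calU}}^{r,\circ})/C^i(\Iw_G,\Fil^{j+1}D_{\kappa_{\calU}}^{r,\circ})\twoheadrightarrow C^i(\Iw_G,D_{\kappa_{\calU}}^{r,\circ})/C^i(\Iw_G,\Fil^{j}D_{\kappa_{\calU}}^{r,\circ})$. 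The Milnor exact sequence then identifies $H^i(Y_{\Iw_G},D_{\kappa_{\calU}}^{r,\circ})$ with $\varprojlim_j H^i(Y_{\Iw_G},D_{\kappa_{\calU},j}^{r,\circ})$ up to a $\varprojlim^1$ term, which vanishes because each $H^{i-1}(Y_{\Iw_G},D_{\kappa_{\calU},j}^{r,\circ})$ is finite, hence the cohomology tower is automatically Mittag--Leffler; the same argument applies verbatim to $C_c^\bullet$. To descend to parabolic cohomology I would then take, for each $j$, the short exact sequences $0\to K_j\to H_c^i(Y_{\Iw_G},D_{\kappa_{\calU},j}^{r,\circ})\to H_{\mathrm{par}}^i(Y_{\Iw_G},D_{\kappa_{\calU},j}^{r,\circ})\to 0$ and $0\to H_{\mathrm{par}}^i(Y_{\Iw_G},D_{\kappa_{\calU},j}^{r,\circ})\to H^i(Y_{\Iw_G},D_{\kappa_{\calU},j}^{r,\circ})\to Q_j\to 0$; every group in sight is finite, so all these towers are Mittag--Leffler and $\varprojlim_j$ is exact on them. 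Combining this with the previous step and with the compatibility of the natural arrow $H_c^i\to H^i$ with the reduction maps, one concludes $\varprojlim_j H_{\mathrm{par}}^i(Y_{\Iw_G},D_{\kappa_{\calU},j}^{r,\circ})=\image\bigl(H_c^i(Y_{\Iw_G},D_{\kappa_{\calU}}^{r,\circ})\to H^i(Y_{\Iw_G},D_{\kappa_{\calU}}^{r,\circ})\bigr)=H_{\mathrm{par}}^i(Y_{\Iw_G},D_{\kappa_{\calU}}^{r,\circ})$, the isomorphism being the canonical one induced by the reductions $D_{\kappa_{\calU}}^{r,\circ}\to D_{\kappa_{\calU},j}^{r,\circ}$.

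For (ii), the point is that the pairing on $H_{\mathrm{par}}^\ast$ is assembled functorially out of the cup product and the coefficient pairing $[\cdot,\cdot]_{\kappa_{\calU}}^\circ\colon D_{\kappa_{\calU}}^{r,\circ}\otimes_{R_{\calU}^\circ}D_{\kappa_{\calU}}^{r,\circ}\to R_{\calU}^\circ$, followed by the identification $H_c^3(Y_{\Iw_G},R_{\calU}^\circ)\simeq R_{\calU}^\circ$. Lemma~\ref{Lemma: the pairing respect to the filtration} supplies precisely the commutative square relating $[\cdot,\cdot]_{\kappa_{\calU}}^\circ$ on the coefficient modules with $[\cdot,\cdot]_{\kappa_{\calU},j}^\circ$ on the graded pieces. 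Since the cup product is natural in the coefficients and the reduction $D_{\kappa_{\calU}}^{r,\circ}\to D_{\kappa_{\calU},j}^{r,\circ}$ induces compatible maps on $H^i$, $H_c^i$, and hence on $H_{\mathrm{par}}^i$ by (i), chaining these naturalities produces the asserted commutative diagram, in which the right vertical map $R_{\calU}^\circ\to R_{\calU}^\circ/\fraka_{\calU}^j$ is the one corresponding under the identification $H_c^3(Y_{\Iw_G},-)\simeq(-)$ to $H_c^3(Y_{\Iw_G},R_{\calU}^\circ)\to H_c^3(Y_{\Iw_G},R_{\calU}^\circ/\fraka_{\calU}^j)$.

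The only genuinely non-formal input — and hence the step to check carefully — is the vanishing of the $\varprojlim^1$ terms, i.e. verifying that the towers of cochain complexes have surjective transition maps and that the cohomology towers are Mittag--Leffler. Both reduce to the finiteness of the graded pieces $D_{\kappa_{\calU},j}^{r,\circ}$ from Remark~\ref{Remark: filtration on distributions} (which is where the hypothesis $r>1+r_{\calU}$ enters) together with the finiteness of the simplicial decomposition of $X_{\Iw_G}^{\mathrm{BS}}$. Once that is secured, both parts amount to assembling naturality squares, with the passage through $H_{\mathrm{par}}^i=\image(H_c^i\to H^i)$ being the only place one must actually invoke exactness of $\varprojlim_j$ rather than mere left-exactness.
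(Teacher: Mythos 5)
Your proposal is correct and takes essentially the same route as the paper: both reduce (i) to commuting cohomology with $\varprojlim_j$ along the filtration of Remark \ref{Remark: filtration on distributions} via a Mittag--Leffler argument, and deduce (ii) from (i) together with Lemma \ref{Lemma: the pairing respect to the filtration} by naturality of the cup product. You are in fact somewhat more thorough than the paper, whose proof only invokes surjectivity of the transition maps (i.e.\ $R^1\varprojlim_j D_{\kappa_{\calU},j}^{r,\circ}=0$): your appeal to the Milnor sequence, to the finiteness of the graded pieces to kill $\varprojlim^1$ of the cohomology towers, and your explicit descent to the parabolic image $H_{\mathrm{par}}^i=\image(H_c^i\rightarrow H^i)$ supply steps the paper leaves implicit.
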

\begin{proof}
    The second assertion follows immediately from (i) and Lemma \ref{Lemma: the pairing respect to the filtration}. 
    
    To show (i), it is enough to show that \[
        H^i(Y_{\Iw_G}, D_{\kappa_{\calU}}^{r, \circ}) \simeq \varprojlim_{j} H^i(Y_{\Iw_G}, D_{\kappa_{\calU}, j}^{r, \circ})\quad \text{ and }\quad H^i_c(Y_{\Iw_G}, D_{\kappa_{\calU}}^{r, \circ}) \simeq \varprojlim_j H_c^i(Y_{\Iw_G}, D_{\kappa_{\calU}, j}^{r, \circ}). 
    \]
    This amount to show that $R^1\varprojlim_j D_{\kappa_{\calU}, j}^{r, \circ} = 0$. However, since the transition maps in the projective system $\{D_{\kappa_{\calU}, j}^{r, \circ}\}_{j\in \Z_{\geq 0}}$ are surjective, it is a Mittag-Leffler system. Consequently,  $R^1\varprojlim_j D_{\kappa_{\calU}, j}^{r, \circ} = 0$.
\end{proof}

\begin{Proposition}\label{Proposition: non-degeneracy of the pairing}
    Let $k\in \Z_{\geq 0}$. Then, for any $h\in \Q_{\geq 0}$ with $h<k+1$, the pairing \[
        H_{\Par}^i(Y_{\Iw_G}, D_{k}^r)^{\leq h} \times H_{\Par}^{3-i}(Y_{\Iw_G}, D_k^r)^{\leq h} \rightarrow \Q_p
    \]
    is a perfect pairing. 
\end{Proposition}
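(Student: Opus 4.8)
The plan is to reduce to classical coefficients via Stevens's control theorem and then invoke Poincaré–Lefschetz duality on the Bianchi threefold. First note that it suffices to treat $i\in\{1,2\}$: by Theorem~\ref{Theorem: ESH isomorphism}(i) one has $H_{\Par}^i(Y_{\Iw_G},V_k^\vee(\Q_p))=0$ for $i\notin\{1,2\}$, hence also $H_{\Par}^i(Y_{\Iw_G},D_k^r)^{\leq h}=0$ for such $i$ by Theorem~\ref{Theorem: control theorem for overconvergent cohomology}, and the pairing is trivially perfect on zero spaces. For $i\in\{1,2\}$ and $h<k+1$, Theorem~\ref{Theorem: control theorem for overconvergent cohomology} supplies Hecke-equivariant isomorphisms $H_{\Par}^j(Y_{\Iw_G},D_k^r)^{\leq h}\xrightarrow{\sim}H_{\Par}^j(Y_{\Iw_G},V_k^\vee(\Q_p))^{\leq h}$ for $j=i,3-i$, induced by the specialisation map \eqref{eq: from D to dual of V}. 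The first task is to check that this map intertwines $[\cdot,\cdot]_k$ on $D_k^r$ with $[\cdot,\cdot]_k$ on $V_k^\vee(\Q_p)$: both pairings are given by the same integral formula, and the embedding $V_k(\Q_p)\hookrightarrow A_k^r$ realises a function in $V_k(\Q_p)$ as the value at $k$ of the kernel $\kappa_{\calU,1}(1+c_1c_1'/p)\kappa_{\calU,2}(1+c_2c_2'/p)$, so the induced cup-product pairings on cohomology are compatible. It therefore suffices to prove that $[\cdot,\cdot]_k\colon H_{\Par}^i(Y_{\Iw_G},V_k^\vee(\Q_p))^{\leq h}\times H_{\Par}^{3-i}(Y_{\Iw_G},V_k^\vee(\Q_p))^{\leq h}\to\Q_p$ is perfect.

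Next I would establish perfectness of $[\cdot,\cdot]_k$ on the \emph{whole} classical parabolic cohomology. By Remark~\ref{Remark: alg. rep. is self-dual} the pairing $[\cdot,\cdot]_k$ on $V_k^\vee(\Q_p)$ is non-degenerate — it differs from the self-duality pairing $\langle\cdot,\cdot\rangle_k$ only by the invertible Atkin–Lehner twist $\bfomega_p$ (Remark~\ref{Remark: relation to the algebraic pairing}) — so it exhibits $V_k^\vee(\Q_p)$ as a self-dual local system on $Y_{\Iw_G}$. Poincaré–Lefschetz duality for the $3$-(orbi)manifold $Y_{\Iw_G}$, computed through the Borel–Serre compactification as in \S\ref{subsection: overconvergent cohomology}, then yields a perfect pairing $H^i(Y_{\Iw_G},V_k^\vee(\Q_p))\times H_c^{3-i}(Y_{\Iw_G},V_k^\vee(\Q_p))\to\Q_p$. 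Since the cup product of a compactly supported class with an arbitrary class is again compactly supported, the value $[\tilde x,\tilde y]_k$ for $\tilde x\in H_c^i$, $\tilde y\in H_c^{3-i}$ depends only on the image of $\tilde x$ in $H^i$ and that of $\tilde y$ in $H^{3-i}$, i.e. only on the corresponding classes in $H_{\Par}^i$ and $H_{\Par}^{3-i}$; combining this with non-degeneracy of $H^i\times H_c^{3-i}\to\Q_p$ (and the symmetric statement), a short diagram chase shows that $[\cdot,\cdot]_k$ restricts to a perfect pairing on $H_{\Par}^i(Y_{\Iw_G},V_k^\vee(\Q_p))\times H_{\Par}^{3-i}(Y_{\Iw_G},V_k^\vee(\Q_p))$.

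Finally I would cut down to the slope-$\leq h$ subspaces. By Lemma~\ref{Lemma: adjoint operators for the pairing} and the $\Shi$-stability of the double coset $\Iw_G\bfupsilon_p\Iw_G$ used in Proposition~\ref{Proposition: pairing on cohomology groups is Hecke-equivariant}, the operator $U_p$ is self-adjoint for $[\cdot,\cdot]_k$. Writing $H_{\Par}^j=H_{\Par}^{j,\leq h}\oplus H_{\Par}^{j,>h}$ for the slope decomposition of the $U_p$-action ($j=i,3-i$), the characteristic polynomials of $U_p$ on the $\leq h$ and $>h$ pieces have disjoint root sets, so the corresponding projectors are polynomials in $U_p$; applying such a projector inside $[\cdot,\cdot]_k$ and using self-adjointness of $U_p$ forces the two off-diagonal blocks $[H_{\Par}^{i,\leq h},H_{\Par}^{3-i,>h}]_k$ and $[H_{\Par}^{i,>h},H_{\Par}^{3-i,\leq h}]_k$ to vanish. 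Thus the perfect pairing on $H_{\Par}^i\times H_{\Par}^{3-i}$ is block-diagonal for these decompositions, and a block-diagonal perfect pairing restricts to a perfect pairing on each diagonal block — in particular on $H_{\Par}^{i,\leq h}\times H_{\Par}^{3-i,\leq h}$, which is the assertion. The main obstacles are (a) pinning down the compatibility of the two pairings under \eqref{eq: from D to dual of V} so that Stevens's theorem actually transports perfectness, and (b) the perfectness of the cup-product pairing on the interior/parabolic cohomology of the non-compact $Y_{\Iw_G}$, which requires the Borel–Serre model and the long exact sequence of the pair $(X_{\Iw_G}^{\mathrm{BS}},\partial X_{\Iw_G}^{\mathrm{BS}})$; the slope-$\leq h$ reduction is then formal.
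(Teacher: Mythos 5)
Your proposal is correct and follows essentially the same route as the paper: reduce via Stevens's control theorem, note that $[\cdot,\cdot]_k$ on $V_k^{\vee}(\Q_p)$ is the Atkin--Lehner twist of the perfect pairing $\langle\cdot,\cdot\rangle_k$, invoke Poincaré duality on $Y_{\Iw_G}$, and run the surjective/injective diagram chase to get non-degeneracy on parabolic cohomology. The only difference is presentational: you make explicit two points the paper leaves implicit, namely the compatibility of the pairings under the specialisation map $D_k^r\twoheadrightarrow V_k^{\vee}(\Q_p)$ and the block-diagonality of the slope decomposition coming from self-adjointness of $U_p$, whereas the paper states the duality directly on the slope-$\leq h$ pieces.
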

\begin{proof}
    In this situation, we first apply Theorem \ref{Theorem: control theorem for overconvergent cohomology} and so we only need to show the perfectness of the pairing \[
        H_{\Par}^i(Y_{\Iw_G}, V_k^{\vee}(\Q_p))^{\leq h} \times H_{\Par}^{3-i}(Y_{\Iw_G}, V_k^{\vee}(\Q_p))^{\leq h} \rightarrow \Q_p.
    \]

    To this end, we first claim that the pairing \[
        [\cdot, \cdot]_k : V_k^{\vee}(\Q_p) \times V_k^{\vee}(\Q_p) \rightarrow \Q_p
    \] is a perfect pairing. By Remark \ref{Remark: relation to the algebraic pairing}, we may view $[\cdot, \cdot]_{k}$ as a twisted pairing of the perfect pairing $\langle \cdot, \cdot \rangle_k$ (see Remark \ref{Remark: alg. rep. is self-dual}) by the Atkin--Lehner operator $\bfomega_p$. However, the action of $\bfomega_p$ on $V_k^{\vee}(\Q_p)$ is invertible with inverse given by $\bfomega_p^{-1} = \left( (\substack{\,\,\,\,\,\,\,\,\,\,\,\,\,\,\,\,\,\, 1\\ -p^{-1}\,\,\,\,\,\,\,\,\,\,}), (\substack{\,\,\,\,\,\,\,\,\,\,\,\,\,\,\,\,\,\, 1\\ -p^{-1}\,\,\,\,\,\,\,\,\,\,})\right) \in G(\Q_p)$.

    Now, since $[\cdot, \cdot]_k$ is a perfect pairing, by Poincaré duality (see, for example, \cite[Corollary III.3.12]{Eigen}), we have two perfect pairings \begin{align}
        H^i(Y_{\Iw_G}, V_k^{\vee}(\Q_p))^{\leq h} \times H_{c}^{3-i}(Y_{\Iw_G}, V_k^{\vee}(\Q_p))^{\leq h} & \rightarrow \Q_p, \\
        H_{c}^i(Y_{\Iw_G}, V_k^{\vee}(\Q_p))^{\leq h} \times H^{3-i}(Y_{\Iw_G}, V_k^{\vee}(\Q_p))^{\leq h} & \rightarrow \Q_p .
    \end{align}
    Hence, we have a commutative diagram \[
        \begin{tikzcd}
            \Hom_{\Q_p}(H^i(Y_{\Iw_G}, V_k^{\vee}(\Q_p)), \Q_p) \arrow[r, two heads] & \Hom_{\Q_p}(H_{\Par}^i(Y_{\Iw_G}, V_k^{\vee}, \Q_p)) \arrow[r, hook] & \Hom_{\Q_p}(H_c^i(Y_{\Iw_G}, V_k^{\vee}(\Q_p)), \Q_p)\\
            H_c^{3-i}(Y_{\Iw_G}, V_k^{\vee}(\Q_p)) \arrow[r, two heads]\arrow[u, equal] & H_{\Par}^{3-i}(Y_{\Iw_G}, V_k^{\vee}(\Q_p)) \arrow[u] \arrow[r, hook] & H^{3-i}(Y_{\Iw_G}, V_k^{\vee}(\Q_p))\arrow[u, equal]
        \end{tikzcd}.
    \]
    One then deduces that the middle vertical map is surjective and injective, which shows the desired result. 
\end{proof}

As an immediate consequence of Proposition \ref{Proposition: integral pairing when we have a small weight} and Proposition \ref{Proposition: non-degeneracy of the pairing}, we have the following corollaries. 

\begin{Corollary}\label{Corollary: non-degenercay of the pairing; ordinary case}
    Let $k\in \Z_{\geq 0}$. Then, we have a perfect pairing \[
        H_{\Par}^i(Y_{\Iw_G}, D_k^r)^{\ord} \times H_{\Par}^{3-i}(Y_{\Iw_G}, D_k^r)^{\ord} \rightarrow \Q_p.
    \]
\end{Corollary}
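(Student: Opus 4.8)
The plan is to realise $H_{\Par}^i(Y_{\Iw_G},D_k^r)^{\ord}$ as an orthogonal direct summand of the finite-slope cohomology $H_{\Par}^i(Y_{\Iw_G},D_k^r)^{\leq h}$ and transport the perfectness supplied by Proposition~\ref{Proposition: non-degeneracy of the pairing}. First I would fix any $h\in\Q$ with $0<h<k+1$. By the discussion of \S\ref{subsection: control theorems} (in particular the proof of Theorem~\ref{Theorem: Hida's control theorem}), the ordinary part $H_{\Par}^i(Y_{\Iw_G},D_k^r)^{\ord}=(e_{\ord}H_{\Par}^i(Y_{\Iw_G},D_k^{r,\circ}))[1/p]$ is precisely the image of the idempotent $e_{\ord}$ acting on the finite-dimensional $\Q_p$-space $H_{\Par}^i(Y_{\Iw_G},D_k^r)^{\leq h}$; hence $H_{\Par}^i(Y_{\Iw_G},D_k^r)^{\leq h}=H_{\Par}^i(Y_{\Iw_G},D_k^r)^{\ord}\oplus W_i$, where $W_i:=(1-e_{\ord})H_{\Par}^i(Y_{\Iw_G},D_k^r)^{\leq h}$ collects the classes whose $U_p$-slopes lie in $(0,h]$.

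Next I would establish that the ordinary summand in degree $i$ is orthogonal to the positive-slope summand $W_{3-i}$ in complementary degree (and symmetrically). By Proposition~\ref{Proposition: pairing on cohomology groups is Hecke-equivariant} the pairing $[\cdot,\cdot]_k$ is $U_p$-equivariant, i.e.\ $[U_px,y]_k=[x,U_py]_k$; extending scalars to a finite extension of $\Q_p$ splitting the $U_p$-action and decomposing $H_{\Par}^\bullet(Y_{\Iw_G},D_k^r)^{\leq h}$ into generalised $U_p$-eigenspaces, a standard argument (for eigenvalues $\alpha\neq\beta$, the operator $(U_p-\alpha)^m$ is invertible on the $\beta$-eigenspace, so $[M[\alpha],M'[\beta]]_k=[M[\alpha],(U_p-\alpha)^mM'[\beta]]_k=0$ after pushing the operator across) shows that eigenspaces for eigenvalues of distinct valuation pair trivially. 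Since the ordinary part is the unit-root part and $W_\bullet$ the union of positive-valuation eigenspaces, this gives the asserted orthogonality; equivalently, $e_{\ord}$ is self-adjoint for $[\cdot,\cdot]_k$, so $[e_{\ord}x,w]_k=[x,e_{\ord}w]_k=0$ for $w\in W_{3-i}$. The integral compatibility of Proposition~\ref{Proposition: integral pairing when we have a small weight}, applied to the small weight $(\Z_p,k)$ with ideal of definition $(p)$, is what makes this manipulation legitimate on the $\Z_p$-level before inverting $p$.

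Finally, with respect to the decompositions $H_{\Par}^i(Y_{\Iw_G},D_k^r)^{\leq h}=H_{\Par}^i(Y_{\Iw_G},D_k^r)^{\ord}\oplus W_i$ and its analogue in degree $3-i$, the Gram matrix of the perfect pairing of Proposition~\ref{Proposition: non-degeneracy of the pairing} is block-diagonal, the off-diagonal blocks vanishing by the previous step. A non-degenerate bilinear pairing between finite-dimensional vector spaces with block-diagonal Gram matrix has each diagonal block non-degenerate, so the ordinary block yields the claimed perfect pairing
\[
    H_{\Par}^i(Y_{\Iw_G},D_k^r)^{\ord}\times H_{\Par}^{3-i}(Y_{\Iw_G},D_k^r)^{\ord}\longrightarrow\Q_p .
\]
The main point to get right — and the step I would check first — is the orthogonality of the ordinary and positive-slope parts, i.e.\ the self-adjointness of $e_{\ord}$ for $[\cdot,\cdot]_k$; once that is in place the rest is formal linear algebra together with Propositions~\ref{Proposition: integral pairing when we have a small weight} and \ref{Proposition: non-degeneracy of the pairing}.
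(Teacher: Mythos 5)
Your proof is correct, but it takes a slightly different (and longer) route than the one the paper intends when it calls the corollary an immediate consequence of Propositions \ref{Proposition: integral pairing when we have a small weight} and \ref{Proposition: non-degeneracy of the pairing}. The direct reading is this: Proposition \ref{Proposition: non-degeneracy of the pairing} is stated for every $h\in\Q_{\geq 0}$ with $h<k+1$, in particular for $h=0$, and the ordinary part is exactly the slope-$\leq 0$ (unit-root) part of the finite-slope cohomology; so the corollary is just the case $h=0$, once one knows that $e_{\ord}$ converges on $H^i_{\Par}(Y_{\Iw_G},D_k^{r,\circ})$ and that $\bigl(e_{\ord}H^i_{\Par}(Y_{\Iw_G},D_k^{r,\circ})\bigr)[1/p]$ equals the slope-$0$ part. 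This convergence/identification is where Proposition \ref{Proposition: integral pairing when we have a small weight} (the inverse-limit-of-finite-modules statement for the small weight $(\Z_p,k)$) genuinely enters; your appeal to it as "legitimizing the manipulation before inverting $p$" is somewhat off-target, since your pairing manipulations all happen on finite-dimensional $\Q_p$-spaces after inverting $p$. Your alternative — fix $h>0$, use the $U_p$-equivariance of Proposition \ref{Proposition: pairing on cohomology groups is Hecke-equivariant} to get self-adjointness of $e_{\ord}$, orthogonality of distinct generalised $U_p$-eigenvalue blocks, and then the block-diagonal Gram-matrix argument — is valid and has the modest benefit of exhibiting explicitly that the ordinary and positive-slope parts are orthogonal. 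To make it complete you should spell out the identification $H^i_{\Par}(Y_{\Iw_G},D_k^r)^{\ord}=e_{\ord}H^i_{\Par}(Y_{\Iw_G},D_k^r)^{\leq h}$: the proof of Theorem \ref{Theorem: Hida's control theorem} only gives the inclusion of the ordinary part into the slope-$\leq h$ part, and the converse inclusion needs the (easy) observation that $H^i_{\Par}(Y_{\Iw_G},D_k^r)$ is $H^i_{\Par}(Y_{\Iw_G},D_k^{r,\circ})[1/p]$ and that $e_{\ord}$ on the finite-dimensional slope-$\leq h$ space is computed by the same limit $\lim U_p^{n!}$, which converges on the integral cohomology by Proposition \ref{Proposition: integral pairing when we have a small weight}(i). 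With that in place, the rest of your argument is the standard linear algebra and is fine.
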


\begin{Corollary}\label{Corollary: non-degenercay of the pairing; non-critical case}
    Let $k\in \Z_{> 0}$. Let $f$ be a classical cuspidal Bianchi eigenform of cohomological parallel weight $k$. Let $\frakm_f$ be the maximal ideal in the Hecke algebra defined by $f$. Let $F_f$ be the residue field of $\frakm_f$. Suppose $f$ is non-critical. Then, \[
        H_{\Par}^1(Y_{\Iw_G}, D_k^r)_{\frakm_f} \times H_{\Par}^2(Y_{\Iw_G}, D_k^r)_{\frakm_f} \rightarrow F_f
    \]  is a perfect pairing. 
\end{Corollary}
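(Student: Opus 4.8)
The plan is to use the non-criticality hypothesis to replace the overconvergent coefficient system $D_k^r$ by the classical one $V_k^\vee(\Q_p)$, where perfectness of the parabolic pairing is essentially Poincaré duality, and then to descend perfectness to the localisation at $\frakm_f$ via Hecke-equivariance of the pairing. First I would record the compatibility between the pairing $[\cdot,\cdot]_k$ on $H_{\Par}^\bullet(Y_{\Iw_G}, D_k^r)$ and the algebraic pairing $[\cdot,\cdot]_k$ on $H_{\Par}^\bullet(Y_{\Iw_G}, V_k^\vee(\Q_p))$ coming from Remark \ref{Remark: relation to the algebraic pairing}, along the specialisation map $D_k^r \twoheadrightarrow V_k^\vee(\Q_p)$ of \eqref{eq: from D to dual of V}. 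The point is that in the defining formula for $[\cdot,\cdot]_k$ on $D_k^r$ one only ever integrates distributions against the functions $\left(\left(\substack{1\\c_1}\right),\left(\substack{1\\c_2}\right)\right)\mapsto (a_1'+c_1c_1'/p)^{k_1}(a_2'+c_2c_2'/p)^{k_2}$ on $\T_{00}$, which under Lemma \ref{Lemma: Vk and polynomials} are polynomials of degree $\le k_i$ in the $i$-th variable, hence lie in the image of $V_k(\Q_p)\hookrightarrow A_k^r$; therefore $[\mu,\mu']_k$ depends only on the images of $\mu,\mu'$ in $V_k^\vee(\Q_p)$, and the induced pairing there is precisely $\langle\cdot,\cdot\rangle_k$ (Remark \ref{Remark: alg. rep. is self-dual}) twisted by the Atkin--Lehner operator $\bfomega_p$, i.e. $[\cdot,\cdot]_k$ of Remark \ref{Remark: relation to the algebraic pairing}. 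By functoriality of the cup product this compatibility is inherited on cohomology. Since $f$ is non-critical, Definition \ref{Definition: non-critical} gives Hecke-equivariant isomorphisms $H_{\Par}^i(Y_{\Iw_G}, D_k^r)_{\frakm_f}\xrightarrow{\sim} H_{\Par}^i(Y_{\Iw_G}, V_k^\vee(\Q_p))_{\frakm_f}$ for $i=1,2$ carrying one pairing to the other, so it suffices to prove the corollary with $D_k^r$ replaced by $V_k^\vee(\Q_p)$.

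Second I would establish that the full parabolic pairing
\[
    [\cdot,\cdot]_k \colon H_{\Par}^1(Y_{\Iw_G}, V_k^\vee(\Q_p)) \times H_{\Par}^2(Y_{\Iw_G}, V_k^\vee(\Q_p)) \longrightarrow \Q_p
\]
is perfect. This is the argument already used in the proof of Proposition \ref{Proposition: non-degeneracy of the pairing}, but with classical coefficients no slope hypothesis is needed: $[\cdot,\cdot]_k$ is a perfect self-duality of $V_k^\vee(\Q_p)$ (being $\langle\cdot,\cdot\rangle_k$ twisted by the invertible operator $\bfomega_p$), Poincaré duality on the oriented open $3$-manifold $Y_{\Iw_G}$ then furnishes perfect pairings $H^i\times H_c^{3-i}\to\Q_p$ and $H_c^i\times H^{3-i}\to\Q_p$, and the diagram chase with the surjections $H_c^i\twoheadrightarrow H_{\Par}^i$ and injections $H_{\Par}^i\hookrightarrow H^i$ shows the induced pairing on parabolic cohomology is perfect. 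All groups here are finite-dimensional over $\Q_p$ since $Y_{\Iw_G}$ has the homotopy type of a finite CW complex; for $k>0$ parabolic cohomology is moreover concentrated in degrees $1,2$ by Theorem \ref{Theorem: ESH isomorphism}.

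Third I would localise at $\frakm_f$. Each $H_{\Par}^i(Y_{\Iw_G}, V_k^\vee(\Q_p))$ is a finite-dimensional $\Q_p$-module over the finite commutative $\Q_p$-algebra generated by the image of $\bbT_{\Iw_G}$, hence decomposes as a direct sum over the finitely many maximal ideals $\frakm$ in its support, and by the Eichler--Shimura--Harder isomorphism (Theorem \ref{Theorem: ESH isomorphism}) the ideal $\frakm_f$ occurs in the support for $i=1,2$. Since the pairing is Hecke-equivariant (Proposition \ref{Proposition: pairing on cohomology groups is Hecke-equivariant}), distinct blocks are orthogonal: for $\frakm\ne\frakm'$ choose $T\in\frakm\setminus\frakm'$; then $T$ acts nilpotently, say $T^N$ annihilates the $\frakm$-part, and invertibly on the $\frakm'$-part, so for $\mu$ in the $\frakm$-part of $H^1$ and $\mu'$ in the $\frakm'$-part of $H^2$, writing $\mu'=T^N\nu'$ gives $[\mu,\mu']_k=[T^N\mu,\nu']_k=0$. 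As the total pairing is perfect, each block pairing is perfect; taking the block at $\frakm_f$ and transporting back along the non-criticality isomorphisms yields the perfect $\Q_p$-valued pairing on $H_{\Par}^1(Y_{\Iw_G}, D_k^r)_{\frakm_f}\times H_{\Par}^2(Y_{\Iw_G}, D_k^r)_{\frakm_f}$; the $F_f$-valued formulation then follows since the localisations are $(\bbT_{\Iw_G})_{\frakm_f}$-modules and the pairing is $(\bbT_{\Iw_G})_{\frakm_f}$-balanced, so composing with the perfect trace form $F_f\times F_f\to\Q_p$ produces the asserted perfect $F_f$-valued pairing.

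I expect the main obstacle to be the first step: making the compatibility between the overconvergent pairing on $D_k^r$ and the algebraic pairing on $V_k^\vee(\Q_p)$ precise, so that the non-criticality isomorphisms transport perfectness. This requires unwinding the definitions of both pairings together with the specialisation map $D_k^r\to V_k^\vee(\Q_p)$ and checking that the relevant square of cup products commutes; by contrast, the Poincaré-duality input and the block-diagonalisation are routine once that compatibility is in hand.
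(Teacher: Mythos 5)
Your proof is correct and is essentially the argument the paper's terse ``immediate consequence'' claim presupposes. Your first step makes explicit the compatibility of $[\cdot,\cdot]_k$ with the specialisation $D_k^r\twoheadrightarrow V_k^{\vee}(\Q_p)$ --- a compatibility the paper relies on implicitly, both here and inside the proof of Proposition \ref{Proposition: non-degeneracy of the pairing} --- and your second and third steps simply unpack the Poincar\'e-duality and Hecke-block-orthogonality mechanisms the authors take for granted.
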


\subsection{\texorpdfstring{$p$}{p}-adic 
 adjoint \texorpdfstring{$L$}{L}-functions varying in finite-slope families}\label{subsection: p-adic adjoint L-functions varying in finie-slope families}

Recall the cuspidal eigenvariety $\calE$. Suppose $x\in \calE(F_x)$\footnote{ Here, $F_x$ denotes the residue field associated with $x$.} is a point varying in a curve $\calU \subset \calW$. Let $(R_{\calU}, \kappa_{\calU})$ be the universal weight of $\calU$. We denote by $\kappa = \wt(x)$ and $\frakm_{\kappa}$ the corresponding maximal ideal in $R_{\calU}$ and we assume $\calU$ is smooth at $\kappa$ (see Assumption \ref{Assumption: vary in 1-dimensional family}). 

For $i\in \{0, 1, 2, 3\}$, we denote by $\bbT_{\calU, h}^{(i)}$ the reduced $R_{\calU}$-subalgebra of $\End_{R_{\calU}}(H_{\Par}^i(Y_{\Iw_G}, D_{\kappa_{\calU}}^r)^{\leq h})$ generated by the image of $\bbT_{\Iw_G}$. We shall assume $x$ defines a maximal ideal $\frakm_x$ in both $\bbT_{\calU, h}^{(1)}$ and $\bbT_{\calU, h}^{(2)}$. We consider the weight map \[
    \wt: \calE_{\calU, h} := \Spa(\bbT_{\calU, h}^{(1)}, \bbT_{\calU, h}^{(1), \circ}) \rightarrow \calU
\]
induced by the natural map $R_{\calU} \rightarrow \bbT_{\calU}^{(1)}$. Note that, if $\calU$ is a family of parallel weights, then $\calE_{\calU, h}$ is nothing but an affinoid open in the cuspidal parallel Bianchi eigenvariety $\calE_\circ$. 

We assume furthermore that we are in the following situation: \begin{enumerate}
    \item[(Adj1)] The localisation $\bbT_{\calU, h, \frakm_x}^{(1)}$ is flat over $R_{\calU, \frakm_{\kappa}}$. 
    \item[(Adj2)] The dual algebra $\bbT_{\calU, h, \frakm_x}^{(1), \vee} := \Hom_{R_{\calU, \frakm_{\kappa}}}(\bbT_{\calU, h, \frakm_x}^{(1)}, R_{\calU, \frakm_{\kappa}})$ is free of rank 1 over $\bbT_{\calU, h, \frakm_{x}}^{(1)}$, \emph{i.e.}, $\bbT_{\calU, h, \frakm_x}^{(1)}$ is Gorenstein over $R_{\calU, \frakm_{\kappa}}$. 
    \item[(Adj3)] The induced pairing \[
        [\cdot, \cdot]_{\kappa_{\calU}, \frakm_x}:  H_{\Par}^1(Y_{\Iw_G}, D_{\kappa_{\calU}}^r)^{\leq h}_{\frakm_x} \times   H_{\Par}^2(Y_{\Iw_G}, D_{\kappa_{\calU}}^r)^{\leq h}_{\frakm_x} \rightarrow R_{\calU, \frakm_{\kappa}}
    \] is non-degenerate. In particular, there is an injection $H_{\Par}^2(Y_{\Iw_G}, D_{\kappa_{\calU}}^r)_{\frakm_x}^{\leq h} \hookrightarrow \Hom_{R_{\calU, \frakm_{\kappa}}}(H_{\Par}^1(Y_{\Iw_G}, D_{\kappa_{\calU}}^r)_{\frakm_x}^{\leq h}, R_{\calU, \frakm_{\kappa}})$.
    \item[(Adj4)] We have isomorphisms of $\bbT_{\calU, h, \frakm_x}^{(1)}$-modules \[
         H_{\Par}^i(Y_{\Iw_G}, D_{\kappa_{\calU}}^r)^{\leq h}_{\frakm_x} \simeq \bbT_{\calU, h, \frakm_x}^{(1), \vee}
    \] for $i=1, 2$. Here, the $\bbT_{\calU, h, \frakm_x}^{(1)}$-module structure on $H_{\Par}^2(Y_{\Iw_G}, D_{\kappa_{\calU}}^r)^{\leq h}_{\frakm_x}$ is given by the injection provided in (Adj3).\footnote{ More precisely, the $\bbT_{\calU, h, \frakm_x}^{(1)}$-module structure on $H_{\Par}^2(Y_{\Iw_G}, D_{\kappa_{\calU}}^r)^{\leq h}_{\frakm_x}$ can be described explicitly as follows. Firstly, since $\bbT_{\calU, h, \frakm_x}^{(1)}$ is generated by the image of the Hecke-operators, we only have to describe how the images of Hecke-operators act on $H_{\Par}^2(Y_{\Iw_G}, D_{\kappa_{\calU}}^r)^{\leq h}_{\frakm_x}$. Let $T^{(i)}$ be the image of a Hecke operator (\emph{i.e.}, $T_{\bfdelta}$ or $U_{\frakp}$ or $U_{\overline{\frakp}}$) in $\bbT_{\calU, h, \frakm_x}^{(i)}$ (for $i=1, 2$) and let $[\mu]$ be a class in  $H_{\Par}^2(Y_{\Iw_G}, D_{\kappa_{\calU}}^r)^{\leq h}_{\frakm_x}$. Then, via the injection $H_{\Par}^2(Y_{\Iw_G}, D_{\kappa_{\calU}}^r)_{\frakm_x}^{\leq h} \hookrightarrow \Hom_{R_{\calU, \frakm_{\kappa}}}(H_{\Par}^1(Y_{\Iw_G}, D_{\kappa_{\calU}}^r)_{\frakm_x}^{\leq h}, R_{\calU, \frakm_{\kappa}})$, the action of $T^{(1)}$ on $[\mu]$ can be described by \[
        (T^{(1)} * [\mu])([\mu']) := \left[T^{(1)}([\mu']), [\mu]\right]_{\kappa_{\calU}, \frakm_x}
    \] for any $[\mu']\in H_{\Par}^1(Y_{\Iw_G}, D_{\kappa_{\calU}}^r)^{\leq h}_{\frakm_x}$. However, since $[\cdot, \cdot]_{\kappa_{\calU}, \frakm_x}$ is Hecke-equivariant, \[
        \left[T^{(1)}([\mu']), [\mu]\right]_{\kappa_{\calU}, \frakm_x} = \left[[\mu'], T^{(2)}([\mu])\right]_{\kappa_{\calU}, \frakm_x}
    \] and thus $T^{(1)} * [\mu] = T^{(2)}([\mu])$.
    
    In fact, we shall see in \S \ref{subsection: non-vanishing; p-adic adjoint L-functions} below that, under certain assumptions, one can show $\bbT_{\calU, h, \frakm_x}^{(1)} \simeq R_{\calU, \frakm_{\kappa}} \simeq \bbT_{\calU, h, \frakm_x}^{(2)}$ canonically. }
\end{enumerate}

Under these assumptions, we may then apply the formalism developed in \cite[\S 9.1]{Eigen} to conclude the following. Let $\calX$ be the connected component of $\wt^{-1}(\calU)$ that contains $x$ and let $e_{\calX}$ be the idempotent in $\bbT_{\calU, h}^{(1)}$ such that $\calX$ is defined by $e_{\calX}=1$, \emph{i.e.,} $e_{\calX}\bbT_{\calU, h}^{(1)}$ is the ring of global sections of $\calX$. Then $[\cdot, \cdot]_{\kappa_{\calU}}$ defines an ideal $\frakL^{\adj}_{\calX,\calU, h} =\frakL^{\adj}_{\calU, h} \subset e_{\calX}\bbT_{\calU, h}^{(1)}$, which becomes a principal ideal in $\bbT_{\calU, h, \frakm_x}^{(1)}$ after localising at $\frakm_x$  (\cite[Corollary 9.1.13]{Eigen}). In particular, after possibly further shrinking $\calU$, $\frakL^{\adj}_{\calU, h} \subset e_{\calX}\bbT_{\calU, h}^{(1)}$ is a principal ideal. We pick a generator $L_{\calX, \calU, h}^{\adj} =L_{\calU, h}^{\adj}\in \frakL_{\calU, h}^{\adj}$ and term it a \textbf{\textit{$p$-adic adjoint $L$-function of $x$}}, varying in a finite-slope family of weight $\kappa_{\calU}$. 

The following theorem is an immediate consequence of \cite[Theorem 9.1.3]{Eigen} (see also \cite[Theorem 2.7]{Auslander-Buchsbaum}).

\begin{Theorem}\label{Theorem: etaleness of the weight map and the p-adic adjoint L-function}
    Keep the notation as above and assume (Adj1) -- (Adj4) are satisfied. Then, the weight map $\wt$ is étale at $x$ if and only if \[
        L_{\calU, h}^{\adj}(x) \neq 0.
    \]
    Here, $L_{\calU, h}^{\adj}(x)$ denotes the image of $L_{\calU, h}^{\adj}$ in the residue field $F_x$.
\end{Theorem}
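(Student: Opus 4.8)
The plan is to reduce the statement entirely to the commutative-algebra formalism of \cite[\S VIII.1]{Eigen}, which is designed precisely to extract such an étaleness criterion from a non-degenerate Hecke-equivariant pairing. First I would recall that, under (Adj1)--(Adj4), \cite[Corollary VIII.1.13]{Eigen} produces the ideal $\frakL_{\calU, h}^{\adj} \subset \bbT_{\calU, h}^{(1)}$ together with a distinguished generator after localising at $\frakm_x$ (and, after shrinking $\calU$, globally). The key input from \emph{loc. cit.} is that $\frakL_{\calU, h}^{\adj}$ is the \emph{different-type ideal} (or rather the image of the pairing map $H_{\Par}^1{}_{\frakm_x} \otimes_{\bbT} H_{\Par}^2{}_{\frakm_x} \to R_{\calU, \frakm_\kappa}$ read as an ideal of $\bbT^{(1)}_{\calU, h, \frakm_x}$ via the identifications (Adj2)--(Adj3)); concretely, after using the Gorenstein isomorphism $H_{\Par}^i{}_{\frakm_x} \simeq \bbT^{(1), \vee}_{\calU, h, \frakm_x}$ for $i=1,2$, the pairing $[\cdot, \cdot]_{\kappa_{\calU}, \frakm_x}$ becomes an $R_{\calU, \frakm_\kappa}$-bilinear form on $\bbT^{(1), \vee}_{\calU, h, \frakm_x} \times \bbT^{(1), \vee}_{\calU, h, \frakm_x}$ valued in $R_{\calU, \frakm_\kappa}$, and its discriminant (in the Gorenstein sense) generates $\frakL^{\adj}_{\calU, h, \frakm_x}$.

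The second step is to invoke \cite[Theorem VIII.1.3]{Eigen} (equivalently, the Auslander--Buchsbaum-type statement \cite[Theorem 2.7]{Auslander-Buchsbaum}), which says exactly that for a finite flat Gorenstein $R_{\calU, \frakm_\kappa}$-algebra $\bbT$ equipped with such a pairing, the discriminant ideal is the annihilator of $\Omega^1_{\bbT/R_{\calU}}$ localised at the relevant point, hence $\frakL^{\adj}_{\calU, h}$ cuts out precisely the ramification locus of $\Spec \bbT \to \Spec R_{\calU}$. Translating through the definition $\wt: \calE_\circ \to \calW_\circ$ and the fact that $\scrT(\calZ_{\calU, h}) = \bbT^{(1)}_{\calU, h}$, so that a neighbourhood of $x$ in $\calE_\circ$ is $\Spa$ of $\bbT^{(1)}_{\calU, h, \frakm_x}$ over $\Spa$ of $R_{\calU, \frakm_\kappa}$, one concludes: $\wt$ is étale at $x$ $\iff$ $x \notin V(\frakL^{\adj}_{\calU, h})$ $\iff$ $L^{\adj}_{\calU, h}(x) \neq 0$ in the residue field $K_x$. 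The last equivalence is immediate since $L^{\adj}_{\calU, h}$ generates $\frakL^{\adj}_{\calU, h}$.

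I would then spell out the one subtlety that needs care: matching \emph{our} normalisations with those of \cite[\S VIII.1]{Eigen}. Bellaïche works with overconvergent modular symbols in a single cohomological degree, whereas here the pairing is between $H^1_{\Par}$ and $H^2_{\Par}$; so I must check that (Adj3) genuinely provides, for \emph{both} $i=1,2$, an isomorphism with $\bbT^{(1), \vee}_{\calU, h, \frakm_x}$ as $\bbT^{(1)}_{\calU, h, \frakm_x}$-modules (this is assumed, so it is fine), and that the Hecke-equivariance of $[\cdot, \cdot]_{\kappa_{\calU}}$ established in Proposition \ref{Proposition: pairing on cohomology groups is Hecke-equivariant} is exactly what is needed for the pairing to factor through the $\bbT^{(1)}_{\calU, h, \frakm_x}$-module structure on both sides. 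Given those, the formalism applies verbatim with $H^1_{\Par}{}_{\frakm_x}$ and $H^2_{\Par}{}_{\frakm_x}$ playing the roles of the two copies of the (localised) space of overconvergent modular symbols in \emph{loc. cit.}

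The main obstacle is not in the present theorem itself — which is genuinely a formal consequence of the cited results once (Adj1)--(Adj4) are in place — but in the bookkeeping: verifying that the hypotheses of \cite[Theorem VIII.1.3]{Eigen} are met on the nose (finiteness and flatness of $\bbT^{(1)}_{\calU, h, \frakm_x}$ over $R_{\calU, \frakm_\kappa}$, which is (Adj1); Gorenstein-ness, which is (Adj2); the module identifications (Adj3); and non-degeneracy (Adj4)), and that the residue field $K_x$ appearing here agrees with $F_x$ used in defining the point $x$. None of this requires new ideas, so the proof will be short: cite \cite[Corollary VIII.1.13]{Eigen} for the existence of $L^{\adj}_{\calU, h}$, then cite \cite[Theorem VIII.1.3]{Eigen} (and \cite[Theorem 2.7]{Auslander-Buchsbaum}) for the étaleness criterion, then translate the conclusion into the geometry of $\wt$.
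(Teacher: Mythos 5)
Your proposal is correct and follows essentially the same route as the paper: the paper's own proof is simply the observation that the statement is an immediate consequence of \cite[Theorem VIII.1.3]{Eigen} (with \cite[Theorem 2.7]{Auslander-Buchsbaum}) once (Adj1)--(Adj4) are in place, which is exactly the reduction you carry out. Your additional bookkeeping about the two cohomological degrees, Hecke-equivariance, and the residue field is consistent with how the paper sets up (Adj1)--(Adj4) and the definition of $L_{\calU, h}^{\adj}$, so nothing is missing.
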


\subsection{A non-vanishing result for \texorpdfstring{$p$}{p}-adic adjoint \texorpdfstring{$L$}{L}-functions}\label{subsection: non-vanishing; p-adic adjoint L-functions}

The goal of this subsection is to provide a more precise choice of the point $x\in \calE(F_x)$ in \S \ref{subsection: p-adic adjoint L-functions varying in finie-slope families}. To this end, let $f$ be a non-critical classical cuspidal Bianchi eigenform of cohomological parallel weight $k$ such that $k\neq (0,0)$. Suppose that $f$ defines a point $x_{f} \in \calE(F_f)$. Suppose $f$ varies in a family over an affinoid curve $\calU \subset \calW$. Let $(R_{\calU}, \kappa_{\calU})$ be the universal weight of $\calU$. Suppose $(\calU, h)$ is slope-adapted and Assumption \ref{Assumption: vary in 1-dimensional family} is satisfied (in particular, if $\calU \subset \calW_{\circ}$ and $f$ is a base-change form, Assumption \ref{Assumption: vary in 1-dimensional family} is satisfied by Corollary \ref{Corollary: parallel family behaves well}).  We denote by $\frakm_f$ the maximal ideal of $\bbT_{\Iw_G}\otimes_{\Z_p}R_{\calU}$ defined by $f$.

The following result is presumably well-known to experts. However, we lack the knowledge of its reference and so we record its proof here.

\begin{Lemma}\label{Lemma: concentration of the cohomology degree}
    Keep the notation as above. \begin{enumerate}
        \item[(i)] The cohomology groups $H^i(Y_{\Iw_G}, V_k^{\vee}(\Q_p))$ and $H_c^i(Y_{\Iw_G}, V_k^{\vee}(\Q_p))$ are concentrated in degree $1$ and $2$.  
        \item[(ii)]  Suppose the generalised $\bbT_{\Iw_G}$-eigenspace $H_c^i(Y_{\Iw_G}, V_k^{\vee}(F_f))_{\frakm_f}$ is one-dimensional for $i=1,2$. Then, the natural map $H_c^i(Y_{\Iw_G}, V_k^{\vee}(F_f))_{\frakm_f} \rightarrow H^i(Y_{\Iw_G}, V_k^{\vee}(F_f))_{\frakm_f}$ is an isomorphism.
    \end{enumerate}
\end{Lemma}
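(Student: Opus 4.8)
The plan is to reduce everything to the structure of $Y_{\Iw_G}$ as a disjoint union of quotients $\Gamma_j\backslash\mathbb{H}^3$ of hyperbolic $3$-space by congruence subgroups $\Gamma_j\subset G(\Q)$, for which $H^i(Y_{\Iw_G},\mathcal{V})=\bigoplus_j H^i(\Gamma_j,V)$ and similarly with compact supports (group cohomology suffices, as torsion is invertible over $\Q_p$). For (i), I would first recall that each Bianchi group $\Gamma_j$ has virtual cohomological dimension $2$ (Serre; equivalently $\Gamma_j\backslash\overline{\mathbb{H}^3}$ retracts onto a $2$-dimensional complex), so $H^i(\Gamma_j,V)=0$ for $i\geq 3$ and hence $H^3(Y_{\Iw_G},V_k^\vee(\Q_p))=0$. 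Since $k\neq(0,0)$, the restriction of $V_k$ — hence of $V_k^\vee$ — to $G^{\mathrm{der}}=\Res^{\calO_K}_\Z\SL_2$ is $\Sym^{k_1}\boxtimes\Sym^{k_2}$, which has no invariants; and by the Borel density theorem the Zariski closure of $\Gamma_j$ contains $G^{\mathrm{der}}$, so $(V_k^\vee(\Q_p))^{\Gamma_j}=0$ and dually $(V_k^\vee(\Q_p))_{\Gamma_j}=0$. The first vanishing gives $H^0(Y_{\Iw_G},V_k^\vee(\Q_p))=0$. For compact supports: $H^i_c=0$ for $i>3$ by dimension; $H^0_c=0$ since each component is connected and non-compact (Bianchi groups are not cocompact); and $H^3_c(\Gamma_j\backslash\mathbb{H}^3,V_k^\vee(\Q_p))$ is the coinvariant space $(V_k^\vee(\Q_p))_{\Gamma_j}$ (the orientation twist is trivial, as $\Gamma_j\subset\PGL_2(\C)=\PSL_2(\C)$ acts orientation-preservingly), which vanishes by the above. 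Thus both $H^\bullet$ and $H^\bullet_c$ are concentrated in degrees $1$ and $2$.

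For (ii), I would factor the natural map as $H^i_c(Y_{\Iw_G},V_k^\vee(\Q_p))\twoheadrightarrow H^i_{\Par}(Y_{\Iw_G},V_k^\vee(\Q_p))\hookrightarrow H^i(Y_{\Iw_G},V_k^\vee(\Q_p))$ and localise at $\frakm_f$; since localisation is exact, it suffices to show that the localised first arrow is an isomorphism and that $H^i_{\mathrm{Eis}}(Y_{\Iw_G},V_k^\vee(\Q_p))_{\frakm_f}=0$ for $i=1,2$. For the first point, the Eichler--Shimura--Harder isomorphism (Theorem \ref{Theorem: ESH isomorphism}) gives $H^i_{\Par}(Y_{\Iw_G},V_k^\vee(\C))\simeq S_k(Y_{\Iw_G})$, which contains $f$; since base change along $\Q_p\hookrightarrow\C$ is faithfully flat and commutes with localisation, $\frakm_f$ lies in the support of $H^i_{\Par}(Y_{\Iw_G},V_k^\vee(\Q_p))$, so $H^i_{\Par}(Y_{\Iw_G},V_k^\vee(\Q_p))_{\frakm_f}\neq 0$. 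By hypothesis $H^i_c(Y_{\Iw_G},V_k^\vee(\Q_p))_{\frakm_f}$ is $1$-dimensional over $F_f$, so the surjection from it onto the non-zero space $H^i_{\Par}(Y_{\Iw_G},V_k^\vee(\Q_p))_{\frakm_f}$ is an isomorphism of $1$-dimensional $F_f$-vector spaces. For the second point, the long exact sequence relating $H^i_c(Y_{\Iw_G},V_k^\vee(\C))$, $H^i(Y_{\Iw_G},V_k^\vee(\C))$ and the boundary cohomology $H^i(\partial X_{\Iw_G}^{\mathrm{BS}},V_k^\vee(\C))$ exhibits $H^i_{\mathrm{Eis}}(Y_{\Iw_G},V_k^\vee(\C))$ as the image of the restriction map, hence as a $\bbT_{\Iw_G}$-submodule of the boundary cohomology; by Harder's description the latter is built from the cohomology of Borel subgroups, so all its generalised $\bbT_{\Iw_G}$-eigensystems are of Eisenstein type, whereas the eigensystem of the cuspidal form $f$ is not (in characteristic zero, by strong multiplicity one / the incompatibility of cuspidal and Eisenstein Satake parameters). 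Hence $\frakm_f$ is not in the support of $H^i_{\mathrm{Eis}}(Y_{\Iw_G},V_k^\vee(\Q_p))$ and its localisation vanishes. Combining the two points yields $H^i_c(\ldots)_{\frakm_f}\xrightarrow{\ \sim\ }H^i_{\Par}(\ldots)_{\frakm_f}=H^i(\ldots)_{\frakm_f}$.

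The main obstacle is precisely the vanishing of the Eisenstein cohomology at $\frakm_f$: it relies on the input that the Hecke eigensystem of a cuspidal Bianchi form does not occur in the boundary cohomology of the Borel--Serre compactification, which one should cite from Harder's work. An alternative that avoids invoking this is to combine Poincaré--Lefschetz duality on the orientable threefold $Y_{\Iw_G}$ with the Atkin--Lehner involution $\bfomega_p$ at $p$ (and its analogue at $\frakn$), which identifies $\dim_{F_f}H^i(Y_{\Iw_G},V_k^\vee(\Q_p))_{\frakm_f}$ with $\dim_{F_f}H^{3-i}_c(Y_{\Iw_G},V_k^\vee(\Q_p))_{\frakm_f}=1$ for $i=1,2$; then the injection $H^i_c(\ldots)_{\frakm_f}\hookrightarrow H^i(\ldots)_{\frakm_f}$ obtained above, being a map of $1$-dimensional $F_f$-vector spaces, is automatically an isomorphism.
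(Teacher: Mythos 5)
Your proof is correct, but it takes a genuinely different route from the paper's, most visibly in part (i). There you argue by decomposing $Y_{\Iw_G}$ into components $\Gamma_j\backslash\mathbb{H}^3$ and invoking group cohomology: virtual cohomological dimension $2$ of Bianchi groups gives $H^{\geq 3}=0$, Borel density plus $k\neq(0,0)$ gives vanishing of invariants and coinvariants (hence $H^0=0$ and $H^3_c=0$), and connected non-compactness gives $H^0_c=0$. The paper instead reads the vanishing of $H^0$ and $H^3$ off the short exact sequence $0\to H^i_{\Par}\to H^i\to H^i_{\mathrm{Eis}}\to 0$ together with the vanishing statements in Theorem \ref{Theorem: ESH isomorphism} and the observation that $\dim_{\R}\partial X^{\mathrm{BS}}_{\Iw_G}<3$ forces $H^3_{\mathrm{Eis}}=0$, then gets the compactly supported cases from Poincaré duality; your approach is more elementary and self-contained (it does not need the Eichler--Shimura--Harder machinery at all for (i)), at the cost of having to justify the passage to torsion-free subgroups and Zariski density, while the paper's is shorter but leans on ESH. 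For part (ii), your \emph{alternative} argument — twisted Poincaré duality (via the Atkin--Lehner-adjusted pairing and its Hecke-equivariance, Proposition \ref{Proposition: pairing on cohomology groups is Hecke-equivariant}) gives $\dim_{F_f}H^i(\cdot)_{\frakm_f}=\dim_{F_f}H^{3-i}_c(\cdot)_{\frakm_f}=1$, and the map $H^i_c\to H^i$ is nonzero because $f$ lies in its image $H^i_{\Par}$, hence is an isomorphism of one-dimensional spaces — is precisely the paper's argument, and is the cleaner one: it sidesteps any discussion of boundary eigensystems. Your primary route (show directly that $\frakm_f$ is not in the support of $H^i_{\mathrm{Eis}}$ by appealing to Harder's description of the boundary cohomology and the disjointness of cuspidal and Eisenstein Hecke eigensystems) is also valid, but, as you correctly flag, it requires citing a nontrivial automorphic input that the duality argument renders unnecessary; I would swap the roles and make duality the main argument, relegating the Eisenstein-support observation to a remark.
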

\begin{proof}
    To show (i), it is enough to show the assertions after the base change to $\C$. Recall that we have a long exact sequence \[
        \cdots \rightarrow H_c^i(Y_{\Iw_G}, V_k^{\vee}(\C)) \rightarrow H^i(Y_{\Iw_G}, V_k^{\vee}(\C)) \rightarrow H_{\partial}^i(Y_{\Iw_G}, V_k^{\vee}(\C)) \rightarrow \cdots,
    \] the $H_{\partial}^i$'s are computed using the boundary cochain complex introduced in \S \ref{subsection: overconvergent cohomology}. This long exact sequence gives rise to short exact sequences \[
        0 \rightarrow H_{\Par}^i(Y_{\Iw_G}, V_k^{\vee}(\C)) \rightarrow H^i(Y_{\Iw_G}, V_k^{\vee}(\C)) \rightarrow H_{\mathrm{Eis}}^i(Y_{\Iw_G}, V_k^{\vee}(\C)) \rightarrow 0.
    \]

    When $i=0, 3$, Theorem \ref{Theorem: ESH isomorphism} (i) yields $H_{\Par}^i(Y_{\Iw_G}, V_k^{\vee}(\C)) = 0$ and so we have an isomorphism \[
        H^i(Y_{\Iw_G}, V_k^{\vee}(\C)) \simeq H_{\mathrm{Eis}}^i(Y_{\Iw_G}, V_k^{\vee}(\C)).
    \]
    If $i=0$, since $k\neq (0,0)$, the second vanishing result in Theorem \ref{Theorem: ESH isomorphism} (i) implies that $H^0(Y_{\Iw_G}, V_k^{\vee}(\C)) = 0$. If $i=3$, since $\dim_{\R} \partial X_{\Iw_G}^{\mathrm{BS}} <3$, we see that $H^3_{\partial}(Y_{\Iw_G}, V_k^{\vee}(\C)) = 0$ and so $H^3_{\mathrm{Eis}}(Y_{\Iw_G}, V_k^{\vee}(\C))  = 0$, which implies that $H^3(Y_{\Iw_G}, V_k^{\vee}(\C)) = 0$. Then, by Poincaré duality (see also the proof of Proposition \ref{Proposition: non-degeneracy of the pairing}), one concludes that $H_c^i(Y_{\Iw_G}, V_k^{\vee}(\C)) =0$. This shows (i).
    
    We turn our attention to (ii). By assumption and the Hecke-equivariance of the Poincaré duality (Proposition \ref{Proposition: pairing on cohomology groups is Hecke-equivariant}), we know that $H^i(Y_{\Iw_G}, V_k^{\vee}(F_f))_{\frakm_f}$ is also one-dimensional for $i=1, 2$. Since localisation is exact, we have an exact sequence \[
        H_{\partial}^{i-1}(Y_{\Iw_G}, V_k^{\vee}(F_f))_{\frakm_f} \rightarrow H_{c}^i(Y_{\Iw_G}, V_k^{\vee}(F_f))_{\frakm_f} \rightarrow H^i(Y_{\Iw_G}, V_k^{\vee}(F_f))_{\frakm_f} \rightarrow H_{\partial}^i(Y_{\Iw_G}, V_k^{\vee}(F_f))_{\frakm_f}.
    \] 
    Since $f$ appears in $H^i_{\Par}(Y_{\Iw_G}, V_k^{\vee}(F_f))$, we know that the middle morphism is nonzero. Then, by dimensional reason and the exactness of the sequence, we conclude the desired isomorphism. 
\end{proof}

In what follows, we work under the assumption in Lemma \ref{Lemma: concentration of the cohomology degree} (ii).
Remark that this assumption holds when $f$ is a regular $p$-stabilised newform (see also \cite[(4.1)]{BW}).

\begin{Assumption}\label{Assumption: multiplicity one on degree 1 and 2}
    The generalised $\bbT_{\Iw_G}$-eigenspace $H_c^i(Y_{\Iw_G}, V_k^{\vee}(F_f))_{\frakm_f}$ is one-dimensional for $i=1, 2$.
\end{Assumption}

A key statement of the non-vanishing result for $p$-adic adjoint $L$-functions is the following proposition.

\begin{Proposition}\label{Proposition: structure theorem}
    Keep the notations and assumptions as above; in particular, Assumptions \ref{Assumption: vary in 1-dimensional family} and \ref{Assumption: multiplicity one on degree 1 and 2} are imposed. For $i=1, 2$, the $\bbT_{\calU, h, \frakm_f}^{(i)}$-module $H_{\Par}^i(Y_{\Iw_G}, D_{\kappa_{\calU}}^r)^{\leq h}_{\frakm_f}$ is free of rank $1$. 
\end{Proposition}

Before proving the proposition, recall the specialisation map $\mathrm{sp}_k: D_{\kappa_{\calU}}^r \twoheadrightarrow D_k^r$. We need the following lemma. 

\begin{Lemma}\label{Lemma: H3 with coefficients in ker(spk) vanishes}
    The finite-slope cohomology group $H^3(Y_{\Iw_G}, \ker \mathrm{sp}_k)^{\leq h}_{\frakm_f}$ vanishes. 
\end{Lemma}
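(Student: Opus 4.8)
The plan is to deduce the lemma from the unconditional fact that $Y_{\Iw_G}$ has cohomological dimension $2$, so that $H^3$ of \emph{any} local system on $Y_{\Iw_G}$ vanishes; the finite‑slope and localisation decorations are then irrelevant and survive for free. Since $\mathrm{sp}_k\colon D_{\kappa_{\calU}}^r \twoheadrightarrow D_k^r$ is $\Iw_G$‑equivariant, $\ker\mathrm{sp}_k$ is an $\Iw_G$‑submodule, hence defines a local system on $Y_{\Iw_G}$, and $H^3(Y_{\Iw_G}, \ker\mathrm{sp}_k)^{\leq h}_{\frakm_f}$ is a subquotient of $H^3(Y_{\Iw_G}, \ker\mathrm{sp}_k)$; so it suffices to show the latter is zero.

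For this I would argue as follows. The Bianchi threefold $Y_{\Iw_G}$ is a finite disjoint union of quotients $\Gamma_j\backslash\mathbb{H}^3$ of hyperbolic $3$‑space by arithmetic subgroups of $\GL_2(K)$ (modulo the centre), each of finite covolume but non‑compact; such $\Gamma_j$ are commensurable with a Bianchi group and therefore have virtual cohomological dimension $3-1=2$ by Borel--Serre. Equivalently, the Borel--Serre compactification $X_{\Iw_G}^{\mathrm{BS}}$ is a compact $3$‑manifold with non‑empty boundary, hence homotopy equivalent to a CW complex of dimension $\le 2$; in the language of \S\ref{subsection: overconvergent cohomology}, after collapsing $X_{\Iw_G}^{\mathrm{BS}}$ onto a spine the cochain complex $C^\bullet(\Iw_G, D)$ is quasi‑isomorphic to one concentrated in degrees $\le 2$. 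Since the torsion orders occurring in the $\Gamma_j$ are invertible in $R_{\calU}$ (a $\Q_p$‑algebra, which contains $\mathbf{Q}$), it follows that $H^i(Y_{\Iw_G}, \mathcal{M}) = 0$ for every $i\ge 3$ and every local system $\mathcal{M}$ of $R_{\calU}$‑modules. Applying this with $\mathcal{M} = \ker\mathrm{sp}_k$ gives $H^3(Y_{\Iw_G}, \ker\mathrm{sp}_k) = 0$, and a fortiori $H^3(Y_{\Iw_G}, \ker\mathrm{sp}_k)^{\leq h}_{\frakm_f} = 0$.

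As a consistency check — and an alternative, should one prefer to avoid citing the vanishing for infinitely generated coefficients directly — one can instead run the long exact sequence attached to $0 \to \ker\mathrm{sp}_k \to D_{\kappa_{\calU}}^r \to D_k^r \to 0$, apply the exact functor $(-)^{\le h}_{\frakm_f}$, and read off that $H^3(Y_{\Iw_G}, \ker\mathrm{sp}_k)^{\le h}_{\frakm_f}$ is squeezed between $\coker\big(H^2(Y_{\Iw_G}, D_{\kappa_{\calU}}^r)^{\le h}_{\frakm_f} \to H^2(Y_{\Iw_G}, D_k^r)^{\le h}_{\frakm_f}\big)$ and $H^3(Y_{\Iw_G}, D_{\kappa_{\calU}}^r)^{\le h}_{\frakm_f}$; the latter vanishes by cohomological dimension, and surjectivity of $\mathrm{sp}_k$ in degree $2$ follows from Stevens's control theorem (Theorem \ref{Theorem: control theorem for overconvergent cohomology}) together with Lemma \ref{Lemma: concentration of the cohomology degree}. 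I would nevertheless present the direct dimension argument as the main proof, since it is cleaner and does not interact with Proposition \ref{Proposition: structure theorem}. The only point requiring care is precisely that $\ker\mathrm{sp}_k$ is not finitely generated, so the vanishing of $H^3$ should be phrased at the level of the cochain complex of a $2$‑dimensional spine (or via group cohomology and $\mathrm{cd}_{\mathbf{Q}}$) rather than via any finiteness‑based reduction; there is no genuine obstacle here.
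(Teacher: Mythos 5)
Your main argument is correct, but it takes a genuinely different route from the paper. You deduce the lemma from the unconditional fact that $Y_{\Iw_G}$ has (virtual) cohomological dimension $2$ -- equivalently, its Borel--Serre compactification retracts onto a $2$-dimensional spine -- so that $H^i(Y_{\Iw_G},\mathcal{M})=0$ for all $i\geq 3$ and every coefficient system of $\Q_p$-vector spaces, torsion in the arithmetic groups being harmless because its order is invertible in the coefficients; the slope-$\leq h$ and localisation decorations then survive trivially. The paper argues purely algebraically instead: since the parallel weight space is one-dimensional, $R_{\calU,\frakm_k}$ is regular and $\frakm_k R_{\calU,\frakm_k}$ has a generator $a_k$; flatness of $D_{\kappa_{\calU}}^r$ over $R_{\calU}$ makes multiplication by $a_k$ injective, so $\ker\mathrm{sp}_k\cong D_{\kappa_{\calU}}^r$, and the lemma reduces to $H^3(Y_{\Iw_G},D_{\kappa_{\calU}}^r)^{\leq h}_{\frakm_f}=0$, which follows because its reduction modulo $\frakm_k$ injects into $H^3(Y_{\Iw_G},D_k^r)^{\leq h}_{\frakm_f}\simeq H^3(Y_{\Iw_G},V_k^{\vee}(\Q_p))^{\leq h}_{\frakm_f}=0$ (Theorem \ref{Theorem: control theorem for overconvergent cohomology} and Lemma \ref{Lemma: concentration of the cohomology degree}), and Nakayama applies as the slope-$\leq h$ part localised at $\frakm_f$ is a finitely generated $R_{\calU,\frakm_k}$-module. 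Your approach proves the stronger statement $H^{\geq 3}=0$ with no appeal to finite-slope finiteness, non-criticality, or the one-dimensionality of the weight space, at the cost of importing the vcd computation for Bianchi groups and of checking that the cohomology defined via the Borel--Serre cochain complex agrees with group cohomology when the level is not neat (your observation that the coefficients are $\Q_p$-vector spaces does handle this); the paper's argument stays inside the commutative-algebra framework it has already set up and would transfer to settings where such a dimension bound is unavailable, but it genuinely uses that $\ker\mathrm{sp}_k\cong D_{\kappa_{\calU}}^r$ over a one-dimensional weight space.

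One caveat on your secondary ``consistency check'': the surjectivity of $H^2(Y_{\Iw_G},D_{\kappa_{\calU}}^r)^{\leq h}_{\frakm_f}\rightarrow H^2(Y_{\Iw_G},D_k^r)^{\leq h}_{\frakm_f}$ does not follow from Stevens's control theorem together with Lemma \ref{Lemma: concentration of the cohomology degree}; the control theorem compares $D_k^r$ with $V_k^{\vee}(\Q_p)$ at the single weight $k$ and says nothing about specialisation from the family. In the paper this surjectivity is deduced, in the proof of Proposition \ref{Proposition: structure theorem}, precisely from the present lemma, so that alternative as written is circular. Since you offer it only as a cross-check and your main argument does not rely on it, this does not affect the validity of your proof.
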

\begin{proof}
    In this proof, we consider $R_{\calU}\widehat{\otimes}_{\Q_p}F_f$ as well as $D_{\kappa_{\calU}}^r\widehat{\otimes}_{R_{\calU}}R_{\calU}\widehat{\otimes}_{\Q_p}F_f = D_{\kappa_{\calU}}^r \widehat{\otimes}_{\Q_p}F_f$; however, to simplify the notation, we still denote them by $R_{\calU}$ and $D_{\kappa_{\calU}}^r$ respectively. Let $\frakm_k$ be the maximal ideal of $R_{\calU}$ defined by the weight $k$ and so $R_{\calU}/\frakm_k = F_f$. Note that the localisation $R_{\calU, \frakm_k}$ is a regular local ring, hence a unique factorisation domain. This means that all height-one prime ideals of $R_{\calU, \frakm_k}$ are principal (\cite[\href{https://stacks.math.columbia.edu/tag/0AFT}{Tag 0AFT}]{stacks-project}) and so we may pick a generator $a_k\in \frakm_k R_{\calU, \frakm_k}$. Up to shrinking $\calU$, we may assume $a_k\in R_{\calU}$. Consequently, we have an exact sequence \[
        D_{\kappa_{\calU}}^r \xrightarrow{\cdot a_k} D_{\kappa_{\calU}}^r \xrightarrow{\mathrm{sp}_k} D_{k}^r \rightarrow 0.
    \] We claim that multiplication by $a_k$ is an injection. Indeed, by dualising the one-variable version of Corollary \ref{Corollary: R-valued analytic functions}, we see that $D_{\kappa_{\calU}}^r \simeq \left(\prod_{i\in \Z_{\geq 0}} R_{\calU}^{\circ}\right)[1/p]$. In particular, $D_{\kappa_{\calU}}^r$ is flat over $R_{\calU}$ and so the claim. 

    Therefore, by taking cohomology and localisation, one obtains an exact sequence \begin{equation}\label{eq: exact seq. of specialisation map at degree 3}
        \begin{tikzcd}[row sep = tiny]
            H^3(Y_{\Iw_G}, D_{\kappa_{\calU}}^r)_{\frakm_f}^{\leq h} \arrow[r, "\cdot a_k"]\arrow[d, equal] & H^3(Y_{\Iw_G}, D_{\kappa_{\calU}}^r)_{\frakm_f}^{\leq h} \arrow[r] & H^3(Y_{\Iw_G}, D_k^r)_{\frakm_f}^{\leq h}\\
            H^3(Y_{\Iw_G}, \ker \mathrm{sp}_k)_{\frakm_f}^{\leq h}
        \end{tikzcd}.
    \end{equation}
    By control theorem and Lemma \ref{Lemma: concentration of the cohomology degree}, we know that \[
        H^3(Y_{\Iw_G}, D_k^r)_{\frakm_f}^{\leq h} \simeq H^3(Y_{\Iw_G}, V_k^{\vee}(F_f))_{\frakm_f}^{\leq h} = 0.
    \]
    On the other hand, we have an injection \[
        H^3(Y_{\Iw_G}, D_{\kappa_{\calU}}^r)_{\frakm_f}^{\leq h} \otimes_{R_{\calU, \frakm_k}} R_{\calU, \frakm_k}/\frakm_k \hookrightarrow H^3(Y_{\Iw_G}, D_k^r)_{\frakm_f}^{\leq h},
    \]
    which shows that $H^3(Y_{\Iw_G}, D_{\kappa_{\calU}}^r)_{\frakm_f}^{\leq h} \otimes_{R_{\calU, \frakm_k}} R_{\calU, \frakm_k}/\frakm_k  = 0$. Then, by Nakayama's lemma, one sees that $H^3(Y_{\Iw_G}, D_{\kappa_{\calU}}^r)_{\frakm_f}^{\leq h} = 0$. The desired result then follows from \eqref{eq: exact seq. of specialisation map at degree 3}.
\end{proof}

\begin{proof}[Proof of Proposition \ref{Proposition: structure theorem}]
    First of all, we claim that the natural morphisms \[
        H_c^i(Y_{\Iw_G}, D_{\kappa_{\calU}}^r)^{\leq h}_{\frakm_f} \twoheadrightarrow H_{\Par}^i(Y_{\Iw_G}, D_{\kappa_{\calU}}^r)^{\leq h}_{\frakm_f} \hookrightarrow H^i(Y_{\Iw_G}, D_{\kappa_{\calU}}^r)^{\leq h}_{\frakm_f}
    \] are isomorphisms. To show this, after applying Theorem \ref{Theorem: control theorem for overconvergent cohomology}, one observes that we have a commutative diagram \[
        \begin{tikzcd}[column sep = small]
            \scalemath{0.8}{H_{\partial}^{i-1}(Y_{\Iw_G}, D_{\kappa_{\calU}}^r)_{\frakm_f}^{\leq h}} \arrow[r]\arrow[d, two heads] & \scalemath{0.8}{H_{c}^{i}(Y_{\Iw_G}, D_{\kappa_{\calU}}^r)_{\frakm_f}^{\leq h} }\arrow[r]\arrow[d, two heads] & \scalemath{0.8}{H^{i}(Y_{\Iw_G}, D_{\kappa_{\calU}}^r)_{\frakm_f}^{\leq h} }\arrow[r]\arrow[d, two heads] & \scalemath{0.8}{H_{\partial}^{i}(Y_{\Iw_G}, D_{\kappa_{\calU}}^r)_{\frakm_f}^{\leq h} } \arrow[d, two heads]\\
            \scalemath{0.8}{H_{\partial}^{i-1}(Y_{\Iw_G}, D_{\kappa_{\calU}}^r)_{\frakm_f}^{\leq h} \otimes_{R_{\calU, \frakm_k}} F_f }\arrow[r]\arrow[d, hook] & \scalemath{0.8}{ H_{c}^{i}(Y_{\Iw_G}, D_{\kappa_{\calU}}^r)_{\frakm_f}^{\leq h} \otimes_{R_{\calU, \frakm_k}} F_f } \arrow[r]\arrow[d, hook] & \scalemath{0.8}{H^i(Y_{\Iw_G}, D_{\kappa_{\calU}}^r)_{\frakm_f}^{\leq h} \otimes_{R_{\calU, \frakm_k}}F_f }\arrow[r]\arrow[d, hook] & \scalemath{0.8}{H_{\partial}^{i}(Y_{\Iw_G}, D_{\kappa_{\calU}}^r)_{\frakm_f}^{\leq h}\otimes_{R_{\calU, \frakm_k}} F_f} \arrow[d, hook]\\
            \scalemath{0.8}{H_{\partial}^{i-1}(Y_{\Iw_G}, V_k^{\vee}(F_f))_{\frakm_f}^{\leq h} } \arrow[r] & \scalemath{0.8}{H_{c}^{i}(Y_{\Iw_G}, V_k^{\vee}(F_f))_{\frakm_f}^{\leq h}} \arrow[r] & \scalemath{0.8}{H^{i}(Y_{\Iw_G}, V_k^{\vee}(F_f))_{\frakm_f}^{\leq h} }\arrow[r] & \scalemath{0.8}{H_{\partial}^{i}(Y_{\Iw_G}, V_k^{\vee}(F_f))_{\frakm_f}^{\leq h}}
        \end{tikzcd},
    \]
    where \begin{enumerate} 
        \item[$\bullet$] the tensor products in the second raw is given via $R_{\calU, \frakm_k}/\frakm_k=F_f$; 
        \item[$\bullet$] the top and the bottom rows are exact.
    \end{enumerate} 
    Since $H_{c}^{i}(Y_{\Iw_G}, V_k^{\vee}(F_f))_{\frakm_f}^{\leq h} \rightarrow H^{i}(Y_{\Iw_G}, V_k^{\vee}(F_f))_{\frakm_f}^{\leq h}$ is an isomorphism by Lemma \ref{Lemma: concentration of the cohomology degree}, one sees that \[
        H_{\partial}^{i}(Y_{\Iw_G}, V_k^{\vee}(F_f))_{\frakm_f}^{\leq h} = 0
    \] for all $i$. This implies that \[
        H_{\partial}^{i}(Y_{\Iw_G}, D_{\kappa_{\calU}}^r)_{\frakm_f}^{\leq h}\otimes_{R_{\calU, \frakm_k}} F_f = 0
    \] for all $i$. By Nakayama's lemma, we conclude that \[
        H_{\partial}^{i}(Y_{\Iw_G}, D_{\kappa_{\calU}}^r)_{\frakm_f}^{\leq h} =0 .
    \] The desired isomorphisms then follow from the exactness of the top row.

    Since we have isomorphisms \[
        H_c^i(Y_{\Iw_G},  D_{\kappa_{\calU}}^r)^{\leq h}_{\frakm_f} \simeq H_{\Par}^i(Y_{\Iw_G}, D_{\kappa_{\calU}}^r)^{\leq h}_{\frakm_f} \simeq H^i(Y_{\Iw_G}, D_{\kappa_{\calU}}^r)^{\leq h}_{\frakm_f},
    \] we can now apply \cite[Theorem 4.5]{BW} and know that $H^1_{\Par}(Y_{\Iw_G}, D_{\kappa_{\calU}}^r)_{\frakm_f}^{\leq h}$ is free of rank one over $\bbT_{\calU, h, \frakm_f}^{(1)}$ (as well as over $R_{\calU, \frakm_k}$). Note that, from the proof of \cite[Proposition 4.6]{BW}, the natural map \[
         H^1_{\Par}(Y_{\Iw_G}, D_{\kappa_{\calU}}^r)_{\frakm_f}^{\leq h} \otimes_{R_{\calU, \frakm_f}} R_{\calU, \frakm_f}/\frakm_f \rightarrow H_{\Par}^1(Y_{\Iw_G}, D_k^r)_{\frakm_f}^{\leq h}
    \] is an isomorphism. 

    To show the desired result for $H^2_{\Par}(Y_{\Iw_G}, D_{\kappa_{\calU}}^r)_{\frakm_f}^{\leq h}$, we claim that the induced pairing \begin{equation}\label{eq: pairing after localisation}
        H^1_{\Par}(Y_{\Iw_G}, D_{\kappa_{\calU}}^r)_{\frakm_f}^{\leq h} \times H^2_{\Par}(Y_{\Iw_G}, D_{\kappa_{\calU}}^r)_{\frakm_f}^{\leq h} \rightarrow R_{\calU, \frakm_k}
    \end{equation} is non-degenerate. Note that we have a commutative diagram of exact sequences \[
        \begin{tikzcd}
            H^2(Y_{\Iw_G}, D_{\kappa_{\calU}}^r)_{\frakm_f}^{\leq h} \arrow[r] & H^2(Y_{\Iw_G}, D_{k}^r)_{\frakm_f}^{\leq h} \arrow[r] & H^3(Y_{\Iw_G}, \ker \mathrm{sp}_k)_{\frakm_f}^{\leq h} \\
            H_c^2(Y_{\Iw_G}, D_{\kappa_{\calU}}^r)_{\frakm_f}^{\leq h} \arrow[r]\arrow[u, "\simeq"] & H_c^2(Y_{\Iw_G}, D_{k}^r)_{\frakm_f}^{\leq h} \arrow[r]\arrow[u, "\simeq "] & H_c^3(Y_{\Iw_G}, \ker \mathrm{sp}_k)_{\frakm_f}^{\leq h}\arrow[u]
        \end{tikzcd} .
    \] Then, by Lemma \ref{Lemma: H3 with coefficients in ker(spk) vanishes}, one then concludes that \[
        H_{\Par}^2(Y_{\Iw_G}, D_{\kappa_{\calU}}^r)_{\frakm_f}^{\leq h} \rightarrow H_{\Par}^2(Y_{\Iw_G}, D_k^r)_{\frakm_f}^{\leq h}
    \] is a surjection. Since $H_{\Par}^1(Y_{\Iw_G}, D_k^r)_{\frakm_f}^{\leq h}$ is one-dimensional, one deduces from Corollary \ref{Corollary: non-degenercay of the pairing; non-critical case} and Nakayama's lemma that $H_{\Par}^2(Y_{\Iw_G}, D_{\kappa_{\calU}}^r)_{\frakm_f}^{\leq h}$ is generated by one element over $R_{\calU, \frakm_k}$. Now, let $e_f^{(i)}$ be a basis of $H_{\Par}^i(Y_{\Iw_G}, D_k^r)_{\frakm_f}^{\leq h}$ (for $i=1,2$) such that $[e_f^{(1)}, e_f^{(2)}]_k = 1$. We lift them to generators of $H^i_{\Par}(Y_{\Iw_G}, D_{\kappa_{\calU}}^r)_{\frakm_f}^{\leq h}$ (by Nakayama's lemma again) and still denote them by $e_f^{(i)}$. Then, we have \[
        [e_f^{(1)}, e_f^{(2)}]_{\kappa_{\calU}, \frakm_f} \equiv 1 \mod \frakm_k.
    \] Since both $e_f^{(1)}$ and $e_f^{(2)}$ are generators, this shows the non-degeneracy of \eqref{eq: pairing after localisation}. 
    
    Consequently, the natural morphism \begin{equation}\label{eq: H^2-->Hom(H^1, R)}
        H^2_{\Par}(Y_{\Iw_G}, D_{\kappa_{\calU}}^r)_{\frakm_f}^{\leq h} \xrightarrow{\eta} \Hom_{R_{\calU, \frakm_k}}(H^1_{\Par}(Y_{\Iw_G}, D_{\kappa_{\calU}}^r)^{\leq h}_{\frakm_f}, R_{\calU, \frakm_k}) \simeq R_{\calU, \frakm_k}
    \end{equation} is an injection, where $\eta([\mu]) = [-, [\mu]]_{\kappa_{\calU, \frakm_f}}$ and the last isomorphism is given by $\varphi \mapsto \varphi(e_f^{(1)})$. Since $[e_f^{(1)}, e_f^{(2)}]_{\kappa_{\calU}, \frakm_f} \equiv 1 \mod \frakm_k$, $u_f:=[e_f^{(1)}, e_f^{(2)}]_{\kappa_{\calU}, \frakm_f}$ is a unit in $R_{\calU, \frakm_k}$. Thus, for any $a\in R_{\calU, \frakm_k}$, \[
        a = au_f^{-1}[e_f^{(1)}, e_f^{(2)}]_{\kappa_{\calU}, \frakm_f}= \eta(u_f^{-1}ae_f^{(2)})(e_f^{(1)})
    \] and so \eqref{eq: H^2-->Hom(H^1, R)} is also a surjection and hence an isomorphism. This then yields an injection \[
        \bbT_{\calU, h, \frakm_f}^{(2)} \hookrightarrow \End_{R_{\calU, \frakm_k}}(H^2_{\Par}(Y_{\Iw_G}, D_{\kappa_{\calU}}^r)^{\leq h}_{\frakm_f}) \simeq R_{\calU, \frakm_k},
    \] which is then an isomorphism. 
\end{proof}

\begin{Corollary}\label{Corollary: weight map is étale at the chosen point}
    Keep the notation as above. The weight map $\wt$ is étale at $x_f$.
\end{Corollary}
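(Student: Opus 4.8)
The plan is to read the corollary directly off Proposition~\ref{Proposition: structure theorem} and the computations carried out in its proof, which in effect already identify the relevant Hecke algebra with the weight algebra locally at $x_f$. Recall from that proof that $H^1_{\Par}(Y_{\Iw_G}, D_{\kappa_{\calU}}^r)^{\leq h}_{\frakm_f}$ is free of rank one over $R_{\calU, \frakm_k}$, and that the localised pairing \eqref{eq: pairing after localisation} is non-degenerate with $[e_f^{(1)}, e_f^{(2)}]_{\kappa_{\calU}, \frakm_f} \equiv 1 \bmod \frakm_k$ for suitable generators $e_f^{(1)}, e_f^{(2)}$. First I would spell out the purely algebraic consequence: since $H^1_{\Par}(Y_{\Iw_G}, D_{\kappa_{\calU}}^r)^{\leq h}_{\frakm_f}$ is $R_{\calU, \frakm_k}$-free of rank one, the natural map identifies $\End_{R_{\calU, \frakm_k}}\bigl(H^1_{\Par}(Y_{\Iw_G}, D_{\kappa_{\calU}}^r)^{\leq h}_{\frakm_f}\bigr)$ with $R_{\calU, \frakm_k}$ acting by scalars; the inclusion $\bbT^{(1)}_{\calU, h, \frakm_f} \hookrightarrow \End_{R_{\calU, \frakm_k}}\bigl(H^1_{\Par}(Y_{\Iw_G}, D_{\kappa_{\calU}}^r)^{\leq h}_{\frakm_f}\bigr)$ then exhibits the composite $R_{\calU, \frakm_k} \to \bbT^{(1)}_{\calU, h, \frakm_f} \hookrightarrow \End_{R_{\calU, \frakm_k}}\bigl(H^1_{\Par}(Y_{\Iw_G}, D_{\kappa_{\calU}}^r)^{\leq h}_{\frakm_f}\bigr) \simeq R_{\calU, \frakm_k}$ as the identity. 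Hence the structure morphism $R_{\calU, \frakm_k} \to \bbT^{(1)}_{\calU, h, \frakm_f}$ is a split injection whose cokernel is killed by the surjection $\bbT^{(1)}_{\calU, h, \frakm_f} \hookrightarrow R_{\calU, \frakm_k}$, so it is an isomorphism.

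Next I would translate this into geometry. By construction $\scrT(\calZ_{\calU, h}) = \bbT^{(1)}_{\calU, h}$, and over a slope-adapted affinoid $\calZ_{\calU, h}$ the weight map factors as the finite morphism $\calE_{\circ} \to \calZ_{\circ} \to \calW_{\circ}$ induced by the structure morphism $R_{\calU} \to \bbT^{(1)}_{\calU, h}$. Since $\wt$ is finite over the slope-adapted locus, étaleness at $x_f$ may be tested on local rings, and the localisation of $R_{\calU} \to \bbT^{(1)}_{\calU, h}$ at the maximal ideal $\frakm_f$ (with contraction $\frakm_k$ on the source) is exactly the isomorphism $R_{\calU, \frakm_k} \xrightarrow{\sim} \bbT^{(1)}_{\calU, h, \frakm_f}$ obtained above. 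Therefore $\wt$ restricts to an isomorphism on a neighbourhood of $x_f$, and is in particular étale there.

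Finally, I would note the alternative route through the formalism of \S\ref{subsection: p-adic adjoint L-functions varying in finie-slope families}: the identification $\bbT^{(1)}_{\calU, h, \frakm_f} \simeq R_{\calU, \frakm_k}$, a regular and hence Gorenstein local ring, immediately gives (Adj1) and (Adj2); (Adj3) for $i = 1, 2$ is Proposition~\ref{Proposition: structure theorem}; and (Adj4) is the non-degeneracy of \eqref{eq: pairing after localisation}. Theorem~\ref{Theorem: etaleness of the weight map and the p-adic adjoint L-function} then reduces the claim to $L_{\calU, h}^{\adj}(x_f) \neq 0$, which holds because the congruence $[e_f^{(1)}, e_f^{(2)}]_{\kappa_{\calU}, \frakm_f} \equiv 1 \bmod \frakm_k$ forces a generator of $\frakL_{\calU, h}^{\adj}$ to be a unit modulo $\frakm_f$. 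The only delicate point in either approach is the passage from an isomorphism of localised (Hecke versus weight) algebras to étaleness of a morphism of adic spaces; this is where finiteness of $\wt$ over the slope-adapted locus enters, reducing the assertion to a statement about local rings, and since both rings are Noetherian localisations with the comparison map a literal isomorphism, no completion subtlety arises.
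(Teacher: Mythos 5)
Your proposal is correct and follows essentially the same route as the paper: Proposition \ref{Proposition: structure theorem} gives that $H^1_{\Par}(Y_{\Iw_G}, D_{\kappa_{\calU}}^r)^{\leq h}_{\frakm_f}$ is free of rank one over both $\bbT^{(1)}_{\calU, h, \frakm_f}$ and $R_{\calU, \frakm_k}$, whence $\bbT^{(1)}_{\calU, h, \frakm_f} \simeq R_{\calU, \frakm_k}$ and étaleness at $x_f$ follows; the paper's proof is exactly this one-line identification. Your added discussion of finiteness of $\wt$ over the slope-adapted locus and the alternative route via Theorem \ref{Theorem: etaleness of the weight map and the p-adic adjoint L-function} are harmless elaborations rather than a different argument.
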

\begin{proof}
    In the proof of Proposition \ref{Proposition: structure theorem} (see also \cite[Theorem 4.5]{BW}), we know that \[
        \bbT_{\calU, h, \frakm_f}^{(1)}  \simeq  R_{\calU, \frakm_k}, 
    \] which is obviously  étale over $R_{\calU, \frakm_k}$.
\end{proof}


\begin{Corollary}\label{Corollary: non-vanishing result for p-adic adjoint L-function}
    Keep the notation as above. Then, $L_{\calU, h}^{\adj}(x_f) \neq 0$.
\end{Corollary}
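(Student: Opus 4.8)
The strategy is to check that the point $x_f$ and the maximal ideal $\frakm_f$ satisfy the hypotheses \textup{(Adj1)}--\textup{(Adj4)} of \S \ref{subsection: p-adic adjoint L-functions varying in finie-slope families}, so that the $p$-adic adjoint $L$-function $L_{\calU, h}^{\adj}$ is defined in a neighbourhood of $x_f$ and Theorem \ref{Theorem: etaleness of the weight map and the p-adic adjoint L-function} applies; then we invoke the étaleness of $\wt$ at $x_f$ already established in Corollary \ref{Corollary: weight map is étale at the chosen point}. All the analytic input has in fact been packaged into Proposition \ref{Proposition: structure theorem}, so what remains is essentially bookkeeping.

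First I would record, from Proposition \ref{Proposition: structure theorem}, that $\bbT_{\calU, h, \frakm_f}^{(1)} \simeq R_{\calU, \frakm_k}$ (the isomorphism used in the proof of Corollary \ref{Corollary: weight map is étale at the chosen point}), since $H_{\Par}^1(Y_{\Iw_G}, D_{\kappa_{\calU}}^r)^{\leq h}_{\frakm_f}$ is free of rank one over $\bbT_{\calU, h, \frakm_f}^{(1)}$ and also over $R_{\calU, \frakm_k}$. In particular $\bbT_{\calU, h, \frakm_f}^{(1)}$ is flat over $R_{\calU, \frakm_k}$, giving \textup{(Adj1)}. As $R_{\calU, \frakm_k}$ is a regular local ring, $\bbT_{\calU, h, \frakm_f}^{(1), \vee} \simeq R_{\calU, \frakm_k}$ is free of rank one over $\bbT_{\calU, h, \frakm_f}^{(1)}$, so $\bbT_{\calU, h, \frakm_f}^{(1)}$ is Gorenstein over $R_{\calU, \frakm_k}$, which is \textup{(Adj2)}. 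For \textup{(Adj3)}, Proposition \ref{Proposition: structure theorem} gives that $H_{\Par}^i(Y_{\Iw_G}, D_{\kappa_{\calU}}^r)^{\leq h}_{\frakm_f}$ is free of rank one over $\bbT_{\calU, h, \frakm_f}^{(i)}$ for $i=1,2$; combining with the isomorphisms $\bbT_{\calU, h, \frakm_f}^{(i)} \simeq R_{\calU, \frakm_k} \simeq \bbT_{\calU, h, \frakm_f}^{(1), \vee}$ coming from that same proposition yields the required identification of $\bbT_{\calU, h, \frakm_f}^{(1)}$-modules. Finally, \textup{(Adj4)} — the non-degeneracy of $[\cdot, \cdot]_{\kappa_{\calU}, \frakm_f}$ on $H_{\Par}^1 \times H_{\Par}^2$ localised at $\frakm_f$ — is exactly the assertion established in the course of the proof of Proposition \ref{Proposition: structure theorem} (the displayed pairing labelled as the localised pairing there), proved via Corollary \ref{Corollary: non-degenercay of the pairing; non-critical case} and Nakayama's lemma.

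With \textup{(Adj1)}--\textup{(Adj4)} in hand, the formalism of \cite[\S VIII.1]{Eigen} produces the ideal $\frakL_{\calU, h}^{\adj}$ and its generator $L_{\calU, h}^{\adj}$, and Theorem \ref{Theorem: etaleness of the weight map and the p-adic adjoint L-function} gives the equivalence: $\wt$ is étale at $x_f$ if and only if $L_{\calU, h}^{\adj}(x_f) \neq 0$. By Corollary \ref{Corollary: weight map is étale at the chosen point}, $\wt$ is étale at $x_f$, and therefore $L_{\calU, h}^{\adj}(x_f) \neq 0$, as desired.

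\textbf{Where the difficulty lies.} There is no substantive new obstacle at this stage: the genuine work is the structure theorem (Proposition \ref{Proposition: structure theorem}), which rests on the concentration of cohomology in degrees $1,2$ (Lemma \ref{Lemma: concentration of the cohomology degree}), the vanishing of $H^3$ with coefficients in $\ker\mathrm{sp}_k$ (Lemma \ref{Lemma: H3 with coefficients in ker(spk) vanishes}), and the freeness result of \cite[Theorem 4.5]{BW}. The only point requiring care in the present corollary is to confirm that the isomorphisms furnished by Proposition \ref{Proposition: structure theorem} are precisely the data demanded by \textup{(Adj1)}--\textup{(Adj4)}, in particular that the module isomorphisms in \textup{(Adj3)} are compatible with the Hecke action; this is immediate because all maps in sight are constructed Hecke-equivariantly.
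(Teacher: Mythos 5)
Your proposal is correct and follows essentially the same route as the paper: the paper's proof likewise notes that (Adj1)--(Adj4) follow from (the proof of) Proposition \ref{Proposition: structure theorem} and then concludes by combining Theorem \ref{Theorem: etaleness of the weight map and the p-adic adjoint L-function} with Corollary \ref{Corollary: weight map is étale at the chosen point}. Your write-up merely makes the bookkeeping of the four conditions explicit, which the paper leaves implicit.
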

\begin{proof}
    By the proof of Proposition \ref{Proposition: structure theorem}, we know that (Adj1) -- (Adj4) are satisfied. Hence, by applying Theorem \ref{Theorem: etaleness of the weight map and the p-adic adjoint L-function} and Corollary \ref{Corollary: weight map is étale at the chosen point}, we conclude the desired result. 
\end{proof}

Recall the setting in \S \ref{subsection: p-adic adjoint L-functions varying in finie-slope families}. Let $\calX$ be the connected component of $\wt^{-1}(\calU)$ that contains $x_f$. Thanks to Proposition \ref{Proposition: structure theorem}, we may assume (after shrinking $\calU$ if necessary) that $R_{\calU} \simeq e_{\calX}\bbT_{\calU, h}^{(1)} = \scrO_{\calE_{\calU, h}}(\calX)$ and $e_{\calX}H_{\Par}^{i}(Y_{\Iw_G}, D_{\kappa_{\calU}}^r)^{\leq h}$ is free of rank $1$ over $R_{\calU}$ for $i=1, 2$. In this situation, we see from the construction of $\frakL_{\calU, h}^{\adj}$ (\cite[\S 9.1]{Eigen}) that it is generated by the image of the induced pairing \[
    [\cdot, \cdot]_{\kappa_{\calU}}: e_{\calX}H_{\Par}^{1}(Y_{\Iw_G}, D_{\kappa_{\calU}}^r)^{\leq h} \times e_{\calX}H_{\Par}^{2}(Y_{\Iw_G}, D_{\kappa_{\calU}}^r)^{\leq h} \rightarrow R_{\calU}.
\] In particular, by letting $[\mu_{\calX, i}]$ be generators $e_{\calX}H_{\Par}^{i}(Y_{\Iw_G}, D_{\kappa_{\calU}}^r)^{\leq h}$, the resulting pairing $[[\mu_{\calX, 1}], [\mu_{\calX, 2}]]_{\kappa_{\calU}}$ generates $\frakL_{\calU, h}^{\adj}$. Consequently, we may choose our $p$-adic adjoint $L$-function to be \begin{equation}\label{eq: natural choice of p-adic L-function}
    L_{\calU, h}^{\adj} := \left[[\mu_{\calX, 1}], [\mu_{\calX, 2}] \right]_{\kappa_{\calU}}. 
\end{equation}

Suppose $g$ is another Bianchi cuspidal eigenform of cohomological weight $k'$ that corresponds to a classical point $x_g$ in $\calX$, let $[\mu_{g, i}]\in H_{\Par}^i(Y_{\Iw_G}, V_{k'}^{\vee}(\C))$ be the image of $f$ under the Eichler--Shimura--Harder isomorphism. Write $[\mu_{\calX, i}](x_g)$ for the image of $[\mu_{\calX, i}]$ in $H_{\Par}^i(Y_{\Iw_G}, D_{k'}^{r})^{\leq h}$ after specialisation. Then, there are $p$-adic error terms $\Omega_{g, i}\in \C^\times\simeq \C_p^\times$ such that \[
    \Omega_{g, i}[\mu_{\calX, i}](x_g) = [\mu_{g, i}].
\]
Immediately from the functoriality of the pairing, we have the following corollary. 

\begin{Corollary}\label{Corollary: specific choice of p-adic adjoint L-function and its special value}
    Keep the notation as above and let $L_{\calU, h}^{\adj}$ be chosen as in \eqref{eq: natural choice of p-adic L-function}. Then, \[
        L_{\calU, h}^{\adj}(x_g) = \frac{\left[[\mu_{g, 1}], [\mu_{g, 2}]\right]_{k'}}{\Omega_{g, 1}\Omega_{g, 2}} .
    \]
\end{Corollary}

\subsection{\texorpdfstring{$p$}{p}-adic 
 adjoint \texorpdfstring{$L$}{L}-functions varying in ordinary families}\label{subsection: p-adic adjoint L-functions varying in ordinary families}

Let's now turn our attention to ordinary families. Let $(R_{\calU}, \kappa_{\calU})$ be a small weight such that the induced weight space $\calU = \Spa(R_{\calU}, R_{\calU})^{\rig} \hookrightarrow \calW$ is a curve. For $i\in \{0, 1, 2, 3\}$, we denote by $\bbT_{\calU, \ord}^{(i)}$ (resp., $\bbT_{\calU, \ord}^{(i), \circ}$) the reduced $R_{\calU}[1/p]$-subalgebra (resp., $R_{\calU}$-subalgebra) of $\End_{R_{\calU}[1/p]}(H_{\Par}^i(Y_{\Iw_G}, D_{\kappa_{\calU}}^r)^{\ord})$ (resp., $\End_{R_{\calU}}(H_{\Par}^i(Y_{\Iw_G}, D_{\kappa_{\calU}}^{r, \circ})^{\ord})$ ) generated by the image of $\bbT_{\Iw_G}$. 

Suppose $\frakm_x$ is a maximal ideal of $\bbT_{\calU, \ord}^{(1)}$ and we let $\frakm_{\kappa}$ be its inverse image in $R_{\calU}[1/p]$. We assume $\calU$ is smooth at $\kappa$ (Assumption \ref{Assumption: vary in 1-dimensional family}). In this situation, we look at the weight map \[
    \wt: \calE_{\calU, \ord} := \Spa(\bbT_{\calU, \ord}^{(1), \circ}, \bbT_{\calU, \ord}^{(1), \circ})^{\rig} \rightarrow \calU.
\] 
We assume furthermore that (Adj1) -- (Adj4) in this situation hold: \begin{enumerate}
    \item[(Adj1)] The localisation $\bbT_{\calU, \ord, \frakm_x}^{(1)}$ is flat over $R_{\calU}[1/p]_{\frakm_{\kappa}}$. 
    \item[(Adj2)] The dual algebra $\bbT_{\calU, \ord, \frakm_x}^{(1), \vee} := \Hom_{R_{\calU}[1/p]_{\frakm_{\kappa}}}(\bbT_{\calU, \ord, \frakm_x}^{(1)}, R_{\calU}[1/p]_{\frakm_{\kappa}})$ is free of rank 1 over $\bbT_{\calU, \ord, \frakm_x}^{(1)}$. 
    \item[(Adj3)] The induced pairing \[
        [\cdot, \cdot]_{\kappa_{\calU}, \frakm_x} : H_{\Par}^1(Y_{\Iw_G}, D_{\kappa_{\calU}}^r)_{\frakm_x}^{\ord} \times H_{\Par}^2(Y_{\Iw_G}, D_{\kappa_{\calU}}^r)_{\frakm_x}^{\ord} \rightarrow R_{\calU}[1/p]_{\frakm_{\kappa}}
    \] is non-degenerate.
    \item[(Adj4)] We have an isomorphism of $\bbT_{\calU, \ord, \frakm_x}^{(1)}$-modules \[
        H_{\Par}^i(Y_{\Iw_G}, D_{\kappa_{\calU}}^r)_{\frakm_x}^{\ord} \simeq \bbT_{\calU, \ord, \frakm_x}^{(1), \vee}
    \] for $i=1, 2$.
\end{enumerate}

Under these assumptions, since the formalism in \cite[\S 9.1]{Eigen} is purely algebraic, let $\calX$ be the connected component of $\wt^{-1}(\calU)$ that contains $x$ and let $e_{\calX}\bbT_{\calU, \ord}^{(1)} = \scrO_{\calE_{\calU, \ord}}^+(\calX)[1/p]$, one then again obtains an ideal $\frakL_{\calX, \calU, \ord}^{\adj} = \frakL_{\calU, \ord}^{\adj}\subset e_{\calX}\bbT_{\calU, \ord}^{(1)}$, which again becomes a principal ideal in $\bbT_{\calU, \ord, \frakm_x}^{(1)}$ after localising at $\frakm_x$. Hence, after possibly shrinking $\calU$, $\frakL_{\calU, \ord}^{\adj}\subset e_{\calX}\bbT_{\calU, \ord}^{(1)}$ is a principal ideal. We again pick a generator $L_{\calU, \ord}^{\adj} \in \frakL_{\calU, \ord}^{\adj}$ and call it a $p$-adic adjoint $L$-function of $x$, varying in a ordinary family of weight $\kappa_{\calU}$.

\begin{Theorem}\label{Theorem: etaleness of the weight map and the p-adic adjoint L-function; ordinary case}
    Keep the notation as above and assume (Adj1) -- (Adj4) are satisfied. Then, the natural morphism $R_{\calU}[1/p]_{\frakm_{\kappa}} \rightarrow \bbT_{\calU, \ord, \frakm_x}^{(1)}$ is étale if and only if $L_{\calU, \ord}^{\adj} \neq 0$ in $\bbT_{\calU, \ord, \frakm_x}^{(1)}/\frakm_x$.
\end{Theorem}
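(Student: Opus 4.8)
The plan is to observe that the stated equivalence is the exact ordinary-family analogue of Theorem \ref{Theorem: etaleness of the weight map and the p-adic adjoint L-function}, and that its proof proceeds by the same route, because the construction of $\frakL_{\calU, \ord}^{\adj}$ in \S\ref{subsection: p-adic adjoint L-functions varying in ordinary families} rests only on the purely commutative-algebraic formalism of \cite[\S VIII.1]{Eigen}. First I would recall precisely what that formalism consumes: a Noetherian base ring $A$, a finite flat $A$-algebra $T$ that is Gorenstein over $A$, two $T$-modules $M_1, M_2$ each isomorphic as a $T$-module to $T^{\vee} := \Hom_A(T, A)$, and a non-degenerate $A$-bilinear pairing $M_1 \times M_2 \to A$ satisfying $\langle t m_1, m_2 \rangle = \langle m_1, t m_2 \rangle$ for $t \in T$. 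Then I would match this with our situation by taking $A = R_{\calU}[1/p]_{\frakm_{\kappa}}$, $T = \bbT_{\calU, \ord, \frakm_x}^{(1)}$, $M_i = H_{\Par}^i(Y_{\Iw_G}, D_{\kappa_{\calU}}^r)_{\frakm_x}^{\ord}$ for $i = 1, 2$, and the pairing $[\cdot, \cdot]_{\kappa_{\calU}, \frakm_x}$. Hypotheses (Adj1)--(Adj4) are literally these requirements, and nothing in the formalism distinguishes cohomology cut out by a slope-$\leq h$ projector from cohomology cut out by the ordinary idempotent $e_{\ord}$; hence the passage from $[\cdot, \cdot]_{\kappa_{\calU}}$ to the ideal $\frakL_{\calU, \ord}^{\adj}$ and its generator $L_{\calU, \ord}^{\adj}$ is justified word for word as in the finite-slope case.

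Granting this, the deduction I would carry out runs as follows. By \cite[Corollary VIII.1.13]{Eigen}, $\frakL_{\calU, \ord}^{\adj}$ is principal after localising at $\frakm_x$, generated by $L_{\calU, \ord}^{\adj}$; and by \cite[Theorem VIII.1.3]{Eigen} (compare \cite[Theorem 2.7]{Auslander-Buchsbaum}) this ideal agrees, up to a unit of $T$, with the Noether different $\frakd_{T/A}$ of $T$ over $A$. Concretely, transporting the pairing through the isomorphisms $M_1 \simeq T^{\vee} \simeq M_2$ of (Adj3) turns it into a generator of $\Hom_A(T^{\vee}\otimes_T T^{\vee}, A)$, and this is the element whose associated ideal the cited computation identifies with $\frakd_{T/A}$. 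Since $T$ is finite and flat over $A$, it is étale over $A$ at $\frakm_x$ if and only if $\frakd_{T/A} \not\subseteq \frakm_x$, which is exactly the condition that the image of $L_{\calU, \ord}^{\adj}$ in $T/\frakm_x = \bbT_{\calU, \ord, \frakm_x}^{(1)}/\frakm_x$ be nonzero; this completes the argument.

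The step I expect to require the most care---though it is short---is verifying that the algebraic inputs of \cite[\S VIII.1]{Eigen} and \cite{Auslander-Buchsbaum} are legitimately available over the base $A = R_{\calU}[1/p]_{\frakm_{\kappa}}$, which is the one genuine difference from the finite-slope setting (where $R_{\calU}$ was a reduced affinoid over $\Q_p$). Here $R_{\calU}$ is a small $\Z_p$-algebra, hence $p$-torsion free, reduced and Noetherian, and because we have restricted throughout to the one-dimensional parallel weight space $\calW_{\circ}$ it is moreover finite over $\Z_p\llbrack T \rrbrack$; therefore $R_{\calU}[1/p]$ is a one-dimensional reduced Noetherian ring and $A$, its localisation at the maximal ideal $\frakm_{\kappa}$, is Noetherian local of Krull dimension at most $1$---enough to run the formalism. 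I would also record, although it is not needed for the equivalence as stated, that in the cases relevant to the applications (for instance when $\frakm_x$ is attached to a non-critical ordinary Bianchi eigenform) the argument of \S\ref{subsection: non-vanishing; p-adic adjoint L-functions}, transported to the ordinary setting by means of Theorem \ref{Theorem: Hida's control theorem}, yields $\bbT_{\calU, \ord, \frakm_x}^{(1)} \simeq A$, whence étaleness at $\frakm_x$ and the non-vanishing $L_{\calU, \ord}^{\adj} \neq 0$ as in Corollary \ref{Corollary: non-vanishing result for p-adic adjoint L-function}.
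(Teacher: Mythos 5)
Your proposal is correct and follows essentially the same route as the paper: the paper states this theorem as an immediate consequence of the purely algebraic formalism of \cite[\S VIII.1]{Eigen} (via \cite[Theorem VIII.1.3]{Eigen} and \cite[Theorem 2.7]{Auslander-Buchsbaum}), exactly as in the finite-slope case, with (Adj1)--(Adj4) supplying the hypotheses over the Noetherian local base $R_{\calU}[1/p]_{\frakm_{\kappa}}$. Your extra verification that the base ring is legitimate in the ordinary setting is a sensible elaboration of what the paper leaves implicit.
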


As before, we would like to exhibit a more precise choice of $\frakm_x$ such that (Adj1) -- (Adj4) hold. To this end, let $f\in S_k(Y_{\Iw_G})$ such that it is a $\bbT_{\Iw_G}$-eigenform and its image in $H_{\Par}^1(Y_{\Iw_G}, V_k^{\vee}(\C))$ lies in $H_{\Par}^1(Y_{\Iw_G}, V_k^{\vee}(F_f))^{\ord}$. We shall again assume that $k \neq (0,0)$. We denote by $\frakm_f$ the maximal ideal of $\bbT_{\Iw_G} \otimes_{\Z_p}R_{\calU}[1/p]$ defined by $f$. We also assume Assumption \ref{Assumption: multiplicity one on degree 1 and 2} holds for $f$.

\begin{Proposition}\label{Proposition: structure theorem; ordinary case}
    Keep the notation as above. For $i=1, 2$, the $\bbT_{\calU, \ord, \frakm_f}^{(i)}$-module $H_{\Par}^i(Y_{\Iw_G}, D_{\kappa_{\calU}}^r)^{\ord}_{\frakm_f}$ is free of rank $1$. 
\end{Proposition}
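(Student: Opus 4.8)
The strategy is to transcribe the proof of Proposition~\ref{Proposition: structure theorem} into the ordinary setting, replacing Stevens's control theorem (Theorem~\ref{Theorem: control theorem for overconvergent cohomology}) by Hida's control theorem (Theorem~\ref{Theorem: Hida's control theorem}), the slope-$\leq h$ projections by the ordinary idempotent $e_{\ord}$, the localised rings $R_{\calU,\frakm_k}$ and $\bbT^{(i)}_{\calU,h,\frakm_f}$ by $R_{\calU}[1/p]_{\frakm_\kappa}$ and $\bbT^{(i)}_{\calU,\ord,\frakm_f}$, and Corollary~\ref{Corollary: non-degenercay of the pairing; non-critical case} by Corollary~\ref{Corollary: non-degenercay of the pairing; ordinary case}. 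The first step is to show that $H_c^i(Y_{\Iw_G},D_{\kappa_{\calU}}^r)^{\ord}_{\frakm_f}\twoheadrightarrow H_{\Par}^i(Y_{\Iw_G},D_{\kappa_{\calU}}^r)^{\ord}_{\frakm_f}\hookrightarrow H^i(Y_{\Iw_G},D_{\kappa_{\calU}}^r)^{\ord}_{\frakm_f}$ are isomorphisms for $i=1,2$. Since $e_{\ord}$ is an idempotent commuting with the boundary long exact sequence, it suffices to prove the vanishing of $H_\partial^i(Y_{\Iw_G},D_{\kappa_{\calU}}^r)^{\ord}_{\frakm_f}$ for all $i$. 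As $D_{\kappa_{\calU}}^{r,\circ}$ is $R_{\calU}$-flat (Corollary~\ref{Corollary: R-valued analytic functions}), the short exact sequence $0\to\ker\mathrm{sp}_k\to D_{\kappa_{\calU}}^r\to D_k^r\to0$ from the proof of Lemma~\ref{Lemma: H3 with coefficients in ker(spk) vanishes} gives, via Hida's control theorem, injections $H_\partial^i(Y_{\Iw_G},D_{\kappa_{\calU}}^r)^{\ord}_{\frakm_f}\otimes_{R_{\calU}[1/p]_{\frakm_\kappa}}R_{\calU}[1/p]_{\frakm_\kappa}/\frakm_k\hookrightarrow H_\partial^i(Y_{\Iw_G},V_k^\vee(\Q_p))^{\ord}_{\frakm_f}$. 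By Lemma~\ref{Lemma: concentration of the cohomology degree} together with the Hecke-equivariance of Poincaré duality (Proposition~\ref{Proposition: pairing on cohomology groups is Hecke-equivariant}) the right-hand side vanishes, and since the $e_{\ord}$-parts of the cohomology of $D_{\kappa_{\calU}}^{r,\circ}$ are finitely generated over $R_{\calU}$ (standard Hida theory, via the profinite filtration of Remark~\ref{Remark: filtration on distributions}), Nakayama's lemma forces $H_\partial^i(Y_{\Iw_G},D_{\kappa_{\calU}}^r)^{\ord}_{\frakm_f}=0$; the boundary sequence then yields the claimed isomorphisms. The same Nakayama argument gives $H^3(Y_{\Iw_G},D_{\kappa_{\calU}}^r)^{\ord}_{\frakm_f}=0$ and hence the ordinary analogue of Lemma~\ref{Lemma: H3 with coefficients in ker(spk) vanishes}, namely $H^3(Y_{\Iw_G},\ker\mathrm{sp}_k)^{\ord}_{\frakm_f}=0$.

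For $i=1$, the isomorphisms just obtained place us in the situation of \cite[Theorem 4.5]{BW}: that result is a statement about the perfect complex computing $H^\bullet_c$ localised at a non-Eisenstein, multiplicity-one maximal ideal, and its proof goes through verbatim with $e_{\ord}$ in place of the slope-$\leq h$ projector, so $H^1_{\Par}(Y_{\Iw_G},D_{\kappa_{\calU}}^r)^{\ord}_{\frakm_f}$ is free of rank one over $\bbT^{(1)}_{\calU,\ord,\frakm_f}$ (and over $R_{\calU}[1/p]_{\frakm_\kappa}$), and by \cite[Proposition 4.6]{BW} its reduction to weight $k$ is $H^1_{\Par}(Y_{\Iw_G},D_k^r)^{\ord}_{\frakm_f}$, which is one-dimensional over $F_f$ by Assumption~\ref{Assumption: multiplicity one on degree 1 and 2}. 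Alternatively, and more safely, one covers $\calU$ by affinoid opens $\calU'$ on which a slope-$\leq h$ decomposition exists with $h\geq0$; since $\frakm_f$ is ordinary, the localisation at $\frakm_f$ of the slope-$\leq h$ part coincides with that of the slope-$0$ (equivalently, ordinary) part, so Proposition~\ref{Proposition: structure theorem} applies on each $\calU'$ and the resulting freeness glues.

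For $i=2$, I would show the induced pairing $H^1_{\Par}(Y_{\Iw_G},D_{\kappa_{\calU}}^r)^{\ord}_{\frakm_f}\times H^2_{\Par}(Y_{\Iw_G},D_{\kappa_{\calU}}^r)^{\ord}_{\frakm_f}\to R_{\calU}[1/p]_{\frakm_\kappa}$ is non-degenerate, exactly as in Proposition~\ref{Proposition: structure theorem}. The vanishing $H^3(Y_{\Iw_G},\ker\mathrm{sp}_k)^{\ord}_{\frakm_f}=0$ makes the specialisation map $H^2_{\Par}(Y_{\Iw_G},D_{\kappa_{\calU}}^r)^{\ord}_{\frakm_f}\twoheadrightarrow H^2_{\Par}(Y_{\Iw_G},D_k^r)^{\ord}_{\frakm_f}$ surjective; the target is one-dimensional and the classical pairing is perfect on it (Corollary~\ref{Corollary: non-degenercay of the pairing; ordinary case} in weight $k$), so choosing dual bases $e_f^{(1)},e_f^{(2)}$ in weight $k$ with $[e_f^{(1)},e_f^{(2)}]_k=1$, lifting them by Nakayama to generators, and computing $[e_f^{(1)},e_f^{(2)}]_{\kappa_{\calU},\frakm_f}\equiv 1\pmod{\frakm_k}$ proves non-degeneracy of the family pairing. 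Consequently the adjunction map $H^2_{\Par}(Y_{\Iw_G},D_{\kappa_{\calU}}^r)^{\ord}_{\frakm_f}\hookrightarrow\End_{R_{\calU}[1/p]_{\frakm_\kappa}}\!\bigl(H^1_{\Par}(Y_{\Iw_G},D_{\kappa_{\calU}}^r)^{\ord}_{\frakm_f}\bigr)\simeq R_{\calU}[1/p]_{\frakm_\kappa}$ is injective, and it is surjective because the source is generated by one element over $R_{\calU}[1/p]_{\frakm_\kappa}$; hence it is an isomorphism, so $H^2_{\Par}(Y_{\Iw_G},D_{\kappa_{\calU}}^r)^{\ord}_{\frakm_f}$ is free of rank one over $\bbT^{(2)}_{\calU,\ord,\frakm_f}\simeq R_{\calU}[1/p]_{\frakm_\kappa}$.

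The genuinely new points are two. First, confirming that the Nakayama arguments remain valid over the small-weight base $R_{\calU}[1/p]_{\frakm_\kappa}$ (a discrete valuation ring, since $\calW_\circ$ is one-dimensional): this rests on finite generation of the $e_{\ord}$-parts over $R_{\calU}$, which is standard Hida theory but should be spelled out using Remark~\ref{Remark: filtration on distributions} and Theorem~\ref{Theorem: Hida's control theorem}. Second, verifying that \cite[Theorem 4.5]{BW} transfers to the ordinary part. I expect the latter to be the main obstacle, and the cleanest resolution is the reduction to slope-adapted affinoids indicated above, where on each piece the ordinary part is literally the slope-$0$ summand and Proposition~\ref{Proposition: structure theorem} applies on the nose; the only thing left to check there is that the local freeness statements are compatible on overlaps, which follows from the uniqueness of slope decompositions.
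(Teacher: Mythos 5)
Your second (``safer'') route is, in outline, the same as the paper's: the paper does not rerun the finite-slope argument with $e_{\ord}$ in place of the slope projectors, but instead chooses an affinoid weight $(R_{\calV},\kappa_{\calV})$ with $\calV$ an affinoid open in $\calU$ through $x_f$ and deduces the ordinary statement from Proposition \ref{Proposition: structure theorem} over $\calV$. The step your write-up glosses over is the identification you describe as ``the ordinary part is literally the slope-$0$ summand \dots\ the only thing left to check is compatibility on overlaps, which follows from uniqueness of slope decompositions''. That identification is not formal: the ordinary part is defined by applying $e_{\ord}$ to integral cohomology over the small-weight algebra $R_{\calU}$, while the slope decomposition lives over the affinoid $R_{\calV}$, and what one actually needs is that the natural base-change map
\[
\Psi^{\ord}_{\calV,\frakm_f}\colon H_{\Par}^i(Y_{\Iw_G}, D_{\kappa_{\calU}}^r)^{\ord}_{\frakm_f}\longrightarrow H_{\Par}^i(Y_{\Iw_G}, D_{\kappa_{\calV}}^r)^{\leq h}_{\frakm_f}
\]
is an isomorphism. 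The paper proves this in two steps: injectivity from the inclusion $e_{\ord}H_{\Par}^i(Y_{\Iw_G},D^{r,\circ}_{\kappa_{\calU}})\otimes_{R_{\calU}}R_{\calV}\hookrightarrow H_{\Par}^i(Y_{\Iw_G},D^r_{\kappa_{\calV}})^{\leq h}$ together with exactness of localisation, and surjectivity by specialising both sides to weight $k$, where Assumption \ref{Assumption: multiplicity one on degree 1 and 2} forces the specialised comparison to be an isomorphism, and then killing the cokernel by Nakayama. Uniqueness of slope decompositions alone does not deliver this, and no gluing over a cover of $\calU$ is needed, since the statement is local at $\frakm_f$ and a single affinoid through $x_f$ suffices. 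Once $\Psi^{\ord}_{\calV,\frakm_f}$ is an isomorphism, freeness and $\bbT^{(i)}_{\calU,\ord,\frakm_f}\simeq\bbT^{(i)}_{\calV,h,\frakm_f}\simeq R_{\calV,\frakm_k}$ follow as you indicate.

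Your first route (transcribing the finite-slope proof wholesale) is precisely what the paper chose not to do, and as written it has unverified inputs: Theorem \ref{Theorem: Hida's control theorem} is only proved for the specialisation $D_k^{r,\circ}\rightarrow V_k^{\vee}(\Z_p)$ at a fixed weight $k$, not for the family specialisation $D^{r,\circ}_{\kappa_{\calU}}\rightarrow D_k^{r,\circ}$, so the injections after reduction modulo $\frakm_k$ that you invoke for the boundary cohomology require a separate base-change argument for ordinary parts in family; and the assertion that the proof of \cite[Theorem 4.5]{BW} ``goes through verbatim'' with $e_{\ord}$ in place of the slope-$\leq h$ projector is exactly the kind of claim the reduction to an affinoid is designed to avoid (you flag this yourself as the main obstacle). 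Since you ultimately fall back on the second route, the proposal is salvageable, but the comparison isomorphism displayed above must be proved rather than asserted.
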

\begin{proof}
    Instead of verifying that our previous method goes verbatim, we make use of our knowledge of Proposition \ref{Proposition: structure theorem} to deduce the desired result. 
    
    Let $(R_{\calV}, \kappa_{\calV})$ be an affinoid weight such that the associated weight space $\calV$ is an affinoid open subset in $\calU$ and $f$ defines a point $x_{f}\in \calV$. We still denote by $\frakm_f$ the corresponding maximal ideal in $R_{\calV}$. By construction, we have a natural morphism \[
        \Psi_{\calV, \frakm_f}^{\ord}:H_{\Par}^i(Y_{\Iw_G}, D_{\kappa_{\calU}}^r)_{\frakm_f}^{\ord} \rightarrow H_{\Par}^i(Y_{\Iw_G}, D_{\kappa_{\calV}}^r)_{\frakm_f}^{\leq h},
    \] for any $h<k+1$. By control theorem and Nakayama's lemma, we know that the left-hand side is a finitely generated $R_{\calU}[1/p]_{\frakm_k}$-module while the right-hand side is a free $R_{\calV, \frakm_k}$-module of rank $1$ (by Proposition \ref{Proposition: structure theorem}). Observe that both $R_{\calU}[1/p]_{\frakm_k}$ and $R_{\calV, \frakm_k}$ are the local rings at $\frakm_k$, thus they agree to each other. We claim that the natural map above is an isomorphism of $R_{\calU}[1/p]_{\frakm_k}$-modules. 

    The injectivity of $\Psi_{\calV ,\frakm_f}^{\ord}$ is easy. By construction, we have an injection \[
        e_{\ord}H_{\Par}^i(Y_{\Iw_G}, D_{\kappa_{\calU}}^{r, \circ})\otimes_{R_{\calU}} R_{\calV} \hookrightarrow H_{\Par}^i(Y_{\Iw_G}, D_{\kappa_{\calV}}^r)^{\leq h}.
    \] Since localisation is exact, we see the desired injectivity. 

    Let's now show the surjectivity of $\Psi_{\calV, \frakm_f}^{\ord}$. We have a commutative diagram \[
        \begin{tikzcd}
            H_{\Par}^i(Y_{\Iw_G}, D_{\kappa_{\calU}}^r)_{\frakm_f}^{\ord} \arrow[r, "\Psi_{\calV, \frakm_f}^{\ord}"]\arrow[d, two heads] & H_{\Par}^i(Y_{\Iw_G}, D_{\kappa_{\calV}}^r)_{\frakm_f}^{\leq h} \arrow[r] \arrow[d, two heads] & \coker\Psi_{\calV, \frakm_f}^{\ord}\arrow[d, two heads]\\
            H_{\Par}^i(Y_{\Iw_G}, D_{\kappa_{\calU}}^r)_{\frakm_f}^{\ord} \otimes_{R_{\calU}[1/p]_{\frakm_f}, k} F_f \arrow[r]\arrow[d, hook] & H_{\Par}^i(Y_{\Iw_G}, D_{\kappa_{\calV}}^r)_{\frakm_f}^{\leq h} \otimes_{R_{\calV, \frakm_f}, k}F_f\arrow[r]\arrow[d, equal] & (\coker\Psi_{\calV, \frakm_f}^{\ord})\otimes_{R_{\calV, \frakm_f}, k}F_f\arrow[d, hook]\\
            H_{\Par}^i(Y_{\Iw_G}, V_k^{\vee}(F_f))_{\frakm_f}^{\ord} \arrow[r, "\Psi_{k, \frakm_f}^{\ord}"] & H_{\Par}^{i}(Y_{\Iw_G}, V_k^{\vee}(F_f))_{\frakm_f}^{\leq h} \arrow[r] & \coker\Psi_{k, \frakm_f}^{\ord}
        \end{tikzcd},
    \]
    where $H^i_{\Par}(Y_{\Iw_G}, V_k^{\vee}(F_f))^{\ord} := \left( e_{\ord}H_{\Par}^i(Y_{\Iw_G}, V_k^{\vee}(\calO_{F_f})) \right)[1/p]$ and the map $\Psi_{k, \frakm_f}^{\ord}$ is defined similarly as above. Note that the equation in the diagram follows from Assumption \ref{Assumption: multiplicity one on degree 1 and 2}. It also implies that $H_{\Par}^i(Y_{\Iw_G}, V_k^{\vee}(F_f))_{\frakm_f}^{\ord}$ is of dimension $1$ and so $\Psi_{k, \frakm_f}^{\ord}$ is an isomorphism. Consequently, $(\coker\Psi_{\calV, \frakm_f}^{\ord})\otimes_{R_{\calV, \frakm_f}, k}F_f=0$. Since the top row gives rise to a short exact sequence, the sequence \[
         H_{\Par}^i(Y_{\Iw_G}, D_{\kappa_{\calU}}^r)_{\frakm_f}^{\ord} \otimes_{R_{\calU}[1/p]_{\frakm_f}, k} F_f \xrightarrow{\Psi_{\calV, \frakm_f}^{\ord} \otimes_k F_f} H_{\Par}^i(Y_{\Iw_G}, D_{\kappa_{\calV}}^r)_{\frakm_f}^{\leq h} \otimes_{R_{\calV, \frakm_f}, k}F_f \rightarrow (\coker\Psi_{\calV, \frakm_f}^{\ord})\otimes_{R_{\calV, \frakm_f}, k}F_f \rightarrow 0
    \] is exact. The vanishing of $(\coker\Psi_{\calV, \frakm_f}^{\ord})\otimes_{R_{\calV, \frakm_f}, k}F_f$ implies that $\Psi_{\calV, \frakm_f}^{\ord} \otimes_k F_f$ is surjective. Then, by Nakayama's lemma, one sees that $\Psi_{\calV, \frakm_f}^{\ord}$ is also surjective. 

    Finally, we see from the proof of Proposition \ref{Proposition: structure theorem}, $\bbT_{\calV, h, \frakm_f}^{(i)} \simeq R_{\calV, \frakm_k}$ and the morphism above is Hecke-equivariant, we conclude that $\bbT_{\calU, \ord, \frakm_f}^{(i)} \simeq \bbT_{\calV, h, \frakm_f}^{(i)} \simeq R_{\calV, \frakm_k}$, which conclude the assertion. 
\end{proof}

Immediately, we have the following corollary. 

\begin{Corollary}\label{Corollary: non-vanishing result for p-adic adjoint L-function; ordinary case}
    Keep the notation as above. Then, the natural morphism $R_{\calU}[1/p]_{\frakm_{k}} \rightarrow \bbT_{\calU, \ord, \frakm_f}^{(1)}$ is étale and $L_{\calU, \ord}^{\adj} \neq 0$ in $\bbT_{\calU, \ord, \frakm_f}^{(1)}/\frakm_f$.
\end{Corollary}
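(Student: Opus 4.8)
The plan is to reduce this corollary to the combination of Proposition \ref{Proposition: structure theorem; ordinary case} and Theorem \ref{Theorem: etaleness of the weight map and the p-adic adjoint L-function; ordinary case}, exactly mirroring the way Corollary \ref{Corollary: non-vanishing result for p-adic adjoint L-function} was deduced from Proposition \ref{Proposition: structure theorem} and Theorem \ref{Theorem: etaleness of the weight map and the p-adic adjoint L-function} in the finite-slope case. So the first task is to check that hypotheses (Adj1)--(Adj4) of \S \ref{subsection: p-adic adjoint L-functions varying in ordinary families} hold for $\frakm_f$, and the second is to observe that the resulting weight map is an isomorphism.

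For (Adj1)--(Adj3): the key input is the last step in the proof of Proposition \ref{Proposition: structure theorem; ordinary case}, which gives $\bbT_{\calU, \ord, \frakm_f}^{(i)} \simeq \bbT_{\calV, h, \frakm_f}^{(i)} \simeq R_{\calV, \frakm_k} = R_{\calU}[1/p]_{\frakm_k}$ for $i=1,2$, together with the rank-one freeness of $H_{\Par}^i(Y_{\Iw_G}, D_{\kappa_{\calU}}^r)^{\ord}_{\frakm_f}$ over $\bbT_{\calU, \ord, \frakm_f}^{(i)}$. From the isomorphism $\bbT_{\calU, \ord, \frakm_f}^{(1)} \simeq R_{\calU}[1/p]_{\frakm_k}$ one gets (Adj1) (flatness is automatic, the map being an isomorphism) and (Adj2) (the $R_{\calU}[1/p]_{\frakm_k}$-linear dual of a free rank-one module is free of rank one), while (Adj3) is the rank-one freeness just recalled combined with the self-identification $\bbT_{\calU, \ord, \frakm_f}^{(1), \vee} \simeq \bbT_{\calU, \ord, \frakm_f}^{(1)}$.

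The only point that still needs an argument is (Adj4), the non-degeneracy of the localised adjoint pairing in the ordinary setting, and I expect this transport step to be the one requiring genuine care. I would use the Hecke-equivariant isomorphism $\Psi_{\calV, \frakm_f}^{\ord} : H_{\Par}^i(Y_{\Iw_G}, D_{\kappa_{\calU}}^r)^{\ord}_{\frakm_f} \xrightarrow{\ \sim\ } H_{\Par}^i(Y_{\Iw_G}, D_{\kappa_{\calV}}^r)^{\leq h}_{\frakm_f}$ constructed inside the proof of Proposition \ref{Proposition: structure theorem; ordinary case}, both sides being modules over the common local ring $R_{\calU}[1/p]_{\frakm_k} = R_{\calV, \frakm_k}$. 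Since the pairing $[\cdot,\cdot]_{\kappa}$ is defined by the same integral formula on $D_{\kappa_{\calU}}^r$ and on $D_{\kappa_{\calV}}^r$, and $\Psi_{\calV, \frakm_f}^{\ord}$ is induced by the structural map $R_{\calU}\to R_{\calV}$, the pairings correspond under $\Psi_{\calV, \frakm_f}^{\ord}$; hence non-degeneracy of $[\cdot,\cdot]_{\kappa_{\calV}, \frakm_f}$ on $H_{\Par}^1 \times H_{\Par}^2$ of the finite-slope side — already established within the proof of Proposition \ref{Proposition: structure theorem} from Corollary \ref{Corollary: non-degenercay of the pairing; non-critical case}, Lemma \ref{Lemma: H3 with coefficients in ker(spk) vanishes} and Nakayama's lemma — yields (Adj4). (One could alternatively argue directly on the ordinary side using Hida's control theorem, Theorem \ref{Theorem: Hida's control theorem}, and Corollary \ref{Corollary: non-degenercay of the pairing; ordinary case}, but going through $\Psi_{\calV, \frakm_f}^{\ord}$ avoids repeating the $\ker\mathrm{sp}_k$ vanishing argument.)

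Finally, with (Adj1)--(Adj4) verified, the purely algebraic formalism of \cite[\S VIII.1]{Eigen} produces the principal ideal $\frakL^{\adj}_{\calU, \ord} \subset \bbT_{\calU, \ord}^{(1)}$ and a generator $L^{\adj}_{\calU, \ord}$ as in \S \ref{subsection: p-adic adjoint L-functions varying in ordinary families}, and Theorem \ref{Theorem: etaleness of the weight map and the p-adic adjoint L-function; ordinary case} becomes applicable. Since $R_{\calU}[1/p]_{\frakm_k} \to \bbT_{\calU, \ord, \frakm_f}^{(1)}$ is an isomorphism by the structure theorem, it is in particular étale, so the theorem forces $L^{\adj}_{\calU, \ord} \neq 0$ in $\bbT_{\calU, \ord, \frakm_f}^{(1)}/\frakm_f$, which is the assertion. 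No step involves a hard computation once the ordinary structure theorem has been invoked; the main obstacle, as noted, is simply to make the compatibility of the pairings along $\Psi_{\calV, \frakm_f}^{\ord}$ precise enough to transfer non-degeneracy.
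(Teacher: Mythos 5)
Your proposal is correct and follows essentially the same route the paper takes: the paper states this corollary as an immediate consequence of Proposition \ref{Proposition: structure theorem; ordinary case} together with Theorem \ref{Theorem: etaleness of the weight map and the p-adic adjoint L-function; ordinary case}, exactly as Corollary \ref{Corollary: non-vanishing result for p-adic adjoint L-function} was deduced in the finite-slope case. Your explicit verification of (Adj1)--(Adj4) — and in particular the transport of non-degeneracy of the adjoint pairing through the Hecke-equivariant isomorphism $\Psi_{\calV, \frakm_f}^{\ord}$ constructed in the proof of Proposition \ref{Proposition: structure theorem; ordinary case} — is a correct filling-in of details the paper leaves implicit in the word \enquote{immediately}.
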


\begin{Remark}
    We point out that in the construction of the $p$-adic adjoint $L$-functions varying in ordinary families, we work with \emph{small weights} instead of \emph{affinoid weights}. If $(R_{\calV}, \kappa_{\calV})$ is an affinoid weight and $(R_{\calU}, \kappa_{\calU})$ is a small weight such that $\calV$ is an affinoid open in $\calU$, then we have injections \[
        H_{\Par}^{i}(Y_{\Iw_G}, D_{\kappa_{\calU}}^r)^{\ord}\otimes_{R_{\calU}[1/p]}R_{\calV} \hookrightarrow H_{\Par}^i(Y_{\Iw_G}, D_{\kappa_{\calV}}^r)^{\leq 0}.
    \] Assuming $f$ is as above, these injections become isomorphisms after localising at $\frakm_f$ (by the proof of Proposition \ref{Proposition: structure theorem; ordinary case}). One can then easily deduce that, after shrinking $\calU$ if necessary, \[
        \frakL_{\calU, \ord}^{\adj} \otimes_{R_{\calU}[1/p]}R_{\calV} = \frakL_{\calV, 0}^{\adj}.
    \]
    In this subsection, we spell out the construction of the $p$-adic adjoint $L$-functions for ordinary families over small weights to resonate with the work of Hida. The discussions on the ordinary families in this paper shall also clarify how one can expect a comparison between the $p$-adic adjoint $L$-functions constructed in the present work and the $p$-adic $L$-functions constructed in \cite{Lee-PhD}.
\end{Remark}

\section{Adjoint \texorpdfstring{$L$}{L}-values}\label{section: adjoint L-values}

In the previous section, we have seen how the adjoint pairing $[\cdot, \cdot]_{\kappa_{\calU}}$ defines a \emph{$p$-adic adjoint $L$-function}. The aim of this section is to justify this name. More precisely, we compute the relation between $[\cdot, \cdot]_{\kappa_{\calU}}$ and the adjoint $L$-value. We remark in the beginning that our strategy follows from the strategy in \cite[\S 9.5]{Eigen}.

\subsection{Pairing and the \texorpdfstring{$p$}{p}-stabilisations}\label{subsection: pairing and p-stabilisations}
To simplify the notation, we assume in this subsection that $f$ is a Bianchi cuspidal eigenform of level $1$ and of cohomological parallel weight $k= (k, k) \neq (0,0)$.\footnote{ If $f$ has level $\Gamma_0(\frakn)$ with $\frakn$ nontrivial, the computation below is similar but the notation in the computation will be more complicated. We leave it to the interested readers. } We denote by $[\mu_{f, i}]$ its image in the degree-$i$ parabolic cohomology via the Eichler--Shimura--Harder isomorphism (Theorem \ref{Theorem: ESH isomorphism}). The goal of this subsection is to understand the relation between $\langle [\mu_{f, 1}], [\mu_{f, 2}]\rangle_k$ and $\left[[\mu_{f, 1}]^{(p)}_{(i,j)}, [\mu_{f, 2}]^{(p)}_{(i,j)}\right]_k$, where $[\mu_{f, i}]^{(p)}_{(i,j)}$ is the $p$-stabilisation of $[\mu_{f, i}]$ whose $U_p$-eigenvalue is $\alpha_{\frakp}^{(i)}\alpha_{\overline{\frakp}}^{(j)}$ (for $(i,j)\in (\Z/2\Z)^2$, see Proposition \ref{Proposition: p-stabilisation}).

\begin{Remark}\label{Remark: Hecke eigenvalues are all real}
    Since we have assumed $f$ is of level $1$, by \cite[Proposition 6.4]{Urban-PhD}, we know that $f$ agrees with its complex conjugation. Moreover, by applying Lemma 6.1 and Proposition 6.2 of \emph{op. cit.}, we know that the Hecke-eigenvalues of $f$ are all real. 
\end{Remark}

Several steps of preparation are required. We begin with the following lemmas.

\begin{Lemma}\label{Lemma: matrix representation of Hecke operators at p}
    For $t=1, 2$, let \[
        W_{f, t} = \mathrm{span}_{\C}\left\{ [\mu_{f, i}], \bfupsilon_{\frakp}^* *[\mu_{f, i}], \bfupsilon_{\overline{\frakp}}^* *[\mu_{f, i}], \bfupsilon_{\frakp}^* \bfupsilon_{\overline{\frakp}}^* *[\mu_{f, i}]\right\}.
    \] Then, the linear action of $U_{\frakp}$ (resp., $U_{\overline{\frakp}}$) on $W_{f, t}$ has the matrix representation  \[
        U_{\frakp} = \begin{pmatrix}\lambda_{\frakp} & p^{k+1} & & \\ -1 &&& \\ && \lambda_{\frakp} & p^{k+1}\\ && -1\end{pmatrix} \quad \left( \text{resp., } U_{\overline{\frakp}} = \begin{pmatrix} \lambda_{\overline{\frakp}} & & p^{k+1} & \\ & \lambda_{\overline{\frakp}} & & p^{k+1}\\ -1 &&& \\ & -1 &&\end{pmatrix}\right),
    \] where $\lambda_{f, \frakp} = \lambda_{\frakp}$ (resp., $\lambda_{f, \overline{\frakp}} = \lambda_{\overline{\frakp}}$) is the $T_{\bfupsilon_{\frakp}}$-eigenvalue (resp., $T_{\bfupsilon_{\overline{\frakp}}}$-eigenvalue) of $[\mu_{f, i}]$. Consequently, the matrix representation for $U_p$ is given by \[
        U_p = \begin{pmatrix} \lambda_{\frakp}\lambda_{\overline{\frakp}} & p^{k+1}\lambda_{\overline{\frakp}} & \lambda_{\frakp}p^{k+1} & p^{2(k+1)}\\ -\lambda_{\overline{\frakp}} & & -p^{k+1} & \\ -\lambda_{\frakp} & -p^{k+1} & & \\ 1 & & & \end{pmatrix}.
    \]
\end{Lemma}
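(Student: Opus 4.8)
The plan is to compute the action of $U_{\frakp}$ and $U_{\overline{\frakp}}$ on the ordered basis $\left\{[\mu_{f,t}], \bfupsilon_{\frakp}^* *[\mu_{f,t}], \bfupsilon_{\overline{\frakp}}^* *[\mu_{f,t}], \bfupsilon_{\frakp}^* \bfupsilon_{\overline{\frakp}}^* *[\mu_{f,t}]\right\}$ of $W_{f,t}$ directly, using only the identities already established in the proof of Proposition \ref{Proposition: p-stabilisation}. Recall from Remark \ref{Remark: coset decomposition for Tp} that $T_{\bfupsilon_{\frakp}} = U_{\frakp} + \bfupsilon_{\frakp}^* * (\cdot)$ as operators on the image of $H^i(Y, V_k^\vee)$ in $H^i(Y_{\Iw_G}, V_k^\vee)$, and that $[\mu_{f,t}]$ (viewed at tame level and then pulled back) is a $T_{\bfupsilon_{\frakp}}$-eigenclass with eigenvalue $\lambda_{\frakp}$. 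Hence $U_{\frakp}[\mu_{f,t}] = \lambda_{\frakp}[\mu_{f,t}] - \bfupsilon_{\frakp}^* *[\mu_{f,t}]$, which gives the first column of the claimed matrix for $U_{\frakp}$. The key auxiliary identity \eqref{eq: key identity for p-stabilisation}, namely $U_{\frakp}(\bfupsilon_{\frakp}^* *[\mu_{f,t}]) = p^{k+1}[\mu_{f,t}]$, gives the second column.

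For the remaining two columns I would observe that $U_{\frakp}$ commutes with the action of $\bfupsilon_{\overline{\frakp}}^*$: this is clear because $\bfupsilon_{\frakp}$ and $\bfupsilon_{\overline{\frakp}}$ act on the two $\GL_2(\Q_p)$-factors of $G(\Q_p)$ separately, so $\bfupsilon_{\frakp,c}$ and $\bfupsilon_{\overline{\frakp}}^*$ commute in $G(\Q_p)$ and their induced operators on cohomology commute. Applying $\bfupsilon_{\overline{\frakp}}^*$ to the two identities just derived yields $U_{\frakp}(\bfupsilon_{\overline{\frakp}}^* *[\mu_{f,t}]) = \lambda_{\frakp}\,\bfupsilon_{\overline{\frakp}}^* *[\mu_{f,t}] - \bfupsilon_{\frakp}^*\bfupsilon_{\overline{\frakp}}^* *[\mu_{f,t}]$ and $U_{\frakp}(\bfupsilon_{\frakp}^*\bfupsilon_{\overline{\frakp}}^* *[\mu_{f,t}]) = p^{k+1}\,\bfupsilon_{\overline{\frakp}}^* *[\mu_{f,t}]$, which are the third and fourth columns. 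The same argument with the roles of $\frakp$ and $\overline{\frakp}$ interchanged gives the matrix for $U_{\overline{\frakp}}$; here one uses the analogue of \eqref{eq: key identity for p-stabilisation} for $\overline{\frakp}$, i.e. $U_{\overline{\frakp}}(\bfupsilon_{\overline{\frakp}}^* *[\mu_{f,t}]) = p^{k+1}[\mu_{f,t}]$ (valid since the weight is parallel, so the exponent is again $k+1$), together with commutativity of $U_{\overline{\frakp}}$ with $\bfupsilon_{\frakp}^*$.

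Finally, since $U_p = U_{\frakp}U_{\overline{\frakp}} = U_{\overline{\frakp}}U_{\frakp}$ (Remark \ref{Remark: other Hecke operators at p}) and both operators preserve $W_{f,t}$, the matrix for $U_p$ is simply the product of the two $4\times 4$ matrices; multiplying them out gives the displayed matrix. I would also note in passing that $W_{f,t}$ is genuinely $U_{\frakp}$- and $U_{\overline{\frakp}}$-stable, which is exactly what the column computations show, so the matrix representations are well-defined. The only mild subtlety — hardly an obstacle — is to make sure the ordering conventions (left action on cohomology induced from the right action on $V_k$, and the order in which $\bfupsilon_{\frakp}^*$, $\bfupsilon_{\overline{\frakp}}^*$ are composed) are consistent with Proposition \ref{Proposition: p-stabilisation}; once that bookkeeping is fixed the computation is a short finite check with no analytic input.
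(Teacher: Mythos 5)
Your proof is correct and follows exactly the paper's argument: you derive the first two columns of $U_{\frakp}$ from $T_{\bfupsilon_{\frakp}} = U_{\frakp} + \bfupsilon_{\frakp}^{*}*(\cdot)$ and the identity $U_{\frakp}(\bfupsilon_{\frakp}^{*}*[\mu_{f,t}]) = p^{k+1}[\mu_{f,t}]$, obtain the last two columns by commuting $U_{\frakp}$ with $\bfupsilon_{\overline{\frakp}}^{*}$, argue symmetrically for $U_{\overline{\frakp}}$, and multiply to get $U_p$. Your added remarks (why the two operators commute, why the parallel weight makes both exponents $k+1$) are correct and merely make explicit what the paper leaves implicit.
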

\begin{proof}
    We show the result for $U_{\frakp}$ as similar computation applies to $U_{\overline{\frakp}}$. 
    
    By Remark \ref{Remark: coset decomposition for Tp} and the proof of Proposition \ref{Proposition: p-stabilisation}, we have\[
        U_{\frakp} : \begin{array}{rl}
            [\mu_{f, i}] & \mapsto \lambda_{\frakp}[\mu_{f, i}] - \bfupsilon_{\frakp}^* *[\mu_{f, i}]  \\
            \bfupsilon_{\frakp}^* * [\mu_{f, i}] & \mapsto p^{k+1}[\mu_{f, i}] 
        \end{array}.
    \] Observe that the $U_{\frakp}$-action commutes with the action by $\bfupsilon_{\overline{\frakp}}$, we have \[
        U_{\frakp}: \begin{array}{rl}
            \bfupsilon_{\overline{\frakp}}^* *[\mu_{f, i}] & \mapsto \lambda_{\frakp} \bfupsilon_{\overline{\frakp}}^* *[\mu_{f, i}] - \bfupsilon_{\overline{\frakp}}^* \bfupsilon_{\frakp}^* [\mu_{f, i}]  \\
            \bfupsilon_{\frakp}^*\bfupsilon_{\overline{\frakp}}^* * [\mu_{f, i}] & \mapsto p^{k+1}\left(\bfupsilon_{\overline{\frakp}}^* [\mu_{f, i}] \right)
        \end{array}.
    \]
    The desired matrix representation follows.
\end{proof}

\begin{Lemma}\label{Lemma: matrix representation for AL operators at p}
    Define \[
        \bfomega_{\frakp} = \left(\begin{pmatrix} & -p\\ 1\end{pmatrix}, \begin{pmatrix}1 & \\ & 1\end{pmatrix}\right) \quad \left(\text{resp., } \bfomega_{\overline{\frakp}} = \left(\begin{pmatrix}1 & \\ & 1\end{pmatrix}, \begin{pmatrix} & -p\\ 1\end{pmatrix}\right)\right).
    \] Then, the linear action of $\bfomega_{\frakp}$ (resp., $\bfomega_{\overline{\frakp}}$) on $W_{f, t}$ has the matrix representation \[
        \bfomega_{\frakp} = \begin{pmatrix} & p^k & & \\ (-1)^k &&& \\ && & p^k \\ && (-1)^k&\end{pmatrix} \quad \left(\text{resp., }\bfomega_{\overline{\frakp}} = \begin{pmatrix} & & p^k & \\  &&&p^k \\  (-1)^k  &&&\\ & (-1)^k&&\end{pmatrix} \right).
    \] Note that the Atkin--Lehner operator satisfies \[
        \bfomega_p = \bfomega_{\frakp}\bfomega_{\overline{\frakp}}.
    \] Thus, the matrix representative for $\bfomega_{p}$ is given by \[
        \bfomega_{p} = \begin{pmatrix}
            &&& p^{2k} \\
            &&(-p)^k& \\
            &(-p)^k&& \\
            1&&&
        \end{pmatrix}.
    \]
\end{Lemma}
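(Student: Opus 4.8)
The plan is as follows. First, observe that $\bfomega_\frakp$ acts through the first $\GL_2(\Q_p)$-factor of $G(\Q_p)=\GL_2(\Q_p)\times\GL_2(\Q_p)$ while $\bfupsilon_{\overline\frakp}^*$ acts through the second, so $\bfomega_\frakp$ commutes with $\bfupsilon_{\overline\frakp}^*$; similarly $\bfomega_{\overline\frakp}$ commutes with $\bfupsilon_\frakp^*$, and $\bfupsilon_\frakp^*$ commutes with $\bfupsilon_{\overline\frakp}^*$. Consequently the whole matrix of $\bfomega_\frakp$ on $W_{f,t}$ is determined by its effect on the two vectors $[\mu_{f,t}]$ and $\bfupsilon_\frakp^* * [\mu_{f,t}]$: applying $\bfupsilon_{\overline\frakp}^*$ transports the resulting identities to $\bfupsilon_{\overline\frakp}^* * [\mu_{f,t}]$ and $\bfupsilon_\frakp^*\bfupsilon_{\overline\frakp}^* * [\mu_{f,t}]$ (this also exhibits $W_{f,t}$ as $\bfomega_\frakp$-stable), and the matrix of $\bfomega_{\overline\frakp}$ is then obtained by interchanging the two $\GL_2(\Q_p)$-factors. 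Thus it suffices to prove the two identities
\[
\bfomega_\frakp * [\mu_{f,t}]=(-1)^k\,\bfupsilon_\frakp^* * [\mu_{f,t}],\qquad \bfomega_\frakp * \bigl(\bfupsilon_\frakp^* * [\mu_{f,t}]\bigr)=p^k\,[\mu_{f,t}].
\]

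For the first identity I would use, in the first $\GL_2(\Q_p)$-factor, the decomposition $\begin{pmatrix}0&-p\\1&0\end{pmatrix}=\begin{pmatrix}p&0\\0&1\end{pmatrix}\begin{pmatrix}0&-1\\1&0\end{pmatrix}$. The left factor is $\bfupsilon_\frakp^*$; the right factor lies in $G(\Z_p)$, and since $[\mu_{f,t}]$ is pulled back from the Bianchi threefold $Y$ of tame level at $p$, one unwinds the cohomological-correspondence formalism of \S\ref{section: cohomological correspondences for topological spaces} to see that $\begin{pmatrix}0&-1\\1&0\end{pmatrix}$ acts on $[\mu_{f,t}]$ by the scalar $(-1)^k$, the contribution coming from the central part $\begin{pmatrix}-1&0\\0&-1\end{pmatrix}$ acting on the polynomial coefficient $P_k$ via Lemma \ref{Lemma: Vk and polynomials}. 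For the second identity, note that $\bfomega_\frakp^2=\begin{pmatrix}-p&0\\0&-p\end{pmatrix}$ in the first factor is a central (hence $\Iw_G$-normalising) element, so its associated single-coset Hecke operator acts on all of $H^\bullet(Y_{\Iw_G},V_k^\vee)$ by the scalar $(-p)^k$ — again by its action on $P_k$, together with the fact that the tame level $\Gamma_0(\frakn)$ carries no nebentypus. Applying $\bfomega_\frakp$ to the first identity then gives $(-p)^k[\mu_{f,t}]=\bfomega_\frakp^2 * [\mu_{f,t}]=(-1)^k\,\bfomega_\frakp * (\bfupsilon_\frakp^* * [\mu_{f,t}])$, whence the second identity.

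Collecting these two identities together with their $\bfupsilon_{\overline\frakp}^*$-translates yields the asserted matrix for $\bfomega_\frakp$, and interchanging the two $\GL_2(\Q_p)$-factors yields the one for $\bfomega_{\overline\frakp}$. Finally $\bfomega_p=\bfomega_\frakp\bfomega_{\overline\frakp}$ is a literal matrix identity in $G(\Q_p)=\GL_2(\Q_p)\times\GL_2(\Q_p)$ (one Atkin--Lehner matrix in each factor, the two factors commuting), so $\bfomega_p$ acts on $W_{f,t}$ by the product of the two $4\times4$ matrices just obtained; multiplying them out gives the displayed matrix for $\bfomega_p$.

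The step I expect to be the main obstacle is pinning down the exact scalar in the first identity — equivalently, verifying directly from the correspondence formalism that $\bfomega_\frakp$ carries the line $\C[\mu_{f,t}]$ onto $\C\,\bfupsilon_\frakp^* * [\mu_{f,t}]$ with constant exactly $(-1)^k$ (and not, say, $1$ or $p^k$ times it), all the other steps being formal or short matrix manipulations. An alternative route that sidesteps the most delicate bookkeeping is to combine $\bfomega_\frakp^2=(-p)^k$ with the conjugation relation $\bfomega_\frakp U_\frakp\bfomega_\frakp^{-1}=U_\frakp^*$ and the matrix of $U_\frakp$ from Lemma \ref{Lemma: matrix representation of Hecke operators at p}; this determines the product of the two scalars and forces the off-diagonal block shape, leaving only a single sign to be checked by hand.
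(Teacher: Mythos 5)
Your overall skeleton (reduce to a $2\times 2$ block using that $\bfomega_{\frakp}$ acts only through the $\frakp$-factor and commutes with $\bfupsilon_{\overline{\frakp}}^*$, use $\bfomega_{\frakp}^2=(-p)^k$, then get $\bfomega_{\overline{\frakp}}$ by symmetry and multiply for $\bfomega_p$) is close to the paper's, but your primary route breaks at exactly the point you flag. You obtain the first identity $\bfomega_{\frakp}*[\mu_{f,t}]=(-1)^k\,\bfupsilon_{\frakp}^**[\mu_{f,t}]$ from the factorisation $\bfomega_{\frakp}=\bfupsilon_{\frakp}^*\cdot w$ with $w=\left(\left(\begin{smallmatrix}0&-1\\1&0\end{smallmatrix}\right),1\right)$, asserting that $w$ acts on $[\mu_{f,t}]$ by $(-1)^k$ ``coming from the central part'' of $w$. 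But $w$ has no central part (only $w^2=-I$), and no such scalar is available: the only property of $w$ the paper uses — and the natural one — is that $w$ lies in the level subgroup $G(\Z_p)$ of $Y$, so its total action on the class $[\mu_{f,t}]$ pulled back from $Y$ is \emph{trivial}. Fed into your factorisation this yields coefficient $1$, not $(-1)^k$. Worse, your assignment is incompatible with the other relations you invoke: if $w*[\mu_{f,t}]=c\,[\mu_{f,t}]$ for a scalar $c$, then your factorisation gives $\bfomega_{\frakp}*[\mu_{f,t}]=c\,\bfupsilon_{\frakp}^**[\mu_{f,t}]$, while $\bfomega_{\frakp}\bfupsilon_{\frakp}^*=w\cdot\left(\begin{smallmatrix}p&\\&p\end{smallmatrix}\right)$ gives $\bfomega_{\frakp}*(\bfupsilon_{\frakp}^**[\mu_{f,t}])=c\,p^k[\mu_{f,t}]$, so $\bfomega_{\frakp}^2$ would act by $c^2p^k$; matching $(-p)^k$ forces $c^2=(-1)^k$, which your choice $c=(-1)^k$ violates for odd $k$. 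This shows the first column cannot be produced by naively composing single-element actions in your factorisation; the delicate point is precisely that ``$\gamma\,*$'' for individual elements moving between level structures does not compose as a scalar action in the way your argument needs.

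The paper's proof avoids evaluating $\bfomega_{\frakp}$ on $[\mu_{f,t}]$ directly. It computes only the \emph{second} identity, $\bfomega_{\frakp}*(\bfupsilon_{\frakp}^**[\mu_{f,t}])=p^k[\mu_{f,t}]$, via $\bfomega_{\frakp}\bfupsilon_{\frakp}^*=\left(\begin{smallmatrix}0&-1\\1&0\end{smallmatrix}\right)\left(\begin{smallmatrix}p&\\&p\end{smallmatrix}\right)$ — there the Weyl element acts directly on the full-level class (trivially) and the scalar matrix contributes $p^k$ — and then writes the unknown columns with indeterminates and solves the quadratic matrix equation $\bfomega_{\frakp}^2=(-p)^k I$; it is this relation, not any evaluation of $w$, that produces the $(-1)^k$ entries (equivalently, $\bfomega_{\frakp}*[\mu_{f,t}]=p^{-k}\bfomega_{\frakp}^2*(\bfupsilon_{\frakp}^**[\mu_{f,t}])=(-1)^k\bfupsilon_{\frakp}^**[\mu_{f,t}]$). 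This is essentially the ``alternative route'' you sketch at the end: you should promote it to the main argument and reverse your order of deduction — prove the second identity directly, then deduce the first from $\bfomega_{\frakp}^2=(-p)^k$. The remaining ingredients of your proposal (commutation with $\bfupsilon_{\overline{\frakp}}^*$ to fill in the other two columns, the symmetric treatment of $\bfomega_{\overline{\frakp}}$, and the matrix product for $\bfomega_p=\bfomega_{\frakp}\bfomega_{\overline{\frakp}}$) agree with the paper.
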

\begin{proof}
    We again show the matrix representation for $\bfomega_{\frakp}$ as the one for $\bfomega_{\overline{\frakp}}$ is similar. 

    First of all, we have the computation \[
        \begin{pmatrix} & -p\\ 1\end{pmatrix} \begin{pmatrix} p & \\ & 1\end{pmatrix} = \begin{pmatrix} & -p\\ p & \end{pmatrix} = \begin{pmatrix} & -1\\ 1\end{pmatrix} \begin{pmatrix} p & \\ & p\end{pmatrix}.
    \] Therefore, \begin{align*}
        \bfomega_{\frakp}\bfupsilon_{\frakp}^* *[\mu_{f, i}] & = \left(\begin{pmatrix} & -1\\ 1 & \end{pmatrix} , \begin{pmatrix}1 & \\ & 1\end{pmatrix}\right) \left(\begin{pmatrix} p& \\  & p\end{pmatrix} , \begin{pmatrix}1 & \\ & 1\end{pmatrix}\right) * [\mu_{f, i}] \\
         & = p^k \left(\begin{pmatrix} & -1\\ 1 & \end{pmatrix} , \begin{pmatrix}1 & \\ & 1\end{pmatrix}\right) * [\mu_{f, i}]\\
         & = p^k [\mu_{f, i}],
    \end{align*}
    where the penultimate equation follows from that $\begin{pmatrix}p & \\ & p\end{pmatrix}$ acts on $P_k$ via $p^k$ and the last equation follows from that $\left(\begin{pmatrix} & -1\\ 1 & \end{pmatrix} , \begin{pmatrix}1 & \\ & 1\end{pmatrix}\right)\in G(\Z)$. Moreover, since \[
        \begin{pmatrix} & -p\\ 1 & \end{pmatrix}\begin{pmatrix} & -p\\ 1 & \end{pmatrix} = \begin{pmatrix}-p & \\ & -p\end{pmatrix},
    \] we have \[
        \bfomega_{\frakp}^2 * [\mu_{f, i}] = (-p)^k [\mu_{f, i}].
    \]

    Consequently, we know the matrix representation for $\bfomega_{\frakp}$ should be of the form \[
        \bfomega_{\frakp} = \begin{pmatrix} x & p^k & a & \\ y & & b & \\ z & & c & p^k \\ t & & d &  \end{pmatrix}
    \] such that \begin{align*}
         \begin{pmatrix} (-p)^k &&&\\ & (-p)^k && \\ && (-p)^k & \\ &&& (-p)^k  \end{pmatrix} & = \begin{pmatrix} x & p^k & a & \\ y & & b & \\ z & & c & p^k \\ t & & d &  \end{pmatrix} \begin{pmatrix} x & p^k & a & \\ y & & b & \\ z & & c & p^k \\ t & & d &  \end{pmatrix} \\
         & = \begin{pmatrix} x^2+p^ky + az & p^kx & xa+p^kb+ac & ap^k \\ 
        xy+bz & yp^{k} & ya + bc & bp^k\\
        zx + cz + p^kt & zp^k & za+c^2+dp^k & cp^k\\
        tx+dz & tp^k & ta+dc & dp^k\end{pmatrix}.
    \end{align*}
    Comparing the entries, one sees that \[
        x=z=t=a=b=c=0 \quad \text{ and }\quad y=d=(-1)^k.
    \]
    Hence the desired matrix representation.
\end{proof}

\begin{Corollary}\label{Corollary: adjoint operators for Hecke operators at p}
    The adjoint operator $U_{\frakp}^*$ (resp., $U_{\overline{\frakp}}^*$) for $U_{\frakp}$ (resp., $U_{\overline{\frakp}}$) on $W_{f, t}$, with respect to the pairing $\langle \cdot, \cdot \rangle_k$, has the matrix representation \[
        U_{\frakp}^* =  \begin{pmatrix} & -(-p)^k & & \\ (-1)^kp & \lambda_{\frakp} & & \\ &&& -(-p)^k\\ && (-1)^k p & \lambda_{\frakp}\end{pmatrix}\quad \left(\text{resp., }U_{\overline{\frakp}}^* = \begin{pmatrix} & & -(-p)^k & \\ & & & -(-p)^k\\ (-1)^kp & & \lambda_{\overline{\frakp}} & \\ & (-1)^kp & & \lambda_{\overline{\frakp}}\end{pmatrix}\right).
    \] Moreover, the adjoint operator $U_p^*$ for $U_p$ has the matrix representation \[
        U_p^* = \begin{pmatrix} & & & p^{2k}\\ & & -p^{k+1} & (-1)^{k+1}p^k\lambda_{\frakp}\\ & -p^{k+1} & & (-1)^{k+1}p^k\lambda_{\overline{\frakp}} \\ p^2 & (-1)^k p\lambda_{\frakp} & (-1)^kp\lambda_{\overline{\frakp}} & \lambda_{\frakp}\lambda_{\overline{\frakp}}\end{pmatrix}.
    \]
\end{Corollary}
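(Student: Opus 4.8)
The plan is to reduce the whole statement to one structural identity, after which everything is a $4\times 4$ matrix computation fed by Lemmas \ref{Lemma: matrix representation of Hecke operators at p} and \ref{Lemma: matrix representation for AL operators at p}. First I would recall from Remark \ref{Remark: relation to the algebraic pairing} that $[\cdot,\cdot]_k$ is the twist of the self-duality pairing $\langle\cdot,\cdot\rangle_k$ (Remark \ref{Remark: alg. rep. is self-dual}) by the Atkin--Lehner operator $\bfomega_p$ on the second variable, i.e. $[x,y]_k=\langle x,\bfomega_p y\rangle_k$; this passes to parabolic cohomology via the cup product together with Poincaré duality. Next, by Proposition \ref{Proposition: pairing on cohomology groups is Hecke-equivariant} (ultimately Lemma \ref{Lemma: adjoint operators for the pairing}, using that $\Iw_G\bfupsilon_{\frakp}\Iw_G$ and $\Iw_G\bfupsilon_{\overline{\frakp}}\Iw_G$ are stable under $\bfalpha\mapsto\bfalpha^{\Shi}$), the operators $U_{\frakp}$ and $U_{\overline{\frakp}}$ --- hence also $U_p=U_{\frakp}U_{\overline{\frakp}}=U_{\overline{\frakp}}U_{\frakp}$ --- are self-adjoint for $[\cdot,\cdot]_k$.

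Combining these, for $\star\in\{U_{\frakp},U_{\overline{\frakp}},U_p\}$ and arbitrary $x,y$ one has $\langle\star x,\bfomega_p y\rangle_k=[\star x,y]_k=[x,\star y]_k=\langle x,\bfomega_p\star y\rangle_k$, and substituting $y\mapsto\bfomega_p^{-1}y$ --- legitimate since, by Lemma \ref{Lemma: matrix representation for AL operators at p}, $\bfomega_p^2=p^{2k}$ on $W_{f,t}$, so $\bfomega_p$ is invertible with $\bfomega_p^{-1}=p^{-2k}\bfomega_p$ --- yields the key identity
\[
    \star^{*}=\bfomega_p\,\star\,\bfomega_p^{-1},
\]
where $(\,\cdot\,)^{*}$ is the adjoint for $\langle\cdot,\cdot\rangle_k$. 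Since $W_{f,t}$ is stable under $U_{\frakp}$, $U_{\overline{\frakp}}$, $\bfomega_{\frakp}$ and $\bfomega_{\overline{\frakp}}$ (immediate from the two matrix lemmas), $\star^{*}$ restricts to $W_{f,t}$ and the identity holds there. Then I would substitute the matrices of $U_{\frakp},U_{\overline{\frakp}},U_p$ from Lemma \ref{Lemma: matrix representation of Hecke operators at p} and of $\bfomega_p$ (with $\bfomega_p^{-1}=p^{-2k}\bfomega_p$) from Lemma \ref{Lemma: matrix representation for AL operators at p} and multiply out. It is marginally cleaner to use the partial versions $U_{\frakp}^{*}=\bfomega_{\frakp}U_{\frakp}\bfomega_{\frakp}^{-1}$ and $U_{\overline{\frakp}}^{*}=\bfomega_{\overline{\frakp}}U_{\overline{\frakp}}\bfomega_{\overline{\frakp}}^{-1}$ --- valid because $U_{\frakp}$ commutes with $\bfomega_{\overline{\frakp}}$ and $U_{\overline{\frakp}}$ with $\bfomega_{\frakp}$ (they act on distinct $\GL_2$-factors) and because $\bfomega_{\frakp}^{2}=\bfomega_{\overline{\frakp}}^{2}=(-p)^k$ on $W_{f,t}$ --- and then to read off $U_p^{*}=(U_{\frakp}U_{\overline{\frakp}})^{*}=U_{\overline{\frakp}}^{*}U_{\frakp}^{*}$, which also serves as an internal consistency check against the directly computed $\bfomega_p U_p\bfomega_p^{-1}$.

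The substance is entirely in the identity $\star^{*}=\bfomega_p\star\bfomega_p^{-1}$; what remains is routine bookkeeping with $4\times4$ matrices and the scalars $(-1)^k$, $(-p)^k$, $p^{k+1}$. The only genuine care-points will be getting the \emph{direction} of the Atkin--Lehner twist right in $[x,y]_k=\langle x,\bfomega_p y\rangle_k$ (so that one conjugates by $\bfomega_p$ and not $\bfomega_p^{-1}$), remembering that $\bfomega_p^{-1}=p^{-2k}\bfomega_p$ rather than $\bfomega_p$ itself, and tracking the signs coming from the $\bfomega_{\frakp}$- and $\bfomega_{\overline{\frakp}}$-matrices. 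A fully computational alternative --- writing down the Gram matrix of $\langle\cdot,\cdot\rangle_k$ on $W_{f,t}$ and verifying $\langle U_{\frakp}x,y\rangle_k=\langle x,U_{\frakp}^{*}y\rangle_k$ on basis vectors --- would also work, but it requires first computing that Gram matrix (using the self-duality formulae of Remark \ref{Remark: alg. rep. is self-dual} applied to the four spanning classes), so I would avoid it.
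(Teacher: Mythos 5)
Your proposal is correct and its computational core is exactly the paper's: one conjugates the matrices of $U_{\frakp}$, $U_{\overline{\frakp}}$, $U_p$ from Lemma \ref{Lemma: matrix representation of Hecke operators at p} by the Atkin--Lehner matrix of Lemma \ref{Lemma: matrix representation for AL operators at p}. The only difference is how the conjugation identity is justified. The paper simply quotes Urban's Lemma 6.1, which gives $U_{\frakp}^*=\bfomega_p^{-1}U_{\frakp}\bfomega_p$ (and likewise for $U_{\overline{\frakp}}$), and then multiplies matrices; you instead rederive this identity internally from the relation $[x,y]_k=\langle x,\bfomega_p y\rangle_k$ of Remark \ref{Remark: relation to the algebraic pairing} together with the self-adjointness of $U_{\frakp}$, $U_{\overline{\frakp}}$ for $[\cdot,\cdot]_k$ (Proposition \ref{Proposition: pairing on cohomology groups is Hecke-equivariant}, resting on Lemma \ref{Lemma: adjoint operators for the pairing} and the $\Shi$-stability of the double cosets $\Iw_G\bfupsilon_{\frakp}\Iw_G$, $\Iw_G\bfupsilon_{\overline{\frakp}}\Iw_G$). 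That buys self-containedness at the cost of a slightly longer argument; note also that your worry about the direction of the twist is moot on $W_{f,t}$, since $\bfomega_p^2=p^{2k}$ acts as a scalar there, so $\bfomega_p\star\bfomega_p^{-1}=\bfomega_p^{-1}\star\bfomega_p$ and your identity coincides with the paper's. Your use of the partial conjugations $U_{\frakp}^*=\bfomega_{\frakp}U_{\frakp}\bfomega_{\frakp}^{-1}$, $U_{\overline{\frakp}}^*=\bfomega_{\overline{\frakp}}U_{\overline{\frakp}}\bfomega_{\overline{\frakp}}^{-1}$ and of $U_p^*=U_{\overline{\frakp}}^*U_{\frakp}^*$ as a cross-check is legitimate (the relevant commutations hold since the matrices live in distinct $\GL_2$-factors) and indeed reproduces the stated matrices.
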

\begin{proof}
    By \cite[Lemma 6.1]{Urban-PhD}, $U_{\frakp}^* = \bfomega_{p}^{-1} U_{\frakp} \bfomega_{p}$ (resp., $U_{\overline{\frakp}}^* = \bfomega_{p}^{-1}U_{\overline{\frakp}} \bfomega_{p}$). The result then follows from Lemma \ref{Lemma: matrix representation of Hecke operators at p} and Lemma \ref{Lemma: matrix representation for AL operators at p}.
\end{proof}

Recall that \[
    [\mu_{f, t}]_{(i,j)}^{(p)} = [\mu_{f, t}] - \alpha_{\frakp}^{(i), -1}\bfupsilon_{\frakp}^* *[\mu_{f, t}] - \alpha_{\overline{\frakp}}^{(j), -1} \bfupsilon_{\overline{\frakp}}^* * [\mu_{f, t}] + \alpha_{\frakp}^{(i), -1}\alpha_{\overline{\frakp}}^{(j), -1} \bfupsilon_{\frakp}^* \bfupsilon_{\overline{\frakp}}^* * [\mu_{f, t}].
\] To compute $\left[ [\mu_{f, 1}]_{(i,j)}^{(p)}, [\mu_{f, 2}]_{(i,j)}^{(p)}\right]_k$, note that, by Remark \ref{Remark: relation to the algebraic pairing} and Lemma \ref{Lemma: matrix representation for AL operators at p}, \begin{align*}
    \left[ [\mu_{f, 1}]_{(i,j)}^{(p)}, [\mu_{f, 2}]_{(i,j)}^{(p)}\right]_k & = \left\langle [\mu_{f, 1}]_{(i,j)}^{(p)}, \bfomega_{p}* [\mu_{f, 2}]_{(i,j)}^{(p)} \right\rangle_k \\ 
    & = \left\langle [\mu_{f, 1}]_{(i,j)}^{(p)}, \bfupsilon_{\frakp}^*\bfupsilon_{\overline{\frakp}}^* * [\mu_{f, 2}] \right\rangle_k  + \left\langle [\mu_{f, 1}]_{(i,j)}^{(p)},  (-p)^k (-\alpha_{\frakp}^{(i), -1})\bfupsilon_{\overline{\frakp}}^* * [\mu_{f, 2}]\right\rangle_k \\
    & \qquad + \left\langle [\mu_{f, 1}]_{(i,j)}^{(p)},  (-p)^k(-\alpha_{\overline{\frakp}}^{(j), -1})\bfupsilon_{\frakp}^* *[\mu_{f, 2}]\right\rangle_k  + \left\langle [\mu_{f, 1}]_{(i,j)}^{(p)},  p^{2k}[\mu_{f, 2}]\right\rangle_k .
\end{align*}
Our task is to understand the following pairings \[
    \begin{array}{cccc}
         & \left\langle [\mu_{f, 1}], \bfupsilon_{\frakp}^* *[\mu_{f, 2}] \right\rangle_k &  \left\langle [\mu_{f, 1}], \bfupsilon_{\overline{\frakp}}^* *[\mu_{f, 2}] \right\rangle_k & \left\langle [\mu_{f, 1}], \bfupsilon_{\frakp}^* \bfupsilon_{\overline{\frakp}}^* *[\mu_{f, 2}] \right\rangle_k\\ \\
         \left\langle \bfupsilon_{\frakp}^* *[\mu_{f, 1}], [\mu_{f, 2}] \right\rangle_k & \left\langle \bfupsilon_{\frakp}^* *[\mu_{f, 1}], \bfupsilon_{\frakp}^* *[\mu_{f, 2}] \right\rangle_k &  \left\langle \bfupsilon_{\frakp}^* *[\mu_{f, 1}], \bfupsilon_{\overline{\frakp}}^* *[\mu_{f, 2}] \right\rangle_k & \left\langle \bfupsilon_{\frakp}^* *[\mu_{f, 1}], \bfupsilon_{\frakp}^* \bfupsilon_{\overline{\frakp}}^* *[\mu_{f, 2}] \right\rangle_k\\  \\
         \left\langle \bfupsilon_{\overline{\frakp}}^* *[\mu_{f, 1}], [\mu_{f, 2}] \right\rangle_k & \left\langle \bfupsilon_{\overline{\frakp}}^* *[\mu_{f, 1}], \bfupsilon_{\frakp}^* *[\mu_{f, 2}] \right\rangle_k &  \left\langle \bfupsilon_{\overline{\frakp}}^* *[\mu_{f, 1}], \bfupsilon_{\overline{\frakp}}^* *[\mu_{f, 2}] \right\rangle_k & \left\langle \bfupsilon_{\overline{\frakp}}^* *[\mu_{f, 1}], \bfupsilon_{\frakp}^* \bfupsilon_{\overline{\frakp}}^* *[\mu_{f, 2}] \right\rangle_k\\ \\
         \left\langle \bfupsilon_{\frakp}^*\bfupsilon_{\overline{\frakp}}^* *[\mu_{f, 1}], [\mu_{f, 2}] \right\rangle_k &\left\langle \bfupsilon_{\frakp}^*\bfupsilon_{\overline{\frakp}}^* *[\mu_{f, 1}], \bfupsilon_{\frakp}^* *[\mu_{f, 2}] \right\rangle_k &  \left\langle \bfupsilon_{\frakp}^*\bfupsilon_{\overline{\frakp}}^* *[\mu_{f, 1}], \bfupsilon_{\overline{\frakp}}^* *[\mu_{f, 2}] \right\rangle_k & \left\langle \bfupsilon_{\frakp}^*\bfupsilon_{\overline{\frakp}}^* *[\mu_{f, 1}], \bfupsilon_{\frakp}^* \bfupsilon_{\overline{\frakp}}^* *[\mu_{f, 2}] \right\rangle_k
    \end{array}
\]
in terms of $\left\langle [\mu_{f, 1}], [\mu_{f, 2}] \right\rangle_k $.

We make the following assumptions for the computations below.

\begin{Assumption}\label{Assumption: Hecke eigenvalues at p are nonzero}
    \begin{enumerate}
        \item[(i)] The $T_{\bfupsilon_{\frakp}}$- and $T_{\bfupsilon_{\overline{\frakp}}}$-eigenvalues $\lambda_{\frakp}$ and $\lambda_{\overline{\frakp}}$ are nonzero real numbers.
        \item[(ii)] The Hecke polynomials $P_{\frakp}(X)$ and $P_{\overline{\frakp}}(X)$ (Proposition \ref{Proposition: p-stabilisation}) are irreducible over $\R$. 
    \end{enumerate}
\end{Assumption}

\begin{Remark}\label{Remark: explanation of Assumption 2}
    The first statement in Assumption \ref{Assumption: Hecke eigenvalues at p are nonzero} is a technical assumption, which is needed for computing $\left\langle \bfupsilon_{\frakp}^* *[\mu_{f, 1}],  \bfupsilon_{\overline{\frakp}}^* *[\mu_{f, 2}] \right\rangle_k$ and $\left\langle \bfupsilon_{\overline{\frakp}}^* *[\mu_{f, 1}], \bfupsilon_{\frakp}^* *[\mu_{f, 2}] \right\rangle_k$. Under (i), the second statement is equivalent to \[
        \lambda_{\frakp}^2 - 4p^{k+1}<0 \quad \text{ and }\quad \lambda_{\overline{\frakp}}^2-4p^{k+1}<0.
    \] We remark that this is a reasonable assumption as the Ramanujan conjecture predicts that \[
        p^{-\frac{k+1}{2}}|\lambda_{\frakp}| = 1\quad \text{ and }\quad p^{-\frac{k+1}{2}}|\lambda_{\overline{\frakp}}| = 1.
    \] As a consequence of Assumption \ref{Assumption: Hecke eigenvalues at p are nonzero} (ii), we have \[
        \alpha_{\frakp}^{(0)} = \overline{\alpha_{\frakp}^{(1)}} \quad \text{ and }\quad \alpha_{\overline{\frakp}}^{(0)} = \overline{\alpha_{\overline{\frakp}}^{(1)}},
    \] where $\overline{\cdot}$ stands for taking complex conjugation. Moreover, since $P_{\frakp}(X)$ and $P_{\overline{\frakp}}(X)$ are irreducible over $\R$, $\alpha_{\frakp}^{(0)}\neq \alpha_{\frakp}^{(1)}$ and $\alpha_{\overline{\frakp}}^{(0)}\neq \alpha_{\overline{\frakp}}^{(1)}$ and so $f$ is regular at both $\frakp$ and $\overline{\frakp}$ (see also \cite[Condition 2.2 \text{[K]} (C3)]{BW}) We also point out that, if $f$ is a base change form from a cuspidal eigenform, then (ii) follows from Hecke estimate (\cite[pp. 284]{Eigen}).
\end{Remark}

\paragraph{Computations for $\left\langle [\mu_{f, 1}], -\right\rangle_{k}$ and $\left\langle -, [\mu_{f, 2}]\right\rangle_{k}$.}
By the matrix representation of $U_{\frakp}^*$, $U_{\overline{\frakp}}^*$, and $U_p^*$, we have \[
    \bfupsilon_{\frakp}^* *[\mu_{f, t}] = (-1)^k p^{-1}U_{\frakp}^* [\mu_{f, t}], \quad \bfupsilon_{\overline{\frakp}}^* * [\mu_{f, t}] = (-1)^kp^{-1}U_{\overline{\frakp}}^* [\mu_{f, t}], \quad \text{ and }\quad \bfupsilon_{\frakp}^* \bfupsilon_{\overline{\frakp}}^* * [\mu_{f, t}] = p^{-2}U_p^* \left([\mu_{f, t}]\right)
\] for $t=1, 2$. \begin{enumerate}
    \item[$\bullet$] Computation for $\left\langle [\mu_{f, 1}], \bfupsilon_{\frakp}^* * [\mu_{f, 2}] \right\rangle_k$: \begin{align*}
        \left\langle [\mu_{f, 1}], \bfupsilon_{\frakp}^* * [\mu_{f, 2}] \right\rangle_k & = (-1)^k p^{-1}\left\langle [\mu_{f, 1}], U_{\frakp}^* [\mu_{f, 2}] \right\rangle_k \\
        & = (-1)^kp^{-1} \left\langle U_{\frakp}[\mu_{f, 1}], [\mu_{f, 2}] \right\rangle_k\\
        & = (-1)^kp^{-1} \left( \lambda_{\frakp}\left\langle [\mu_{f, 1}], [\mu_{f, 2}] \right\rangle_k - \left\langle \bfupsilon_{\frakp}^* *[\mu_{f, 1}], [\mu_{f, 2}]\right\rangle_k \right) \\
        & = (-1)^kp^{-1}\left( \lambda_{\frakp}\left\langle [\mu_{f, 1}], [\mu_{f, 2}] \right\rangle_k - (-1)^kp^{-1}\lambda_{\frakp} \left\langle [\mu_{f, 1}], [\mu_{f, 2}] \right\rangle_k + (-1)^kp^{-1} \left\langle [\mu_{f, 1}], \bfupsilon_{\frakp}^* *[\mu_{f, 2}] \right\rangle_k \right) \\
        & = (-1)^kp^{-1}\lambda_{\frakp}(1-(-1)^kp^{-1})\left\langle [\mu_{f, 1}], [\mu_{f, 2}] \right\rangle_k + p^{-2}\left\langle [\mu_{f, 1}], \bfupsilon_{\frakp}^* *[\mu_{f, 2}] \right\rangle_k.
    \end{align*}
    Therefore, we have \begin{equation}\label{eq: <f, frakp f> }
        \left\langle [\mu_{f, 1}], \bfupsilon_{\frakp}^* * [\mu_{f, 2}] \right\rangle_k = \frac{(-1)^kp^{-1}\lambda_{\frakp}(1-(-1)^kp^{-1})}{1-p^{-2}} \left\langle [\mu_{f, 1}], [\mu_{f, 2}] \right\rangle_k = \frac{\lambda_{\frakp}}{(-1)^k p +1} \left\langle [\mu_{f, 1}], [\mu_{f, 2}] \right\rangle_k.
    \end{equation}
    \item[$\bullet$] Computation for $\left\langle [\mu_{f, 1}], \bfupsilon_{\overline{\frakp}}^* * [\mu_{f, 2}] \right\rangle_k$: \begin{align*}
        \left\langle [\mu_{f, 1}], \bfupsilon_{\overline{\frakp}}^* * [\mu_{f, 2}] \right\rangle_k & = (-1)^kp^{-1} \left\langle [\mu_{f, 1}], U_{\overline{\frakp}}^* [\mu_{f, 2}] \right\rangle_k \\
        & = (-1)^kp^{-1}\lambda_{\overline{\frakp}}(1-(-1)^kp^{-1}) \left\langle [\mu_{f, 1}],  [\mu_{f, 2}] \right\rangle_k + p^{-2}\left\langle [\mu_{f, 1}], \bfupsilon_{\overline{\frakp}}^* * [\mu_{f, 2}] \right\rangle_k,
    \end{align*}
    where the computation is similar to the previous case. We then similarly conclude \begin{equation}\label{eq: <f, frakpbar f>}
        \left\langle [\mu_{f, 1}], \bfupsilon_{\overline{\frakp}}^* * [\mu_{f, 2}] \right\rangle_k = \frac{\lambda_{\overline{\frakp}}}{(-1)^k p+1} \left\langle [\mu_{f, 1}], [\mu_{f, 2}] \right\rangle_k.
    \end{equation}

    \item[$\bullet$] Computation for $\left\langle \bfupsilon_{\frakp}^* * [\mu_{f, 1}], [\mu_{f, 2}]\right\rangle_{k}$: From the above, we see that \begin{align*}
        \left\langle \bfupsilon_{\frakp}^* * [\mu_{f, 1}], [\mu_{f, 2}]\right\rangle_{k} & = (-1)^kp^{-1}\lambda_{\frakp} \left\langle [\mu_{f, 1}], [\mu_{f, 2}]\right\rangle_{k} - (-1)^kp^{-1}\left\langle  [\mu_{f, 1}], \bfupsilon_{\frakp}^* *[\mu_{f, 2}]\right\rangle_{k} \\
        & = (-1)^kp^{-1}\lambda_{\frakp}\left( 1- \frac{1}{(-1)^kp+1}\right)  \left\langle  [\mu_{f, 1}], [\mu_{f, 2}]\right\rangle_{k}.
    \end{align*} We conclude that \begin{equation}\label{eq: <frakp f, f>}
        \left\langle \bfupsilon_{\frakp}^* * [\mu_{f, 1}], [\mu_{f, 2}]\right\rangle_{k} = \frac{\lambda_{\frakp}}{(-1)^k p +1} \left\langle  [\mu_{f, 1}], [\mu_{f, 2}]\right\rangle_{k}.
    \end{equation} 

    \item[$\bullet$] Computation for $\left\langle \bfupsilon_{\overline{\frakp}}^* *[\mu_{f, 1}], [\mu_{f, 2}]\right\rangle_{k}$: Similar as before, we have \begin{align*}
        \left\langle \bfupsilon_{\overline{\frakp}}^* * [\mu_{f, 1}], [\mu_{f, 2}]\right\rangle_{k} & = (-1)^kp^{-1}\lambda_{\overline{\frakp}} \left\langle [\mu_{f, 1}], [\mu_{f, 2}]\right\rangle_{k} - (-1)^kp^{-1}\left\langle  [\mu_{f, 1}], \bfupsilon_{\overline{\frakp}}^* *[\mu_{f, 2}]\right\rangle_{k} \\
        & = (-1)^kp^{-1}\lambda_{\overline{\frakp}}\left( 1- \frac{1}{(-1)^kp+1}\right)  \left\langle  [\mu_{f, 1}], [\mu_{f, 2}]\right\rangle_{k}.
    \end{align*} Hence \begin{equation}\label{eq: <frakpbar f , f>}
        \left\langle \bfupsilon_{\overline{\frakp}}^* * [\mu_{f, 1}], [\mu_{f, 2}]\right\rangle_{k} = \frac{\lambda_{\overline{\frakp}}}{(-1)^kp +1} \left\langle  [\mu_{f, 1}], [\mu_{f, 2}]\right\rangle_{k}.
    \end{equation}

    \item[$\bullet$] Computation for $\left\langle [\mu_{f, 1}], \bfupsilon_{\frakp}^*\bfupsilon_{\overline{\frakp}}^* * [\mu_{f, 2}] \right\rangle_k$: \begin{align*}
        \left\langle [\mu_{f, 1}], \bfupsilon_{\frakp}^*\bfupsilon_{\overline{\frakp}}^* * [\mu_{f, 2}] \right\rangle_k & = p^{-2} \left\langle [\mu_{f, 1}], U_p^*\left( [\mu_{f, 2}] \right) \right\rangle_k\\
        & = p^{-2} \left\langle U_p\left([\mu_{f, 1}]\right),  [\mu_{f, 2}] \right\rangle_k\\
        & = p^{-2}\left( \lambda_{\frakp}\lambda_{\overline{\frakp}}\left\langle [\mu_{f, 1}],  [\mu_{f, 2}] \right\rangle_k -\lambda_{\overline{\frakp}} \left\langle \bfupsilon_{\frakp}^* *[\mu_{f, 1}],  [\mu_{f, 2}] \right\rangle_k \right. \\
        & \left.\qquad -\lambda_{\frakp} \left\langle \bfupsilon_{\overline{\frakp}}^* *[\mu_{f, 1}],  [\mu_{f, 2}] \right\rangle_k + \left\langle \bfupsilon_{\frakp}^*\bfupsilon_{\overline{\frakp}}^* *[\mu_{f, 1}],  [\mu_{f, 2}] \right\rangle_k \right) \\
        & = p^{-2}\left( \lambda_{\frakp}\lambda_{\overline{\frakp}} \left(1- \frac{2}{(-1)^kp+1}\right)\left\langle [\mu_{f, 1}],  [\mu_{f, 2}] \right\rangle_k + \left\langle \bfupsilon_{\frakp}^*\bfupsilon_{\overline{\frakp}}^* *[\mu_{f, 1}],  [\mu_{f, 2}] \right\rangle_k \right).
    \end{align*} We use the same technique and compute \begin{align*}
        \left\langle \bfupsilon_{\frakp}^*\bfupsilon_{\overline{\frakp}}^* *[\mu_{f, 1}],  [\mu_{f, 2}] \right\rangle_k & = p^{-2} \left\langle  [\mu_{f, 1}], U_p([\mu_{f, 2}])\right\rangle_k\\
        & = p^{-2}\left( \lambda_{\frakp}\lambda_{\overline{\frakp}} \left\langle [\mu_{f, 1}], [\mu_{f, 2}] \right\rangle_k - \lambda_{\overline{\frakp}} \left\langle [\mu_{f, 1}], \bfupsilon_{\frakp}^* *[\mu_{f, 2}] \right\rangle_k \right.\\
        & \qquad \left. -\lambda_{\frakp} \left\langle [\mu_{f, 1}], \bfupsilon_{\overline{\frakp}}^* *[\mu_{f, 2}] \right\rangle_k + \left\langle [\mu_{f, 1}], \bfupsilon_{\frakp}^* \bfupsilon_{\overline{\frakp}}^* *[\mu_{f, 2}] \right\rangle_k\right) \\
        & = p^{-2}\left( \lambda_{\frakp}\lambda_{\overline{\frakp}} \left( 1-\frac{2}{(-1)^kp +1}\right) \left\langle [\mu_{f, 1}], [\mu_{f, 2}] \right\rangle_k + \left\langle [\mu_{f, 1}], \bfupsilon_{\frakp}^* \bfupsilon_{\overline{\frakp}}^* *[\mu_{f, 2}] \right\rangle_k\right)
    \end{align*}
    Note that \[
        1-\frac{2}{(-1)^kp + 1} = \frac{(-1)^kp-1}{(-1)^kp +1}.
    \] Combining everything together, we see \begin{align*}
        \left\langle [\mu_{f, 1}], \bfupsilon_{\frakp}^* \bfupsilon_{\overline{\frakp}}^* *[\mu_{f, 2}] \right\rangle_k & = \frac{\lambda_{\frakp} \lambda_{\overline{\frakp}} \left((-1)^kp-1\right) }{p^2((-1)^kp+1)} \cdot\frac{1+p^{-2}}{1-p^{-4}} \left\langle [\mu_{f, 1}], [\mu_{f, 2}] \right\rangle_k
    \end{align*} and so \begin{equation}\label{eq: <f, frakpfrakpbar f>}
        \left\langle [\mu_{f, 1}], \bfupsilon_{\frakp}^* \bfupsilon_{\overline{\frakp}}^* *[\mu_{f, 2}] \right\rangle_k = \frac{\lambda_{\frakp}\lambda_{\overline{\frakp}}  }{((-1)^kp +1)^2} \left\langle [\mu_{f, 1}], [\mu_{f, 2}] \right\rangle_k.
    \end{equation}

    \item[$\bullet$] Computation for $\left\langle \bfupsilon_{\frakp}^*\bfupsilon_{\overline{\frakp}}^* * [\mu_{f, 1}],   [\mu_{f, 2}] \right\rangle_k$: From the computation above, we see that \begin{align*}
        \left\langle \bfupsilon_{\frakp}^*\bfupsilon_{\overline{\frakp}}^* * [\mu_{f, 1}],   [\mu_{f, 2}] \right\rangle_k & = p^{-2}\left( \lambda_{\frakp}\lambda_{\overline{\frakp}} \left( 1-\frac{2}{(-1)^kp +1}\right) \left\langle [\mu_{f, 1}], [\mu_{f, 2}] \right\rangle_k + \left\langle [\mu_{f, 1}], \bfupsilon_{\frakp}^* \bfupsilon_{\overline{\frakp}}^* *[\mu_{f, 2}] \right\rangle_k\right)\\
        & = p^{-2}\lambda_{\frakp}\lambda_{\overline{\frakp}}\left( \frac{\left((-1)^kp-1\right) }{(-1)^kp+1} + \frac{1}{((-1)^kp +1)^2} \right)\left\langle [\mu_{f, 1}], [\mu_{f, 2}] \right\rangle_k.
    \end{align*} Note that \[
        \frac{\left((-1)^kp-1\right) }{(-1)^kp+1} + \frac{1}{((-1)^kp +1)^2} = \frac{p^2}{((-1)^kp +1)^2}
    \] and so we conclude \begin{equation}\label{eq: <frakpfrakpbar f, f>}
        \left\langle \bfupsilon_{\frakp}^*\bfupsilon_{\overline{\frakp}}^* * [\mu_{f, 1}],   [\mu_{f, 2}] \right\rangle_k = \frac{\lambda_{\frakp}\lambda_{\overline{\frakp}}  }{((-1)^kp +1)^2} \left\langle [\mu_{f, 1}], [\mu_{f, 2}] \right\rangle_k.
    \end{equation}
\end{enumerate}

\paragraph{Computations for $\left\langle \bfupsilon_{\frakp}^* * [\mu_{f, 1}], \bfupsilon_{\frakp}^* *[\mu_{f, 2}] \right\rangle_k$ and $\left\langle \bfupsilon_{\overline{\frakp}}^* * [\mu_{f, 1}], \bfupsilon_{\overline{\frakp}}^* *[\mu_{f, 2}] \right\rangle_k$.} Using the matrix representations for $U_{\frakp}^*$ and $U_{\overline{\frakp}}^*$ again, we have the equations \begin{equation}\label{eq: <frakp f, frakp f>}
    \left\langle \bfupsilon_{\frakp}^* * [\mu_{f, 1}], \bfupsilon_{\frakp}^* *[\mu_{f, 2}] \right\rangle_k = (-p)^k \left\langle  [\mu_{f, 1}], [\mu_{f, 2}] \right\rangle_k,
\end{equation}
\begin{equation}\label{eq: <frakpbar f, frakpbar f>}
    \left\langle \bfupsilon_{\overline{\frakp}}^* * [\mu_{f, 1}], \bfupsilon_{\overline{\frakp}}^* *[\mu_{f, 2}] \right\rangle_k = (-p)^k \left\langle  [\mu_{f, 1}], [\mu_{f, 2}] \right\rangle_k,
\end{equation}
whose computations are given below.
\begin{enumerate}
    \item[$\bullet$] Computation for $\left\langle \bfupsilon_{\frakp}^* * [\mu_{f, 1}], \bfupsilon_{\frakp}^* *[\mu_{f, 2}] \right\rangle_k$: \begin{align*}
        \left\langle \bfupsilon_{\frakp}^* * [\mu_{f, 1}], \bfupsilon_{\frakp}^* *[\mu_{f, 2}] \right\rangle_k & = (-1)^k p^{-1}\left\langle U_{\frakp}^* [\mu_{f, 1}], \bfupsilon_{\frakp}^* *[\mu_{f, 2}] \right\rangle_k\\
        & = (-1)^k p^{-1} \left\langle  [\mu_{f, 1}], U_{\frakp}\left(\bfupsilon_{\frakp}^* *[\mu_{f, 2}]\right) \right\rangle_k \\
        & = (-1)^kp^k \left\langle  [\mu_{f, 1}], [\mu_{f, 2}] \right\rangle_k.
    \end{align*}

    \item[$\bullet$] Computation for $\left\langle \bfupsilon_{\overline{\frakp}}^* * [\mu_{f, 1}], \bfupsilon_{\overline{\frakp}}^* *[\mu_{f, 2}] \right\rangle_k$: \begin{align*}
        \left\langle \bfupsilon_{\overline{\frakp}}^* * [\mu_{f, 1}], \bfupsilon_{\overline{\frakp}}^* *[\mu_{f, 2}] \right\rangle_k & = (-1)^k p^{-1}\left\langle U_{\overline{\frakp}}^* [\mu_{f, 1}], \bfupsilon_{\frakp}^* *[\mu_{f, 2}] \right\rangle_k\\
        & = (-1)^k p^{-1} \left\langle  [\mu_{f, 1}], U_{\overline{\frakp}}\left(\bfupsilon_{\overline{\frakp}}^* *[\mu_{f, 2}]\right) \right\rangle_k \\
        & = (-1)^kp^k \left\langle  [\mu_{f, 1}], [\mu_{f, 2}] \right\rangle_k.
    \end{align*}
\end{enumerate}

\paragraph{Computations for $\left\langle -, \bfupsilon_{\frakp}^*\bfupsilon_{\overline{\frakp}}^* *[\mu_{f, 2}] \right\rangle_k$ and $\left\langle \bfupsilon_{\frakp}^*\bfupsilon_{\overline{\frakp}} * [\mu_{f, 1}], - \right\rangle_k$.} Recall that \[
    \bfupsilon_{\frakp}^* \bfupsilon_{\overline{\frakp}}^* *[\mu_{f, t}] = p^{-2}U_p^*([\mu_{f, t}])
\] for $t=1, 2$. \begin{enumerate}
    \item[$\bullet$] Computation for $\left\langle \bfupsilon_{\frakp}^* * [\mu_{f, 1}], \bfupsilon_{\frakp}^*\bfupsilon_{\overline{\frakp}}^* *[\mu_{f, 2}] \right\rangle_k$: \begin{align*}
        \left\langle \bfupsilon_{\frakp}^* * [\mu_{f, 1}], \bfupsilon_{\frakp}^*\bfupsilon_{\overline{\frakp}}^* *[\mu_{f, 2}] \right\rangle_k & = p^{-2} \left\langle \bfupsilon_{\frakp}^* * [\mu_{f, 1}], U_p^*([\mu_{f, 2}]) \right\rangle_k \\
         & = p^{-2} \left\langle U_p\left(\bfupsilon_{\frakp}^* * [\mu_{f, 1}]\right), [\mu_{f, 2}] \right\rangle_k \\ 
         & = p^{-2}\left( p^{k+1}\lambda_{\overline{\frakp}} \left\langle  [\mu_{f, 1}], [\mu_{f, 2}] \right\rangle_k - p^{k+1}\left\langle \bfupsilon_{\overline{\frakp}}^* * [\mu_{f, 1}], [\mu_{f, 2}] \right\rangle_k \right)  \\
         & = p^{k-1}\lambda_{\overline{\frakp}} \left\langle  [\mu_{f, 1}], [\mu_{f, 2}] \right\rangle_k - \frac{p^{k-1}\lambda_{\overline{\frakp}}}{(-1)^kp+1}\left\langle  [\mu_{f, 1}], [\mu_{f, 2}] \right\rangle_k\\
         & = p^{k-1}\lambda_{\overline{\frakp}} \left(1-\frac{1}{(-1)^kp+1}\right) \left\langle  [\mu_{f, 1}], [\mu_{f, 2}] \right\rangle_k.
    \end{align*} We conclude that \begin{equation}\label{eq: <frakp f, frakpfrakpbar f>}
        \left\langle \bfupsilon_{\frakp}^* * [\mu_{f, 1}], \bfupsilon_{\frakp}^*\bfupsilon_{\overline{\frakp}}^* *[\mu_{f, 2}] \right\rangle_k = \frac{(-p)^k\lambda_{\overline{\frakp}}}{(-1)^kp+1} \left\langle  [\mu_{f, 1}], [\mu_{f, 2}] \right\rangle_k
    \end{equation}

    \item[$\bullet$] Computation for $\left\langle \bfupsilon_{\overline{\frakp}}^* * [\mu_{f, 1}], \bfupsilon_{\frakp}^*\bfupsilon_{\overline{\frakp}}^* *[\mu_{f, 2}] \right\rangle_k$: \begin{align*}
        \left\langle \bfupsilon_{\overline{\frakp}}^* * [\mu_{f, 1}], \bfupsilon_{\frakp}^*\bfupsilon_{\overline{\frakp}}^* *[\mu_{f, 2}] \right\rangle_k & = p^{-2} \left\langle \bfupsilon_{\overline{\frakp}}^* * [\mu_{f, 1}], U_p^*([\mu_{f, 2}]) \right\rangle_k \\
         & = p^{-2} \left\langle U_p\left(\bfupsilon_{\overline{\frakp}}^* * [\mu_{f, 1}]\right), [\mu_{f, 2}] \right\rangle_k \\ 
         & = p^{-2}\left( p^{k+1}\lambda_{\frakp} \left\langle  [\mu_{f, 1}], [\mu_{f, 2}] \right\rangle_k - p^{k+1}\left\langle  \bfupsilon_{\frakp}^* *[\mu_{f, 1}], [\mu_{f, 2}] \right\rangle_k \right) \\
         & = p^{k-1}\lambda_{\frakp}\left( 1- \frac{1}{(-1)^kp+1}\right) \left\langle  [\mu_{f, 1}], [\mu_{f, 2}] \right\rangle_k.
    \end{align*} We conclude that \begin{equation}\label{eq: <frakpbar f, frakpfrakpbar f>}
        \left\langle \bfupsilon_{\overline{\frakp}}^* * [\mu_{f, 1}], \bfupsilon_{\frakp}^*\bfupsilon_{\overline{\frakp}}^* *[\mu_{f, 2}] \right\rangle_k = \frac{(-p)^k\lambda_{\frakp}}{(-1)^kp+1}\left\langle  [\mu_{f, 1}], [\mu_{f, 2}] \right\rangle_k.
    \end{equation}

    \item[$\bullet$] Computation for $\left\langle \bfupsilon_{\frakp}^*\bfupsilon_{\overline{\frakp}}^* * [\mu_{f, 1}], \bfupsilon_{\frakp}^* *[\mu_{f, 2}] \right\rangle_k$:  Similar computation as above allows us to conclude \begin{equation}\label{eq: <frakpfrakpbar f, frakp f>}
        \left\langle \bfupsilon_{\frakp}^*\bfupsilon_{\overline{\frakp}}^* * [\mu_{f, 1}], \bfupsilon_{{\frakp}}^* *[\mu_{f, 2}] \right\rangle_k = \frac{(-p)^k\lambda_{\overline{\frakp}}}{(-1)^kp + 1} \left\langle  [\mu_{f, 1}], [\mu_{f, 2}] \right\rangle_k.
    \end{equation}

    \item[$\bullet$] Computation for $\left\langle \bfupsilon_{\frakp}^*\bfupsilon_{\overline{\frakp}}^* * [\mu_{f, 1}], \bfupsilon_{\overline{\frakp}}^* *[\mu_{f, 2}] \right\rangle_k$: Similar computation as above allows us to conclude \begin{equation}\label{eq: <frakpfrakpbar f, frakpbar f>}
        \left\langle \bfupsilon_{\frakp}^*\bfupsilon_{\overline{\frakp}}^* * [\mu_{f, 1}], \bfupsilon_{\overline{\frakp}}^* *[\mu_{f, 2}] \right\rangle_k = \frac{(-p)^k\lambda_{{\frakp}}}{(-1)^kp + 1} \left\langle  [\mu_{f, 1}], [\mu_{f, 2}] \right\rangle_k.
    \end{equation} 

    \item[$\bullet$]  Computation for $\left\langle \bfupsilon_{\frakp}^*\bfupsilon_{\overline{\frakp}}^* * [\mu_{f, 1}], \bfupsilon_{\frakp}^*\bfupsilon_{\overline{\frakp}}^* *[\mu_{f, 2}] \right\rangle_k$: \begin{align*}
        \left\langle \bfupsilon_{\frakp}^*\bfupsilon_{\overline{\frakp}}^* * [\mu_{f, 1}], \bfupsilon_{\frakp}^*\bfupsilon_{\overline{\frakp}}^* *[\mu_{f, 2}] \right\rangle_k & = p^{-2}\left\langle  \bfupsilon_{\frakp}^*\bfupsilon_{\overline{\frakp}}^* *[\mu_{f, 1}], U_p^*([\mu_{f, 2}]) \right\rangle_k\\
         & = p^{-2}\left\langle  U_p\left( \bfupsilon_{\frakp}^*\bfupsilon_{\overline{\frakp}}^* *[\mu_{f, 1}]\right), [\mu_{f, 2}] \right\rangle_k.
    \end{align*} Using the matrix representation for $U_p$, we see that \[
        U_p\left( \bfupsilon_{\frakp}^*\bfupsilon_{\overline{\frakp}}^* *[\mu_{f, 1}]\right) = p^{2(k+1)}[\mu_{f, 1}]
    \] and so \begin{equation}\label{eq: <frakpfrakpbar f, frakpfrakpbar f>}
        \left\langle \bfupsilon_{\frakp}^*\bfupsilon_{\overline{\frakp}}^* * [\mu_{f, 1}], \bfupsilon_{\frakp}^*\bfupsilon_{\overline{\frakp}}^* *[\mu_{f, 2}] \right\rangle_k = p^{2k} \left\langle  [\mu_{f, 1}], [\mu_{f, 2}] \right\rangle_k.
    \end{equation}
\end{enumerate}

\paragraph{Computations for $\left\langle \bfupsilon_{\frakp}^* * [\mu_{f, 1}], \bfupsilon_{\overline{\frakp}}^* *[\mu_{f, 2}] \right\rangle_k$ and $\left\langle \bfupsilon_{\overline{\frakp}}^* * [\mu_{f, 1}], \bfupsilon_{{\frakp}}^* *[\mu_{f, 2}] \right\rangle_k$.} From the matrix representation for $U_p^*$, we have \[
    U_p^*\left( \bfupsilon_{\frakp}^* \bfupsilon_{\overline{\frakp}}^* *[\mu_{f, t}]\right) = p^{2k}[\mu_{f, t}] + (-1)^{k+1}p^k\lambda_{\frakp} \bfupsilon_{\frakp}^* [\mu_{f, t}] + (-1)^{k+1}p^k \lambda_{\overline{\frakp}} \bfupsilon_{\overline{\frakp}}^* *[\mu_{f, t}] + \lambda_{\frakp}\lambda_{\overline{\frakp}} \bfupsilon_{\frakp}^* \bfupsilon_{\overline{\frakp}}^* *[\mu_{f, t}]
\] and so \[
    \bfupsilon_{\overline{\frakp}}^* * [\mu_{f, t}] = (-1)^kp^k\lambda_{\overline{\frakp}}^{-1}[\mu_{f, t}] -\lambda_{\frakp}\lambda_{\overline{\frakp}}^{-1}\bfupsilon_{\frakp}^* [\mu_{f, t}] + (-1)^kp^{-k}\lambda_{\frakp}\bfupsilon_{\frakp}^* \bfupsilon_{\overline{\frakp}}^* *[\mu_{f, t}] - (-1)^kp^{-k}\lambda_{\overline{\frakp}}^{-1}U_p^*\left(\bfupsilon_{\frakp}^*\bfupsilon_{\overline{\frakp}}^* * [\mu_{f, t}] \right).
\] \begin{enumerate}
    \item[$\bullet$] Computation for  $\left\langle \bfupsilon_{\frakp}^* * [\mu_{f, 1}], \bfupsilon_{\overline{\frakp}}^* *[\mu_{f, 2}] \right\rangle_k$: \begin{align*}
        & \left\langle \bfupsilon_{\frakp}^* * [\mu_{f, 1}], \bfupsilon_{\overline{\frakp}}^* *[\mu_{f, 2}] \right\rangle_k\\
        & \qquad  = (-1)^kp^k \lambda_{\overline{\frakp}}^{-1} \left\langle  \bfupsilon_{\frakp}^* * [\mu_{f, 1}], [\mu_{f, 2}] \right\rangle_k - \lambda_{\frakp}\lambda_{\overline{\frakp}}^{-1}\left\langle  \bfupsilon_{\frakp}^* *[\mu_{f, 1}], \bfupsilon_{\frakp}^* * [\mu_{f, 2}] \right\rangle_k \\
        & \qquad\qquad  + (-1)^k p^{-k}\lambda_{\frakp} \left\langle  \bfupsilon_{\frakp}^* * [\mu_{f, 1}], \bfupsilon_{\frakp}^* \bfupsilon_{\overline{\frakp}}^* *[\mu_{f, 2}] \right\rangle_k - (-1)^kp^{-k}\lambda_{\overline{\frakp}}^{-1} \left\langle  \bfupsilon_{\frakp}^* *[\mu_{f, 1}], U_p^*\left( \bfupsilon_{\frakp}^* \bfupsilon_{\overline{\frakp}}^* *[\mu_{f, 2}]  \right)\right\rangle_k.
    \end{align*}
    Note that  \begin{align*}
        \left\langle  \bfupsilon_{\frakp}^* *[\mu_{f, 1}], U_p^*\left( \bfupsilon_{\frakp}^* \bfupsilon_{\overline{\frakp}}^* *[\mu_{f, 2}]  \right)\right\rangle_k & = \left\langle  U_p\left(\bfupsilon_{\frakp}^* *[\mu_{f, 1}] \right), \bfupsilon_{\frakp}^* \bfupsilon_{\overline{\frakp}}^* *[\mu_{f, 2}] \right\rangle_k \\
        & = p^{k+1}\lambda_{\overline{\frakp}} \left\langle  [\mu_{f, 1}], \bfupsilon_{\frakp}^* \bfupsilon_{\overline{\frakp}}^* * [\mu_{f, 2}] \right\rangle_k - p^{k+1}\left\langle  \bfupsilon_{\overline{\frakp}}^* *[\mu_{f, 1}], \bfupsilon_{\frakp}^* \bfupsilon_{\overline{\frakp}}^* *[\mu_{f, 2}] \right\rangle_k.
    \end{align*}
    Combining everything, we have \begin{align*}
        & \left\langle \bfupsilon_{\frakp}^* * [\mu_{f, 1}], \bfupsilon_{\overline{\frakp}}^* *[\mu_{f, 2}] \right\rangle_k \\
        & \qquad = (-1)^kp^k\lambda_{\overline{\frakp}}^{-1}\cdot \frac{\lambda_{\frakp}}{(-1)^kp+1} \left\langle  [\mu_{f, 1}], [\mu_{f, 2}] \right\rangle_k - \lambda_{\frakp}\lambda_{\overline{\frakp}}^{-1}\cdot (-1)^kp^k \left\langle  [\mu_{f, 1}], [\mu_{f, 2}] \right\rangle_k\\
        & \qquad\qquad  + (-p)^{-k}\lambda_{\frakp} \cdot \frac{(-p)^k\lambda_{\overline{\frakp}}}{(-1)^kp+1} \left\langle  [\mu_{f, 1}], [\mu_{f, 2}] \right\rangle_k -(-p)^{-k}\lambda_{\overline{\frakp}}^{-1}\cdot p^{k+1}\lambda_{\overline{\frakp}} \cdot \frac{\lambda_{\frakp}\lambda_{\overline{\frakp}}}{((-1)^kp+1)^2} \left\langle  [\mu_{f, 1}], [\mu_{f, 2}] \right\rangle_k\\
        & \qquad\qquad + (-p)^{-k}\lambda_{\overline{\frakp}}^{-1}\cdot p^{k+1}\cdot \frac{(-p)^k\lambda_{\frakp}}{(-1)^kp+1} \left\langle  [\mu_{f, 1}], [\mu_{f, 2}] \right\rangle_k\\
        & \qquad = \scalemath{0.9}{ \left( \frac{(-p)^k\lambda_{\frakp}\lambda_{\overline{\frakp}}^{-1} - p^{k+1}\lambda_{\frakp}\lambda_{\overline{\frakp}}^{-1} - (-p)^k\lambda_{\frakp}\lambda_{\overline{\frakp}}^{-1} + p^{k+1}\lambda_{\frakp}\lambda_{\overline{\frakp}}^{-1}}{(-1)^kp+1}  +\frac{(-1)^kp\lambda_{\frakp}\lambda_{\overline{\frakp}}-\lambda_{\frakp}\lambda_{\overline{\frakp}}-(-1)^kp\lambda_{\frakp}\lambda_{\overline{\frakp}}}{((-1)^kp +1)^2} \right) } \left\langle  [\mu_{f, 1}], [\mu_{f, 2}] \right\rangle_k.
    \end{align*}We then conclude \begin{equation}\label{eq: <frakp f, frakpbar f>}
        \left\langle \bfupsilon_{\frakp}^* * [\mu_{f, 1}], \bfupsilon_{\overline{\frakp}}^* *[\mu_{f, 2}] \right\rangle_k = \frac{-\lambda_{\frakp}\lambda_{\overline{\frakp}}}{((-1)^kp+1)^2}\left\langle  [\mu_{f, 1}], [\mu_{f, 2}] \right\rangle_k.
    \end{equation}

    \item[$\bullet$] Computation for $\left\langle \bfupsilon_{\overline{\frakp}}^* * [\mu_{f, 1}], \bfupsilon_{{\frakp}}^* *[\mu_{f, 2}] \right\rangle_k$: Using the same strategy as above and combining our previous computations, we also have \begin{equation}\label{eq: <frakpbar f, frakp f>}
        \left\langle \bfupsilon_{\overline{\frakp}}^* * [\mu_{f, 1}], \bfupsilon_{{\frakp}}^* *[\mu_{f, 2}] \right\rangle_k = \frac{-\lambda_{\frakp}\lambda_{\overline{\frakp}}}{((-1)^kp+1)^2}\left\langle  [\mu_{f, 1}], [\mu_{f, 2}] \right\rangle_k.
    \end{equation}
\end{enumerate}

\paragraph{Computation for $\left[ [\mu_{f, 1}]_{(i,j)}^{(p)}, [\mu_{f, 2}]_{(i,j)}^{(p)}\right]_k$.} We finally would like to combine the computations above to understand $\left[ [\mu_{f, 1}]_{(i,j)}^{(p)}, [\mu_{f, 2}]_{(i,j)}^{(p)}\right]_k$. 

Recall we have \begin{align*}
    \left[ [\mu_{f, 1}]_{(i,j)}^{(p)}, [\mu_{f, 2}]_{(i,j)}^{(p)}\right]_k &  = \left\langle [\mu_{f, 1}]_{(i,j)}^{(p)}, \bfupsilon_{\frakp}^*\bfupsilon_{\overline{\frakp}}^* * [\mu_{f, 2}] \right\rangle_k  + \left\langle [\mu_{f, 1}]_{(i,j)}^{(p)},  (-p)^k (-\alpha_{\frakp}^{(i), -1})\bfupsilon_{\overline{\frakp}}^* * [\mu_{f, 2}]\right\rangle_k \\
    & \qquad + \left\langle [\mu_{f, 1}]_{(i,j)}^{(p)},  (-p)^k(-\alpha_{\overline{\frakp}}^{(j), -1})\bfupsilon_{\frakp}^* *[\mu_{f, 2}]\right\rangle_k  + \left\langle [\mu_{f, 1}]_{(i,j)}^{(p)},  p^{2k}[\mu_{f, 2}]\right\rangle_k .
\end{align*} Let's study each term in the expansion. \begin{enumerate}
    \item[$\bullet$] Expansion for  $\left\langle [\mu_{f, 1}]_{(i,j)}^{(p)}, \bfupsilon_{\frakp}^*\bfupsilon_{\overline{\frakp}}^* * [\mu_{f, 2}] \right\rangle_k$: \begin{align*}
        & \left\langle [\mu_{f, 1}]_{(i,j)}^{(p)}, \bfupsilon_{\frakp}^*\bfupsilon_{\overline{\frakp}}^* * [\mu_{f, 2}] \right\rangle_k\\
        & \qquad = \left\langle  [\mu_{f, 1}], \bfupsilon_{\frakp}^* \bfupsilon_{\overline{\frakp}}^* *[\mu_{f, 2}] \right\rangle_k - \alpha_{\frakp}^{(i), -1}\left\langle  \bfupsilon_{\frakp}^* *[\mu_{f, 1}], \bfupsilon_{\frakp}^* \bfupsilon_{\overline{\frakp}}^* *[\mu_{f, 2}] \right\rangle_k \\ 
        & \qquad\qquad  -\alpha_{\overline{\frakp}}^{(j), -1}\left\langle  \bfupsilon_{\overline{\frakp}}^* *[\mu_{f, 1}], \bfupsilon_{\frakp}^* \bfupsilon_{\overline{\frakp}}^* *[\mu_{f, 2}] \right\rangle_k + \alpha_{\frakp}^{(i), -1}\alpha_{\overline{\frakp}}^{(j), -1}\left\langle  \bfupsilon_{\frakp}^* \bfupsilon_{\overline{\frakp}}^* *[\mu_{f, 1}], \bfupsilon_{\frakp}^* \bfupsilon_{\overline{\frakp}}^* *[\mu_{f, 2}] \right\rangle_k \\
        & \qquad = \left(\frac{\lambda_{\frakp}\lambda_{\overline{\frakp}}}{((-1)^kp+1)^2} - \alpha_{\frakp}^{(i), -1}\cdot \frac{(-p)^k\lambda_{\overline{\frakp}}}{(-1)^kp+1} - \alpha_{\overline{\frakp}}^{(j), -1}\cdot \frac{(-p)^k\lambda_{\frakp}}{(-1)^kp+1} + \alpha_{\frakp}^{(i), -1}\alpha_{\overline{\frakp}}^{(j), -1}p^{2k}\right) \left\langle  [\mu_{f, 1}], [\mu_{f, 2}] \right\rangle_k.
    \end{align*} 

    \item[$\bullet$] Expansion for $\left\langle [\mu_{f, 1}]_{(i,j)}^{(p)},  (-p)^k (-\alpha_{\frakp}^{(i), -1})\bfupsilon_{\overline{\frakp}}^* * [\mu_{f, 2}]\right\rangle_k$: \begin{align*}
        & \left\langle [\mu_{f, 1}]_{(i,j)}^{(p)},  (-p)^k (-\alpha_{\frakp}^{(i), -1})\bfupsilon_{\overline{\frakp}}^* * [\mu_{f, 2}]\right\rangle_k\\
        & \qquad = -(-p)^k\overline{\alpha_{{\frakp}}^{(i), -1}} \left( \left\langle  [\mu_{f, 1}], \bfupsilon_{\overline{\frakp}}^* *[\mu_{f, 2}]\right\rangle_k - \alpha_{\frakp}^{(i), -1} \left\langle  \bfupsilon_{\frakp}^{*}*[\mu_{f, 1}], \bfupsilon_{\overline{\frakp}}^* *[\mu_{f, 2}] \right\rangle_k \right.\\
        & \qquad \qquad  \left. - \alpha_{\overline{\frakp}}^{(j), -1} \left\langle  \bfupsilon_{\overline{\frakp}}^* *[\mu_{f, 1}], \bfupsilon_{\overline{\frakp}}^* *[\mu_{f, 2}] \right\rangle_k + \alpha_{\frakp}^{(i), -1}\alpha_{\overline{\frakp}}^{(j), -1}\left\langle  \bfupsilon_{\frakp}^* \bfupsilon_{\overline{\frakp}}^* *[\mu_{f, 1}], \bfupsilon_{\overline{\frakp}}^* *[\mu_{f, 2}] \right\rangle_k\right) \\
        & \qquad = -(-p)^k\overline{\alpha_{\frakp}^{(i), -1}}\left( \frac{\lambda_{\overline{\frakp}}}{(-1)^kp+1} -\alpha_{\frakp}^{(i), -1}\cdot \frac{-\lambda_{\frakp}\lambda_{\overline{\frakp}}}{((-1)^kp+1)^2} \right.\\
        & \qquad \qquad \left. -\alpha_{\overline{\frakp}}^{(j), -1}\cdot (-p)^k+ \alpha_{\frakp}^{(i), -1}\alpha_{\overline{\frakp}}^{(j), -1}\cdot \frac{(-p)^k\lambda_{\frakp}}{(-1)^kp+1}\right) \left\langle  [\mu_{f, 1}], [\mu_{f, 2}] \right\rangle_k
    \end{align*}

    \item[$\bullet$] Expansion for $\left\langle [\mu_{f, 1}]_{(i,j)}^{(p)},  (-p)^k(-\alpha_{\overline{\frakp}}^{(j), -1})\bfupsilon_{\frakp}^* *[\mu_{f, 2}]\right\rangle_k$: \begin{align*}
        & \left\langle [\mu_{f, 1}]_{(i,j)}^{(p)},  (-p)^k(-\alpha_{\overline{\frakp}}^{(j), -1})\bfupsilon_{\frakp}^* *[\mu_{f, 2}]\right\rangle_k\\
        & \qquad = -(-p)^k\overline{\alpha_{\overline{\frakp}}^{(j), -1}} \left( \left\langle  [\mu_{f, 1}], \bfupsilon_{\frakp}^* *[\mu_{f, 2}] \right\rangle_k  - \alpha_{\frakp}^{(i), -1} \left\langle  \bfupsilon_{\frakp}^* * [\mu_{f, 1}], \bfupsilon_{\frakp}^* *[\mu_{f, 2}] \right\rangle_k \right.\\
        & \qquad\qquad  \left. - \alpha_{\overline{\frakp}}^{(j), -1} \left\langle  \bfupsilon_{\overline{\frakp}}^* *[\mu_{f, 1}], \bfupsilon_{\frakp}^* *[\mu_{f, 2}] \right\rangle_k + \alpha_{\frakp}^{(i), -1}\alpha_{\overline{\frakp}}^{(j), -1} \left\langle  \bfupsilon_{\frakp}^* \bfupsilon_{\overline{\frakp}}^* *[\mu_{f, 1}], \bfupsilon_{\frakp}^* *[\mu_{f, 2}] \right\rangle_k\right)\\
        & \qquad  = -(-p)^k\overline{\alpha_{\overline{\frakp}}^{(j), -1}} \left(  \frac{\lambda_{\frakp}}{(-1)^kp+1} - \alpha_{\frakp}^{(i), -1}\cdot (-p)^k\right. \\
        & \qquad\qquad  \left. -\alpha_{\overline{\frakp}}^{(j), -1}\cdot \frac{-\lambda_{\frakp}\lambda_{\overline{\frakp}}}{((-1)^kp+1)^2} + \alpha_{\frakp}^{(i), -1}\alpha_{\overline{\frakp}}^{(j), -1}\cdot \frac{(-p)^k\lambda_{\overline{\frakp}}}{(-1)^k p +1} \right)\left\langle  [\mu_{f, 1}], [\mu_{f, 2}] \right\rangle_k
    \end{align*} 

    \item[$\bullet$] Expansion for $\left\langle [\mu_{f, 1}]_{(i,j)}^{(p)},  p^{2k}[\mu_{f, 2}]\right\rangle_k$: \begin{align*}
        & \left\langle [\mu_{f, 1}]_{(i,j)}^{(p)},  p^{2k}[\mu_{f, 2}]\right\rangle_k \\
        & \qquad = p^{2k}\left(\left\langle  [\mu_{f, 1}], [\mu_{f, 2}] \right\rangle_k - \alpha_{\frakp}^{(i), -1} \left\langle  \bfupsilon_{\frakp}^* *[\mu_{f, 1}], [\mu_{f, 2}] \right\rangle_k \right.\\ 
        & \qquad\qquad  \left. -\alpha_{\overline{\frakp}}^{(j), -1}\left\langle  \bfupsilon_{\overline{\frakp}}^* *[\mu_{f, 1}], [\mu_{f, 2}] \right\rangle_k + \alpha_{\frakp}^{(i), -1}\alpha_{\overline{\frakp}}^{(j), -1}\left\langle  \bfupsilon_{\frakp}^* \bfupsilon_{\overline{\frakp}}^* *[\mu_{f, 1}], [\mu_{f, 2}] \right\rangle_k\right) \\
        & \qquad  = p^{2k} \left( 1- \alpha_{\frakp}^{(i), -1}\cdot \frac{\lambda_{\frakp}}{(-1)^kp +1}   -\alpha_{\overline{\frakp}}^{(j), -1}\cdot \frac{\lambda_{\overline{\frakp}}}{(-1)^kp+1} + \alpha_{\frakp}^{(i), -1}\alpha_{\overline{\frakp}}^{(j), -1} \cdot \frac{\lambda_{\frakp}\lambda_{\overline{\frakp}}}{((-1)^kp +1)^2}\right) \left\langle  [\mu_{f, 1}], [\mu_{f, 2}] \right\rangle_k
    \end{align*} 
\end{enumerate} 

As a consequence, we have \begin{equation*}
    \begin{array}{r}
        \left[ [\mu_{f, 1}]_{(i,j)}^{(p)}, [\mu_{f, 2}]_{(i,j)}^{(p)}\right]_k = \begin{pmatrix} p^{2k}\left(1+\alpha_{\frakp}^{(i), -1}\overline{\alpha_{\overline{\frakp}}^{(j), -1}} + \overline{\alpha_{\frakp}^{(i), -1}}\alpha_{\overline{\frakp}}^{(j), -1} + \alpha_{\frakp}^{(i), -1}\alpha_{\overline{\frakp}}^{(j), -1} \right) \\ \\
        -\dfrac{(-p)^k\lambda_{\overline{\frakp}}}{(-1)^kp +1} \left(\alpha_{\frakp}^{(i), -1} + \overline{\alpha_{\frakp}^{(i), -1}} + (-p)^k\alpha_{\frakp}^{(i), -1}\alpha_{\overline{\frakp}}^{(j), -1}\overline{\alpha_{\overline{\frakp}}^{(j), -1}} + (-p)^k\alpha_{\overline{\frakp}}^{(j), -1} \right) \\ \\
        -\dfrac{(-p)^k\lambda_{{\frakp}}}{(-1)^kp +1}\left(\alpha_{\overline{\frakp}}^{(j), -1} + \overline{\alpha_{\overline{\frakp}}^{(j), -1}} + (-p)^k\alpha_{\frakp}^{(i), -1}\overline{\alpha_{\frakp}^{(i), -1}} \alpha_{\overline{\frakp}}^{(j), -1} + (-p)^k\alpha_{\frakp}^{(i), -1} \right)\\ \\
        + \dfrac{\lambda_{\frakp}\lambda_{\overline{\frakp}}}{((-1)^kp+1)^2} \left(1-(-p)^k\overline{\alpha_{\frakp}^{(i), -1}}\alpha_{\frakp}^{(i), -1} - (-p)^k\alpha_{\overline{\frakp}}^{(j), -1}\overline{\alpha_{\overline{\frakp}}^{(j), -1}} + p^{2k}\alpha_{\frakp}^{(i), -1}\alpha_{\overline{\frakp}}^{(j), -1} \right)
    \end{pmatrix} \\ \\
    \times \left\langle  [\mu_{f, 1}], [\mu_{f, 2}] \right\rangle_k
    \end{array}
\end{equation*}
Using the relations \[
    \alpha_{\frakp}^{(i)}\alpha_{\frakp}^{(1-i)} = p^{k+1}, \quad \alpha_{\frakp}^{(i)} + \alpha_{\frakp}^{(1-i)} = \lambda_{\frakp}, \quad \alpha_{\overline{\frakp}}^{(j)}\alpha_{\overline{\frakp}}^{(1-j)} = p^{k+1}, \quad \alpha_{\overline{\frakp}}^{(j)} + \alpha_{\overline{\frakp}}^{(1-j)} = \lambda_{\overline{\frakp}},
\] one deduces \begin{equation}\label{eq: [p-stab f, p-stab f]}
    \left[ [\mu_{f, 1}]_{(i,j)}^{(p)}, [\mu_{f, 2}]_{(i,j)}^{(p)}\right]_k = \Theta(\lambda_{\frakp}, \lambda_{\overline{\frakp}}, i,j) \left\langle  [\mu_{f, 1}], [\mu_{f, 2}] \right\rangle_k,
\end{equation} where \begin{align*}
     & \Theta(\lambda_{\frakp}, \lambda_{\overline{\frakp}}, i,j) \\ 
     & \quad =  \scalemath{0.7}{\frac{p^{2k+2}(p^2-1)-p\lambda_{\frakp}\lambda_{\overline{\frakp}}(p+2+2(-1)^k) + (-1)^kp((-1)^kp+1)(\alpha_{\frakp}^{(1-i)}\alpha_{\overline{\frakp}}^{(j)} + \alpha_{\frakp}^{(i)}\alpha_{\overline{\frakp}}^{(1-i)}) + \alpha_{\frakp}^{(1-i)}\alpha_{\overline{\frakp}}^{(1-j)}(\lambda_{\frakp}\lambda_{\overline{\frakp}}-(-1)^kp-1)-p((-1)^kp+1)(\alpha_{\frakp}^{(1-i), 2} + \alpha_{\overline{\frakp}}^{(1-j), 2}) }{p^2((-1)^kp+1)^2} }.
\end{align*}

\subsection{A formula for the adjoint \texorpdfstring{$L$}{L}-value}\label{subsection: adjoint L-value formula}

We keep the assumption above that $f$ is a Bianchi cuspidal eigenform of level $1$ with cohomological weight $k = (k, k)\neq (0,0)$. For any prime ideal $\frakq \subset \calO_K$, denote by $\varpi_{\frakq}$ a fixed uniformiser of $\calO_{K, \frakq}$ and let $T_{\bfupsilon_{\frakq}}$ be the Hecke operator defined by the matrix \[
    \bfupsilon_{\frakq} = \begin{pmatrix}1 & \\ & \varpi_{\frakq}\end{pmatrix}.
\] Denote by $\lambda_{\frakq}$ the $T_{\bfupsilon_{\frakq}}$-eigenvalue of $f$. We similarly consider the Hecke polynomial \[
    P_{\frakq}(X) = X^2 -\lambda_{\frakq} X + q_{\frakq}^{k+1},
\] where $q_{\frakq}$ is the cardinality of the residue field of $\calO_{K, \frakq}$.  Let $\alpha_{\frakq}^{(0)}$, $\alpha_{\frakq}^{(1)}$ be the two roots of $P_{\frakq}(X)$. Then, the adjoint $L$-function associated with $f$ is defined to be the Euler product \[
    L(\mathrm{ad}^0 f, s) := \prod_{\frakq} \left( \left(1-\frac{\alpha_{\frakq}^{(0)}}{\alpha_{\frakq}^{(1)}}q_{\frakq}^{-s}\right) \left(1-q_{\frakq}^{-s}\right) \left(1-\frac{\alpha_{\frakq}^{(1)}}{\alpha_{\frakq}^{(0)}}q_{\frakq}^{-s} \right) \right)^{-1}.
\]
Note that this product converges when $\Re(s)$ is sufficiently large.

\begin{Proposition}[$\text{\cite[Proposition 7.1]{Urban-PhD}}$]\label{Proposition: Urban's formula}
    Let $f$ be as above. Then, \[
        \langle [\mu_{f, 1}], [\mu_{f, 2}] \rangle_{G(\Z), k} = \frac{D_{\infty}(k+1, \Phi_{\infty})\mathrm{disc}(K)}{16\pi} L(\mathrm{ad}^0f, 1),
    \] where $\mathrm{disc}(K)$ is the discriminant of $K$ and  \[
        D_{\infty}(k+1, \Phi_{\infty}) = (2\pi)^{-1-2k}\times (-1)^{k+1} \times \frac{((k+1)!)^2}{2k+1}\times \sum_{|n|\leq k+1}(-1)^n a_{2n, -2n}\begin{pmatrix}2k+2\\ k+n+1\end{pmatrix}
    \] with \[
        a_{n, -n} = \left\{ \begin{array}{ll}
            \dfrac{(2k^2+k)\times (\substack{2k\\k})}{(k!)^2} \times (-1)^n\times (k-n-1)!(k+n-1)!, & \text{ if }n\in \{-k+1, ..., k-1\} \\ \\
            (2k+1)(k+1)(-1)^{k+1}\times (\substack{2k\\k})^2, & \text{ if }n= -1-k\text{ or }1+k\\ \\
            (-1)^k\times (\substack{2k\\k})^2, & \text{ if }n=k\text{ or }-k
        \end{array}\right. .
    \]
\end{Proposition}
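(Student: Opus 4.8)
The final statement to prove is Proposition \ref{Proposition: Urban's formula}, which reproduces Urban's integral formula (\cite[Proposition 7.1]{Urban-PhD}). Since the statement is explicitly attributed to Urban, the proof is essentially a citation accompanied by the dictionary translating Urban's automorphic normalisation into the cohomological pairing $\langle \cdot, \cdot \rangle_{G(\Z), k}$ used here.

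\begin{proof}[Sketch of proof]
The plan is to reduce the assertion to \cite[Proposition 7.1]{Urban-PhD} by matching normalisations. First I would recall, via the Eichler--Shimura--Harder isomorphism (Theorem \ref{Theorem: ESH isomorphism}), that $[\mu_{f,1}]$ and $[\mu_{f,2}]$ correspond to the (co)homological avatars of the cusp form $f$ in the two relevant degrees $1$ and $2$; under this identification the cup-product pairing $\langle [\mu_{f,1}], [\mu_{f,2}]\rangle_{G(\Z),k}$ computed using the self-duality of $V_k^\vee$ described in Remark \ref{Remark: alg. rep. is self-dual} is, up to an explicit elementary constant, the Poincar\'e-duality pairing on the Borel--Serre cohomology of $Y_{\GL_2(\widehat{\Z})}$ evaluated against the harmonic representative of $f$. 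This is exactly the quantity that Urban computes in \emph{loc. cit.}: he expresses the Petersson-type inner product of the cohomology class attached to $f$ (at full level, weight $k$) as a Rankin--Selberg integral, unfolds it, and identifies the resulting archimedean integral as $D_\infty(k+1,\Phi_\infty)$ times the adjoint $L$-value, with the global constant $\mathrm{disc}(K)/(16\pi)$ coming from the measure normalisation and the functional equation bookkeeping.

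The key steps, in order, would be: (i) fix the harmonic differential form representing $f$ in degrees $1$ and $2$ and record how it pairs under $\langle\cdot,\cdot\rangle_{G(\Z),k}$, checking that our conventions for $V_k^\vee$ (Lemma \ref{Lemma: Vk and polynomials}, Remark \ref{Remark: alg. rep. is self-dual}) agree with Urban's up to the stated scalar; (ii) invoke \cite[Proposition 7.1]{Urban-PhD} to rewrite this pairing as $\frac{D_\infty(k+1,\Phi_\infty)\mathrm{disc}(K)}{16\pi}L(\mathrm{ad}^0 f,1)$, where the Euler product defining $L(\mathrm{ad}^0 f,s)$ matches the one in \S\ref{subsection: adjoint L-value formula} because our Hecke polynomials $P_{\frakq}(X)=X^2-\lambda_{\frakq}X+q_{\frakq}^{k+1}$ are normalised to have roots of absolute value $q_{\frakq}^{(k+1)/2}$ (consistent with Remark \ref{Remark: explanation of Assumption 2}); and (iii) transcribe the closed-form expression for $D_\infty(k+1,\Phi_\infty)$ and the coefficients $a_{n,-n}$ directly from Urban, verifying only that the indexing of the parallel weight $k=(k,k)$ in our setup corresponds to Urban's weight parameter.

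The main obstacle is purely bookkeeping: making the normalisations line up. Urban works with a specific choice of invariant measure, a specific Whittaker normalisation, and a specific model for the archimedean component $\pi_\infty$ (the principal series attached to the character in \S\ref{subsection: Bianchi cuspforms}); one must confirm that the cohomological pairing $\langle\cdot,\cdot\rangle_{G(\Z),k}$ defined through our self-duality isomorphism does not introduce any extra power of $p$, sign, or factorial beyond what is already absorbed into $D_\infty$ and the constant $\mathrm{disc}(K)/(16\pi)$. Since we have assumed $f$ has level $1$ and is self-conjugate with totally real Hecke eigenvalues (Remark \ref{Remark: Hecke eigenvalues are all real}), there are no additional local factors at finite places to track, so this reconciliation is routine though somewhat tedious; once it is done, the proposition follows verbatim from \cite[Proposition 7.1]{Urban-PhD}.
\end{proof}
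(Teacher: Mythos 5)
Your proposal matches what the paper does: the proposition is stated as a direct quotation of \cite[Proposition 7.1]{Urban-PhD}, and the paper offers no independent proof beyond the citation, so your reduction-to-Urban with normalisation bookkeeping is essentially the same (and only available) approach. Your sketch of the convention-matching via the Eichler--Shimura--Harder isomorphism and the self-duality of $V_k^\vee$ is consistent with how the paper uses the formula later in \S\ref{subsection: adjoint L-value formula}.
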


\begin{Proposition}\label{Proposition: pairing and adjoint L-value}
    Let $f$ be as above and let $(i,j)\in (\Z/2\Z)^2$. Then, \[
        \left[ [\mu_{f, 1}]_{(i,j)}^{(p)}, [\mu_{f, 2}]_{(i,j)}^{(p)}\right]_k = \frac{(p+1)^2\Theta(\lambda_{f,\frakp}, \lambda_{f,\overline{\frakp}}, i,j)D_{\infty}(k+1, \Phi_{\infty})\mathrm{disc}(K)}{16 \pi} \times  L(\mathrm{ad}^0 f, 1).\footnote{ Recall we abbreviated $\lambda_{f, \frakp}=\lambda_{\frakp}$ and $\lambda_{f, \overline{\frakp}}=\lambda_{\overline{\frakp}}$ in the previous subsection.}
    \]
\end{Proposition}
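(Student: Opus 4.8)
The statement is an immediate splicing together of two facts already at hand: the explicit comparison \eqref{eq: [p-stab f, p-stab f]} between the twisted pairing on $p$-stabilisations and the algebraic pairing $\langle\cdot,\cdot\rangle_k$ of the newform classes, and Urban's integral formula (Proposition \ref{Proposition: Urban's formula}) expressing $\langle[\mu_{f,1}],[\mu_{f,2}]\rangle_{G(\Z),k}$ in terms of $L(\mathrm{ad}^0 f,1)$. So the plan is essentially bookkeeping: substitute one into the other and reconcile the normalisations.

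First I would recall, from \S\ref{subsection: pairing and p-stabilisations}, that Assumption \ref{Assumption: Hecke eigenvalues at p are nonzero} is in force, so all the computations \eqref{eq: <f, frakp f> }--\eqref{eq: <frakpbar f, frakp f>} are valid, and hence \eqref{eq: [p-stab f, p-stab f]} holds:
\[
    \left[ [\mu_{f, 1}]_{(i,j)}^{(p)}, [\mu_{f, 2}]_{(i,j)}^{(p)}\right]_k = \Theta(\lambda_{\frakp}, \lambda_{\overline{\frakp}}, i,j) \left\langle  [\mu_{f, 1}], [\mu_{f, 2}] \right\rangle_k .
\]
Next I would substitute Proposition \ref{Proposition: Urban's formula}, which gives
\[
    \left\langle  [\mu_{f, 1}], [\mu_{f, 2}] \right\rangle_{G(\Z),k} = \frac{D_{\infty}(k+1, \Phi_{\infty})\mathrm{disc}(K)}{16\pi} L(\mathrm{ad}^0 f, 1).
\]
The only genuine point to check is the discrepancy between the pairing $\langle\cdot,\cdot\rangle_k$ normalised as in Remark \ref{Remark: relation to the algebraic pairing} (coming from the self-duality of $V_k^\vee(\Q_p)$, i.e. the cup product on $Y_{\Iw_G}$) and the pairing $\langle\cdot,\cdot\rangle_{G(\Z),k}$ appearing on the classical Bianchi threefold $Y=Y_{G(\Z)}$ in Urban's formula. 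Passing from tame level at $p$ to Iwahori level introduces the index $[\,G(\Z_p):\Iw_G\,]$; since $\Iw_G=\Gamma_0(p)\times\Gamma_0(p)$ inside $\GL_2(\Z_p)\times\GL_2(\Z_p)$, this index is $(p+1)^2$, which accounts for the factor $(p+1)^2$ in the statement. Concretely I would argue that the covering $Y_{\Iw_G}\to Y$ has degree $(p+1)^2$, so the cup-product pairing on $Y_{\Iw_G}$ restricted to classes pulled back from $Y$ equals $(p+1)^2$ times the corresponding pairing on $Y$; combining with \eqref{eq: [p-stab f, p-stab f]} and Urban's formula then yields
\[
    \left[ [\mu_{f, 1}]_{(i,j)}^{(p)}, [\mu_{f, 2}]_{(i,j)}^{(p)}\right]_k = \frac{(p+1)^2\Theta(\lambda_{\frakp}, \lambda_{\overline{\frakp}}, i,j)D_{\infty}(k+1, \Phi_{\infty})\mathrm{disc}(K)}{16 \pi} \times  L(\mathrm{ad}^0 f, 1),
\]
as claimed.

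The main obstacle — really the only one — is pinning down this normalisation factor precisely: one must be careful about whether Urban's $\langle\cdot,\cdot\rangle_{G(\Z),k}$ is the Poincaré pairing on $Y$ or on $Y_{\Iw_G}$, whether his classes $[\mu_{f,t}]$ are the level-$1$ newform classes or their images at Iwahori level, and how the degree-$(p+1)^2$ covering interacts with the cup product and with the trace/restriction maps used to define the Hecke action. I would settle this by tracking the classes through the Eichler--Shimura--Harder isomorphism (Theorem \ref{Theorem: ESH isomorphism}) at both levels and using that the pullback $H^i(Y,V_k^\vee)\to H^i(Y_{\Iw_G},V_k^\vee)$ composed with pushforward is multiplication by the covering degree. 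Everything else is a direct substitution of \eqref{eq: [p-stab f, p-stab f]} into Proposition \ref{Proposition: Urban's formula}, and the factor $\Theta(\lambda_{\frakp},\lambda_{\overline{\frakp}},i,j)$ is simply carried along unchanged (its non-vanishing under the regularity and non-criticality hypotheses being the content of the companion Corollary \ref{Corollary: nonvanishing Euler factor}, which is not needed here).
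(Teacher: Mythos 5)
Your proposal is correct and follows essentially the same route as the paper: substitute \eqref{eq: [p-stab f, p-stab f]} into Urban's formula (Proposition \ref{Proposition: Urban's formula}) and account for the level change via $\langle [\mu_{f,1}],[\mu_{f,2}]\rangle_k = (p+1)^2\,\langle [\mu_{f,1}],[\mu_{f,2}]\rangle_{G(\Z),k}$, the factor being the index $[G(\Z_p):\Iw_G]=(p+1)^2$. Your extra care about which level the Poincar\'e pairing lives on is exactly the (brief) content of the paper's own argument.
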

\begin{proof}
    Note that 
    $\langle [\mu_{f, 1}], [\mu_{f, 2}] \rangle_{ k} = (p+1)^2\langle [\mu_{f, 1}], [\mu_{f, 2}] \rangle_{G(\Z), k}$ where the factor $(p+1)^2$ is the index $[G(\Z_p): \Iw_G]$. Hence, the formula follows from  Proposition \ref{Proposition: Urban's formula} and \eqref{eq: [p-stab f, p-stab f]}.
\end{proof}

\begin{Corollary}\label{Corollary: nonvanishing Euler factor}
    Let $f$ be as above and suppose one can choose $(i,j)\in (\Z/2\Z)^2$ such that $[\mu_{f, 1}]_{(i,j)}^{(p)}$ and $[\mu_{f, 2}]_{(i,j)}^{(p)}$ satisfy Assumption \ref{Assumption: multiplicity one on degree 1 and 2} and are non-critical. Then \[
        \Theta(\lambda_{f,\frakp}, \lambda_{f,\overline{\frakp}}, i,j) \neq 0.
    \]
\end{Corollary}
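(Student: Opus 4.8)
The plan is to deduce the non-vanishing of $\Theta(\lambda_{\frakp},\lambda_{\overline{\frakp}},i,j)$ directly from the identity \eqref{eq: [p-stab f, p-stab f]} together with the non-degeneracy statements already established in \S\ref{subsection: pairing; overconvergent}, thereby avoiding any analysis of the (complicated) closed expression for $\Theta$. Recall that \eqref{eq: [p-stab f, p-stab f]} reads
\[
    \left[ [\mu_{f, 1}]_{(i,j)}^{(p)}, [\mu_{f, 2}]_{(i,j)}^{(p)}\right]_k = \Theta(\lambda_{\frakp}, \lambda_{\overline{\frakp}}, i,j)\, \langle [\mu_{f,1}],[\mu_{f,2}]\rangle_k ,
\]
so it suffices to show that the left-hand side is nonzero: this forces $\Theta(\lambda_{\frakp},\lambda_{\overline{\frakp}},i,j)\neq 0$ (and, as a free by-product, $\langle [\mu_{f,1}],[\mu_{f,2}]\rangle_k\neq 0$, hence ultimately the non-vanishing of the archimedean factor and of $L(\mathrm{ad}^0 f,1)$ via Propositions \ref{Proposition: Urban's formula} and \ref{Proposition: pairing and adjoint L-value}). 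The whole argument therefore reduces to proving that the adjoint pairing does not vanish on the pair of $p$-stabilised classes singled out by $(i,j)$.

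To carry this out, write $f^{(p)}_{(i,j)}$ for the Iwahori-level $p$-stabilisation of $f$ whose $U_p$-eigenvalue is $\alpha^{(i)}_{\frakp}\alpha^{(j)}_{\overline{\frakp}}$ (Proposition \ref{Proposition: p-stabilisation}), let $\frakm$ be the maximal ideal it defines in $\bbT_{\Iw_G}$, and let $F_{f^{(p)}}$ be its residue field. Since $f^{(p)}_{(i,j)}$ is again cuspidal, $[\mu_{f,t}]^{(p)}_{(i,j)}$ is a nonzero $\frakm$-eigenclass in $H^t_{\Par}(Y_{\Iw_G},V_k^{\vee}(\Q_p))$ for $t=1,2$ (it is a $\bbT_{\Iw_G}$-combination of translates of the pullback of $[\mu_{f,t}]$, and all the operators involved preserve parabolic cohomology). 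I would then invoke the two hypotheses of the corollary in turn. First, $f^{(p)}_{(i,j)}$ satisfies Assumption \ref{Assumption: multiplicity one on degree 1 and 2}, so by Lemma \ref{Lemma: concentration of the cohomology degree}(ii) the localised map $H^t_c(Y_{\Iw_G},V_k^{\vee}(\Q_p))_{\frakm}\xrightarrow{\sim} H^t(Y_{\Iw_G},V_k^{\vee}(\Q_p))_{\frakm}$ is an isomorphism of one-dimensional $F_{f^{(p)}}$-vector spaces; consequently $H^t_{\Par}(Y_{\Iw_G},V_k^{\vee}(\Q_p))_{\frakm}$ is this same line, spanned by $[\mu_{f,t}]^{(p)}_{(i,j)}$. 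Second, $f^{(p)}_{(i,j)}$ is non-critical, so Definition \ref{Definition: non-critical}, compatibly with the specialisation map and the description of the pairing in Remark \ref{Remark: relation to the algebraic pairing}, identifies $H^t_{\Par}(Y_{\Iw_G},D_k^r)_{\frakm}$ with $H^t_{\Par}(Y_{\Iw_G},V_k^{\vee}(\Q_p))_{\frakm}$ compatibly with $[\cdot,\cdot]_k$. Finally, Corollary \ref{Corollary: non-degenercay of the pairing; non-critical case} asserts that $[\cdot,\cdot]_k\colon H^1_{\Par}(Y_{\Iw_G},D_k^r)_{\frakm}\times H^2_{\Par}(Y_{\Iw_G},D_k^r)_{\frakm}\to F_{f^{(p)}}$ is perfect; transporting this along the identifications above, the pairing of the one-dimensional lines spanned by $[\mu_{f,1}]^{(p)}_{(i,j)}$ and $[\mu_{f,2}]^{(p)}_{(i,j)}$ is perfect, whence $\bigl[ [\mu_{f,1}]^{(p)}_{(i,j)}, [\mu_{f,2}]^{(p)}_{(i,j)}\bigr]_k\neq 0$. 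Feeding this into \eqref{eq: [p-stab f, p-stab f]} yields $\Theta(\lambda_{\frakp},\lambda_{\overline{\frakp}},i,j)\neq 0$.

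The step I expect to be the main obstacle is the compatibility bookkeeping that glues these three inputs together: one must verify that the perfect pairing produced by Corollary \ref{Corollary: non-degenercay of the pairing; non-critical case} on the $\frakm$-localised overconvergent cohomology is the \emph{same} pairing (up to a unit) that enters \eqref{eq: [p-stab f, p-stab f]} once the weight is specialised to $k$ — that is, that the surjection $D_k^r\twoheadrightarrow V_k^{\vee}(\Q_p)$, the Eichler--Shimura--Harder realisation of $[\mu_{f,t}]^{(p)}_{(i,j)}$, and the twisted algebraic pairing $[\cdot,\cdot]_k$ of Remark \ref{Remark: relation to the algebraic pairing} are mutually compatible on the localised pieces. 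This is essentially formal but should be spelled out with care so that no normalisation is lost; note also that no analytic non-vanishing input is required, since everything beyond the perfectness of the pairing follows a posteriori from $\Theta\cdot\langle[\mu_{f,1}],[\mu_{f,2}]\rangle_k\neq 0$.
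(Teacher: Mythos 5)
Your argument is correct, and the key input is the same as the paper's: the perfectness of the twisted pairing $[\cdot,\cdot]_k$ on the $\frakm$-localised parabolic cohomology in degrees $1$ and $2$, which is exactly where Assumption~\ref{Assumption: multiplicity one on degree 1 and 2} and non-criticality enter. The difference is a matter of which identity you feed this into. The paper's proof passes through Proposition~\ref{Proposition: pairing and adjoint L-value} (i.e.\ through Urban's formula), so it must separately record that $D_{\infty}(k+1,\Phi_\infty)\neq 0$ and that $L(\mathrm{ad}^0 f,1)\neq 0$ (the latter itself coming from the non-degeneracy of the classical Poincar\'e pairing plus Urban's formula) before dividing out by them. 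You instead work one step upstream, directly with \eqref{eq: [p-stab f, p-stab f]}: non-vanishing of the left-hand side forces both $\Theta\neq 0$ and $\langle[\mu_{f,1}],[\mu_{f,2}]\rangle_k\neq 0$ at once, with no archimedean or $L$-value input needed (those non-vanishings then drop out a posteriori via Urban's formula, rather than being assumed). This is marginally more economical and makes the logical dependency clearer.

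On the bookkeeping worry you flag: the compatibility between the perfect pairing of Corollary~\ref{Corollary: non-degenercay of the pairing; non-critical case} on $D_k^r$-coefficients and the pairing $[\cdot,\cdot]_k$ appearing in \eqref{eq: [p-stab f, p-stab f]} on $V_k^{\vee}(\Q_p)$-coefficients is not an extra verification to supply — it is precisely what the proof of Proposition~\ref{Proposition: non-degeneracy of the pairing} establishes, by reducing along the control-theorem isomorphism of Theorem~\ref{Theorem: control theorem for overconvergent cohomology} (and Remark~\ref{Remark: relation to the algebraic pairing} for the interpretation of $[\cdot,\cdot]_k$ as the $\bfomega_p$-twist of $\langle\cdot,\cdot\rangle_k$). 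So the "compatibility bookkeeping" you anticipated as the main obstacle is already packaged into the results you are quoting, and your argument closes.
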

\begin{proof}
    Note that $D_{\infty}(k+1, \Phi_{\infty})$ is nonzero (\cite[Notation 5.5a]{Urban-PhD}). Also note that the non-degeneracy of the classical Poincaré pairing and Urban's formula implies the non-vanishing of $L(\mathrm{ad}^0 f, 1)$. The result then follows from the non-degeneracy of $\left[ [\mu_{f, 1}]_{(i,j)}^{(p)}, [\mu_{f, 2}]_{(i,j)}^{(p)}\right]_k$ in this case.  
\end{proof}

\begin{Remark}
    Immediately from the corollary, we see that the phenomenon of trivial zeros shall not happen when $f$ is regular. Although it is an interesting question to study when $\Theta(\lambda_{\frakp}, \lambda_{\overline{\frakp}}, i,j) = 0$, we do not pursue in this direction in the present paper. 
\end{Remark}

\begin{Corollary}\label{Corollary: interpolation property of p-adic adjoint L-function}
    Let $f$ be as above and suppose one can choose $(i,j)\in (\Z/2\Z)^2$ such that $[\mu_{f, 1}]_{(i,j)}^{(p)}$ and $[\mu_{f, 2}]_{(i,j)}^{(p)}$ satisfy Assumption \ref{Assumption: multiplicity one on degree 1 and 2} and are non-critical. Suppose the Hecke eigensystem of $[\mu_{f, 1}]_{(i,j)}^{(p)}$ (and $[\mu_{f, 2}]_{(i,j)}^{(p)}$) defines a point $x_{f_{(i,j)}^{(p)}}$ in $\calE$. We assume that $x_{f_{(i,j)}^{(p)}}$ varies in a family over an affinoid curve $\calU \subset \calW$ such that Assumption \ref{Assumption: vary in 1-dimensional family} is satisfied. 
    \begin{enumerate}
        \item[(i)]  Let the $p$-adic adjoint $L$-function $L_{\calU, h}^{\adj}$ be chosen as in \eqref{eq: natural choice of p-adic L-function}. We have \begin{equation}\label{eq: interpolation formula}
            L_{\calU, h}^{\adj}\left(x_{f_{(i,j)}^{(p)}}\right) = \frac{(p+1)^2\Theta(\lambda_{f,\frakp}, \lambda_{f,\overline{\frakp}}, i,j)D_{\infty}(k+1, \Phi_{\infty})\mathrm{disc}(K)}{16\pi \Omega_{f_{(i,j)}^{(p)}, 1}\Omega_{f_{(i,j)}^{(p)}, 2}} \times  L(\mathrm{ad}^0 f, 1).
        \end{equation}
        \item[(ii)] Moreover, after shrinking $\calU$ if necessary so that $\wt$ is étale on the connected component $\calX$ of $\wt^{-1}(\calU)$ containing $x_{f_{(i,j)}^{(p)}}$ (see the discussion after Corollary \ref{Corollary: non-vanishing result for p-adic adjoint L-function}). If $g$ is another Bianchi cuspform of weight $k'$ satisfying the same assumptions on $f$ as above such that $[\mu_{g,1}]_{(i,j)}^{(p)}$ defines a point $x_{g_{(i,j)}^{(p)}}$ in $\calX$, then a similar formula also applies to $L_{\calU, h}^{\adj}(x_{g_{(i,j)}^{(p)}})$ (by simply replacing the $f$'s in \eqref{eq: interpolation formula} by $g$).
    \end{enumerate}
\end{Corollary}
\begin{proof}
    This follows immediately from Proposition \ref{Proposition: pairing and adjoint L-value} and the Corollary \ref{Corollary: specific choice of p-adic adjoint L-function and its special value}. 
\end{proof}

\begin{Remark}\label{Remark: interpolation formula }
    Corollary \ref{Corollary: interpolation property of p-adic adjoint L-function} justifies the terminology `$p$-adic adjoint $L$-function' for $L_{\calU, h}^{\adj}$ in the sense that it $p$-adically interpolates the adjoint $L$-value $L(\ad^0(-), 1)$, \emph{i.e.}, the $p$-adic variables for $L_{\calU, h}^{\adj}$ are points in a $p$-adic family of Bianchi forms.
\end{Remark}

\begin{Remark}\label{Remark: comparison with Bellaïche}
    Finally, we remark that the `adjoint $L$-function' in \cite[\S 9.5.2]{Eigen} is actually the `symmetric $L$-function $L(\mathrm{Sym}f, s)$'. After a change of variable, one has \[
        L(\mathrm{Sym}f, s-k-1) = L(\ad^0 f, s).
    \] In particular, the $p$-adic adjoint $L$-function in \emph{op. cit.} should interpolate $L(\ad^0(-), 1)$ in $p$-adic families. We also remark that a precise interpolation formula ( \emph{i.e.}, a similar formula as in Corollary \ref{Corollary: interpolation property of p-adic adjoint L-function}) is not written down in \cite{Eigen}; instead, Bellaïche provided an upper- and lower-bound for the $p$-adic norm of his $p$-adic adjoint $L$-function (\cite[Theorem 9.5.2]{Eigen}). 
\end{Remark}

\begin{appendix}
    \section{Cohomological correspondence for topological spaces}\label{section: cohomological correspondences for topological spaces}
In this appendix, we set up a framework of cohomological correspondences that we used in the main body of the paper. Materials presented here are presumably well-known to the expert, but we have not been able to find a reference.

\subsection{Some recollection of the lower shriek functor}\label{subsection: recall some topological terminologies}

Let $X$ be a locally compact Hausdorff topological space. Suppose $f: Y \rightarrow X$ is a continuous map between locally compact Hausdorff topological spaces, standard sheaf theory gives rise to two functors \[
    f_*: \Ab(Y) \rightarrow \Ab(X) \quad \text{ and }\quad f^{-1}: \Ab(X) \rightarrow \Ab(Y)
\]
between categories of sheaves of abelian groups on $X$ and $Y$. 
It is well-known that $f_*$ is the right adjoint of $f^{-1}$. In particular, for any sheaf $\scrF$ on $X$, we have a natural morphism of sheaves \[
    \scrF \rightarrow f_* f^{-1}\scrF,
\]
which corresponds to the identity $f^{-1} \scrF \rightarrow f^{-1}\scrF$.

Recall that a continuous map between locally compact Hausdorff topological spaces $f: Y \rightarrow X$ is \textbf{\textit{proper}} if the inverse images of compact subsets in $X$ are still compact in $Y$. We have another functor \[
    f_!: \Ab(Y) \rightarrow \Ab(X)
\] given by, for any open subset $U\subset X$, \[
    f_!\scrF(U)  = \left\{s\in \scrF(f^{-1}(U)): f: \supp s \rightarrow U\text{ is proper}  \right\}.
\]

\begin{Lemma}\label{Lemma: lower shrieck of proper maps}
    Let $f: Y \rightarrow X$ be a proper continuous map between locally compact Hausdorff topological spaces. Then, we have an identification of functors \[
        f_* = f_!.
    \]
\end{Lemma}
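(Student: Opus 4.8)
The statement is the classical fact that for a proper map of locally compact Hausdorff spaces, $f_!$ and $f_*$ agree. The plan is to check that the two functors have the same sections over every open $U\subseteq X$, compatibly with restriction maps; since both are subsheaves of (or, more precisely, $f_!\scrF$ is by definition a subsheaf of $f_*\scrF$) it suffices to show the inclusion $f_!\scrF(U)\hookrightarrow f_*\scrF(U)=\scrF(f^{-1}(U))$ is surjective for each $U$. First I would recall the definition: $f_!\scrF(U)$ consists of those sections $s\in\scrF(f^{-1}(U))$ such that the restriction $f\colon \supp s\to U$ is proper. So the entire content is: when $f$ itself is proper, \emph{every} section $s\in\scrF(f^{-1}(U))$ automatically has this property.

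The key step is therefore the following topological lemma: if $f\colon Y\to X$ is proper and $Z\subseteq Y$ is a closed subset, then $f|_Z\colon Z\to X$ is proper; and moreover, after restricting the base to an open $U\subseteq X$, the map $f^{-1}(U)\to U$ is still proper (properness is preserved by base change to an open subspace, which for the inverse-image-of-compacts characterization is immediate since a compact $K\subseteq U$ is also compact in $X$, and $f^{-1}(K)$ meets $f^{-1}(U)$ in exactly $f^{-1}(K)$). Combining these: for $s\in\scrF(f^{-1}(U))$, the support $\supp s$ is a closed subset of $f^{-1}(U)$, hence closed in $f^{-1}(U)$; then $f\colon\supp s\to U$ is the restriction of the proper map $f^{-1}(U)\to U$ to a closed subset, hence proper. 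Therefore $s\in f_!\scrF(U)$, giving the desired equality $f_!\scrF(U)=f_*\scrF(U)$. Finally I would note that these identifications are visibly compatible with the restriction maps of the two sheaves (both restriction maps are just restriction of sections of $\scrF$), so they assemble into an equality — indeed an equality, not merely a natural isomorphism — of sheaves, and hence of functors $\Ab(Y)\to\Ab(X)$ since the construction is functorial in $\scrF$.

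The only mild subtlety, and the step I would be most careful about, is the claim that a closed subset of a space that is proper over $U$ is again proper over $U$: one needs $\supp s$ to be closed in $f^{-1}(U)$ (true by definition of support) and one needs the characterization ``inverse images of compacts are compact'' to behave well — specifically, for $K\subseteq U$ compact, $(f|_{\supp s})^{-1}(K)=f^{-1}(K)\cap\supp s$ is a closed subset of the compact set $f^{-1}(K)\cap f^{-1}(U)$, hence compact. Here one uses that $f^{-1}(K)$ is compact (properness of $f$), that $f^{-1}(U)$ is open so $f^{-1}(K)\cap f^{-1}(U)$ need not be compact in general — but $f^{-1}(K)\subseteq f^{-1}(U)$ already when $K\subseteq U$, so the intersection is just $f^{-1}(K)$, which is compact — and that a closed subset of a compact Hausdorff space is compact. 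This is entirely routine point-set topology; there is no real obstacle, and the proof is short.

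\begin{proof}
    By definition, for any open subset $U\subseteq X$ we have
    \[
        f_!\scrF(U) = \left\{ s\in \scrF(f^{-1}(U)) : f\colon \supp s \to U \text{ is proper}\right\} \subseteq \scrF(f^{-1}(U)) = f_*\scrF(U),
    \]
    and this inclusion is compatible with the restriction maps, which on both sides are induced by restriction of sections of $\scrF$. It therefore suffices to show that the inclusion is an equality for every $U$, i.e.\ that every $s\in\scrF(f^{-1}(U))$ satisfies the support condition. Fix such an $s$ and let $K\subseteq U$ be compact. Since $K\subseteq U$, we have $f^{-1}(K)\subseteq f^{-1}(U)$, and $f^{-1}(K)$ is compact because $f$ is proper. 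Now
    \[
        \left( f|_{\supp s}\right)^{-1}(K) = f^{-1}(K)\cap \supp s
    \]
    is a closed subset of the compact Hausdorff space $f^{-1}(K)$ (closed because $\supp s$ is closed in $f^{-1}(U)$ and $f^{-1}(K)\subseteq f^{-1}(U)$), hence compact. Thus $f\colon\supp s\to U$ is proper, so $s\in f_!\scrF(U)$. This proves $f_!\scrF(U) = f_*\scrF(U)$ for all $U$, compatibly with restrictions, whence $f_!\scrF = f_*\scrF$; as the identification is natural in $\scrF$, we conclude $f_! = f_*$ as functors.
\end{proof}
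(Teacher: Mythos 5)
Your proof is correct and takes the natural route — the paper's own proof is the one-liner ``The assertion follows immediately from the construction,'' and your argument simply supplies the routine point-set details (the support of any section is automatically proper over $U$ because, for compact $K\subseteq U$, $f^{-1}(K)\cap\supp s$ is a closed subset of the compact set $f^{-1}(K)$) that the paper leaves implicit. Nothing is missing or different in substance.
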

\begin{proof}
    The assertion follows immediately from the construction. 
\end{proof}

\begin{Lemma}\label{Lemma: trace map}
    Let $f: Y \rightarrow X$ be a continuous map between locally compact Hausdorff spaces with the property that for any $x\in X$, there exists an open neighbourhood $x\in U_x \subset X$ such that $f^{-1}(U_x) = \bigsqcup_{\sigma: U_x \rightarrow Y} \sigma(U_x)$, where $\sigma: U_x \rightarrow Y$ are continuous sections such that $\sigma: U_x \simeq \sigma(U_x)$. Then, for any $\scrF \in \Ab(X)$ and any $\scrG \in \Ab(Y)$, we have the adjunction \[
        \Hom_{\Ab(X)}(f_!\scrG, \scrF) = \Hom_{\Ab(Y)}(\scrG, f^{-1}\scrF).
    \]
\end{Lemma}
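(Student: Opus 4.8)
The plan is to produce the adjunction bijection directly, by constructing its counit — the \emph{trace} morphism $\tr_{\scrF}\colon f_!f^{-1}\scrF \to \scrF$ — and its unit $\eta_{\scrG}\colon \scrG \to f^{-1}f_!\scrG$, and then checking the two triangle identities; by the usual formal argument this is equivalent to exhibiting a natural isomorphism $\Hom_{\Ab(X)}(f_!\scrG,\scrF) \xrightarrow{\ \sim\ } \Hom_{\Ab(Y)}(\scrG, f^{-1}\scrF)$. The first input I would record is that, for a local homeomorphism, each fiber $f^{-1}(x)$ is discrete, and that for a section $s$ of a sheaf on $Y$ whose support $Z$ is proper over an open $W\subseteq X$ the restriction $f|_Z\colon Z\to W$ is a proper map with discrete fibers; hence $Z\cap f^{-1}(x)$ is finite for every $x$, and $f(Z)$ is closed in $W$ (proper maps between locally compact Hausdorff spaces are closed).

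With this in hand, I would define $\tr_{\scrF}$ over an open $W\subseteq X$ as follows: an element of $(f_!f^{-1}\scrF)(W)$ is a section $s\in (f^{-1}\scrF)(f^{-1}W)$ with $Z=\supp s$ proper over $W$; for $x\in W$ put $\tr(s)_x := \sum_{y\in Z\cap f^{-1}(x)} s_y$, a finite sum in $(f^{-1}\scrF)_y \cong \scrF_x$. Separating the finitely many points of $Z\cap f^{-1}(x)$ by disjoint opens on which $f$ is a homeomorphism, one sees $x\mapsto \tr(s)_x$ is locally the germ of an honest section of $\scrF$, so $\tr(s)\in\scrF(W)$, functorially in $\scrF$. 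For the unit, over a small open $U\subseteq Y$ with $f|_U\colon U\xrightarrow{\ \sim\ } V$ a homeomorphism onto an open $V\subseteq X$, a section $t\in\scrG(U)$ automatically has support proper over $V$ (preimages of compacts under the homeomorphism $f|_U$ are compact, and meet the closed set $\supp t$ in a compact set), so extension by zero exhibits $t$ as an element of $f_!\scrG(V)$, whose pullback to $U$ is the desired section of $f^{-1}f_!\scrG$; these are compatible as $U$ shrinks and glue to $\eta_{\scrG}$.

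The triangle identities I would verify on stalks, using the identification $(f_!\scrG)_x \cong \bigoplus_{y\in f^{-1}(x)}\scrG_y$ valid for local homeomorphisms (the support condition forces all but finitely many germs to vanish): the composite $\scrG \to f^{-1}f_!\scrG \to f^{-1}\scrF$ built from $\eta_{f^{-1}\scrF}$ and $f^{-1}(\tr_{\scrF})$ becomes, at $y$ over $x$, the inclusion of the $y$-th summand followed by the summation map, which is $\id_{\scrG_y}$; the other identity, involving $f_!(\eta_{\scrG})$ followed by $\tr_{f_!\scrG}$, is dual. The genuinely non-formal point — and the main obstacle — is the well-definedness of the trace: one must argue that the fiberwise sums assemble into an actual section of $\scrF$ and not merely a compatible family of germs, which is precisely where the local-homeomorphism hypothesis enters (to push $s$ forward near each point of its finite fiber) together with properness (to know the fiber is finite and to separate its points by opens on which $f$ is invertible). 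A clean alternative, which sidesteps the sheafification bookkeeping in $\eta$, is to cover $Y$ by opens $U_i$ on which $f$ restricts to an open immersion, invoke the classical adjunction $(j_!\dashv j^{-1})$ for open immersions, and glue the resulting local adjunction data along the $U_i$, at the cost of checking compatibility on the overlaps $U_i\cap U_j$.
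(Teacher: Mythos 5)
Your proposal is correct, and at the level of strategy it matches the paper's proof: both arguments construct the counit (trace) $\tr\colon f_!f^{-1}\scrF\to\scrF$ by fibrewise summation and the unit ($\mathrm{cotr}$ in the paper, your $\eta_{\scrG}$) by inclusion of a summand, and then verify the two adjunction identities stalkwise using the identification $(f_!\scrG)_x\cong\bigoplus_{y\in f^{-1}(x)}\scrG_y$. Where you genuinely diverge is in how the trace is shown to be well defined. The paper works over ``small enough'' opens $U\subset X$ for which it asserts $f^{-1}(U)=\bigsqcup_{\sigma}\sigma(U)$ over continuous sections $\sigma\colon U\to Y$, i.e.\ a covering-space-type local decomposition, and then sums the finitely many components; this is adequate for the covering spaces actually used in the body of the paper, but such a decomposition can fail for an arbitrary local homeomorphism (already for an open inclusion $Y\hookrightarrow X$ there may be no section through a boundary point, yet $f^{-1}(U)\neq\emptyset$). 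Your route instead sums germs over the finite set $\supp(s)\cap f^{-1}(x)$, using properness of $\supp(s)\to W$ to get finiteness and closedness of proper maps to exclude extra sheets near $x$, which is the argument that works in the stated generality of the lemma; you correctly flag this well-definedness as the one non-formal point, and your fallback of gluing the open-immersion adjunctions $j_!\dashv j^{-1}$ over a cover of $Y$ is a legitimate alternative the paper does not pursue. In short, same unit--counit skeleton, but your local analysis is more careful and actually covers the hypothesis as stated, at the cost of spelling out the closedness bookkeeping that the paper's section-decomposition shortcut hides.
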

\begin{proof}
     Let $U \subset X$ be a small enough open subset. By assumption, we may assume that $f^{-1}(U) = \bigsqcup_{\sigma: U \rightarrow Y} \sigma(U) = \bigsqcup_{\sigma: U \rightarrow Y} U$. Hence, for any $\scrG \in \Ab(Y)$,  $\scrG(f^{-1}(U)) = \prod_{\sigma: U \rightarrow Y} \scrG(\sigma(U))$. The proper-support condition in the definition of $f_!\scrF$ then yields and identification \[
        f_!\scrG(U) = \bigoplus_{\sigma: U \rightarrow Y} \scrG(\sigma(U)). 
     \] 
     
     To show the adjunction, we claim the existence of the following two morphisms: \begin{enumerate}
         \item[(i)] for any $\scrF\in \Ab(X)$, there exists a natural nonzero morphism $\tr: f_!f^{-1}\scrF \rightarrow \scrF$;
         \item[(ii)] for any $\scrG\in \Ab(Y)$, there exists a natural nonzero morphism $\mathrm{cotr}: \scrG \rightarrow f^{-1}f_!\scrG$;
     \end{enumerate}
     such that the maps \begin{align*}
         \Phi: & \Hom_{\Ab(X)}(f_!\scrG, \scrF) \xrightarrow{f^{-1}} \Hom_{\Ab(Y)}(f^{-1}f_!\scrG, f^{-1}\scrG) \xrightarrow{\circ \mathrm{cotr}} \Hom_{\Ab(Y)}(\scrG, f^{-1}\scrF)\\
         \Psi: & \Hom_{\Ab(Y)}(\scrF, f^{-1}\scrG) \xrightarrow{f_!} \Hom_{\Ab(X)}(f_!\scrG, f_!f^{-1}\scrF) \xrightarrow{\tr\circ} \Hom_{\Ab(X)}(f_!\scrG, \scrF)
     \end{align*} yield \begin{equation}\label{eq: adjunction identities}
         \Phi \circ \Psi = \id \quad \text{ and }\quad \Psi \circ \Phi = \id.
     \end{equation}
     
     We first construct $\tr$. Given $\scrF\in \Ab(X)$ and small-enough open subset $U \subset X$ as above, we have \[
        f_! f^{-1}\scrF(U) = \bigoplus_{\sigma: U \rightarrow Y} f^{-1}\scrF(\sigma(U)) = \bigoplus_{\sigma: U \rightarrow Y} \scrF(f(\sigma(U))) = \bigoplus_{\sigma: U \rightarrow Y} \scrF(U),
     \] where the penultimate equation follows from the assumption that $f$ is a local homeomorphism and each $\sigma$ is a continuous section. We then define \[
        \tr(U): f_!f^{-1}\scrF(U) \rightarrow \scrF(U), \quad (s_{\sigma})_{\sigma} \mapsto \sum_{\sigma}s_{\sigma}.
     \] 
     It is easy to see that this map glues to a morphism $\tr: f_!f^{-1}\scrF \rightarrow \scrF$. 

     Next, we construct $\mathrm{cotr}$. Let $V \subset Y$ be an open subset such that $f|_V : V \rightarrow f(V)$ is a homeomorphism. After shrinking $V$, we may further assume that $f^{-1}(f(V))$ is a disjoint union of open subsets, each of which is homeomorphic to $f(V)$ under $f$. Given $\scrG\in \Ab(Y)$, we then have \[
        f^{-1}f_!\scrG(V) = f_! \scrG(f(V)) = \bigoplus_{\sigma: f(V) \rightarrow Y} \scrG(\sigma(f(V))).
     \] Note that $V$ agrees with one of $\sigma(f(V))$. Hence, we have a natural map \[
        \mathrm{cotr}: \scrG(V) = \scrG(\sigma(f(V))) \rightarrow \bigoplus_{\lambda: f(V) \rightarrow Y} \scrG(\lambda(f(V))), \quad s \mapsto (s_{\lambda})_{\lambda}\text{, where }s_{\lambda} = \left\{ \begin{array}{ll}
            s, & \text{if }\lambda = \sigma \\
            0, & \text{else}
        \end{array}\right..
     \] This map also glue to a morphism $\mathrm{cotr}: \scrG \rightarrow f^{-1}f_!\scrG$.

     It remains to show \eqref{eq: adjunction identities}. We begin with $\alpha \in \Hom_{\Ab(X)}(f_!\scrG, \scrF)$ and we want to show $\Psi(\Phi(\alpha)) = \alpha$. We can check this locally at stalks. For any $x\in X$, unwinding exerything, we have \[
        \begin{tikzcd}[row sep = tiny]
             \Psi(\Phi(\alpha))_x : & \displaystyle \bigoplus_{y\in f^{-1}(x)}\scrG_y \arrow[r,"{\bigoplus \mathrm{cotr}_y}"] &  \displaystyle \bigoplus_{y\in f^{-1}(x)} \bigoplus_{z\in f^{-1}(x)} \scrG_y \arrow[r, "{\bigoplus \alpha_x}"] &  \displaystyle \bigoplus_{z\in f^{-1}(x)} \scrF_x \arrow[r, "{\tr_x}"] &  \scrF_x\\
             & (s_{y})_y \arrow[r, mapsto ] & \left(s_{y,z}  \right)_{y, z} \arrow[r, mapsto] & (\alpha_x(s_z))_z \arrow[r, mapsto] & \alpha_x((s_z)_z).
        \end{tikzcd}
     \] Here, the image of $(s_y)$ under the first map is given by \[
        s_{y,z} = \left\{ \begin{array}{ll}
            s_y, & \text{ if }y=z \\
            0, & \text{else}
        \end{array}\right. .
     \] We can then conclude that $\alpha = \Psi(\Phi(\alpha))$.

     On the other hand, given $\beta\in \Hom_{\Ab(Y)}(\scrG, f^{-1}\scrF)$, we show $\Phi(\Psi(\beta)) = \beta$. We again check this locally at stalks. For any $y\in Y$, unwinding everything, one obtains \[
        \begin{tikzcd}[row sep = tiny]
            \Phi(\Psi(\beta))_y: & \scrG_y \arrow[r, "\mathrm{cotr}"] & \displaystyle \bigoplus_{z\in f^{-1}(f(y))} \scrG_z \arrow[r, "\bigoplus_{z} \beta_z"] & \displaystyle \bigoplus_{z\in f^{-1}(f(y))}\scrF_{f(y)} \arrow[r, "\tr"] & \scrF_{f(y)}\\
            & s \arrow[r, mapsto] & (s_z)_z \arrow[r, mapsto] & (\beta_z(s_z))_z \arrow[r, mapsto] & \sum_{z}\beta_z(s_z)
        \end{tikzcd}.
     \] However, since $s_z = \left\{ \begin{array}{ll}
         s, & \text{ if }y=z \\
         0, & \text{else}
     \end{array}\right.$, we see that $\sum_{z}\beta_z(s_z) = \beta_y(s)$ and so $\beta = \Phi(\Psi(\beta))$. 
\end{proof}

\subsection{Cohomological correspondences for topological spaces}\label{subsection: cohomological correspondences}

\begin{Definition}\label{Definition: covering space}
    Let $X$ be a locally compact Hausdorff topological space. Let $d\in \Z_{>0}$. A \textbf{covering space of $X$ of degree $d$} is a locally compact Hausdorff topological space $Y$ together with a surjective continuous map $f: Y \rightarrow X$ with the property that for any $x\in X$, there exists an open neighbourhood $U_x\subset X$ of $x$ such that \[
        f^{-1}(U_x) = \bigsqcup_{i=1}^d V_{x, i},
    \]
    where each $V_{x, i}$ is an open subset in $Y$ and $f|_{V_{x_i}}: V_{x, i} \rightarrow U_{x}$ is a homeomorphism. 
\end{Definition}

\begin{Lemma}\label{Lemma: covering spaces are proper}
    Let $f: Y \rightarrow X$ be a covering space of degree $d$. Then, $f$ is proper. 
\end{Lemma}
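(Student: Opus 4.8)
The statement to prove is that a covering space $f : Y \to X$ of degree $d$ (in the sense of Definition \ref{Definition: covering space}) is proper, i.e. that $f^{-1}(K)$ is compact in $Y$ whenever $K \subset X$ is compact. The plan is to reduce the question to a finite cover by trivialising opens and then to exploit that each trivialising piece maps homeomorphically onto its image.

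First I would take a compact $K \subset X$ and, for each $x \in K$, choose an open neighbourhood $U_x$ as in Definition \ref{Definition: covering space}, so that $f^{-1}(U_x) = \bigsqcup_{i=1}^d V_{x,i}$ with each $f|_{V_{x,i}} : V_{x,i} \to U_x$ a homeomorphism. By compactness of $K$, finitely many of the $U_x$, say $U_{x_1}, \dots, U_{x_n}$, cover $K$. Then $f^{-1}(K) \subseteq \bigcup_{m=1}^n f^{-1}(U_{x_m}) = \bigcup_{m=1}^n \bigsqcup_{i=1}^d V_{x_m,i}$, so $f^{-1}(K)$ is contained in a finite union of the open sets $V_{x_m,i}$. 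Next I would write $f^{-1}(K) = \bigcup_{m,i} \bigl( V_{x_m,i} \cap f^{-1}(K) \bigr)$, a finite union; since a finite union of compact sets is compact, it suffices to show each $V_{x_m,i} \cap f^{-1}(K)$ is compact. Here one must be slightly careful: $V_{x_m,i} \cap f^{-1}(K) = (f|_{V_{x_m,i}})^{-1}(U_{x_m} \cap K)$, and $U_{x_m} \cap K$ need not be closed in $X$ (hence need not be compact) because $U_{x_m}$ is only open.

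The way around this is to shrink the cover using local compactness of $X$: since $X$ is locally compact Hausdorff, for each $x \in K$ I can choose the neighbourhood so that $x$ has a compact neighbourhood $C_x$ with $C_x \subseteq U_x$, and I take the interiors $\mathrm{int}(C_x)$ as the covering family of $K$. Extracting a finite subcover $\mathrm{int}(C_{x_1}), \dots, \mathrm{int}(C_{x_n})$, we get $f^{-1}(K) \subseteq \bigcup_m f^{-1}(C_{x_m})$, and $f^{-1}(C_{x_m}) \cap f^{-1}(K) = f^{-1}(C_{x_m} \cap K)$. Now $C_{x_m} \cap K$ is compact (closed subset of the compact $K$, using that $X$ is Hausdorff so $C_{x_m}$ is closed), $C_{x_m} \cap K \subseteq U_{x_m}$, and $f^{-1}(C_{x_m} \cap K) = \bigsqcup_{i=1}^d (f|_{V_{x_m,i}})^{-1}(C_{x_m} \cap K)$ is a finite disjoint union of homeomorphic copies of the compact set $C_{x_m} \cap K$, hence compact. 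Therefore $f^{-1}(K)$, being a closed subset (use Hausdorffness of $Y$ and continuity of $f$) of the finite union $\bigcup_m f^{-1}(C_{x_m} \cap K)$ of compact sets, is compact.

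The only mild obstacle is the bookkeeping around open-versus-compact neighbourhoods: one must invoke local compactness of $X$ to replace the trivialising opens by ones with compact closure (or compact neighbourhoods) inside them, so that intersecting with $K$ yields genuinely compact sets rather than merely relatively compact ones. Everything else is the standard finite-subcover argument together with Lemma-type facts (finite unions and closed subsets of compacta are compact, homeomorphic images of compacta are compact). I would present this in a handful of lines, flagging the local-compactness step explicitly.
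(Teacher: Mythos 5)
Your argument is correct. The paper itself offers no proof here -- it dismisses the lemma with ``This is an easy exercise'' -- so there is no argument in the text to compare against, and your finite-subcover argument, shrinking the trivialising opens to compact neighbourhoods via local compactness of $X$, is a perfectly standard and valid way to fill it in. Two small remarks. First, the closedness of $f^{-1}(K)$ in $Y$ comes from $K$ being closed in $X$ (Hausdorffness of $X$) together with continuity of $f$; Hausdorffness of $Y$ plays no role at that step. Second, your last step is not even needed: since $K\subseteq\bigcup_m \mathrm{int}(C_{x_m})$, one has $K=\bigcup_m (C_{x_m}\cap K)$ and hence $f^{-1}(K)=\bigcup_m f^{-1}(C_{x_m}\cap K)$ exactly, so $f^{-1}(K)$ is a finite union of the compact sets you produced and compactness follows at once. (As an aside, finiteness of the degree alone already forces properness without local compactness -- given an open cover of $f^{-1}(K)$, one can shrink each trivialising $U_x$ to $\bigcap_{i=1}^d f(W_{\alpha_i}\cap V_{x,i})$ so that its preimage is covered by $d$ members of the cover, and then take a finite subcover of $K$ -- but in the paper's locally compact Hausdorff setting your route is entirely adequate.)
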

\begin{proof}
    This is an easy exercise.
\end{proof}

Suppose we are given a diagram of topological spaces \[
    \begin{tikzcd}
        & C \arrow[ld, "\pr_1"']\arrow[rd, "\pr_2"]\\
        X & & X
    \end{tikzcd}
\]
such that $\pr_1: C \rightarrow X$ is a covering space of degree $d$. We also assume that we are given a sheaf of abelian groups $\scrF$ on $X$ such that \[
    \pr_1^{-1}\scrF \cong \pr_2^{-1}\scrF.
\]
Then, we have a sequence of cohomology groups \begin{equation}\label{eq: cohomological correspondence for topological spaces}
    H^i(X, \scrF) \xrightarrow{\pr_2^{-1}} H^i(C, \pr_2^{-1}\scrF) \cong H^i(C, \pr_1^{-1}\scrF) \cong H^i(X, \pr_{1, *}\pr_1^{-1}\scrF) \xrightarrow{\tr} H^i(X, \scrF),
\end{equation}
where the last trace map follows from Lemma \ref{Lemma: covering spaces are proper},  Lemma \ref{Lemma: lower shrieck of proper maps}, and  Lemma \ref{Lemma: trace map}. We term the composition the \textbf{\textit{(degree-$i$) cohomological correspondence}} with respect to the diagram $X \xleftarrow{\pr_1} C \xrightarrow{\pr_2} X$ and $\scrF$.

\begin{Remark}
    We finally remark that the construction above works for other cohomology theories. In particular, one also has a composition of morphisms \[
        H_c^i(X, \scrF) \xrightarrow{\pr_2^{-1}} H_c^i(C, \pr_2^{-1}\scrF) \cong H_c^i(C, \pr_1^{-1}\scrF) \cong H_c^i(X, \pr_{1, *}\pr_1^{-1}\scrF) \xrightarrow{\tr} H_c^i(X, \scrF),
    \] which is compatible with \eqref{eq: cohomological correspondence for topological spaces}. 
\end{Remark}

\end{appendix}

\printbibliography[heading=bibintoc]


\vspace{10mm}

\begin{tabular}[t]{l}
    P.-H.L.\\
    University of Leicester\\
    School of Computing and Mathematical Sciences\\
    Leicester, UK\\
    \textit{E-mail address: }\texttt{phl8@leicester.ac.uk}
\end{tabular}
\begin{tabular}[t]{l}
    J.-F.W.\\
    School of Mathematics and Statistics\\
    University College Dublin\\
    Belfield Dublin 4\\
    Ireland\\
    \textit{E-mail address: }\texttt{ju-feng.wu@ucd.ie}
\end{tabular}


\end{document}